\newtheorem{theorem}{Theorem}
\numberwithin{section}{part}
\newtheorem{corollary}{Corollary}
\newtheorem{lemma}{Lemma}
\newtheorem*{remark}{Remark}
\newtheorem*{remarks}{Remarks}
\newenvironment{proof}[1][Proof]{\noindent\textbf{#1.} }{\ \rule{0.5em}{0.5em}}
\newcommand{\A}{\mathcal{A}}
\begin{document}

\title{Finiteness Principles for Smooth Selection}
\date{\today }
\author{Charles Fefferman, Arie Israel, Garving K. Luli \thanks{%
The first author is supported in part by NSF grant DMS-1265524, AFOSR
grant FA9550-12-1-0425, and Grant No 2014055 from the United States-Israel Binational Science Foundation (BSF). The third author is supported in part by NSF grant
DMS-1355968 and by a start-up fund from UC Davis.}}
\maketitle

\section*{Introduction}

In this paper and \cite{fil-2016}, we extend a basic finiteness principle \cite{bs-1994,f-2005},
used in \cite{fb1,fb2} to fit smooth functions $F$ to data. Our results
raise the hope that one can start to understand constrained interpolation
problems in which e.g. the interpolating function $F$ is required to be
nonnegative.

Let us set up notation. We fix positive integers $m$, $n$, $D$. We will work
with the function spaces $C^{m}(\mathbb{R}^{n},\mathbb{R}^{D})$ and $%
C^{m-1,1}(\mathbb{R}^{n},\mathbb{R}^{D})$ and their norms $\Vert F\Vert
_{C^{m}\left( \mathbb{R}^{n},\mathbb{R}^{D}\right) }$ and $\left\Vert
F\right\Vert _{C^{m-1,1}\left( \mathbb{R}^{n},\mathbb{R}^{D}\right) }$.
Here, $C^{m}\left( \mathbb{R}^{n},\mathbb{R}^{D}\right) $ denotes the space
of all functions $F:\mathbb{R}^{n}\rightarrow \mathbb{R}^{D}$ whose
derivatives $\partial ^{\beta }F$ (for all $\left\vert \beta \right\vert
\leq m$) are continuous and bounded on $\mathbb{R}^{n}$, and $%
C^{m-1,1}\left( \mathbb{R}^{n},\mathbb{R}^{D}\right) $ denotes the space of
all $F:\mathbb{R}^{n}\rightarrow \mathbb{R}^{D}$ whose derivatives $\partial
^{\beta }F$ (for all $\left\vert \beta \right\vert \leq m-1$) are bounded
and Lipschitz on $\mathbb{R}^{n}$. When $D=1$, we write $C^{m}\left( \mathbb{%
R}^{n}\right) $ and $C^{m-1,1}\left( \mathbb{R}^{n}\right) $ in place of $%
C^{m}\left( \mathbb{R}^{n},\mathbb{R}^{D}\right) $ and $C^{m-1,1}\left( 
\mathbb{R}^{n},\mathbb{R}^{D}\right) $.

Expressions $c\left( m,n\right) $, $C\left( m,n\right) $, $k\left(
m,n\right) $, etc. denote constants depending only on $m$, $n$; these
expressions may denote different constants in different occurrences. Similar
conventions apply to constants denoted by $C\left( m,n,D\right)$, $k\left(
D\right)$, etc.

If $X$ is any finite set, then $\#\left( X\right) $ denotes the number of
elements in $X$.

We recall the basic finiteness principle of \cite{f-2005}.

\begin{theorem}
\label{Th1}For large enough $k^{\#}=k\left( m,n\right) $ and $C^{\#}=C\left(
m,n\right) $ the following hold:

\begin{description}
\item[(A) $C^{m}\text{ FLAVOR}$] Let $f:E\rightarrow \mathbb{R}$ with $%
E\subset \mathbb{R}^{n}$ finite. Suppose that for each $S\subset E$ with $%
\#\left( S\right) \leq k^{\#}$ there exists $F^{S}\in C^{m}\left( \mathbb{R}%
^{n}\right) $ with norm $\left\Vert F^{S}\right\Vert _{C^{m}\left( \mathbb{R}%
^{n}\right) }\leq 1$, such that $F^{S}=f$ on $S$. Then there exists $F\in
C^{m}\left( \mathbb{R}^{n}\right) $ with norm $\left\Vert F\right\Vert
_{C^{m}\left( \mathbb{R}^{n}\right) }\leq C^{\#}$, such that $F=f$ on $E$.

\item[(B) $C^{m-1,1}\text{ FLAVOR} $] Let $f:E\rightarrow \mathbb{R}$ with $%
E\subset \mathbb{R}^{n}$ arbitrary. Suppose that for each $S\subset E$ with $%
\#\left( S\right) \leq k^{\#}$, there exists $F^{S}\in C^{m-1,1}\left( 
\mathbb{R}^{n}\right) $ with norm $\left\Vert F^{S}\right\Vert
_{C^{m-1,1}\left( \mathbb{R}^{n}\right) }\leq 1$, such that $F^{S}=f$ on $S$%
. Then there exists $F\in C^{m-1,1}\left( \mathbb{R}^{n}\right) $ with norm $%
\left\Vert F\right\Vert _{C^{m-1,1}\left( \mathbb{R}^{n}\right) }\leq C^{\#}$%
, such that $F=f$ on $E. $
\end{description}
\end{theorem}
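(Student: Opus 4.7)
My plan is to handle (A) and (B) by a single scheme, which I describe for (B): build a Whitney field interpolating $f$ via iterated Glaeser refinement of a convex bundle of admissible jets, then invoke Whitney's extension theorem. Part (A) follows the same approach with $C^m$ replacing $C^{m-1,1}$ throughout and Whitney's $C^m$ extension theorem at the end; the finiteness of $E$ in (A) streamlines some of the compactness steps but does not change the structure of the argument.

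The goal for (B) is to select, at each $x \in \mathbb{R}^n$, a polynomial $P_x$ of degree $\leq m-1$ so that $\vec{P} = (P_x)$ is a Whitney field of norm $\leq C^\#$ with $P_x(x) = f(x)$ for $x \in E$; Whitney's extension theorem then produces the required $F$. Candidate jets come from the convex bundle $x \mapsto \sigma_0(x) \subset \mathcal{P}$, where $\mathcal{P}$ is the space of polynomials of degree $\leq m-1$ and $P \in \sigma_0(x)$ iff for every $S \subset E$ with $\#(S) \leq k_0$ there exists $F^S \in C^{m-1,1}(\mathbb{R}^n)$ of norm $\leq C_0$ with $F^S = f$ on $S$ and $J_x F^S = P$. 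The finite-set hypothesis gives $\sigma_0(x) \neq \varnothing$ provided $k^\# \geq k_0 + 1$, and convexity is immediate by averaging. I would then iterate Glaeser refinement---cutting $\sigma_\ell(x)$ down to those jets that can be approximated, at all small scales and with the appropriate polynomial decay in the derivatives $\partial^\beta$, by jets in $\sigma_\ell(x')$ for $x'$ near $x$---until the bundle stabilizes after $L \leq \dim \mathcal{P} + 1$ steps (Bierstone--Milman--Paw\l ucki). A stable, nonempty, convex bundle admits a continuous section by Michael's selection theorem; pick $P_x \in \sigma_L(x)$ with $P_x(x) = f(x)$ for $x \in E$ and invoke Whitney's extension theorem.

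The principal obstacle is propagating nonemptiness of $\sigma_\ell$ through each refinement while controlling the blow-up of the finite-set parameter. Quantitatively, one must show that $\sigma_{\ell+1}(x) \neq \varnothing$ follows from a finite-set hypothesis with constant $k_{\ell+1} \leq A(m,n) \cdot k_\ell$; the proof patches interpolants at finitely many nearby base points into a single $F^S$ by a partition-of-unity construction, multiplying the test-point count by a bounded factor per step. Taking $k^\# = A^L k_0$ with $L$, $k_0$, $A$ depending only on $m,n$ then closes the bookkeeping, and the norm constant $C^\#$ accrues a bounded factor per refinement plus a final factor from the Whitney extension. Once these quantitative bounds and Glaeser stabilization are in place, the selection and extension steps are comparatively soft, and essentially the same argument with $C^m$ in place of $C^{m-1,1}$ handles part (A).
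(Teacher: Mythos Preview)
Your proposal has a genuine gap at the selection step. Michael's selection theorem produces a \emph{continuous} section $x \mapsto P_x$ of a lower-semicontinuous convex-valued map, but Whitney's extension theorem requires a \emph{Whitney field}: the family $(P_x)$ must satisfy $|\partial^\beta(P_x - P_y)(x)| \leq C|x-y|^{m-|\beta|}$ for all $x,y$ and all $|\beta|\leq m-1$. Continuity of $x \mapsto P_x$ in $\mathcal{P}$ is far weaker than this, and nothing in your outline upgrades a merely continuous selection to a Whitney-compatible one. Glaeser stability tells you only that for each $P_x \in \sigma_L(x)$ and each $y$ there \emph{exists} some compatible $P_y' \in \sigma_L(y)$; making a single globally coherent choice across all points is exactly the hard content of the theorem, and Michael's theorem does not supply it. (A secondary issue: the Bierstone--Milman--Paw\l ucki stabilization you invoke is a $C^\infty$ statement about paratangent bundles and does not transfer directly to the quantitative $C^{m-1,1}$ setting with uniform constants.)

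The paper does not reprove Theorem~\ref{Th1} --- it is quoted from \cite{f-2005} --- but its shape-field machinery yields it as the special case $\Gamma_0(x,M)=\{P\in\mathcal{P}:P(x)=f(x),\ |\partial^\beta P(x)|\leq M\ \text{for}\ |\beta|\leq m-1\}$ of Theorem~\ref{theorem-fp-for-wsf}. There the analogue of your nonemptiness-through-refinement step is handled by Helly's theorem (Lemmas~\ref{lemma-fp1} and~\ref{lemma-fp2}), not by partition-of-unity patching of the $F^S$; and in place of any abstract selection principle, the interpolant $F$ is constructed directly via a Calder\'on--Zygmund decomposition and an induction on monotonic labels $\mathcal{A}\subseteq\mathcal{M}$ (the Main Lemma of Section~\ref{statement-of-the-main-lemma}), with local pieces glued by a squared partition of unity compatible with the $(C_w,\delta_{\max})$-convexity hypothesis (Lemma~\ref{lemma-wsf2}). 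That CZ/label-induction construction is where the coherent choice of jets is actually made, and it has no counterpart in your sketch.
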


Theorem \ref{Th1} and several related results were conjectured by Y. Brudnyi
and P. Shvartsman in \cite{bs-1994-a} and \cite{bs-1994} (see also \cite{pavel-1984,pavel-1986,pavel-1987}). The first nontrivial case $m=2$
with the sharp \textquotedblleft finiteness constant\textquotedblright\ $%
k^{\#}=3\cdot 2^{n-1}$ was proven by P. Shvartsman \cite{pavel-1982,pavel-1987}; see also \cite{pavel-1984, bs-2001}. The proof of Theorem \ref{Th1}
for general $m$, $n$ appears in \cite{f-2005}. For general $m$, $n$, the optimal $%
k^{\#}$ is unknown, but see \cite{bm-2007, s-2008}.

The proof \cite{pavel-1984,pavel-1987} of Theorem \ref{Th1} for $m=2$ was based on a generalization of the
following \textquotedblleft finiteness principle for Lipschitz selection" \cite{pavel-1986} for maps of metric spaces.

\begin{theorem}
\label{Th2}For large enough $k^{\#}=k^{\#}\left( D\right) $ and $%
C^{\#}=C^{\#}\left( D\right) $, the following holds. \newline
Let $X$ be a metric space. For each $x\in X$, let $K\left( x\right) \subset 
\mathbb{R}^{D}$ be an affine subspace in $\mathbb{R}^D$ of dimension $\leq d$. Suppose that for each $S\subset X$
with $\#\left( S\right) \leq k^{\#}$ there exists a map $F^{S}:S\rightarrow 
\mathbb{R}^{D}$ with Lipschitz constant $\leq 1$, such that $F^{S}\left(
x\right) \in K\left( x\right) $ for all $x\in S$.\newline
Then there exists a map $F:X\rightarrow \mathbb{R}^{D}$ with Lipschitz
constant $\leq C^{\#}$, such that $F\left( x\right) \in K\left( x\right) $
for all $x\in X$.
\end{theorem}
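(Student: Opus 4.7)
The plan is to prove Theorem \ref{Th2} by induction on $d$, the maximum dimension of the affine subspaces $K(x)$, after first reducing to the case of finite $X$ by a compactness argument.

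For the reduction to finite $X$: fix $x_0 \in X$ and apply the hypothesis to $S = \{x_0\}$ to obtain a reference point $v_0 \in K(x_0)$. For an exhausting chain $X_1 \subset X_2 \subset \cdots \subset X$ of finite subsets with $x_0 \in X_1$, assuming the finite version of the theorem yields selections $F_n : X_n \to \mathbb{R}^D$ with a uniform Lipschitz bound and with $F_n(x_0)$ bounded, each $F_n(x)$ lies in the compact set $K(x) \cap \overline{B}(v_0, C^{\#}(D)\, d_X(x, x_0) + O(1))$. A diagonal (or ultrafilter) extraction then produces a global Lipschitz-$C^{\#}(D)$ selection on all of $X$.

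For the finite case, the base $d=0$ is immediate: each $K(x) = \{p(x)\}$, and the hypothesis with $k^{\#} \geq 2$ applied to pairs yields $|p(x) - p(y)| \leq d_X(x, y)$, so $F = p$ works with $C^{\#} = 1$. For the inductive step, suppose the theorem holds for affine subspaces of dimension $\leq d-1$. For parameters $N = N(D)$ and $\lambda = \lambda(D)$ to be tuned, define the convex ``admissible set''
\[
\Omega(x) = \bigl\{\, v \in K(x) : \text{for every } S \subset X,\ \#S \leq N,\ \exists\ \text{a Lipschitz-}\lambda\ \text{selection on } S \cup \{x\} \text{ sending } x \mapsto v \,\bigr\}.
\]
Each $\Omega(x)$ is an intersection of convex subsets of $K(x)$; the hypothesis (with $k^{\#}$ chosen large relative to $N$) ensures every finite subintersection is nonempty, and Helly's theorem inside the $\leq d$-dimensional affine subspace $K(x)$ promotes this to $\Omega(x) \neq \emptyset$. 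One then proceeds by dichotomy: if $\dim \Omega(x) \leq d-1$ for every $x$, the inductive hypothesis applies to the affine hulls $K'(x) = \mathrm{aff}\,\Omega(x)$, which still satisfy the small-subset hypothesis with a somewhat inflated but $D$-bounded constant; otherwise, define $F(x)$ as the Steiner point (or another canonical center) of $\Omega(x)$, which depends Lipschitz-continuously on $\Omega(x)$ in Hausdorff distance with constant depending only on $D$, and verify that $x \mapsto \Omega(x)$ is itself $O(1)$-Lipschitz in Hausdorff distance using the defining constraints.

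The main obstacle is the bookkeeping in the inductive step: the size $N$ of subsets used to define $\Omega(\cdot)$ must stay bounded in terms of $D$ (so that $k^{\#}$ does not blow up through the recursion), and the dimension-reduction leg of the dichotomy must genuinely produce $\dim K'(x) \leq d - 1$, which one forces by choosing $\lambda$ large enough that the small-subset constraints ``saturate'' $\Omega(x)$ inside $K(x)$; failing that, the Steiner-center leg applies directly. Each induction step inflates the constants by a $D$-dependent factor, and since the induction runs at most $D$ times, the final $k^{\#}(D)$ and $C^{\#}(D)$ depend only on $D$.
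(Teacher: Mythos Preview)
The paper does not contain a proof of Theorem~\ref{Th2}. This theorem is quoted as prior work of Shvartsman (see the citation \cite{pavel-1986} and the surrounding discussion in the Introduction), and the paper's own contribution is Theorem~\ref{Th3} and the shape-field machinery behind it. So there is no ``paper's own proof'' to compare against; the relevant benchmark is Shvartsman's original argument.

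That said, your overall strategy---induction on the dimension $d$, Helly's theorem inside each $K(x)$ to produce a nonempty convex ``admissible'' set $\Omega(x)$, and a Steiner-type selector---is indeed the architecture of Shvartsman's proof. But the sketch as written has genuine gaps. First, your dichotomy is not a global case split: some $x$ may have $\dim\Omega(x)=d$ and others $\dim\Omega(x)\le d-1$, and you cannot simply apply the inductive hypothesis to the latter while Steiner-selecting on the former and expect the two pieces to glue into a single Lipschitz map. Shvartsman's argument instead shows that a suitably defined $\Omega(x)$ is \emph{always} bounded (with diameter controlled in a Lipschitz way), so that a Steiner-type center applies uniformly; the dimension induction enters in proving that boundedness and diameter control, not as a runtime dichotomy. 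Second, you assert but do not establish that $\Omega(x)$ is bounded---without this the Steiner point is undefined, since $K(x)$ is an affine subspace---nor that $x\mapsto\Omega(x)$ is Lipschitz in Hausdorff distance; both require real work and are where the finiteness constant $k^\#=2^{d+1}$ actually emerges. Finally, the sentence ``failing that, the Steiner-center leg applies directly'' is doing too much: you have not shown that full-dimensionality of $\Omega(x)$ yields the needed Hausdorff-Lipschitz control.
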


In fact, P. Shvartsman in \cite{pavel-1986} showed that one can take $k^\# = 2^{d+1}$ in Theorem \ref{Th2} and that the constant $k^\# = 2^{d+1}$ is sharp, see \cite{pavel-1992}.

P. Shvartsman also showed that Theorem \ref{Th2} remains valid when $\mathbb{R}^D$ is replaced by a Hilbert space (see \cite{s-2001}) or a Banach space (see \cite{pavel-2004}). 

It is conjectured in \cite{bs-1994} that Theorem \ref{Th2} should hold for any compact convex subsets $K(x) \subset \mathbb{R}^D$. In \cite{s-2002}, P. Shvartsman provided evidence for this conjecture: He showed that the conjecture holds in the case when $D = 2$ and in the case when $X$ is a finite metric space and the constant $C^\#$ is allowed to depend on the cardinality of $X$.

In this paper we prove finiteness principles for $C^{m}\left( \mathbb{R}^{n},%
\mathbb{R}^{D}\right) $-selection, and for $C^{m-1,1}\left( \mathbb{R}^{n},%
\mathbb{R}^{D}\right) $-selection, in particular providing a proof for the conjecture in \cite{bs-1994} for the case $X = \mathbb{R}^n$. 

\begin{theorem}
\label{Th3}For large enough $k^{\#}=k\left( m,n,D\right) $ and $%
C^{\#}=C\left( m,n,D\right) $, the following hold.

\begin{description}
\item[(A) $C^{m}$ FLAVOR] Let $E\subset \mathbb{R}^{n}$ be finite. For each $%
x\in E$, let $K\left( x\right) \subset \mathbb{R}^{D}$ be convex. Suppose
that for each $S\subset E$ with $\#\left( S\right) \leq k^{\#}$, there
exists $F^{S}\in C^{m}\left( \mathbb{R}^{n},\mathbb{R}^{D}\right) $ with
norm $\left\Vert F^{S}\right\Vert _{C^{m}\left( \mathbb{R}^{n},\mathbb{R}%
^{D}\right) }\leq 1$, such that $F^{S}\left( x\right) \in K\left( x\right) $
for all $x\in S$. \newline
Then there exists $F\in C^{m}\left( \mathbb{R}^{n},\mathbb{R}^{D}\right) $
with norm $\left\Vert F\right\Vert _{C^{m}\left( \mathbb{R}^{n},\mathbb{R}%
^{D}\right) }\leq C^{\#}$, such that $F\left( x\right) \in K\left( x\right) $
for all $x\in E$.

\item[(B) $C^{m-1,1}$ FLAVOR] Let $E \subset \mathbb{R}^n$ be arbitrary. For each $x \in E$, let $K(x)
\subset \mathbb{R}^n$ be closed and convex. Suppose that for
each $S\subset E$ with $\#\left( S\right) \leq k^{\#}$, there
exists $F^{S}\in C^{m-1,1}\left( \mathbb{R}^{n},\mathbb{R}^{D}\right) $ with
norm $\left\Vert F^{S}\right\Vert _{C^{m-1,1}\left( \mathbb{R}^{n},\mathbb{R}%
^{D}\right) }\leq 1$, such that $F^{S}\left( x\right) \in K\left( x\right) $
for all $x\in S$.\newline
Then there exists $F\in C^{m-1,1}\left( \mathbb{R}^{n},\mathbb{R}^{D}\right) 
$ with norm $\left\Vert F\right\Vert _{C^{m-1,1}\left( \mathbb{R}^{n},%
\mathbb{R}^{D}\right) }\leq C^{\#}$, such that $F\left( x\right) \in K\left(
x\right) $ for all $x\in E$.
\end{description}
\end{theorem}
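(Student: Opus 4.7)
The plan is to reduce Theorem \ref{Th3} to the $C^{m-1,1}$ flavor with $E$ finite. For (A), since $\|\cdot\|_{C^{m-1,1}}$ and $\|\cdot\|_{C^m}$ are equivalent (up to a constant depending on a bounding ball) on functions extended from a finite set, a solution to the $C^{m-1,1}$ finite problem, combined with a standard mollification, will solve (A). For (B) with infinite $E$, a weak-$*$ compactness argument in $C^{m-1,1}(\mathbb{R}^n,\mathbb{R}^D)$ will produce a global $F$ from uniformly bounded $F^S$'s on finite $S \subset E$. Hence the entire theorem reduces to the finite-$E$, $C^{m-1,1}$ setting.

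To handle the finite case, I would set up the machinery of admissible jet bundles, analogous to that of \cite{f-2005}. For each $x \in E$ and threshold $M>0$, let $\Gamma(x,M)$ be the (convex) set of $\mathbb{R}^D$-valued polynomials $P$ of degree $<m$ such that $P(x) \in K(x)$ and $P$ is the $(m-1)$-jet at $x$ of some $F \in C^{m-1,1}(\mathbb{R}^n,\mathbb{R}^D)$ with $\|F\|_{C^{m-1,1}} \leq M$ selecting from $K$ on some controlled neighborhood. The hypothesis guarantees $\Gamma(x,C) \neq \emptyset$ for a universal $C = C(m,n,D)$. The goal is to select $P_x \in \Gamma(x,C')$ so that $(P_x)_{x\in E}$ is Whitney-compatible, i.e. $|\partial^\beta(P_x-P_y)(y)| \leq C'|x-y|^{m-|\beta|}$, after which the classical Whitney extension theorem produces the desired $F$.

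The substantive work is to refine the bundle $\Gamma$ iteratively. Adapting the scheme of \cite{f-2005} to the convex-valued setting, I would build a decreasing sequence of convex bundles $\Gamma = \Gamma^{(0)} \supset \Gamma^{(1)} \supset \cdots$ indexed by a complexity label $\ell$ tracking the dimension of the affine hull of each $\Gamma^{(\ell)}(x,M)$. At step $\ell\to\ell+1$, one imposes the further constraint that for every small subset $S \ni x$ there exist Whitney-compatible jets $(P_y)_{y \in S}$ with $P_y \in \Gamma^{(\ell)}(y,M)$. Convexity is preserved, and a pigeonhole on affine hull dimensions forces stabilization after $O(1)$ steps at a bundle $\widetilde\Gamma(x)$ whose affine hull has locally constant dimension. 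One then applies Theorem \ref{Th2} within a Calderón-Zygmund decomposition of the ambient cubes: on each cube the rescaled selection problem has affine convex targets, Shvartsman's theorem provides a Lipschitz selection of jets, and a partition-of-unity gluing assembles these into a global compatible collection $(P_x)$.

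The main obstacle I anticipate is showing that each refinement step $\Gamma^{(\ell)} \to \Gamma^{(\ell+1)}$ preserves the small-subset hypothesis with controlled loss: if $\Gamma^{(\ell)}$ admits Whitney-compatible selections on all subsets of size $k_\ell$, then so does $\Gamma^{(\ell+1)}$ on subsets of size $k_{\ell+1}$, with $k_{\ell+1}$ depending only on $k_\ell$, $m$, $n$, $D$. This requires a convex-bundle analogue of Helly's theorem in the polynomial space $\mathcal{P}$, together with a careful two-scale gluing argument that patches local $C^{m-1,1}$ extensions over pairs of subsets without degrading the size of the constants. Quantifying the inductive bookkeeping so that after $O(1)$ refinements the final constants $k^\#$ and $C^\#$ still depend only on $m,n,D$ is the principal technical challenge of the proof.
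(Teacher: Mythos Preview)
Your reductions are sound in spirit and roughly parallel the paper's: the passage from arbitrary $E$ to finite $E$ in part (B) via Ascoli/weak-$*$ compactness is exactly what the paper does, and the equivalence of (A) and (B) for finite $E$ via Whitney extension of the jets is also essentially the paper's argument (though the paper proves (A) first and derives (B), while you go the other way). So the outer shell is fine.

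The core, however, diverges from the paper and has a genuine gap. The paper does \emph{not} work with $\mathbb{R}^D$-valued jet bundles on $\mathbb{R}^n$; instead it lifts the problem to scalar $C^{m+1}$ functions on $\mathbb{R}^{n+D}$ by setting $E^+=E\times\{0\}$ and
\[
\Gamma((x_0,0),M)=\bigl\{P\in\mathcal{P}^+:\ P(x_0,0)=0,\ \nabla_\xi P(x_0,0)\in K(x_0),\ |\partial_x^\alpha\partial_\xi^\beta P(x_0,0)|\le M\bigr\},
\]
and then applies the Finiteness Principle for Shape Fields (Theorem~\ref{theorem-fp-for-wsf}) to this scalar shape field, recovering the selection as $F(x)=\nabla_\xi\tilde F(x,0)$. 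The point of the lift is that the paper's entire refinement/gluing machinery rests on the notion of $(C_w,\delta_{\max})$-convexity, which involves the jet multiplication $\odot_x$ on \emph{scalar} polynomials; the condition $P(x_0,0)=0$ forces $\nabla_\xi$ of $Q_1\odot Q_1\odot P_1+Q_2\odot Q_2\odot P_2$ at $(x_0,0)$ to be a genuine convex combination of the $\nabla_\xi P_i(x_0,0)$, so membership in $K(x_0)$ survives. No analogous multiplicative structure is available on $\mathcal{P}^D$, and without it your refinement step has no mechanism that is simultaneously (i) satisfied by the initial bundle, (ii) inherited by each refinement, and (iii) stable under partition-of-unity gluing. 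This is precisely the ``main obstacle'' you flag, and it is not a bookkeeping issue: it is the substantive idea of the proof.

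Your proposed endgame---refine until the fibers have locally constant affine-hull dimension, then invoke Shvartsman's Theorem~\ref{Th2} on each CZ cube---is not how the paper proceeds and is not justified as stated. Refinement does not in general make the $\Gamma^{(\ell)}(x,M)$ affine (they remain merely convex), so Theorem~\ref{Th2}, which requires affine targets, does not apply. The paper never invokes Theorem~\ref{Th2} in the proof of Theorem~\ref{Th3}; the stratification that drives the induction is over monotonic multi-index sets $\mathcal{A}\subseteq\mathcal{M}$ (via the Main Lemma), not over affine-hull dimension. If you want to avoid the lift to $\mathbb{R}^{n+D}$, you would need to formulate and prove a vector-valued analogue of $(C_w,\delta_{\max})$-convexity and of Lemma~\ref{lemma-wsf3}; absent that, the argument as outlined does not close.
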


In a forthcoming paper \cite{fil-2016}, we will prove the following closely related result on interpolation by nonnegative functions.

\begin{theorem}
\label{Th4}For large enough $k^{\#}=k\left( m,n\right) $ and $C^{\#}=C\left(
m,n\right) $ the following hold.

\begin{description}
\item[(A) $C^{m}$ FLAVOR] Let $f:E\rightarrow [0,\infty)$ with $%
E\subset \mathbb{R}^{n}$ finite. Suppose that for each $S\subset E$ with $%
\#\left( S\right) \leq k^{\#}$, there exists $F^{S}\in C^{m}\left( \mathbb{R}%
^{n}\right) $ with norm $\left\Vert F^{S}\right\Vert _{C^{m}\left( \mathbb{R}%
^{n}\right) }\leq 1$, such that $F^{S}=f\ $on $S$ and $F^{S}\geq 0$ on $%
\mathbb{R}^{n}$. \newline
Then there exists $F\in C^{m}\left( \mathbb{R}^{n}\right) $ with norm $%
\left\Vert F\right\Vert _{C^{m}\left( \mathbb{R}^{n}\right) }\leq C^{\#}$,
such that $F=f$ on $E$ and $F\geq 0$ on $\mathbb{R}^{n}$.

\item[(B) $C^{m-1,1}$ FLAVOR] Let $f:E\rightarrow [0,\infty)$ with $%
E\subset \mathbb{R}^{n}$ arbitrary. Suppose that for each $S\subset E$ with $%
\#\left( S\right) \leq k^{\#}$, there exists $F^{S}\in C^{m-1,1}\left( 
\mathbb{R}^{n}\right) $ with norm $\left\Vert F^{S}\right\Vert
_{C^{m-1,1}\left( \mathbb{R}^{n}\right) }\leq 1$, such that $F^{S}=f\ $on $S$
and $F^{S}\geq 0$ on $\mathbb{R}^{n}$. \newline
Then there exists $F\in C^{m-1,1}\left( \mathbb{R}^{n}\right) $ with norm $%
\left\Vert F\right\Vert _{C^{m-1,1}\left( \mathbb{R}^{n}\right) }\leq C^{\#}$%
, such that $F=f$ on $E$ and $F\geq 0$ on $\mathbb{R}^{n}$.
\end{description}
\end{theorem}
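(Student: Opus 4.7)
The natural strategy is to reduce Theorem \ref{Th4} to a jet-valued refinement of the convex selection principle of Theorem \ref{Th3}. Replace the target $\mathbb{R}^D$ by the finite-dimensional space $\mathcal{P}$ of polynomials on $\mathbb{R}^n$ of degree $<m$, and for each $x\in\mathbb{R}^n$ introduce the convex set $\Gamma(x)\subset\mathcal{P}$ consisting of all $(m-1)$-jets at $x$ of nonnegative functions $F\in C^{m-1,1}(\mathbb{R}^n)$ of norm $\leq M$ that, when $x\in E$, satisfy $F(x)=f(x)$. These sets encode both constraints of Theorem \ref{Th4}: interpolation (an affine condition, active only at points of $E$) and nonnegativity (a global convex condition at every point of $\mathbb{R}^n$).

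Next, run a Glaeser-style refinement on the family $\{\Gamma(x)\}$ in this convex setting. Starting from $\Gamma^{(0)}(x)=\Gamma(x)$, iteratively pass to
\[
\Gamma^{(k+1)}(x) = \bigl\{\, P \in \Gamma^{(k)}(x) : \text{for every } y \text{ near } x \text{ there is } Q\in\Gamma^{(k)}(y) \text{ compatible with } P \text{ in the Whitney sense}\,\bigr\}.
\]
A Helly-type dimension count in $\mathcal{P}$ shows that the iteration stabilizes after $O(\dim\mathcal{P})$ steps to a family $\Gamma^{*}(x)$ of convex sets. If $\Gamma^{*}(x)\neq\emptyset$ for every $x$, a standard Whitney partition-of-unity argument patches any coherent selection $x\mapsto P^x\in\Gamma^{*}(x)$ into a function $F\in C^{m-1,1}(\mathbb{R}^n)$ with $F|_E=f$, $F\geq 0$, and norm $\leq C^{\#}$; this patching step is essentially the convex selection mechanism behind Theorem \ref{Th3} applied at the level of jets.

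The crux is showing that the finiteness hypothesis of Theorem \ref{Th4} forces $\Gamma^{*}(x)\neq\emptyset$ for every $x$. For any fixed finite test set $T=\{x_1,\ldots,x_N\}$, the small-subset hypothesis provides nonnegative interpolants $F^S$ for every $S\subset T\cup E$ with $\#(S)\leq k^{\#}$; a Helly-type argument in the product space $\mathcal{P}^N$ then amalgamates these partial selections into a single nonnegative interpolant on $T$, provided $k^{\#}$ is taken on the order of $\dim\mathcal{P}+1$. A weak-$*$ compactness argument, letting $T$ exhaust dense subsets of $\mathbb{R}^n$, extracts at each $x$ a jet lying in every $\Gamma^{(k)}(x)$, hence in $\Gamma^{*}(x)$. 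The $C^m$ flavor of Theorem \ref{Th4} follows from the $C^{m-1,1}$ flavor by the usual compactness argument for finite $E$.

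The main obstacle I anticipate is the tension between the local, jet-based formalism and the genuinely global nature of nonnegativity: unlike $F|_E=f$, which constrains $F$ only on a finite set, $F\geq 0$ is a constraint at uncountably many points. Controlling it through finitely many jet-level conditions requires a quantitative Whitney-type patching that produces nonnegative local extensions of each selected jet and glues them into a globally nonnegative function without losing $C^{m-1,1}$-norm control. I expect this patching step, rather than the Helly-type combinatorics, to be the real bottleneck, and it likely demands ideas beyond the convex selection machinery used to prove Theorem \ref{Th3}.
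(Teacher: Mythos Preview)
The paper does not contain a proof of Theorem~\ref{Th4}. In the Introduction the authors state explicitly that Theorem~\ref{Th4} will be proven in the forthcoming companion paper~\cite{fil-2016}; the present paper asserts that both Theorems~\ref{Th3} and~\ref{Th4} follow from the general shape-field finiteness principle (Theorem~\ref{theorem-fp-for-wsf}), but carries out the deduction only for Theorem~\ref{Th3} (Section~\ref{fpii}). There is therefore no proof here against which to compare your proposal.

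That said, your outline is in the right spirit and, more importantly, you have correctly located the genuine obstacle. The shape-field machinery of this paper, applied to sets $\Gamma(x,M)$ of jets of nonnegative interpolants, would at best produce a function $F$ with $J_z(F)\in\Gamma_0(z,CM)$ for $z\in E$; but this says only that at each $z\in E$ the jet of $F$ coincides with the jet of \emph{some} nonnegative function, not that $F$ itself is nonnegative on all of $\mathbb{R}^n$. Your final paragraph names exactly this gap, and it is real: bridging it requires a quantitative local result that takes a jet ``consistent with nonnegativity'' and produces an actual nonnegative $C^m$ extension with norm control, together with a gluing step that preserves the sign. That is precisely the content deferred to~\cite{fil-2016}, and it does require ideas beyond the convex-selection mechanism used for Theorem~\ref{Th3}. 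One minor correction: your last sentence has the implication reversed. In this circle of results one proves the $C^m$ flavor for finite $E$ first and then passes to $C^{m-1,1}$ for arbitrary $E$ via Ascoli-type compactness, exactly as the paper does in deducing Theorem~\ref{Th3}(B) from Theorem~\ref{Th3}(A); going the other way is not routine, since smoothing a $C^{m-1,1}$ function to $C^m$ while preserving both $F\geq 0$ and the finitely many interpolation conditions is itself nontrivial.
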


One might ask how to decide whether there exist $F^S$ as in the above results. For Theorem \ref{Th1} this issue is addressed in \cite{f-2005}; for Theorems \ref{Th3} and \ref{Th4} we address it in this paper and \cite{fil-2016}.

A weaker version of the case $D=1$ of Theorem \ref{Th3} appears in \cite%
{f-2005}. There, each $K\left( x\right) $ is an interval $\left[ f\left(
x\right) -\varepsilon \left( x\right) ,f\left( x\right) +\varepsilon \left(
x\right) \right] $. In place of the conclusion $F\left( x\right) \in K\left(
x\right) $ in Theorem \ref{Th3}, \cite{f-2005} obtains the weaker conclusion 
$F\left( x\right) \in \left[ f\left( x\right) -C\varepsilon \left( x\right)
,f\left( x\right) +C\varepsilon \left( x\right) \right] $ for a constant $C$
determined by $m$, $n$.

Our interest in Theorems \ref{Th3} and \ref{Th4} arises in part from their
possible connection to the interpolation algorithms of Fefferman-Klartag 
\cite{fb1,fb2}. Given a function $f:E\rightarrow \mathbb{R}$ with $E\subset 
\mathbb{R}^{n}$ finite, the goal of \cite{fb1,fb2} is to compute a function $%
F\in C^{m}\left( \mathbb{R}^{n}\right) $ such that $F=f$ on $E$, with norm $%
\left\Vert F\right\Vert _{C^{m}\left( \mathbb{R}^{n}\right) }$ as small as
possible up to a factor $C\left( m,n\right) $. Roughly speaking, the
algorithm in \cite{fb1,fb2} computes such an $F$ using $O\left( N\log
N\right) $ computer operations, where $N=\#\left( E\right) $. The algorithm
is based on ideas from the proof \cite{f-2005} of Theorem \ref{Th1}.
Accordingly, Theorems \ref{Th3} and \ref{Th4} raise the hope that we can
start to understand constrained interpolation problems, in which e.g. the
interpolant $F$ is constrained to be nonnegative everywhere on $\mathbb{R}%
^{n}$.

Theorems \ref{Th3} and \ref{Th4} follow from a more general result on $C^m$ functions whose Taylor polynomials belong to prescribed convex sets. More precisely, let $J_x(F)$ denote the $(m-1)^{\text{rst}}$ degree Taylor polynomial of $F$ at $x$; thus, $J_x(F)$ belongs to the vector space of $\mathcal{P}$ of all such polynomials. Let $E \subset \mathbb{R}^n$ be finite. For each $x \in E$ and $M >0$, let $\Gamma(x,M)$ be a convex subset of $\mathcal{P}$. Under suitable hypotheses on the $\Gamma(x,M)$ we will prove the following:

\theoremstyle{plain} \newtheorem*{thm finiteneness}{Finiteness Principle}%
\begin{thm finiteneness}
Fix $M_0 >0$. Suppose that for each $S \subseteq E$ with $\#(S) \leq k^{\#}(m,n)$ there exists $F^S \in C^m(\mathbb{R}^n)$ with $||F^S||_{C^m(\mathbb{R}^n)} \leq M_0$, such that $J_x(F^S) \in \Gamma(x,M_0)$ for all $x \in S$. 

Then there exists $F \in C^m(\mathbb{R}^n)$ with $||F||_{C^m(\mathbb{R}^n)} \leq CM_0$, such that $J_x(F^S) \in \Gamma(x,CM_0)$ for all $x \in E$.

Here, $C$ depends only on $m$, $n$ and the constants in our assumptions on the $\Gamma(x,M)$.  

\end{thm finiteneness}

See Section \ref{fp-i} below for our assumptions on the $\Gamma(x,M)$. The precise statement of the above finiteness principle is given by Theorem \ref{theorem-fp-for-wsf} in Section \ref{fp-i} and the remark after its proof. That result and the closely related Theorem \ref{theorem-tu1} in Section \ref{tu} form the real content of this paper.

A special case of Theorem \ref{theorem-fp-for-wsf} was proven in \cite{f-2005-a}. In that special case, the sets $\Gamma(x,M)$ have the form $\Gamma(x,M)=f_x +M \cdot \sigma(x)$ with $f_x \in \mathcal{P}$ and $\sigma(x)$ a symmetric convex subset of $\mathcal{P}$. The $\sigma(x)$ are required to be ``Whitney convex"; see \cite{f-2005-a}.

Here, to prove Theorems \ref{theorem-tu1} and \ref{theorem-fp-for-wsf}, we adapt the arguments in \cite{f-2005-a} to more general families of convex sets $\Gamma(x,M)$. In particular, our hypothesis of ``$(C_w,\delta_{\max})$-convexity", formulated in Section \ref{shape-fields} below, generalizes the notion of Whitney convexity to the present context.

However, at one crucial point (Case 2 in the proof
of Lemma \ref{lemma-gn1} below) the argument has no analogue in \cite{f-2005-a}.

We refer the reader to the expository paper \cite{f-2009-b} for an explanation of the main ideas in \cite{f-2005-a}.

This paper is part of a literature on extension, interpolation, and
selection of functions, going back to H. Whitney's seminal work \cite%
{whitney-1934}, and including fundamental contributions by G. Glaeser \cite%
{gl-1958}, Y, Brudnyi and P. Shvartsman \cite%
{bs-1985, bs-1994,bs-1997,bs-1998,bs-2001, pavel-1982,pavel-1984,pavel-1986, pavel-1987, pavel-1992, s-2001,s-2002,pavel-2004, s-2008}, J. Wells 
\cite{jw-1973}, E. Le Gruyer \cite{lgw-2009}, and E. Bierstone, P. Milman,
and W. Paw{\l }ucki \cite{bmp-2003,bmp-2006,bm-2007}, as well as our own
papers \cite{f-2005, f-2006, f-2007, f-2009-b, fb1, fb2, f-2005-a, fl-2014}. See e.g. \cite{f-2009-b}
for the history of the problem, as well as Zobin \cite{zobin-1998,zobin-1999} for a related problem. 

We are grateful to the American Institute of Mathematics, the Banff
International Research Station, the Fields Institute, and the College of
William and Mary for hosting workshops on interpolation and extension. We
are grateful also to the Air Force Office of Scientific Research, the
National Science Foundation, the Office of Naval Research, and the U.S.-Israel Binational Science Foundation for financial support.

We are also grateful to Pavel Shvartsman and Alex Brudnyi for their comments on an earlier version of our manuscript, and to Bo'az Klartag and all the participants of the Eighth Whitney Problems Workshop for their interest in our work. Finally, we are grateful to the referee for careful, expert reading, and for comments that improved our paper. 

\part{Shape Fields and Their Refinements}

\section{Notation and Preliminaries\label{notation-and-preliminaries}}

Fix $m$, $n\geq 1$. We will work with cubes in $\mathbb{R}^{n}$; all our
cubes have sides parallel to the coordinate axes. If $Q$ is a cube, then $%
\delta _{Q}$ denotes the sidelength of $Q$. For real numbers $A>0$, $AQ$
denotes the cube whose center is that of $Q$, and whose sidelength is $%
A\delta _{Q}$.

A \underline{dyadic} cube is a cube of the form $I_{1}\times I_{2}\times
\cdots \times I_{n}\subset \mathbb{R}^{n}$, where each $I_{\nu }$ has the
form $[2^{k}\cdot i_{\nu },2^{k}\cdot \left( i_{\nu }+1\right) )$ for
integers $i_{1},\cdots ,i_{n}$, $k$. Each dyadic cube $Q$ is contained in
one and only one dyadic cube with sidelength $2\delta _{Q}$; that cube is
denoted by $Q^{+}$.

We write $\mathcal{P}$ to denote the vector space of all real-valued
polynomials of degree at most $\left( m-1\right) $ on $\mathbb{R}^{n}$. If $%
x\in \mathbb{R}^{n}$ and $F$ is a real-valued $C^{m-1}$ function on a
neighborhood of $x$, then $J_{x}\left( F\right) $ (the \textquotedblleft
jet" of $F$ at $x$) denotes the $\left( m-1\right) ^{rst}$ order Taylor
polynomial of $F$ at $x$. Thus, $J_{x}\left( F\right) \in \mathcal{P}$.

For each $x\in \mathbb{R}^{n}$, there is a natural multiplication $\odot
_{x} $ on $\mathcal{P}$ (\textquotedblleft multiplication of jets at $x$")
defined by setting%
\begin{equation*}
P\odot _{x}Q=J_{x}\left( PQ\right) \text{ for }P,Q\in \mathcal{P}\text{.}
\end{equation*}%
We write $C^{m}\left( \mathbb{R}^{n}\right) $ to denote the Banach space of
real-valued $C^{m}$ functions $F$ on $\mathbb{R}^{n}$ for which the
norm 
\begin{equation*}
\left\Vert F\right\Vert _{C^{m}\left( \mathbb{R}^{n}\right) }=\sup_{x\in 
\mathbb{R}^{n}}\max_{\left\vert \alpha \right\vert \leq m}\left\vert
\partial ^{\alpha }F\left( x\right) \right\vert
\end{equation*}%
is finite. For $D\geq 1$, we write $C^{m}\left( \mathbb{R}^{n},%
\mathbb{R}^{D}\right) $ to denote the Banach space of all $\mathbb{R}^{D}$%
-valued $C^{m}$ functions $F$ on $\mathbb{R}^{n}$, for which the
norm 
\begin{equation*}
\left\Vert F\right\Vert _{C^{m}\left( \mathbb{R}^{n},\mathbb{R}^{D}\right)
}=\sup_{x\in \mathbb{R}^{n}}\max_{\left\vert \alpha \right\vert \leq
m}\left\Vert \partial ^{\alpha }F\left( x\right) \right\Vert
\end{equation*}%
is finite. Here, we use the Euclidean norm on $\mathbb{R}^{D}$.

If $F$ is a real-valued function on a cube $Q$, then we write $F\in
C^{m}\left( Q\right) $ to denote that $F$ and its derivatives up to $m$-th
order extend continuously to the closure of $Q$. For $F\in C^{m}\left(
Q\right) $, we define 
\begin{equation*}
\left\Vert F\right\Vert _{C^{m}\left( Q\right) }=\sup_{x\in
Q}\max_{\left\vert \alpha \right\vert \leq m}\left\vert \partial ^{\alpha
}F\left( x\right) \right\vert .
\end{equation*}%
Similarly, if $F$ is an $\mathbb{R}^{D}$-valued function on a cube $Q$,
then we write $F\in C^{m}\left( Q,\mathbb{R}^{D}\right) $ to denote that $F$
and its derivatives up to $m$-th order extend continuously to the closure of 
$Q$. For $F\in C^{m}\left( Q,\mathbb{R}^{D}\right) $, we define 
\begin{equation*}
\left\Vert F\right\Vert _{C^{m}\left( Q,\mathbb{R}^{D}\right) }=\sup_{x\in
Q}\max_{\left\vert \alpha \right\vert \leq m}\left\Vert \partial ^{\alpha
}F\left( x\right) \right\Vert \text{,}
\end{equation*}%
where again we use the Euclidean norm on $\mathbb{R}^{D}$.

If $F\in C^{m}\left( Q\right) $ and $x$ belongs to the boundary of $Q$, then
we still write $J_{x}\left( F\right) $ to denote the $\left( m-1\right)
^{rst}$ degree Taylor polynomial of $F$ at $x$, even though $F$ isn't
defined on a full neighborhood of $x\in \mathbb{R}^{n}$.

Let $S\subset \mathbb{R}^{n}$ be non-empty and finite. A \underline{Whitney
field} on $S$ is a family of polynomials 
\begin{equation*}
\vec{P}=\left( P^{y}\right) _{y\in S}\text{ (each }P^{y}\in \mathcal{P}\text{%
),}
\end{equation*}%
parametrized by the points of $S$.

We write $Wh\left( S\right) $ to denote the vector space of all Whitney
fields on $S$.

For $\vec{P}=\left( P^{y}\right) _{y\in S}\in Wh\left( S\right) $, we define
the seminorm 
\begin{equation*}
\left\Vert \vec{P}\right\Vert _{\dot{C}^{m}\left( S\right) }=\max_{x,y\in
S,\left( x\not=y\right), |\alpha| \leq m}\frac{\left\vert \partial ^{\alpha
}\left( P^{x}-P^{y}\right) \left( x\right) \right\vert }{\left\vert
x-y\right\vert ^{m-\left\vert \alpha \right\vert }}\text{.}
\end{equation*}

(If $S$ consists of a single point, then $\left\Vert \vec{P}\right\Vert _{%
\dot{C}^{m}\left( S\right) }=0$.)

We write $\mathcal{M}$ to denote the set of all multiindices $\alpha =\left(
\alpha _{1},\cdots ,\alpha _{n}\right) $ of order $\left\vert \alpha
\right\vert =\alpha _{1}+\cdots +\alpha _{n} \leq m-1$.

We define a (total) order relation $<$ on $\mathcal{M}$, as follows. Let $%
\alpha =\left( \alpha _{1},\cdots ,\alpha _{n}\right) $ and $\beta =\left(
\beta _{1},\cdots ,\beta _{n}\right) $ be distinct elements of $\mathcal{M}$%
. Pick the largest $k$ for which $\alpha _{1}+\cdots +\alpha
_{k}\not=\beta _{1}+\cdots +\beta _{k}$. (There must be at least one such $k$%
, since $\alpha $ and $\beta $ are distinct). Then we say that $\alpha
<\beta $ if $\alpha _{1}+\cdots +\alpha _{k}<\beta _{1}+\cdots +\beta _{k}$.

We also define a (total) order relation $<$ on subsets of $\mathcal{M}$, as
follows. Let $\mathcal{A},\mathcal{B}$ be distinct subsets of $\mathcal{M}$,
and let $\gamma $ be the least element of the symmetric difference $\left( 
\mathcal{A\setminus B}\right) \cup \left( \mathcal{B\setminus A}\right) $
(under the above order on the elements of $\mathcal{M}$). Then we say that $%
\mathcal{A}<\mathcal{B}$ if $\gamma \in \mathcal{A}$.

One checks easily that the above relations $<$ are indeed total order
relations. Note that $\mathcal{M}$ is minimal, and the empty set $\emptyset $
is maximal under $<$. A set $\mathcal{A}\subseteq \mathcal{M}$ is called 
\underline{monotonic} if, for all $\alpha \in \mathcal{A}$ and $\gamma \in 
\mathcal{M}$, $\alpha +\gamma \in \mathcal{M}$ implies $\alpha +\gamma \in 
\mathcal{A}$. We make repeated use of a simple observation:

Suppose $\mathcal{A}\subseteq \mathcal{M}$ is monotonic, $P\in \mathcal{P}$
and $x_{0}\in \mathbb{R}^{n}$. If $\partial ^{\alpha }P\left( x_{0}\right)
=0 $ for all $\alpha \in \mathcal{A}$, then $\partial ^{\alpha }P\equiv 0$
on $\mathbb{R}^{n}$ for $\alpha \in \mathcal{A}$. 

This follows by writing $\partial ^{\alpha }P\left( y\right)
=\sum_{\left\vert \gamma \right\vert \leq m-1-\left\vert \alpha \right\vert }%
\frac{1}{\gamma !}\partial ^{\alpha +\gamma }P\left( x_{0}\right) \cdot
\left( y-x_{0}\right) ^{\gamma }$ and noting that all the relevant $\alpha
+\gamma $ belong to $\mathcal{A}$, hence $\partial ^{\alpha +\gamma }P\left(
x_{0}\right) =0$.

We need a few elementary facts about convex sets. We recall

\theoremstyle{plain} \newtheorem*{thm Helly}{Helly's Theorem}%
\begin{thm Helly}
Let $K_{1},\cdots ,K_{N}\subset \mathbb{R}^{D}$ be convex. Suppose that $K_{i_{1}}\cap \cdots \cap K_{i_{D+1}}$ is nonempty for any $i_{1}, \cdots, i_{D+1}\in
\{1,\cdots ,N\}$. Then $K_{1}\cap \cdots \cap K_{N}$ is nonempty.\end{thm Helly}

See \cite{rock-convex}.

We also use the following \theoremstyle{plain} 
\newtheorem*{thm Trivial
Remark on Convex Sets}{Trivial Remark on Convex Sets}%
\begin{thm Trivial Remark on Convex Sets}
Let $\Gamma $ be a convex set, and let $P_{0}$, $P_{0}~+~P_{\nu }$, $P_{0}~-~P_{\nu }\in \Gamma $ for $\nu =1,\cdots ,\nu _{\max }$. Then for any
real numbers $t_{1},\cdots ,t_{\nu _{\max }}$ with 
\begin{equation*}
\sum_{\nu =1}^{\nu _{\max }}\left\vert t_{\nu }\right\vert \leq 1,
\end{equation*}we have 
\begin{equation*}
P_{0}+\sum_{\nu =1}^{\nu _{\max }}t_{\nu }P_{\nu }\in \Gamma \text{.}
\end{equation*}\end{thm Trivial Remark on Convex Sets}

If $\lambda =\left( \lambda _{1},\cdots ,\lambda _{n}\right) $ is an $n$%
-tuple of positive real numbers, and if $\beta =\left( \beta _{1},\cdots
,\beta _{n}\right) \in \mathbb{Z}^{n}$, then we write $\lambda ^{\beta }$ to
denote 
\begin{equation*}
\lambda _{1}^{\beta _{1}}\cdots \lambda _{n}^{\beta _{n}}.
\end{equation*}%
We write $B_{n}\left( x,r\right) $ to denote the open ball in $\mathbb{R}%
^{n} $ with center $x$ and radius $r$, with respect to the Euclidean metric. 

\section{Shape Fields}\label{shape-fields}

Let $E\subset \mathbb{R}^{n}$ be finite. For each $x\in E$, $M\in \left(
0,\infty \right) $, let $\Gamma \left( x,M\right) \subseteq \mathcal{P}$ be
a (possibly empty) convex set. We say that $\vec{\Gamma}=\left( \Gamma
\left( x,M\right) \right) _{x\in E,M>0}$ is a \underline{shape field} if for
all $x\in E$ and $0<M^{\prime }\leq M<\infty $, we have 
\begin{equation*}
\Gamma \left( x,M^{\prime }\right) \subseteq \Gamma \left( x,M\right) .
\end{equation*}

Let $\vec{\Gamma}=\left( \Gamma \left( x,M\right) \right) _{x\in E,M>0}$ be
a shape field and let $C_{w},\delta _{\max }$ be positive real numbers. We
say that $\vec{\Gamma}$ is \underline{$\left( C_{w},\delta _{\max }\right) $%
-convex} if the following condition holds:

Let $0<\delta \leq \delta _{\max }$, $x\in E$, $M\in \left( 0,\infty \right) 
$, $P_{1}$, $P_{2}$, $Q_{1}$, $Q_{2}\in \mathcal{P}$. Assume that

\begin{itemize}
\item[\refstepcounter{equation}\text{(\theequation)}\label{wsf1}] $P_1,P_2
\in\Gamma(x,M)$;

\item[\refstepcounter{equation}\text{(\theequation)}\label{wsf2}] $%
|\partial^\beta(P_1-P_2)(x)| \leq M\delta^{m-|\beta|}$ for $|\beta| \leq m-1$%
;

\item[\refstepcounter{equation}\text{(\theequation)}\label{wsf3}] $%
|\partial^\beta Q_i(x)|\leq \delta^{-|\beta|}$ for $|\beta| \leq m-1$ for $%
i=1,2$;

\item[\refstepcounter{equation}\text{(\theequation)}\label{wsf4}] $%
Q_1\odot_x Q_1 + Q_2 \odot_x Q_2 =1$.
\end{itemize}

Then

\begin{itemize}
\item[\refstepcounter{equation}\text{(\theequation)}\label{wsf5}] $%
P:=Q_1\odot_x Q_1\odot_x P_1 + Q_2 \odot_x Q_2 \odot_x P_2\in\Gamma(x,C_wM)$.
\end{itemize}

The following lemma follows easily from the definition of $(C_w,\delta_{\max})$%
-convexity.

\begin{lemma}
\label{lemma-wsf1} Suppose $\vec{\Gamma}=\left( \Gamma \left( x,M\right)
\right) _{x\in E,M>0}$ is a $\left( C_{w},\delta _{\max }\right) $-convex
shape field. Let

\begin{itemize}
\item[\refstepcounter{equation}\text{(\theequation)}\label{6}] $0<\delta
\leq \delta_{\max}$, $x \in E$, $M>0$, $P_1,P_2,Q_1,Q_2 \in \mathcal{P}$ and 
$A^{\prime },A^{\prime \prime }>0$.
\end{itemize}

Assume that

\begin{itemize}
\item[\refstepcounter{equation}\text{(\theequation)}\label{7}] $P_1,P_2 \in
\Gamma(x,A^{\prime }M)$;

\item[\refstepcounter{equation}\text{(\theequation)}\label{8}] $\left\vert
\partial ^{\beta }\left( P_{1}-P_{2}\right) \left( x\right) \right\vert \leq
A^{\prime}M \delta^{  m-\left\vert \beta \right\vert }$ for $\left\vert \beta
\right\vert \leq m-1$;

\item[\refstepcounter{equation}\text{(\theequation)}\label{9}] $\left\vert
\partial ^{\beta }Q_{i}\left( x\right) \right\vert \leq A^{\prime \prime 
}\delta^{-\left\vert \beta \right\vert }$ for $\left\vert \beta \right\vert \leq m-1$
and $i=1,2$;

\item[\refstepcounter{equation}\text{(\theequation)}\label{10}] $Q_{1}\odot
_{x}Q_{1}+Q_{2}\odot _{x}Q_{2}=1$.
\end{itemize}

Then

\begin{itemize}
\item[\refstepcounter{equation}\text{(\theequation)}\label{11}] $%
P:=Q_{1}\odot _{x}Q_{1}\odot _{x}P_{1}+Q_{2}\odot _{x}Q_{2}\odot
_{x}P_{2}\in \Gamma \left( x,CM\right) $ with $C$ determined by $A^{\prime }$%
, $A^{\prime \prime }$, $C_{w}$, $m$, and $n$.
\end{itemize}
\end{lemma}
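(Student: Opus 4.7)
The plan is to reduce Lemma \ref{lemma-wsf1} directly to the defining condition of $(C_w,\delta_{\max})$-convexity by rescaling $\delta$ and $M$. Without loss of generality I may assume $A' \geq 1$ and $A'' \geq 1$: if $A' < 1$, hypothesis (7) can be weakened to $P_1, P_2 \in \Gamma(x, M)$ by the shape-field monotonicity, and (8) is weakened similarly; if $A'' < 1$, then (9) already implies (wsf3) with $\delta$ itself. The crucial observation is that hypothesis (10), evaluated at the point $x$, forces $Q_1(x)^2 + Q_2(x)^2 = 1$, and hence $|Q_i(x)| \leq 1$ \emph{regardless} of $A''$; this is what allows the rescaling below to succeed at the $|\beta|=0$ level, where a naive dilation of $Q_i$ would otherwise fail.

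Concretely, I set $\tilde{\delta} := \delta / A''$ and $\tilde{M} := A' (A'')^m M$, so that $\tilde{\delta} \leq \delta \leq \delta_{\max}$. Then I check that $P_1, P_2, Q_1, Q_2$ satisfy (wsf1)--(wsf4) with $(\delta, M)$ replaced by $(\tilde{\delta}, \tilde{M})$. For (wsf1), the inclusion $\Gamma(x, A'M) \subseteq \Gamma(x, \tilde{M})$ is automatic from the shape-field property since $A'M \leq \tilde{M}$. For (wsf2), using $\delta = A''\tilde{\delta}$, I compute $A'M \delta^{m-|\beta|} = A'(A'')^{m-|\beta|} M \tilde{\delta}^{m-|\beta|} \leq A'(A'')^m M \tilde{\delta}^{m-|\beta|} = \tilde{M}\tilde{\delta}^{m-|\beta|}$, where the inequality uses $A'' \geq 1$ and $|\beta| \geq 0$. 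For (wsf3) with $|\beta| \geq 1$, $|\partial^\beta Q_i(x)| \leq A''\delta^{-|\beta|} = (A'')^{1-|\beta|} \tilde{\delta}^{-|\beta|} \leq \tilde{\delta}^{-|\beta|}$ since $A'' \geq 1$; and for $|\beta| = 0$, the required bound $|Q_i(x)| \leq 1 = \tilde{\delta}^{0}$ follows from (10) evaluated at $x$. Hypothesis (wsf4) is identical.

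Applying $(C_w,\delta_{\max})$-convexity at the rescaled parameters $(\tilde{\delta}, \tilde{M})$ then yields $P \in \Gamma(x, C_w\tilde{M}) = \Gamma(x, C_w A'(A'')^m M)$, so $C := C_w A'(A'')^m$ works and depends only on $A'$, $A''$, $C_w$, $m$, $n$, as claimed. I do not anticipate any serious obstacle here; the only mildly nontrivial step is the use of (10) to obtain the pointwise bound $|Q_i(x)| \leq 1$ for free, which rescues the $|\beta|=0$ case of (wsf3). Everything else is elementary rescaling.
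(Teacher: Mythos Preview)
Your argument is correct. The paper does not give a proof of this lemma, stating only that it ``follows easily from the definition of $(C_w,\delta_{\max})$-convexity''; your rescaling $\tilde\delta=\delta/A''$, $\tilde M=A'(A'')^mM$ is exactly the natural way to make this precise, and you correctly identify and handle the one non-obvious point, namely that the $|\beta|=0$ case of (wsf3) is saved by evaluating (10) at $x$ to get $Q_1(x)^2+Q_2(x)^2=1$.
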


By Lemma \ref{lemma-wsf1} and an induction argument, we also have the following result.

\begin{lemma}
\label{lemma-wsf2} Suppose $\vec{\Gamma}=\left( \Gamma \left( x,M\right)
\right) _{x\in E,M>0}$ is a $\left( C_{w},\delta _{\max }\right) $-convex
shape field. Let

\begin{itemize}
\item[\refstepcounter{equation}\text{(\theequation)}\label{12}] $0<\delta
\leq \delta _{\max }$, $x\in E$, $M>0,A^{\prime },A^{\prime \prime }>0$, $%
P_{1},\cdots P_{k},Q_{1},\cdots ,Q_{k}\in \mathcal{P}$.
\end{itemize}

Assume that

\begin{itemize}
\item[\refstepcounter{equation}\text{(\theequation)}\label{13}] $P_{i}\in
\Gamma \left( x,A^{\prime }M\right) $ for $i=1,\cdots ,k$;

\item[\refstepcounter{equation}\text{(\theequation)}\label{14}] $\left\vert
\partial ^{\beta }\left( P_{i}-P_{j}\right) \left( x\right) \right\vert \leq
A^{\prime}M\delta^{  m-\left\vert \beta \right\vert }$ for $\left\vert \beta
\right\vert \leq m-1$, $i,j=1,\cdots ,k$;

\item[\refstepcounter{equation}\text{(\theequation)}\label{15}] $\left\vert
\partial ^{\beta }Q_{i}\left( x\right) \right\vert \leq A^{\prime \prime }\delta^{
-\left\vert \beta \right\vert }$ for $\left\vert \beta \right\vert \leq m-1$
and $i=1,\cdots ,k $;

\item[\refstepcounter{equation}\text{(\theequation)}\label{16}] $%
\sum_{i=1}^{k}Q_{i}\odot _{x}Q_{i}=1$.
\end{itemize}

Then

\begin{itemize}
\item[\refstepcounter{equation}\text{(\theequation)}\label{17}] $%
\sum_{i=1}^kQ_{i}\odot _{x}Q_{i}\odot _{x}P_{i}\in \Gamma \left( x,CM\right)
, $ with $C$ determined by $A^{\prime }$, $A^{\prime \prime }$, $C_{w}$, $m$%
, $n$, $k$.
\end{itemize}
\end{lemma}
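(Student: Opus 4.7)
The plan is to prove Lemma \ref{lemma-wsf2} by induction on $k$, with Lemma \ref{lemma-wsf1} serving as both the base case $k=2$ and the merging tool in the inductive step. The case $k=1$ reduces to the observation that $Q_{1}\odot_{x}Q_{1}=1$ forces $Q_{1}$ to be the constant $\pm 1$ (a short Leibniz-rule induction on $|\beta|$ using $Q_{1}(x)\neq 0$), so the sum in \eqref{17} is $P_{1}\in\Gamma(x,A'M)$. For the inductive step $k\mapsto k+1$, the strategy is to merge two of the $k+1$ terms into a single term, producing a $k$-term system that still satisfies \eqref{13}--\eqref{16} with new constants depending additionally on $k$, and then invoke the induction hypothesis.

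To carry out the merge, I would first evaluate \eqref{16} at $x$ to get $\sum_{i}Q_{i}(x)^{2}=1$, so some index $i_{0}$ satisfies $Q_{i_{0}}(x)^{2}\geq 1/(k+1)$; then pick any $j_{0}\neq i_{0}$ and set $\pi:=Q_{i_{0}}\odot_{x}Q_{i_{0}}+Q_{j_{0}}\odot_{x}Q_{j_{0}}$, so $\pi(x)\geq 1/(k+1)$. Since $\pi(x)$ is bounded away from zero and the derivatives of $\pi$ at $x$ are controlled by \eqref{15}, I would construct jet polynomials $T,R_{i_{0}},R_{j_{0}}\in\mathcal{P}$ with $T\odot_{x}T=\pi$, $T\odot_{x}R_{i_{0}}=Q_{i_{0}}$, $T\odot_{x}R_{j_{0}}=Q_{j_{0}}$ (whence $R_{i_{0}}\odot_{x}R_{i_{0}}+R_{j_{0}}\odot_{x}R_{j_{0}}=1$), with derivatives at $x$ bounded by $A_{1}''\delta^{-|\beta|}$ for a new constant $A_{1}''$ depending on $A'',m,n,k$. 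Applying Lemma \ref{lemma-wsf1} to $(R_{i_{0}},R_{j_{0}},P_{i_{0}},P_{j_{0}})$ then produces
$$
P_{\star}:=R_{i_{0}}\odot_{x}R_{i_{0}}\odot_{x}P_{i_{0}}+R_{j_{0}}\odot_{x}R_{j_{0}}\odot_{x}P_{j_{0}}\in\Gamma(x,A_{1}'M).
$$
Replacing the pairs $(Q_{i_{0}},P_{i_{0}})$ and $(Q_{j_{0}},P_{j_{0}})$ with the single pair $(T,P_{\star})$ yields a $k$-term system with partition of unity $T\odot_{x}T+\sum_{i\neq i_{0},j_{0}}Q_{i}\odot_{x}Q_{i}=1$, while preserving the weighted sum in \eqref{17}.

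To apply the induction hypothesis I need to verify the difference bound \eqref{14} for the reduced system; the only new instances are $P_{\star}-P_{i}$ for $i\neq i_{0},j_{0}$, which, using $R_{i_{0}}\odot_{x}R_{i_{0}}+R_{j_{0}}\odot_{x}R_{j_{0}}=1$, can be rewritten as
$$
P_{\star}-P_{i}=R_{i_{0}}\odot_{x}R_{i_{0}}\odot_{x}(P_{i_{0}}-P_{i})+R_{j_{0}}\odot_{x}R_{j_{0}}\odot_{x}(P_{j_{0}}-P_{i}),
$$
and then estimated at $x$ by the product rule from \eqref{14} and the bounds on $R_{i_{0}},R_{j_{0}}$. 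The main obstacle in the argument is the construction and quantitative control of the jet square root $T$ and the jet quotients $R_{i_{0}},R_{j_{0}}$; these rely on the fact that $(\mathcal{P},\odot_{x})$ is a local ring whose units are precisely the polynomials with nonzero constant term, so that given the lower bound $\pi(x)\geq 1/(k+1)$ one may build $T$ and $\pi^{-1}$ coefficient by coefficient and read off derivative estimates at $x$ in terms of those of $\pi$ and the lower bound.
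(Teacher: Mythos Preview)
Your proposal is correct and matches the paper's indicated approach: the paper simply states that Lemma~\ref{lemma-wsf2} follows from Lemma~\ref{lemma-wsf1} ``and an induction argument,'' giving no further details. Your merging step---constructing the jet square root $T$ of $\pi=Q_{i_0}\odot_x Q_{i_0}+Q_{j_0}\odot_x Q_{j_0}$ and the quotients $R_{i_0},R_{j_0}$ (equivalently, the $(m-1)$-jets at $x$ of the smooth functions $\sqrt{\pi}$, $Q_{i_0}/\sqrt{\pi}$, $Q_{j_0}/\sqrt{\pi}$, well-defined and controlled near $x$ thanks to the lower bound $\pi(x)\geq 1/(k+1)$)---is a valid way to carry out that induction.
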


Next, we define the \underline{first refinement} of a shape field $\vec{%
\Gamma}=\left( \Gamma \left( x,M\right) \right) _{x\in E,M>0}$ to be $\vec{%
\Gamma}^{\#}=\left( \Gamma ^{\#}\left( x,M\right) \right) _{x\in E,M>0}$,
where $\Gamma ^{\#}\left( x,M\right) $ consists of those $P^{\#}\in \mathcal{%
P}$ such that for all $y\in E$ there exists $P\in \Gamma \left( y,M\right) $
for which

\begin{itemize}
\item[\refstepcounter{equation}\text{(\theequation)}\label{33}] $\left\vert
\partial ^{\beta }\left( P^{\#}-P\right) \left( x\right) \right\vert \leq
M\left\vert x-y\right\vert ^{m-\left\vert \beta \right\vert }$ for $%
\left\vert \beta \right\vert \leq m-1$.
\end{itemize}

Note that each $\Gamma ^{\#}\left( x,M\right) $ is a (possibly empty) convex
subset of $\mathcal{P}$, and that $M^{\prime }\leq M$ implies $\Gamma
^{\#}\left( x,M^{\prime }\right) \subseteq \Gamma ^{\#}\left( x,M\right) $.
Thus, $\vec{\Gamma}^{\#}$ is again a shape field. Taking $y=x$ in \eqref{33}%
, we see that $\Gamma^\#(x,M) \subset \Gamma(x,M)$.

\begin{lemma}
\label{lemma-wsf3} Let $\vec{\Gamma}=\left( \Gamma \left( x,M\right) \right)
_{x\in E,M>0}$ be a $\left( C_{w},\delta _{\max }\right) $-convex shape
field, and let $\vec{\Gamma}^{\#}=\left( \Gamma ^{\#}\left( x,M\right) \right)
_{x\in E,M>0}$ be the first refinement of $\vec{\Gamma}^{\#}$. Then $\vec{%
\Gamma}^{\#}$ is $\left( C,\delta _{\max }\right) $-convex, where $C$ is
determined by $C_{w}$, $m$, $n$.
\end{lemma}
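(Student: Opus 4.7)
The plan is to verify the $(C,\delta_{\max})$-convexity of $\vec{\Gamma}^{\#}$ directly from the definition. Fix $0<\delta\le\delta_{\max}$, $x\in E$, $M>0$, and polynomials $P_1,P_2,Q_1,Q_2\in\mathcal{P}$ satisfying (\ref{wsf1})--(\ref{wsf4}) for $\vec{\Gamma}^{\#}$, so in particular $P_1,P_2\in\Gamma^{\#}(x,M)$. Set $P:=Q_1\odot_x Q_1\odot_x P_1+Q_2\odot_x Q_2\odot_x P_2$. The goal is to show $P\in\Gamma^{\#}(x,CM)$ for a constant $C$ depending only on $C_w,m,n$; equivalently, for each $y\in E$ I must produce a witness $\tilde P\in\Gamma(y,CM)$ with $|\partial^{\beta}(P-\tilde P)(y)|\le CM|x-y|^{m-|\beta|}$ for $|\beta|\le m-1$. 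By definition of $\Gamma^{\#}(x,M)$, for this $y$ there exist $\tilde P_1,\tilde P_2\in\Gamma(y,M)$ with $|\partial^{\beta}(P_i-\tilde P_i)(x)|\le M|x-y|^{m-|\beta|}$. The key identity, obtained from $Q_1\odot_x Q_1+Q_2\odot_x Q_2=1$, is
\[
P-P_1 \;=\; J_x\bigl[Q_2^2(P_2-P_1)\bigr].
\]

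First I would split on a small threshold $c_0=c_0(m,n)$, to be chosen. In the \emph{far regime} $|x-y|\ge c_0\delta$, take $\tilde P:=\tilde P_1\in\Gamma(y,M)$ and write $P-\tilde P=(P-P_1)+(P_1-\tilde P_1)$. Each summand lies in $\mathcal{P}$, so Taylor-expanding around $x$ reduces its derivatives at $y$ to derivatives at $x$. Leibniz and (\ref{wsf2})--(\ref{wsf3}) give $|\partial^{\beta+\alpha}[Q_2^2(P_2-P_1)](x)|\le CM\delta^{m-|\beta+\alpha|}$, and together with the witness bound $|\partial^{\beta+\alpha}(P_1-\tilde P_1)(x)|\le M|x-y|^{m-|\beta+\alpha|}$ the condition $\delta\le|x-y|/c_0$ absorbs every unwanted power of $\delta$ and yields $|\partial^{\beta}(P-\tilde P)(y)|\le CM|x-y|^{m-|\beta|}$.

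The \emph{near regime} $|x-y|<c_0\delta$ is the heart of the argument; here $\tilde P=\tilde P_1$ no longer suffices, and I would instead apply Lemma \ref{lemma-wsf1} at the new base point $y$ to $\tilde P_1,\tilde P_2\in\Gamma(y,M)$. The obstruction is that $Q_1\odot_x Q_1+Q_2\odot_x Q_2=1$ is anchored at $x$, not $y$. To re-center it, set $V:=J_y(Q_1^2+Q_2^2)-1\in\mathcal{P}$; since $\partial^{\beta}(Q_1^2+Q_2^2-1)(x)=0$ for $|\beta|\le m-1$, a Taylor expansion around $x$ yields $|\partial^{\beta}V(y)|\le C(|y-x|/\delta)^{m-|\beta|}\delta^{-|\beta|}$. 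For $c_0$ chosen small enough (depending on $m,n$), $V$ is small enough at $y$ that the formal power series $(1+V)^{-1/2}$, truncated at degree $m-1$ via iterated $\odot_y$-products, defines a polynomial $\lambda\in\mathcal{P}$ with $\lambda\odot_y\lambda\odot_y(1+V)=1$ and $|\partial^{\beta}\lambda(y)|\le C\delta^{-|\beta|}$. Setting $Q_i':=\lambda\odot_y Q_i$, one checks $Q_1'\odot_y Q_1'+Q_2'\odot_y Q_2'=1$ and $|\partial^{\beta}Q_i'(y)|\le C\delta^{-|\beta|}$, while Taylor from $x$ yields $|\partial^{\beta}(\tilde P_1-\tilde P_2)(y)|\le CM\delta^{m-|\beta|}$. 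Lemma \ref{lemma-wsf1} applied at $y$ then provides
\[
\tilde P:=Q_1'\odot_y Q_1'\odot_y\tilde P_1+Q_2'\odot_y Q_2'\odot_y\tilde P_2\in\Gamma(y,CM).
\]

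Finally, to bound $|\partial^{\beta}(P-\tilde P)(y)|$ in the near regime, use $J_y[\lambda^2(Q_1^2+Q_2^2)]=1$ to rewrite $\tilde P=\tilde P_1+J_y[\lambda^2 Q_2^2(\tilde P_2-\tilde P_1)]$, and compare with $P=P_1+J_x[Q_2^2(P_2-P_1)]$:
\begin{align*}
P-\tilde P ={}&(P_1-\tilde P_1)+\bigl(J_x[Q_2^2(P_2-P_1)]-J_y[Q_2^2(P_2-P_1)]\bigr)\\
&+J_y\bigl[(1-\lambda^2)Q_2^2(P_2-P_1)+\lambda^2 Q_2^2\bigl((P_2-\tilde P_2)-(P_1-\tilde P_1)\bigr)\bigr].
\end{align*}
Each summand is bounded by $CM|x-y|^{m-|\beta|}$: the first by Taylor from $x$; the second by the standard jet-truncation estimate using $|\partial^{\beta+\alpha}[Q_2^2(P_2-P_1)](x)|\le CM\delta^{m-|\beta+\alpha|}$ and $|x-y|<c_0\delta$; the third by Leibniz, using $|\partial^{\beta}(1-\lambda^2)(y)|\le C|x-y|^{m-|\beta|}\delta^{-m}$ (a consequence of $J_y[\lambda^2(1+V)]=1$ together with the estimate on $V$) and the witness bound $|\partial^{\beta}(P_i-\tilde P_i)(y)|\le CM|x-y|^{m-|\beta|}$ (from Taylor at $x$). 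I expect the main obstacle to be the construction of $\lambda$ and the systematic bookkeeping for this final decomposition; the far regime and the algebra at the end are routine Taylor, Leibniz, and shape-field monotonicity.
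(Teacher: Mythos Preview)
Your proof is correct and follows essentially the same strategy as the paper: split into a near case $|x-y|<c_0\delta$ and a far case $|x-y|\ge c_0\delta$, in the far case take $\tilde P=\tilde P_1$ and absorb $P-P_1=J_x[Q_2^2(P_2-P_1)]$ into the $|x-y|$ scale, and in the near case renormalize the partition of unity at $y$ and invoke Lemma~\ref{lemma-wsf1} there. The only difference is cosmetic: the paper introduces genuine smooth functions $\theta_i=Q_i/\sqrt{Q_1^2+Q_2^2}$ on $B_n(x,c_0\delta)$ and takes $J_y(\theta_i)$, whereas you build the normalizer $\lambda$ as a truncated power series $(1+V)^{-1/2}$ in the $\odot_y$-algebra---but your $Q_i'=\lambda\odot_y Q_i$ is exactly $J_y(\theta_i)$, so the two constructions coincide (and the paper's Taylor-remainder estimate \eqref{54} plays the role of your jet-truncation term $J_x[\cdot]-J_y[\cdot]$). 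One minor slip: the definition of $\Gamma^\#(x,CM)$ requires the witness bound $|\partial^\beta(P-\tilde P)(x)|\le CM|x-y|^{m-|\beta|}$ at $x$, not at $y$ as you wrote, but for polynomials of degree $\le m-1$ the two are equivalent up to a constant depending only on $m,n$.
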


\begin{proof}
We write $c$, $C$, $C^{\prime }$, etc., to denote constants determined by $%
C_w$, $m$, $n$. These symbols may denote different constants in different
occurrences. Let $0 < \delta \leq \delta_{\max}$, $M > 0$, $x \in E$, $%
P_1^\#, P_2^\#, Q_1^\#, Q_2^\# \in \mathcal{P}$, and assume that

\begin{itemize}
\item[\refstepcounter{equation}\text{(\theequation)}\label{34}] $P_i^\# \in
\Gamma^\#(x,M)$ for $i=1,2$;
\end{itemize}

\begin{itemize}
\item[\refstepcounter{equation}\text{(\theequation)}\label{35}] $\left\vert
\partial ^{\beta }\left( P_{1}^{\#}-P_{2}^{\#}\right) \left( x\right)
\right\vert \leq M\delta ^{m-\left\vert \beta \right\vert }$ for $\left\vert
\beta \right\vert \leq m-1$;

\item[\refstepcounter{equation}\text{(\theequation)}\label{36}] $\left\vert
\partial ^{\beta }Q_{i}^{\#}\left( x\right) \right\vert \leq \delta
^{-\left\vert \beta \right\vert }$ for $\left\vert \beta \right\vert \leq
m-1 $, $i=1,2$; and

\item[\refstepcounter{equation}\text{(\theequation)}\label{37}] $%
Q_{1}^{\#}\odot _{x}Q_{1}^{\#}+Q_{2}^{\#}\odot _{x}Q_{2}^{\#}=1$.
\end{itemize}

Under the above assumptions, we must show that%
\begin{equation}
P^\#:=Q_{1}^{\#}\odot _{x}Q_{1}^{\#}\odot _{x}P_{1}^{\#}+Q_{2}^{\#}\odot
_{x}Q_{2}^{\#}\odot _{x}P_{2}^{\#}\in \Gamma ^{\#}\left( x,CM\right) \text{.} \label{40}
\end{equation}%
By definition of $\Gamma ^{\#}\left( \cdot ,\cdot \right) $, this means that
given any $y\in E$ there exists

\begin{itemize}
\item[\refstepcounter{equation}\text{(\theequation)}\label{38}] $P \in
\Gamma(y,CM)$ such that
\end{itemize}

\begin{itemize}
\item[\refstepcounter{equation}\text{(\theequation)}\label{39}] $%
|\partial^\beta(P^\#-P)(x)| \leq CM|x-y|^{m-|\beta|}$ for $|\beta| \leq m-1$%
, where
\end{itemize}

Thus, to prove Lemma \ref{lemma-wsf3}, we must prove that there exists $P$
satisfying \eqref{38}, \eqref{39}, under the assumptions \eqref{34}$\cdots$%
\eqref{37}. To do so, we start by
defining the functions

\begin{itemize}
\item[\refstepcounter{equation}\text{(\theequation)}\label{41}] $\theta _{i}=%
\frac{Q_{i}^{\#}}{\left[ \left( Q_{1}^{\#}\right) ^{2}+\left(
Q_{2}^{\#}\right) ^{2}\right] ^{1/2}}$ on $B_{n}(x,c_{0}\delta )$ ($i=1,2$).
\end{itemize}

We pick $c_0 <1$ small enough so that \eqref{36}, \eqref{37} guarantee that $%
\theta_i$ is well-defined on $B_n(x,c_0\delta)$ and satisfies

\begin{itemize}
\item[\refstepcounter{equation}\text{(\theequation)}\label{42}] $|\partial
^{\beta }\theta _{i}|\leq C\delta ^{-\left\vert \beta \right\vert }$ on $%
B_{n}\left( x,c_{0}\delta \right) $ for $\left\vert \beta \right\vert \leq m$%
, $i=1,2$,
\end{itemize}

and

\begin{itemize}
\item[\refstepcounter{equation}\text{(\theequation)}\label{43}] $%
\theta_1^2+\theta_2^2=1$ on $B_n(x,c_0\delta)$.
\end{itemize}

Also

\begin{itemize}
\item[\refstepcounter{equation}\text{(\theequation)}\label{44}] $%
J_x(\theta_i)=Q_i^\#$ for $i=1,2$,
\end{itemize}

thanks to \eqref{37}.

We now divide the discussion of \eqref{38}, \eqref{39} into two
cases.

\underline{CASE 1: Suppose $y\in B_n(x,c_0\delta)$.}

By \eqref{34} and the definition of $\Gamma^\#(\cdot, \cdot)$, there exist

\begin{itemize}
\item[\refstepcounter{equation}\text{(\theequation)}\label{45}] $P_i \in
\Gamma(y,M)$ ($i=1,2$)
\end{itemize}

satisfying

\begin{itemize}
\item[\refstepcounter{equation}\text{(\theequation)}\label{46}] $%
|\partial^\beta(P_i^\# - P_i)(x)| \leq M|x-y|^{m- |\beta|}$ for $|\beta|
\leq m-1$, $i=1,2$.
\end{itemize}

Since we are in CASE 1, estimates \eqref{35} and \eqref{46} together imply
that

\begin{itemize}
\item[\refstepcounter{equation}\text{(\theequation)}\label{47}] $%
|\partial^\beta(P_1-P_2)(x)| \leq CM\delta^{m-|\beta|}$ for $|\beta| \leq
m-1 $.
\end{itemize}

Consequently,

\begin{itemize}
\item[\refstepcounter{equation}\text{(\theequation)}\label{48}] $%
|\partial^\beta(P_1-P_2)|\leq CM \delta^{m-|\beta|}$ on $B_n(x,c_0 \delta)$
for $|\beta| \leq m$.
\end{itemize}

(Recall that $P_1,P_2$ are polynomials of degree at most $m-1$.) 

Thanks to \eqref{42}, \eqref{43},  \eqref{45}, \eqref{48}, and the $%
\left( C_{w},\delta _{\max }\right) $-convexity of $\vec{\Gamma}$, we may
apply Lemma \ref{lemma-wsf1} to the polynomials $P_{1}$, $P_{2}$, $Q_{1}$, $%
Q_{2}$, where $Q_{i}=J_{y}\left( \theta _{i}\right) $ for $i=1,2$. This
tells us that

\begin{itemize}
\item[\refstepcounter{equation}\text{(\theequation)}\label{50}] $%
P:=J_{y}\left( \theta _{1}^{2}P_{1}+\theta _{2}^{2}P_{2}\right) \in \Gamma
\left( y,CM\right) $.
\end{itemize}

That is, the $P$ in \eqref{50} satisfies \eqref{38}. We will show that it
also satisfies \eqref{39}.

Thanks to \eqref{40}, \eqref{44}, we have

\begin{itemize}
\item[\refstepcounter{equation}\text{(\theequation)}\label{51}] $P^\# =
J_x(\theta_1^2 P_1^\#+\theta_2^2 P_2^\#)$.
\end{itemize}

In view of \eqref{50}, \eqref{51}, our desired estimate \eqref{39} is
equivalent to the following:

\begin{itemize}
\item[\refstepcounter{equation}\text{(\theequation)}\label{52}] $\left\vert
\partial ^{\beta }\left( \theta _{1}^{2}P_{1}^{\#}+\theta
_{2}^{2}P_{2}^{\#}-J_{y}\left( \theta _{1}^{2}P_{1}+\theta
_{2}^{2}P_{2}\right) \right) \left( x\right) \right\vert \leq CM\left\vert
x-y\right\vert ^{m-\left\vert \beta \right\vert }$ 
\end{itemize}
for $\left\vert \beta
\right\vert \leq m-1$.

Thus, we have reduced the existence of $P$ satisfying \eqref{38}, \eqref{39}
in CASE 1 to the task of proving \eqref{52}.

Since $\theta _{1}^{2}+\theta _{2}^{2}=1$ (see \eqref{43}) and $%
J_{y}P_{1}=P_{1}$, the following holds on $B_n(x,c_0\delta)$: 
\begin{eqnarray*}
&&\left( \theta _{1}^{2}P_{1}^{\#}+\theta _{2}^{2}P_{2}^{\#}-J_{y}\left(
\theta _{1}^{2}P_{1}+\theta _{2}^{2}P_{2}\right) \right) \\
&=&\theta _{1}^{2}\left( P_{1}^{\#}-P_{1}\right) +\theta _{2}^{2}\left(
P_{2}^{\#}-P_{2}\right)+\left[ \theta _{2}^{2}\left( P_{2}-P_{1}\right) -J_{y}\left( \theta
_{2}^{2}\left( P_{2}-P_{1}\right) \right) \right] .
\end{eqnarray*}%
Consequently, the desired estimate (\ref{52}) will follow if we can show
that 
\begin{equation}
\left\vert \partial ^{\beta }\left[ \theta _{i}^2\left(
P_{i}^{\#}-P_{i}\right) \left( x\right) \right] \right\vert \leq
CM\left\vert x-y\right\vert ^{m-\left\vert \beta \right\vert }\text{ for }%
\left\vert \beta \right\vert \leq m-1\text{, }i=1\text{,}2\text{,}
\label{53}
\end{equation}%
and 
\begin{equation}
\left\vert \partial ^{\beta }\left[ \theta _{2}^{2}\left( P_{1}-P_{2}\right)
-J_{y}\left( \theta _{2}^{2}\left( P_{1}-P_{2}\right) \right) \right] \left(
x\right) \right\vert \leq CM\left\vert x-y\right\vert ^{m-\left\vert \beta
\right\vert }\text{ for }\left\vert \beta \right\vert \leq m-1.  \label{54}
\end{equation}

Moreover, (\ref{53}) follows at once from (\ref{42}) and (\ref{46}), since $%
\delta ^{-\left\vert \beta \right\vert }\leq C\left\vert x-y\right\vert
^{-\left\vert \beta \right\vert }$ in CASE\ 1.

To check (\ref{54}), we apply (\ref{42}) and (\ref{48}) to deduce that 
$$\left\vert \partial ^{\beta }\left[ \theta _{2}^{2}\left(
P_{1}-P_{2}\right) \right] \right\vert \leq CM$$ on $B\left( x,c_{0}\delta
\right) $ for $\left\vert \beta \right\vert =m.$

Therefore, (\ref{54}) follows from Taylor's theorem.

This proves the existence of a $P$ satisfying (\ref{38}), (\ref{39}) in\
CASE 1.

\underline{CASE 2: Suppose that $y\not\in B_{n}(x,c_{0}\delta )$.}

Since $P_{1}^{\#}\in \Gamma \left( x, M\right) $ (see (\ref{34})), there
exists

\begin{itemize}
\item[\refstepcounter{equation}\text{(\theequation)}\label{55}] $P_{1}\in
\Gamma \left( y,M\right) $
\end{itemize}

such that

\begin{itemize}
\item[\refstepcounter{equation}\text{(\theequation)}\label{56}] $\left\vert
\partial ^{\beta }\left( P_{1}^{\#}-P_{1}\right) \left( x\right) \right\vert
\leq M\left\vert x-y\right\vert ^{m-\left\vert \beta \right\vert }$ for $%
\left\vert \beta \right\vert \leq m-1$.
\end{itemize}

Thanks to (\ref{37}), we may rewrite $P^\#$ in the form $$%
P^{\#}=P_{1}^{\#}+Q_{2}^{\#}\odot _{x}Q_{2}^{\#}\odot _{x}\left(
P_{2}^{\#}-P_{1}^{\#}\right).$$ Our assumptions (\ref{35}), (\ref{36})
therefore yield the estimates $$\left\vert \partial ^{\beta }\left(
P^{\#}-P_{1}^{\#}\right) \left( x\right) \right\vert \leq CM\delta
^{m-\left\vert \beta \right\vert }$$ for $\left\vert \beta \right\vert \leq
m-1$.

Since we are in CASE 2, it follows that

\begin{itemize}
\item[\refstepcounter{equation}\text{(\theequation)}\label{57}] $\left\vert
\partial ^{\beta }\left( P^{\#}-P_{1}^{\#}\right) \left( x\right)
\right\vert \leq CM\left\vert x-y\right\vert ^{m-\left\vert \beta
\right\vert }$ for $\left\vert \beta \right\vert \leq m-1$.
\end{itemize}

From \eqref{56} and \eqref{57}, we learn that

\begin{itemize}
\item[\refstepcounter{equation}\text{(\theequation)}\label{58}] $\left\vert
\partial ^{\beta }\left( P^{\#}-P_{1}\right) \left( x\right) \right\vert
\leq CM\left\vert x-y\right\vert ^{m-\beta }$ for $\left\vert \beta
\right\vert \leq m-1$.
\end{itemize}

We now know from \eqref{55} and \eqref{58} that $P:=P_1$ satisfies \eqref{38}
and \eqref{39}. Thus, in CASE 2 we again have a polynomial $P$ satisfying %
\eqref{38} and \eqref{39}. We have seen in all cases that there exists $P
\in \mathcal{P}$ satisfying \eqref{38}, \eqref{39}.

The proof of Lemma \ref{lemma-wsf3} is complete.
\end{proof}

Next we define the higher refinements of a given shape field $\vec{\Gamma}_0
= \left( \Gamma_0(x,M) \right)_{x\in E,M>0}$. By induction on $l \geq 0$, we
define $\vec{\Gamma}_l = \left(\Gamma_l(x,M) \right)_{x \in E, M>0}$; to do
so, we start with our given $\vec{\Gamma}_0$, and define $\vec{\Gamma}_{l+1}$
to be the first refinement of $\vec{\Gamma}_l$, for each $l \geq 0$. Thus,
each $\vec{\Gamma}_l$ is a shape field.

By the definition of the first refinement and Lemma \ref{lemma-wsf3}, we have the following result.

\begin{lemma}
\label{lemma-wsf4}

\begin{itemize}
\item[(A)] Let $x,y\in E$, $l\geq 1$, $M>0$, and $P\in \Gamma _{l}(x,M)$.
Then there exists $P^{\prime }\in \Gamma _{l-1}(y,M)$ such that 
\begin{equation*}
\left\vert \partial ^{\beta }\left( P-P^{\prime }\right) \left( x\right)
\right\vert \leq M\left\vert x-y\right\vert ^{m-\left\vert \beta \right\vert
}\text{ for }\left\vert \beta \right\vert \leq m-1\text{.}
\end{equation*}

\item[(B)] If $\vec{\Gamma}_{0}$ is $\left( C_{w},\delta _{\max }\right) $%
-convex, then for each $l\geq 0$, $\vec{\Gamma}_{l}$ is $\left( C_{l},\delta
_{\max }\right) $-convex, where $C_{l}$ is determined by $C_{w}$, $l$, $m$, $%
n$.
\end{itemize}
\end{lemma}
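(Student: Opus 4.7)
\noindent\textbf{Proof proposal for Lemma \ref{lemma-wsf4}.}

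The plan is to observe that both parts follow essentially by unwinding definitions, together with a straightforward induction that invokes Lemma \ref{lemma-wsf3} at each step.

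For part (A), the idea is that by construction $\vec{\Gamma}_l = \vec{\Gamma}_{l-1}^{\#}$ is the first refinement of $\vec{\Gamma}_{l-1}$, so the conclusion is literally the defining property of membership in $\Gamma_{l-1}^{\#}(x,M)$. First I would recall the definition of the first refinement: $P \in \Gamma_{l-1}^{\#}(x,M)$ means that for every $y \in E$ there exists some $P' \in \Gamma_{l-1}(y,M)$ satisfying $|\partial^\beta(P-P')(x)| \leq M|x-y|^{m-|\beta|}$ for $|\beta| \leq m-1$. Then, given $P \in \Gamma_l(x,M) = \Gamma_{l-1}^{\#}(x,M)$, I would simply apply this definition to the given $y$ to produce $P'$. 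No real work is required beyond identifying the definition with the conclusion.

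For part (B), I would argue by induction on $l \geq 0$. The base case $l = 0$ is the hypothesis that $\vec{\Gamma}_0$ is $(C_w, \delta_{\max})$-convex, so we set $C_0 = C_w$. For the inductive step, assume $\vec{\Gamma}_l$ is $(C_l, \delta_{\max})$-convex with $C_l$ determined by $C_w$, $l$, $m$, $n$. Since $\vec{\Gamma}_{l+1}$ is by construction the first refinement of $\vec{\Gamma}_l$, Lemma \ref{lemma-wsf3} applied to the shape field $\vec{\Gamma}_l$ (playing the role of $\vec{\Gamma}$ there) yields that $\vec{\Gamma}_{l+1}$ is $(C_{l+1}, \delta_{\max})$-convex, where $C_{l+1}$ is determined by $C_l$, $m$, $n$. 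Combined with the inductive hypothesis, $C_{l+1}$ is determined by $C_w$, $l+1$, $m$, $n$, closing the induction.

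There is essentially no main obstacle here: the content of the lemma is entirely packaged in the definition of refinement (for (A)) and in Lemma \ref{lemma-wsf3} (for (B)). The only mild point to verify in writing up (A) carefully is that the hypothesis $l \geq 1$ is used precisely so that $\Gamma_l = \Gamma_{l-1}^{\#}$ makes sense, so the quantitative estimate available from the definition is at level $l-1$, matching the statement exactly.
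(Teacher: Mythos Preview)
Your proposal is correct and matches the paper's approach exactly: the paper states that Lemma \ref{lemma-wsf4} follows ``by the definition of the first refinement and Lemma \ref{lemma-wsf3}'', which is precisely your argument for (A) and (B) respectively.
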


We call $\vec{\Gamma}_{l}$ the $l$-th refinement of $\vec{\Gamma}_{0}$.
(This is consistent with our previous definition of the first refinement.)

\section{Polynomial Bases}

\label{polynomial-bases}

Let $\vec{\Gamma}=\left( \Gamma \left( x,M\right) \right) _{x\in E,M>0}$ be
a shape field. Let $x_{0}\in E$, $M_{0}>0$, $P^{0}\in \mathcal{P}$, $%
\mathcal{A}\subseteq \mathcal{M}$, $P_{\alpha }\in \mathcal{P}$ for $\alpha
\in \mathcal{A}$, $C_{B}>0$, $\delta >0$ be given. Then we say that $\left(
P_{\alpha }\right) _{\alpha \in \mathcal{A}}$ is \underline{an $\left( 
\mathcal{A},\delta ,C_{B}\right) $-basis for $\vec{\Gamma}$ at $\left(
x_{0},M_{0},P^{0}\right) $} if the following conditions are satisfied:

\begin{itemize}
\item[\refstepcounter{equation}\text{(\theequation)}\label{pb1}] $P^{0}\in
\Gamma \left( x_{0},C_{B}M_{0}\right) $.
\end{itemize}

\begin{itemize}
\item[\refstepcounter{equation}\text{(\theequation)}\label{pb2}] $P^{0}+%
\frac{M_{0}\delta ^{m-\left\vert \alpha \right\vert }}{C_{B}}P_{\alpha }$, $%
P^{0}-\frac{M_{0}\delta ^{m-\left\vert \alpha \right\vert }}{C_{B}}P_{\alpha
}\in \Gamma \left( x_{0},C_{B}M_{0}\right) $ for all $\alpha \in \mathcal{A}$%
.
\end{itemize}

\begin{itemize}
\item[\refstepcounter{equation}\text{(\theequation)}\label{pb3}] $\partial
^{\beta }P_{\alpha }\left( x_{0}\right) =\delta _{\alpha \beta }$ (Kronecker
delta) for $\beta ,\alpha \in \mathcal{A}$.
\end{itemize}

\begin{itemize}
\item[\refstepcounter{equation}\text{(\theequation)}\label{pb4}] $\left\vert
\partial ^{\beta }P_{\alpha }\left( x_{0}\right) \right\vert \leq
C_{B}\delta ^{\left\vert \alpha \right\vert -\left\vert \beta \right\vert }$
for all $\alpha \in \mathcal{A}$, $\beta \in \mathcal{M}$.
\end{itemize}

We say that $(P_{\alpha })_{\alpha \in \mathcal{A}}$ is a \underline{weak 
$(\mathcal{A},\delta ,C_{B})$-basis for $\vec{\Gamma}$ at $%
(x_{0},M_{0},P^{0})$} if conditions \eqref{pb1}, \eqref{pb2}, \eqref{pb3}
hold as stated, and condition \eqref{pb4} holds for $\alpha \in \mathcal{A}%
,\beta \in \mathcal{M},\beta \geq \alpha $.

We make a few obvious remarks.

\begin{itemize}
\item[\refstepcounter{equation}\text{(\theequation)}\label{pb5}] Any $(%
\mathcal{A}, \delta, C_B)$-basis for $\vec{\Gamma}$ at $(x_0,M_0,P^0)$ is
also an $(\mathcal{A}, \delta, C_B^{\prime })$-basis for $\vec{\Gamma}$ at $%
(x_0,M_0,P^0)$, whenever $C^{\prime }_B \geq C_B$.
\end{itemize}

\begin{itemize}
\item[\refstepcounter{equation}\text{(\theequation)}\label{pb6}] Any $(%
\mathcal{A}, \delta, C_B)$-basis for $\vec{\Gamma}$ at $(x_0,M_0,P^0)$ is
also an $(\mathcal{A}, \delta^{\prime }, C_B\cdot[\max\{\frac{\delta^{\prime
}}{\delta},\frac{\delta}{\delta^{\prime }} \}]^m)$-basis for $\vec{\Gamma}$
at $(x_0,M_0,P^0)$, for any $\delta^{\prime }>0$.
\end{itemize}

\begin{itemize}
\item[\refstepcounter{equation}\text{(\theequation)}\label{pb7}] {Any weak $(%
\mathcal{A}, \delta, C_B)$-basis for $\vec{\Gamma}$ at $(x_0,M_0,P^0)$ is
also a weak $(\mathcal{A}, \delta^{\prime }, C_B^{\prime })$-basis for $\vec{%
\Gamma}$ at $(x_0,M_0,P^0)$, whenever $0<\delta^{\prime }\leq \delta$ and $%
C^{\prime }_B \geq C_B$. }
\end{itemize}

Note that \eqref{pb1} need not follow from \eqref{pb2}, since $\mathcal{A}$
may be empty.

\begin{itemize}
\item[\refstepcounter{equation}\text{(\theequation)}\label{pb7a}] If $%
\mathcal{A}=\emptyset$, then the existence of an $(\mathcal{A},\delta,C_B)$%
-basis (or a weak $(\mathcal{A},\delta,C_B)$%
-basis) for $\vec{\Gamma}$ at $(x_0,M_0,P^0)$ is equivalent to the assertion
that $P^0 \in \Gamma(x_0, C_BM_0)$.
\end{itemize}

The main result of this section is Lemma \ref{lemma-pb2} below. The proof of Lemma \ref{lemma-pb2} relies on two other lemmas. 

As a consequence of Lemma \ref{lemma-wsf1}, we have the following result.

\begin{lemma}
\label{lemma-pb1}Let $\vec{\Gamma}=\left( \Gamma \left( x,M\right) \right)
_{x\in E,M>0}$be a $\left( C_{w},\delta _{\max }\right) $-convex shape
field. Fix $x_{0}\in E$, $M_{0}>0$, $0<\delta \leq \delta _{\max }$, $C_{1}>0
$, and let $P^{0}$, $\hat{P}$, $\hat{S}\in \mathcal{P}$.

Assume that

\begin{itemize}
\item[\refstepcounter{equation}\text{(\theequation)}\label{pb8}] $P^{0}+%
\frac{1}{C_{1}}\hat{P}$, $P^{0}-\frac{1}{C_{1}}\hat{P}\in \Gamma \left(
x_{0},C_{1}M_0\right) $;
\end{itemize}

\begin{itemize}
\item[\refstepcounter{equation}\text{(\theequation)}\label{pb9}] $\left\vert
\partial ^{\beta }\hat{P}\left( x_{0}\right) \right\vert \leq
C_{1}M_{0}\delta ^{m-\left\vert \beta \right\vert }$ for $\left\vert \beta
\right\vert \leq m-1$; and
\end{itemize}

\begin{itemize}
\item[\refstepcounter{equation}\text{(\theequation)}\label{pb10}] $%
\left\vert \partial ^{\beta }\hat{S}\left( x_{0}\right) \right\vert \leq
C_{1}\delta ^{-\left\vert \beta \right\vert }$ for $\left\vert \beta
\right\vert \leq m-1$.
\end{itemize}

Then

\begin{itemize}
\item[\refstepcounter{equation}\text{(\theequation)}\label{pb11}] $P^{0}+%
\frac{1}{C_2}\hat{S}\odot _{x_{0}}\hat{P}$, $P^{0}-\frac{1}{C_2}\hat{S}\odot
_{x_{0}}\hat{P}\in \Gamma \left( x_{0},C_{2}M_0\right) $, with $C_{2}$
determined by $C_{1}$, $C_{w}$, $m$, $n$.
\end{itemize}
\end{lemma}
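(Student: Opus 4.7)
The plan is to invoke Lemma \ref{lemma-wsf1} with the pair
\[
P_1 := P^0 + \tfrac{1}{C_1}\hat{P}, \qquad P_2 := P^0 - \tfrac{1}{C_1}\hat{P},
\]
which lie in $\Gamma(x_0, C_1 M_0)$ by \eqref{pb8} and satisfy $|\partial^\beta(P_1-P_2)(x_0)| \leq 2 M_0 \delta^{m-|\beta|}$ by \eqref{pb9}. Thus hypotheses \eqref{wsf1}--\eqref{wsf2} of Lemma \ref{lemma-wsf1} hold with $A' = \max(C_1,2)$. The multipliers $Q_1, Q_2$ will be chosen so that $Q_1 \odot_{x_0} Q_1 + Q_2 \odot_{x_0} Q_2 = 1$ and so that the output of the lemma,
\[
Q_1\odot_{x_0} Q_1 \odot_{x_0} P_1 + Q_2 \odot_{x_0} Q_2 \odot_{x_0} P_2 = P^0 + \tfrac{1}{C_1}\bigl(Q_1\odot_{x_0} Q_1 - Q_2 \odot_{x_0} Q_2\bigr)\odot_{x_0} \hat{P},
\]
is $P^0$ plus a controlled scalar multiple of $\hat{S}\odot_{x_0} \hat{P}$. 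This dictates that $Q_1\odot_{x_0} Q_1 - Q_2\odot_{x_0} Q_2 = \lambda \hat{S}$ for some small $\lambda > 0$.

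To construct such $Q_i$, I take ``square roots'' of the partition of unity $\tfrac{1+\lambda \hat{S}}{2} + \tfrac{1-\lambda \hat{S}}{2} = 1$. Fix $\lambda := C_1/C_2$, where $C_2$ is chosen below large enough that $\lambda C_1 \leq 1/2$, and define
\[
\theta_1(y) := \sqrt{\tfrac{1+\lambda\hat{S}(y)}{2}}, \qquad \theta_2(y) := \sqrt{\tfrac{1-\lambda\hat{S}(y)}{2}},
\]
which are smooth on a neighborhood of $x_0$ because $|\lambda \hat{S}(x_0)| \leq \lambda C_1 \leq 1/2$ by the $\beta=0$ case of \eqref{pb10}. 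Set $Q_i := J_{x_0}(\theta_i)$. Since $J_{x_0}$ is a ring homomorphism to $(\mathcal{P}, \odot_{x_0})$, the polynomial identities $\theta_1^2 + \theta_2^2 = 1$ and $\theta_1^2 - \theta_2^2 = \lambda\hat{S}$ pass to their jets, giving $Q_1\odot_{x_0} Q_1 + Q_2\odot_{x_0} Q_2 = 1$ and $Q_1\odot_{x_0} Q_1 - Q_2\odot_{x_0} Q_2 = \lambda \hat{S}$.

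The crux is verifying hypothesis \eqref{wsf3}, namely $|\partial^\beta Q_i(x_0)| \leq A''\delta^{-|\beta|}$ with $A''$ depending only on $C_1, m, n$; this is the one place that requires care. Writing $\theta_i = f_i(\lambda \hat{S})$ with $f_i(t) = \sqrt{(1 \pm t)/2}$, Fa\`a di Bruno expresses $\partial^\beta \theta_i(x_0)$ as a finite sum of products of the form $f_i^{(k)}(\lambda \hat{S}(x_0)) \cdot \prod_j \partial^{\gamma_j}(\lambda \hat{S})(x_0)$ with $\sum_j |\gamma_j| = |\beta|$. Since $\lambda\hat{S}(x_0) \in [-1/2, 1/2]$ is bounded away from the singularities $\pm 1$ of $f_i$, each $|f_i^{(k)}(\lambda\hat{S}(x_0))|$ is bounded by an absolute constant, while \eqref{pb10} yields $|\partial^{\gamma_j}(\lambda \hat{S})(x_0)| \leq \lambda C_1 \delta^{-|\gamma_j|} \leq \tfrac12 \delta^{-|\gamma_j|}$. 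Combining gives $|\partial^\beta Q_i(x_0)| \leq C(m,n) \delta^{-|\beta|}$, so $A'' = C(m,n)$ is admissible, independent of $\lambda$ as long as $\lambda C_1 \leq 1/2$.

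With all hypotheses in place, Lemma \ref{lemma-wsf1} yields $P^0 + \tfrac{\lambda}{C_1}\hat{S}\odot_{x_0} \hat{P} \in \Gamma(x_0, C_\ast M_0)$ for some $C_\ast$ depending only on $C_1, C_w, m, n$. Setting $C_2 := \max(C_\ast, 2C_1^2)$ makes the choice $\lambda = C_1/C_2 \leq 1/(2C_1)$ consistent with the requirement above, and the coefficient $\lambda/C_1$ equals $1/C_2$; hence $P^0 + \tfrac{1}{C_2}\hat{S}\odot_{x_0} \hat{P} \in \Gamma(x_0, C_2 M_0)$. The sign-flipped inclusion follows by swapping $\theta_1 \leftrightarrow \theta_2$ (equivalently, replacing $\hat{S}$ by $-\hat{S}$), which proves \eqref{pb11}.
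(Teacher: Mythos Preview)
Your proof is correct and follows exactly the route the paper indicates: the paper simply asserts that Lemma~\ref{lemma-pb1} is ``a consequence of Lemma~\ref{lemma-wsf1}'' without spelling out the details, and you have supplied precisely those details---taking $P_1,P_2 = P^0 \pm \tfrac{1}{C_1}\hat{P}$ and constructing $Q_1,Q_2$ as jets of $\sqrt{(1\pm\lambda\hat{S})/2}$ so that $Q_1\odot_{x_0}Q_1 - Q_2\odot_{x_0}Q_2 = \lambda\hat{S}$. The bookkeeping (the Fa\`a di Bruno estimate giving $A''$ independent of $\lambda$, and the non-circular choice of $C_2$) is handled cleanly.
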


We also need the following result, which is immediate\footnote{%
Lemma 16.1 in \cite{f-2005} involves also real numbers $F_{\alpha ,\beta }$
with $\left\vert \beta \right\vert =m$. Setting those $F_{\alpha ,\beta }=0$%
, we recover the Rescaling Lemma stated here.} from Lemma 16.1 in \cite%
{f-2005}.

\begin{lemma}[Rescaling Lemma]
\label{rescaling-lemma} Let $\mathcal{A}\subseteq \mathcal{M}$, and let $C,a$
be positive real numbers. Suppose we are given real numbers $F_{\alpha
,\beta }$, indexed by $\alpha \in \mathcal{A}$, $\beta \in \mathcal{M}$.
Assume that the following conditions are satisfied.

\begin{itemize}
\item[\refstepcounter{equation}\text{(\theequation)}\label{pb18}] $%
F_{\alpha,\alpha}\not=0$ for all $\alpha \in \mathcal{A}$.
\end{itemize}

\begin{itemize}
\item[\refstepcounter{equation}\text{(\theequation)}\label{pb19}] $%
|F_{\alpha,\beta}|\leq C \cdot |F_{\alpha,\alpha}|$ for all $\alpha \in 
\mathcal{A}$, $\beta\in \mathcal{M}$ with $\beta \geq \alpha$.
\end{itemize}

\begin{itemize}
\item[\refstepcounter{equation}\text{(\theequation)}\label{pb20}] $%
F_{\alpha,\beta}=0$ for all $\alpha,\beta\in \mathcal{A}$ with $\alpha
\not=\beta$.
\end{itemize}

Then there exist positive numbers $\lambda_1, \cdots, \lambda_n$ and a map $%
\phi: \mathcal{A} \rightarrow \mathcal{M}$, with the following properties:

\begin{itemize}
\item[\refstepcounter{equation}\text{(\theequation)}\label{pb21}] $c(a) \leq
\lambda_i \leq 1$ for each $i$, where $c(a)$ is determined by $C$, $a$, $m$, 
$n$;
\end{itemize}

\begin{itemize}
\item[\refstepcounter{equation}\text{(\theequation)}\label{pb22}] $%
\phi(\alpha) \leq \alpha$ for each $\alpha \in \mathcal{A}$;
\end{itemize}

\begin{itemize}
\item[\refstepcounter{equation}\text{(\theequation)}\label{pb23}] For each $%
\alpha \in \mathcal{A}$, either $\phi(\alpha) = \alpha$ or $\phi(\alpha)
\not\in \mathcal{A}$.
\end{itemize}

Suppose we define $\hat{F}_{\alpha,\beta}=\lambda^\beta F_{\alpha,\beta}$
for $\alpha \in \mathcal{A}$, $\beta \in \mathcal{M}$, where we recall that $%
\lambda^\beta$ denotes $\lambda^{\beta_1}_1\cdots \lambda^{\beta_n}_n$ for $%
\beta=(\beta_1,\cdots, \beta_n)$. Then

\begin{itemize}
\item[\refstepcounter{equation}\text{(\theequation)}\label{pb24}] $|\hat{F}%
_{\alpha,\beta}|\leq a \cdot |\hat{F}_{\alpha,\phi(\alpha)}|$ for $\alpha
\in \mathcal{A}$, $\beta \in \mathcal{M}\setminus \{ \phi(\alpha)\}$.
\end{itemize}
\end{lemma}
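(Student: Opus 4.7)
The lemma is the special case of Lemma 16.1 of \cite{f-2005} obtained by setting $F_{\alpha,\beta}=0$ for $|\beta|=m$, as the footnote already notes, so my plan is to adapt that argument. The two ingredients are a selection rule for $\phi$ and a careful choice of the scalars $\lambda_i$ adapted to the order $<$ on $\mathcal M$.

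The definition of $\phi$ is essentially forced once the rescaling is fixed: take $\phi(\alpha)$ to be any index maximizing $|\hat F_{\alpha,\beta}|$ over $\beta\in\mathcal M$. Condition \eqref{pb23} is then automatic -- by hypothesis \eqref{pb20}, every $\beta\in\mathcal A\setminus\{\alpha\}$ gives a zero rescaled entry in row $\alpha$, while \eqref{pb18} ensures $\hat F_{\alpha,\alpha}\ne 0$. So if $\phi(\alpha)\in\mathcal A$ then necessarily $\phi(\alpha)=\alpha$.

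For \eqref{pb22} I would choose the $\lambda_i$ so that, in each row, the rescaled upper-triangular entries (with respect to $<$) become negligible compared to the diagonal. Combined with \eqref{pb19}, this reduces to arranging $\lambda^{\beta-\alpha}\le a/C$ whenever $\alpha<\beta$ in $\mathcal M$, which in turn forces the argmax $\phi(\alpha)$ to satisfy $\phi(\alpha)\le\alpha$. A naive attempt to realize $<$ by a single linear weighting $\alpha\mapsto\sum w_i\alpha_i$ fails because $<$ combines a primary total-degree comparison with a secondary reverse-lex comparison on components, and no single positive linear functional respects both simultaneously. Instead, one uses a multi-parameter choice $\lambda_i=\mu^{w_i}$ tuned to both comparisons and argues iteratively through the elements of $\mathcal A$; because $\mathcal A$ and $\mathcal M$ are finite of cardinality bounded in terms of $m,n$, the cumulative shrinkage is controlled, giving $\lambda_i\ge c(a)$ for some $c(a)$ determined by $C,a,m,n$, yielding \eqref{pb21}.

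The main obstacle is \eqref{pb24}, the factor-$a$ domination of $\hat F_{\alpha,\phi(\alpha)}$ over every other $\hat F_{\alpha,\beta}$. For $\beta>\alpha$ this follows from the previous step with $\mu$ sufficiently small. For $\beta<\alpha$ with $\beta\ne\phi(\alpha)$ there is no a priori bound on $|F_{\alpha,\beta}|$, so two rescaled nonzero entries could come out comparable in magnitude; I would break these ties by choosing the weights $w_i$ generically, so that the finitely many exponent differences $\sum w_i(\beta_i-\beta'_i)$ arising in comparisons are nonzero with magnitude bounded below, and then shrinking $\mu$ once more. The finiteness of $\mathcal A$ and $\mathcal M$ ensures a single choice of $(\lambda_1,\ldots,\lambda_n)$ works uniformly across all rows, completing the verification of \eqref{pb21}--\eqref{pb24}.
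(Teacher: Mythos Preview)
The paper does not prove this lemma; it simply invokes Lemma~16.1 of \cite{f-2005}. Your sketch has the right architecture --- take $\phi(\alpha)$ to be an index maximizing $|\hat F_{\alpha,\beta}|$ and rescale so as to suppress the entries with $\beta>\alpha$ --- and you correctly observe that \eqref{pb23} is then automatic from \eqref{pb18} and \eqref{pb20}. But there is one confusion and one genuine gap.

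The confusion: contrary to your assertion, the order $<$ on $\mathcal M$ \emph{is} realized by a positive linear functional. With $w_i=\sum_{j\ge i}m^{j-1}$ one has $\sum_i w_i\alpha_i=\sum_{k}m^{k-1}S_k(\alpha)$ where $S_k(\alpha)=\alpha_1+\cdots+\alpha_k\in\{0,\ldots,m-1\}$, and this visibly respects the lexicographic comparison on $(S_n,\ldots,S_1)$ that defines $<$. Your own choice $\lambda_i=\mu^{w_i}$ is exactly of this form, so you are in fact using the construction you claim cannot exist.

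The genuine gap is in \eqref{pb24} for $\beta<\alpha$ with $\beta\notin\mathcal A$. You propose to ``shrink $\mu$ once more'' to separate any two comparable rescaled entries, and you justify the lower bound on $\lambda_i$ by finiteness of $\mathcal A$ and $\mathcal M$. But finiteness bounds only the \emph{number} of comparisons; the amount by which $\mu$ must shrink to achieve the factor $a$ in any single comparison depends on the ratio $|F_{\alpha,\beta}|/|F_{\alpha,\beta'}|$, which is entirely uncontrolled by the hypotheses. Monotone shrinking therefore cannot deliver a data-independent $c(a)$. What is missing is a \emph{selection} step: writing $s=-\log\mu$ and $L(\beta)=\sum w_i\beta_i$, each pair $(\beta,\beta')$ contributes a ``bad'' interval $\{s:|\hat F_{\alpha,\beta}/\hat F_{\alpha,\beta'}|\in(a,a^{-1})\}$ of length at most $2|\log a|/\min_{\gamma\ne\gamma'}|L(\gamma)-L(\gamma')|$, a quantity depending only on $a,m,n$. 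The union of all such intervals (over all $\alpha,\beta,\beta'$) thus has measure bounded independently of the data, so one can choose $s$ in a fixed window $[s_0,s_0+L_1]$ avoiding every bad interval; it is this pigeonhole, not further shrinking, that produces $\lambda_i\ge c(a)$ with $c(a)$ determined by $C,a,m,n$ alone.
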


Lemma 3.3 in \cite{f-2005} tells us that any map $\phi: \mathcal{A}
\rightarrow \mathcal{M}$ satisfying \eqref{pb22}, \eqref{pb23} satisfies also

\begin{itemize}
\item[\refstepcounter{equation}\text{(\theequation)}\label{pb25}] $\phi(%
\mathcal{A}) \leq \mathcal{A}$, with equality only if $\phi$ is the identity
map.
\end{itemize}

We are ready to state the main result of this section.

\begin{lemma}[Relabeling Lemma]
\label{lemma-pb2} Let $\vec{\Gamma}=\left( \Gamma \left( x,M\right) \right)
_{x\in E,M>0}$ be a $(C_w,\delta_{\max})$-convex shape field. Let $x_{0}\in E
$, $M_{0}>0$, $0<\delta \leq \delta _{\max }$, $C_{B}>0$, $P^{0}\in \Gamma
\left( x_{0},M_{0}\right) $, $\mathcal{A}\subseteq \mathcal{M}$. Suppose $%
\left( P_{\alpha }^{00}\right) _{\alpha \in \mathcal{A}}$ is a weak $\left( 
\mathcal{A},\delta ,C_{B}\right) $-basis for $\vec{\Gamma}$ at $\left(
x_{0},M_{0},P^{0}\right) $. Then, for some monotonic $\hat{\mathcal{A}}\leq 
\mathcal{A}$, $\vec{\Gamma}$ has an $(\hat{\mathcal{A}},\delta
,C_{B}^{\prime })$-basis at $(x_{0},M_{0},P^{0})$, with $C_{B}^{\prime }$
determined by $C_{B}$, $C_{w}$, $m$, $n$. Moreover, if $\mathcal{A}\not=\emptyset$ and $\max_{\alpha \in 
\mathcal{A},\beta \in \mathcal{M}}\delta ^{|\beta |-|\alpha |}|\partial
^{\beta }P_{\alpha }^{00}(x_{0})|$ exceeds a large enough constant
determined by $C_{B}$, $C_{w}$, $m$, $n$, then we can take $\hat{\mathcal{A}}%
<\mathcal{A}$ (strict inequality).
\end{lemma}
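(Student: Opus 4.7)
The plan is to apply the Rescaling Lemma (Lemma~\ref{rescaling-lemma}) to the matrix of normalized Taylor coefficients of the weak basis, obtaining scalings $\lambda_i$ and an index map $\phi$ that makes this matrix effectively diagonal on $\hat{\mathcal{A}}=\phi(\mathcal{A})$; then upgrade the rescaled weak basis to a genuine basis on $\hat{\mathcal{A}}$ by Gaussian elimination, with Lemma~\ref{lemma-pb1} and the Trivial Remark on Convex Sets carrying \eqref{pb2} along; and finally enlarge $\hat{\mathcal{A}}$ to its monotonic closure, filling in the extra basis elements by jet-multiplication. Set $F_{\alpha,\beta}=\delta^{|\beta|-|\alpha|}\partial^\beta P_\alpha^{00}(x_0)$; conditions \eqref{pb3} and \eqref{pb4} give $F_{\alpha,\alpha}=1$, $|F_{\alpha,\beta}|\le C_B$ for $\beta\ge\alpha$, and $F_{\alpha,\beta}=0$ for distinct $\alpha,\beta\in\mathcal{A}$, which are exactly \eqref{pb18}--\eqref{pb20}. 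Applying Lemma~\ref{rescaling-lemma} with a small parameter $a$ to be fixed in terms of $C_B,C_w,m,n$ produces $\lambda_i\in[c(a),1]$ and $\phi:\mathcal{A}\to\mathcal{M}$ satisfying $\phi(\alpha)\le\alpha$, $\phi(\alpha)\in\{\alpha\}\cup(\mathcal{M}\setminus\mathcal{A})$, and the dominance estimate $\lambda^\beta|F_{\alpha,\beta}|\le a\,\lambda^{\phi(\alpha)}|F_{\alpha,\phi(\alpha)}|$ for $\beta\ne\phi(\alpha)$. Set $\hat{\mathcal{A}}=\phi(\mathcal{A})$; \eqref{pb25} yields $\hat{\mathcal{A}}\le\mathcal{A}$. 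For the strict-inequality clause, if the stated maximum is large, the dominance estimate together with $F_{\alpha,\alpha}=1$ precludes $\phi$ from being the identity, so $\hat{\mathcal{A}}<\mathcal{A}$.

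For each $\hat\alpha\in\hat{\mathcal{A}}$ pick $\pi(\hat\alpha)\in\mathcal{A}$ with $\phi(\pi(\hat\alpha))=\hat\alpha$ and set $\tilde P_{\hat\alpha}=\delta^{|\hat\alpha|-|\pi(\hat\alpha)|}P_{\pi(\hat\alpha)}^{00}/F_{\pi(\hat\alpha),\hat\alpha}$, so that $\partial^{\hat\alpha}\tilde P_{\hat\alpha}(x_0)=1$. The dominance estimate (with $\lambda_i\ge c(a)$) yields $|\partial^\beta\tilde P_{\hat\alpha}(x_0)|\le C(a)\delta^{|\hat\alpha|-|\beta|}$ for every $\beta\in\mathcal{M}$, with off-diagonal entries at $\beta\in\hat{\mathcal{A}}\setminus\{\hat\alpha\}$ only of size $O(a)\delta^{|\hat\alpha|-|\beta|}$. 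A round of Gaussian elimination then yields $P_{\hat\alpha}=\tilde P_{\hat\alpha}-\sum_{\hat\beta\ne\hat\alpha}c_{\hat\alpha,\hat\beta}\tilde P_{\hat\beta}$ satisfying \eqref{pb3}, with $|c_{\hat\alpha,\hat\beta}|=O(a)\delta^{|\hat\alpha|-|\hat\beta|}$; \eqref{pb4} is then automatic. For \eqref{pb2}, the scalar rescaling carries the hypothesis $P^0\pm(M_0\delta^{m-|\pi(\hat\alpha)|}/C_B)P_{\pi(\hat\alpha)}^{00}\in\Gamma(x_0,C_BM_0)$ to $P^0\pm(M_0\delta^{m-|\hat\alpha|}/C_1)\tilde P_{\hat\alpha}\in\Gamma(x_0,C_BM_0)$, and then the Trivial Remark on Convex Sets --- applied with weights of total mass less than one, which is arranged by making $a$ small --- produces \eqref{pb2} for $P_{\hat\alpha}$ with a constant $C_B'$ depending only on $C_B, C_w, m, n$.

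The main obstacle is monotonicity of $\hat{\mathcal{A}}$, which Lemma~\ref{rescaling-lemma} does not supply. I resolve it by passing to the upward closure $\hat{\mathcal{A}}_{\mathrm{mon}}=\{\hat\alpha+\gamma\in\mathcal{M}:\hat\alpha\in\hat{\mathcal{A}},\gamma\in\mathcal{M}\}$; a short argument with the minimum of the symmetric difference (together with the observation that the old minimum $\mu_0\in\hat{\mathcal{A}}\setminus\mathcal{A}$ remains in $\hat{\mathcal{A}}_{\mathrm{mon}}\setminus\mathcal{A}$) shows $\hat{\mathcal{A}}_{\mathrm{mon}}\le\mathcal{A}$ is preserved. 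For each new index $\hat\alpha+\gamma\in\hat{\mathcal{A}}_{\mathrm{mon}}\setminus\hat{\mathcal{A}}$, set $P_{\hat\alpha+\gamma}$ proportional to the jet-product $(y-x_0)^\gamma\odot_{x_0}P_{\hat\alpha}$: this shifts the dominant derivative from $\hat\alpha$ to $\hat\alpha+\gamma$, and \eqref{pb2} is preserved by Lemma~\ref{lemma-pb1} with $\hat S$ a constant multiple of $J_{x_0}((y-x_0)^\gamma)$. A final round of Gaussian elimination over $\hat{\mathcal{A}}_{\mathrm{mon}}$ restores \eqref{pb3}, and fixing $a$ small enough at the outset absorbs all constants into a final $C_B'$ depending only on $C_B, C_w, m, n$.
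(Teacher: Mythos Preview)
Your overall strategy coincides with the paper's: feed $F_{\alpha,\beta}=\delta^{|\beta|-|\alpha|}\partial^\beta P_\alpha^{00}(x_0)$ into the Rescaling Lemma, pass from $\mathcal A$ to $\phi(\mathcal A)$ and then to its monotonic closure, manufacture the missing basis elements by jet-multiplication with monomials (Lemma~\ref{lemma-pb1} supplying \eqref{pb2}), and finally invert a near-identity matrix to enforce \eqref{pb3}. The only structural difference is that the paper takes the monotonic closure and performs the jet-multiplication \emph{before} a single matrix inversion, rather than your two rounds of elimination; that reordering is cosmetic.

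There is, however, a genuine gap in your size estimates. The Rescaling Lemma controls the \emph{rescaled} quantities $\hat F_{\alpha,\beta}=\lambda^\beta F_{\alpha,\beta}$, not $F_{\alpha,\beta}$ itself; unwinding your normalization gives, for $\hat\beta\in\hat{\mathcal A}\setminus\{\hat\alpha\}$,
\[
\delta^{|\hat\beta|-|\hat\alpha|}\bigl|\partial^{\hat\beta}\tilde P_{\hat\alpha}(x_0)\bigr|
=\Bigl|\frac{F_{\pi(\hat\alpha),\hat\beta}}{F_{\pi(\hat\alpha),\hat\alpha}}\Bigr|
\le a\,\lambda^{\hat\alpha-\hat\beta},
\]
and since each $\lambda_i$ may be as small as $c(a)$, the factor $\lambda^{\hat\alpha-\hat\beta}$ can be of order $c(a)^{-m}$. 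Thus your claimed $O(a)$ off-diagonal bound is false in the original coordinates, and ``making $a$ small'' does not force the Trivial-Remark weights below $1$ (the product $a\cdot c(a)^{-m}$ need not tend to~$0$). The paper circumvents this by carrying out the matrix inversion in the rescaled coordinates --- working throughout with $\partial^\beta\bigl((\,\cdot\,)\circ T\bigr)(0)$, where $T(x)=(\lambda_1x_1,\dots,\lambda_nx_n)$ --- so that the relevant matrix really is within $Ca$ of the identity (see \eqref{pb43}, \eqref{pb56}, \eqref{pb61}--\eqref{pb62}); only afterwards is the scalar factor $\lambda^\gamma$ inserted (see \eqref{pb66}) to restore \eqref{pb3} at $x_0$ in the original coordinates. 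Your argument can be repaired the same way, or alternatively by observing that the matrix conjugated by $\mathrm{diag}(\lambda^{\hat\alpha})$ has $O(a)$ off-diagonals and then absorbing the resulting $C(a)$ factors into the final $C_B'$ once $a$ is fixed; but as written the step is unjustified.
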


\begin{proof} If $\A$ is empty, then we can take $\hat{\A}$ empty. Thus, Lemma \ref{lemma-pb2} holds trivially for $\A = \emptyset$. We suppose that $\A \not= \emptyset$.

Without loss of generality, we may take $x_0 =0$. We introduce a constant $%
a>0$ to be picked later, satisfying the

\emph{Small $a$ condition: $a$ is less than a small enough constant
determined by $C_B$, $C_w$, $m$, $n$.}

We write $c$, $C$, $C^{\prime }$, etc., to denote constants determined by $%
C_B$, $C_w$, $m$, $n$; and we write $c(a)$, $C(a)$, $C^{\prime }(a)$, etc., to
denote constants determined by $a$, $C_B$, $C_w$, $m$, $n$. These symbols
may denote different constants in different occurrences.

Since $(P_\alpha^{00})_{\alpha \in \mathcal{A}}$ is a weak $(\mathcal{A},
\delta, C_B)$-basis for $\vec{\Gamma}$ at $(0, M_0, P^0)$, we have the
following.

\begin{itemize}
\item[\refstepcounter{equation}\text{(\theequation)}\label{pb26}] $%
P^{0},P^{0}\pm cM_{0}\delta ^{m-\left\vert \alpha \right\vert }P_{\alpha
}^{00}\in \Gamma \left( 0,CM_{0}\right) $ for $\alpha \in \mathcal{A}$.
\end{itemize}

\begin{itemize}
\item[\refstepcounter{equation}\text{(\theequation)}\label{pb27}] $\partial
^{\beta }P_{\alpha }^{00}\left( 0\right) =\delta _{\beta \alpha }$ for $%
\beta ,\alpha \in \mathcal{A}$.
\end{itemize}

\begin{itemize}
\item[\refstepcounter{equation}\text{(\theequation)}\label{pb28}] $%
\left\vert \partial ^{\beta }P_{\alpha }^{00}\left( 0\right) \right\vert
\leq C\delta ^{\left\vert \alpha \right\vert -\left\vert \beta \right\vert }$
for $\alpha \in \mathcal{A}$, $\beta \in \mathcal{M}$, $\beta \geq \alpha $.
\end{itemize}

Thanks to \eqref{pb27}, \eqref{pb28}, the numbers $F_{\alpha,\beta} =
\delta^{|\beta|-|\alpha|}\partial^\beta P_\alpha^{00}(0)$ satisfy %
\eqref{pb18}, \eqref{pb19}, \eqref{pb20}. Applying Lemma \ref%
{rescaling-lemma}, we obtain real numbers $\lambda_1, \cdots, \lambda_n$ and
a map $\phi: \mathcal{A} \rightarrow \mathcal{M}$ satisfying \eqref{pb21}, $%
\cdots$, \eqref{pb24}. We define a linear map $T: \mathbb{R}^n \rightarrow 
\mathbb{R}^n$ by setting

\begin{itemize}
\item[\refstepcounter{equation}\text{(\theequation)}\label{pb29}] $T\left(
x_{1},\cdots ,x_{n}\right) =\left( \lambda _{1}x_{1},\cdots ,\lambda
_{n}x_{n}\right) $ for $\left( x_{1},\cdots ,x_{n}\right) \in \mathbb{R}^{n}$%
.
\end{itemize}

From \eqref{pb21} $\cdots $\eqref{pb24} for our $F_{\alpha ,\beta }$, we
obtain the following.

\begin{itemize}
\item[\refstepcounter{equation}\text{(\theequation)}\label{pb30}] $c\left(
a\right) \leq \lambda _{i}\leq 1$ for $i=1,\cdots ,n$.
\end{itemize}

\begin{itemize}
\item[\refstepcounter{equation}\text{(\theequation)}\label{pb31}] $\phi
\left( \alpha \right) \leq \alpha $ for all $\alpha \in \mathcal{A}$.
\end{itemize}

\begin{itemize}
\item[\refstepcounter{equation}\text{(\theequation)}\label{pb32}] For each $%
\alpha \in \mathcal{A}$, either $\phi \left( \alpha \right) =\alpha $ or $%
\phi \left( \alpha \right) \not\in \mathcal{A}$.
\end{itemize}

\begin{itemize}
\item[\refstepcounter{equation}\text{(\theequation)}\label{pb33}] $\delta
^{\left\vert \beta \right\vert -\left\vert \alpha \right\vert }\left\vert
\partial ^{\beta }\left( P_{\alpha }^{00}\circ T\right) \left( 0\right)
\right\vert \leq a\cdot \delta ^{\left\vert \phi \left( \alpha \right)
\right\vert -\left\vert \alpha \right\vert }\left\vert \partial ^{\phi
\left( \alpha \right) }\left( P_{\alpha }^{00}\circ T\right) \left( 0\right)
\right\vert $ for all $\alpha \in \mathcal{A}$, $\beta \in \mathcal{%
M\setminus }\phi \left( \alpha \right) $.
\end{itemize}

Since the left-hand side of \eqref{pb33} is equal to $\lambda^\beta \geq
c(a) $ for $\beta=\alpha$ (by \eqref{pb27} and \eqref{pb30}), it follows
from \eqref{pb33} that

\begin{itemize}
\item[\refstepcounter{equation}\text{(\theequation)}\label{pb34}] $\delta
^{\left\vert \phi \left( \alpha \right) \right\vert -\left\vert \alpha
\right\vert }\left\vert \partial ^{\phi \left( \alpha \right) }\left(
P_{\alpha }^{00}\circ T\right) \left( 0\right) \right\vert \geq c\left(
a\right) $ for $\alpha \in \mathcal{A}$.
\end{itemize}

We define

\begin{itemize}
\item[\refstepcounter{equation}\text{(\theequation)}\label{pb35}] $\bar{%
\mathcal{A}}=\phi(\mathcal{A})$
\end{itemize}
and introduce a map
\begin{itemize}
\item[\refstepcounter{equation}\text{(\theequation)}\label{pb36}] $\psi: 
\bar{\mathcal{A}} \rightarrow \mathcal{A}$
\end{itemize}
such that
\begin{itemize}
\item[\refstepcounter{equation}\text{(\theequation)}\label{pb37}] $\phi
(\psi (\bar{\alpha}))=\bar{\alpha}$ for all $\bar{\alpha}\in \bar{\mathcal{A}%
}$.
\end{itemize}

Thanks to \eqref{pb31}, \eqref{pb32}, and Lemma 3.3 in \cite{f-2005}
(mentioned above), we have 
\begin{equation*}
\bar{\mathcal{A}}\leq \mathcal{A},
\end{equation*}%
with equality only when $\phi =$identity. Moreover, suppose $\phi =$
identity. Then \eqref{pb27} and \eqref{pb33} show that%
\begin{eqnarray*}
\left\vert \lambda ^{\beta }\delta ^{\left\vert \beta \right\vert
-\left\vert \alpha \right\vert }\partial ^{\beta }P_{\alpha }^{00}\left(
0\right) \right\vert  &=&\delta ^{\left\vert \beta \right\vert -\left\vert
\alpha \right\vert }\left\vert \partial ^{\beta }\left( P_{\alpha
}^{00}\circ T\right) \left( 0\right) \right\vert  \\
&\leq &\left\vert \partial ^{\alpha }\left( P_{\alpha }^{00}\circ T\right)
\left( 0\right) \right\vert  \\
&=&\lambda ^{\alpha }
\end{eqnarray*}%
for $\alpha \in \mathcal{A}$ and $\beta \in \mathcal{M}$. Hence, \eqref{pb30}
yields

\begin{itemize}
\item[\refstepcounter{equation}\text{(\theequation)}\label{pb38}] $\delta
^{\left\vert \beta \right\vert -\left\vert \alpha \right\vert }\left\vert
\partial ^{\beta }P_{\alpha }^{00}\left( 0\right) \right\vert \leq C\left(
a\right) $ for all $\alpha \in \mathcal{A}$ and $\beta \in \mathcal{M}$;
\end{itemize}
this holds provided $\phi =$identity.

If $\max_{\alpha \in \mathcal{A},\beta \in \mathcal{M}}\delta ^{\left\vert
\beta \right\vert -\left\vert \alpha \right\vert }\left\vert \partial
^{\beta }P_{\alpha }^{00}\left( 0\right) \right\vert >C(a) $ with $C\left(
a\right) $ as in \eqref{pb38}, then $\phi $ cannot be the identity map, and
therefore $\bar{\mathcal{A}} < \mathcal{A} $ (strict inequality).

Thus,

\begin{itemize}
\item[\refstepcounter{equation}\text{(\theequation)}\label{pb39}] $\bar{%
\mathcal{A}} \leq \mathcal{A}$, with strict inequality if $\max_{\alpha \in 
\mathcal{A},\beta \in \mathcal{M}}\delta ^{\left\vert \beta \right\vert
-\left\vert \alpha \right\vert }\left\vert \partial ^{\beta }P_{\alpha
}^{00}\left( 0\right) \right\vert $ exceeds a large enough constant
determined by $a$, $C_{w}$, $C_{B}$, $m$, $n$.
\end{itemize}

For $\bar{\alpha} \in \bar{\mathcal{A}}$, we define

\begin{itemize}
\item[\refstepcounter{equation}\text{(\theequation)}\label{pb40}] $b_{\bar{%
\alpha}}=\left[ \delta ^{\left\vert \bar{\alpha}\right\vert -\left\vert \psi
\left( \bar{\alpha}\right) \right\vert }\partial ^{\bar{\alpha}}\left(
P_{\psi \left( \bar{\alpha}\right) }^{00}\circ T\right) \left( 0\right) %
\right] ^{-1}$;
\end{itemize}

estimate \eqref{pb34} with $\alpha =\psi \left( \bar{\alpha}\right) $ gives

\begin{itemize}
\item[\refstepcounter{equation}\text{(\theequation)}\label{pb41}] $%
\left\vert b_{\bar{\alpha}}\right\vert \leq C\left( a\right) $ for all $\bar{%
\alpha}\in \bar{\mathcal{A}}$.
\end{itemize}

For $\bar{\alpha}\in \bar{\mathcal{A}}$, we also define

\begin{itemize}
\item[\refstepcounter{equation}\text{(\theequation)}\label{pb42}] $\bar{P}_{%
\bar{\alpha}}=b_{\bar{\alpha}}\delta ^{\left\vert \bar{\alpha}\right\vert
-\left\vert \psi \left( \bar{\alpha}\right) \right\vert }\cdot P_{\psi
\left( \bar{\alpha}\right) }^{00}$.
\end{itemize}

From \eqref{pb33}, \eqref{pb40}, \eqref{pb42}, we find that

\begin{itemize}
\item[\refstepcounter{equation}\text{(\theequation)}\label{pb43}] $%
\left\vert \delta ^{\left\vert \beta \right\vert -\left\vert \bar{\alpha}%
\right\vert }\partial ^{\beta }\left( \bar{P}_{\bar{\alpha}}\circ T\right)
\left( 0\right) -\delta _{\beta \bar{\alpha }}\right\vert \leq a$ for $\bar{\alpha}%
\in \bar{\mathcal{A}}$, $\beta \in \mathcal{M}$.
\end{itemize}

Note that $\delta ^{m-\left\vert \bar{\alpha}\right\vert }\bar{P}_{\bar{%
\alpha}}=b_{\bar{\alpha}}\delta ^{m-\left\vert \psi \left( \bar{\alpha}%
\right) \right\vert }P_{\psi \left( \bar{\alpha}\right) }^{00}$. Hence, %
\eqref{pb41} and \eqref{pb26} (with $\alpha =\psi \left( \bar{\alpha}\right) 
$) tell us that

\begin{itemize}
\item[\refstepcounter{equation}\text{(\theequation)}\label{pb44}] $P^{0}\pm
c\left( a\right) M_{0}\delta ^{m-\left\vert \bar{\alpha}\right\vert }\bar{P}%
_{\bar{\alpha}}\in \Gamma \left( 0,CM_{0}\right) $,
\end{itemize}

since $\Gamma(0,CM_0)$ is convex.

Next, define

\begin{itemize}
\item[\refstepcounter{equation}\text{(\theequation)}\label{pb45}] $\hat{%
\mathcal{A}}=\left\{ \gamma \in \mathcal{M}:\gamma =\bar{\alpha}+\bar{\gamma}%
\text{ for some }\bar{\alpha}\in \bar{\mathcal{A}}\text{, }\bar{\gamma}\in 
\mathcal{M}\right\} $,
\end{itemize}

and introduce maps $\chi :\hat{\mathcal{A}}\rightarrow \bar{\mathcal{A}}$, $%
\omega :\hat{\mathcal{A}}\rightarrow \mathcal{M}$, such that

\begin{itemize}
\item[\refstepcounter{equation}\text{(\theequation)}\label{pb46}] $\hat{%
\alpha}=\chi \left( \hat{\alpha}\right) +\omega \left( \hat{\alpha}\right) $
for all $\hat{\alpha}\in \hat{\mathcal{A}}$.
\end{itemize}

By definition \eqref{pb45},

\begin{itemize}
\item[\refstepcounter{equation}\text{(\theequation)}\label{pb47}] $\hat{%
\mathcal{A}}$ is monotonic,
\end{itemize}

and $\bar{\mathcal{A}} \subseteq \hat{\mathcal{A}}$, hence

\begin{itemize}
\item[\refstepcounter{equation}\text{(\theequation)}\label{pb48}] $\hat{%
\mathcal{A}}\leq \bar{\mathcal{A}}$.
\end{itemize}

From \eqref{pb39} and \eqref{pb48}, we have

\begin{itemize}
\item[\refstepcounter{equation}\text{(\theequation)}\label{pb49}] $\hat{%
\mathcal{A}} \leq \mathcal{A}$, with strict inequality if $\max_{\alpha \in 
\mathcal{A},\beta \in \mathcal{M}}\delta ^{\left\vert \beta \right\vert
-\left\vert \alpha \right\vert }\left\vert \partial ^{\beta }P_{\alpha
}^{00}\left( 0\right) \right\vert $ exceeds a large enough constant
determined by $a$, $C_{w}$, $C_{B}$, $m$, $n$.
\end{itemize}

For $\hat{\alpha} \in \hat{\mathcal{A}}$, we introduce the monomial

\begin{itemize}
\item[\refstepcounter{equation}\text{(\theequation)}\label{pb50}] $S_{\hat{%
\alpha}}\left( x\right) =\frac{\chi \left( \hat{\alpha}\right) !}{\hat{\alpha%
}!}\lambda ^{-\omega \left( \hat{\alpha}\right) }x^{\omega \left( \hat{\alpha%
}\right) }$ $\left( x\in \mathbb{R}^{n}\right) $.
\end{itemize}

We have

\begin{itemize}
\item[\refstepcounter{equation}\text{(\theequation)}\label{pb51}] $S_{\hat{%
\alpha}}\circ T\left( x\right) =\frac{\chi \left( \hat{\alpha}\right) !}{%
\hat{\alpha}!}x^{\omega \left( \hat{\alpha}\right) }$,
\end{itemize}
hence
\begin{itemize}
\item[\refstepcounter{equation}\text{(\theequation)}\label{pb52}] $\partial
^{\beta }\left( S_{\hat{\alpha}}\circ T\right) \left( 0\right) =\frac{\chi
\left( \hat{\alpha}\right) !\omega \left( \hat{\alpha}\right) !}{\hat{\alpha}%
!}\delta _{\beta \omega \left( \hat{\alpha}\right) }$ for $\hat{\alpha}\in 
\hat{\mathcal{A}},\beta \in \mathcal{M}$.
\end{itemize}

We study the derivatives $\partial ^{\beta }\left( \left[ \left( S_{\hat{%
\alpha}}\bar{P}_{\chi \left( \hat{\alpha}\right) }\right) \circ T\right]
\left( 0\right) \right) $ for $\hat{\alpha}\in \hat{\mathcal{A}},\beta \in 
\mathcal{M}$.

Case 1: If $\beta$ is not of the form $\beta = \omega(\hat{\alpha})+\tilde{%
\beta}$ for some $\tilde{\beta}\in \mathcal{M}$, then \eqref{pb52} gives

\begin{itemize}
\item[\refstepcounter{equation}\text{(\theequation)}\label{pb53}] $\partial
^{\beta }\left[ \left( S_{\hat{\alpha}}\bar{P}_{\chi \left( \hat{\alpha}%
\right) }\right) \circ T\right] \left( 0\right) =0$.
\end{itemize}

Case 2: Suppose $\beta =\omega (\hat{\alpha})+\tilde{\beta}$ for some $%
\tilde{\beta}\in \mathcal{M}$. Then \eqref{pb52} gives%
\begin{eqnarray*}
&&\partial ^{\beta }\left[ \left( S_{\hat{\alpha}}\bar{P}_{\chi \left( \hat{%
\alpha}\right) }\right) \circ T\right] \left( 0\right)  \\
&=&\frac{\beta !}{\omega \left( \hat{\alpha}\right) !\tilde{\beta}!}\left[
\partial ^{\omega \left( \hat{\alpha}\right) }\left( S_{\hat{\alpha}}\circ
T\right) \left( 0\right) \right] \cdot \left[ \partial ^{\tilde{\beta}}\left( 
\bar{P}_{\chi \left( \hat{\alpha}\right) }\circ T\right) \left( 0\right) %
\right]  \\
&=&\frac{\beta !}{\omega \left( \hat{\alpha}\right) !\tilde{\beta}!}\frac{%
\chi \left( \hat{\alpha}\right) !\omega \left( \hat{\alpha}\right) !}{\hat{%
\alpha}!}\cdot \left[ \partial ^{\tilde{\beta}}\left( \bar{P}_{\chi \left( 
\hat{\alpha}\right) }\circ T\right) \left( 0\right) \right]  \\
&=&\frac{\beta !\chi \left( \hat{\alpha}\right) !}{\hat{\alpha}!\tilde{\beta}%
!}\left[ \partial ^{\tilde{\beta}}\left( \bar{P}_{\chi \left( \hat{\alpha}%
\right) }\circ T\right) \left( 0\right) \right] \text{.}
\end{eqnarray*}%
Hence, by \eqref{pb43}, we have 
\begin{equation}
\left\vert \frac{\hat{\alpha}!\tilde{\beta}!}{\beta !\chi \left( \hat{\alpha}%
\right) !}\delta ^{\left\vert \tilde{\beta}\right\vert -\left\vert \chi
\left( \hat{\alpha}\right) \right\vert }\partial ^{\beta }\left[ \left( S_{%
\hat{\alpha}}\cdot \bar{P}_{\chi \left( \hat{\alpha}\right) }\right) \circ T%
\right] \left( 0\right) -\delta _{\tilde{\beta}\chi \left( \hat{\alpha}%
\right) }\right\vert \leq a\text{.}  \label{pb54}
\end{equation}%
Since in this case $\beta =\omega \left( \hat{\alpha}\right) +\tilde{\beta}$
and $\hat{\alpha}=\omega \left( \hat{\alpha}\right) +\chi \left( \hat{\alpha}%
\right) $ (see \eqref{pb46}), we have $\delta _{\tilde{\beta}\chi \left( 
\hat{\alpha}\right) }=\delta _{\beta \hat{\alpha}}$, $|\tilde{\beta}%
|-\left\vert \chi \left( \hat{\alpha}\right) \right\vert =\left\vert \beta
\right\vert -\left\vert \hat{\alpha}\right\vert $, and $\frac{\hat{\alpha}!%
\tilde{\beta}!}{\beta !\chi \left( \hat{\alpha}\right) !}=1$ if $\beta =\hat{%
\alpha}$.

Hence, \eqref{pb54} implies that 
\begin{equation}
\left\vert \delta ^{\left\vert \beta \right\vert -\left\vert \hat{\alpha}%
\right\vert }\partial ^{\beta }\left[ \left( S_{\hat{\alpha}}\cdot \bar{P}%
_{\chi \left( \hat{\alpha}\right) }\right) \circ T\right] \left( 0\right)
-\delta _{\beta \hat{\alpha}}\right\vert \leq Ca  \label{pb55}
\end{equation}%
in Case 2.

Thanks to \eqref{pb46} and \eqref{pb53}, estimate \eqref{pb55} holds also in
Case 1.

Thus, \eqref{pb55} holds for all $\hat{\alpha}\in \hat{\mathcal{A}}$, $\beta
\in \mathcal{M}$. Consequently, 
\begin{equation}
\left\vert \delta ^{\left\vert \beta \right\vert -\left\vert \hat{\alpha}%
\right\vert }\partial ^{\beta }\left( \left[ S_{\hat{\alpha}}\odot _{0}\bar{P%
}_{\chi \left( \hat{\alpha}\right) }\right] \circ T \right)\left( 0\right)
-\delta _{\beta \hat{\alpha}}\right\vert \leq Ca  \label{pb56}
\end{equation}%
for all $\hat{\alpha}\in \hat{\mathcal{A}}$, $\beta \in \mathcal{M}$.

We prepare to apply Lemma \ref{lemma-pb1}, with $\hat{S}=\delta
^{-\left\vert \omega \left( \hat{\alpha}\right) \right\vert }S_{\hat{\alpha}%
} $ and $\hat{P}=M_{0}\delta ^{m-\left\vert \chi \left( \hat{\alpha}\right)
\right\vert }\bar{P}_{\chi \left( \hat{\alpha}\right) }$. From \eqref{pb30}
and \eqref{pb50}, we have 
\begin{equation}
\left\vert \partial ^{\beta }\left( \delta ^{-\left\vert \omega \left( \hat{%
\alpha}\right) \right\vert }S_{\hat{\alpha}}\right) \left( 0\right)
\right\vert \leq C\left( a\right) \delta ^{-\left\vert \beta \right\vert }%
\text{ for }\hat{\alpha}\in \hat{\mathcal{A}}\text{, }\beta \in \mathcal{M}.
\label{pb57}
\end{equation}

From \eqref{pb43} we have 
\begin{equation*}
\left\vert \partial ^{\beta }\left( \bar{P}_{\chi \left( \hat{\alpha}\right)
}\circ T\right) \left( 0\right) \right\vert \leq C\delta ^{\left\vert \chi
\left( \hat{\alpha}\right) \right\vert -\left\vert \beta \right\vert }\text{
for }\hat{\alpha}\in \hat{\mathcal{A}}\text{, }\beta \in \mathcal{M}\text{;}
\end{equation*}%
hence, by \eqref{pb30}, 
\begin{equation}
\left\vert \partial ^{\beta }\left( M_{0}\delta ^{m-\left\vert \chi \left( 
\hat{\alpha}\right) \right\vert }\bar{P}_{\chi \left( \hat{\alpha}\right)
}\right) \left( 0\right) \right\vert \leq C\left( a\right) M_{0}\delta
^{m-\left\vert \beta \right\vert }\text{ for }\hat{\alpha}\in \hat{\mathcal{A%
}}\text{, }\beta \in \mathcal{M}\text{.}  \label{pb58}
\end{equation}%
Also, \eqref{pb44} gives 
\begin{equation}
P^{0}\pm c\left( a\right) M_{0}\delta ^{m-\left\vert \chi \left( \hat{\alpha}%
\right) \right\vert }\bar{P}_{\chi \left( \hat{\alpha}\right) }\in \Gamma
\left( 0,CM_{0}\right) \text{ for }\hat{\alpha}\in \hat{\mathcal{A}}\text{.}
\label{pb59}
\end{equation}

Our results \eqref{pb57}, \eqref{pb58}, \eqref{pb59} are the hypotheses of
Lemma \ref{lemma-pb1} for $\hat{S}$, $\hat{P}$ as given above. Applying that
lemma, we learn that 
\begin{equation*}
P^{0}\pm c\left( a\right) \left( \delta ^{-\left\vert \omega \left( \hat{%
\alpha}\right) \right\vert }S_{\hat{\alpha}}\right) \odot _{0}\left(
M_{0}\delta ^{m-\left\vert \chi \left( \hat{\alpha}\right) \right\vert }\bar{%
P}_{\chi \left( \hat{\alpha}\right) }\right) \in \Gamma \left( 0,C\left(
a\right) M_{0}\right) \text{ for }\hat{\alpha}\in \hat{\mathcal{A}}\text{.}
\end{equation*}%
Recalling \eqref{pb46}, we conclude that 
\begin{equation}
P^{0}\pm c\left( a\right) M_{0}\delta ^{m-\left\vert \hat{\alpha}\right\vert
}S_{\hat{\alpha}}\odot _{0}\bar{P}_{\chi \left( \hat{\alpha}\right) }\in
\Gamma \left( 0,C\left( a\right) M_{0}\right) \text{ for }\hat{\alpha}\in 
\hat{\mathcal{A}}\text{.}  \label{pb60}
\end{equation}

Next, \eqref{pb56} and the \emph{small }$a$ \emph{condition} tell us that
there exists a matrix of real numbers $\left( b_{\gamma \hat{\alpha}}\right)
_{\gamma ,\hat{\alpha}\in \hat{\mathcal{A}}}$ satisfying 
\begin{equation}
\sum_{\hat{\alpha}\in \hat{\mathcal{A}}}b_{\gamma \hat{\alpha}}\partial
^{\beta }\left( \left[ S_{\hat{\alpha}}\odot _{0}\bar{P}_{\chi \left( \hat{%
\alpha}\right) }\right] \circ T\right) \left( 0\right) \cdot \delta
^{\left\vert \beta \right\vert -\left\vert \hat{\alpha}\right\vert }=\delta
_{\gamma \beta }\text{ for }\gamma ,\beta \in \hat{\mathcal{A}}  \label{pb61}
\end{equation}%
and 
\begin{equation}
\left\vert b_{\gamma \hat{\alpha}}\right\vert \leq 2\text{, for all }\gamma ,%
\hat{\alpha}\in \hat{\mathcal{A}}\text{.}  \label{pb62}
\end{equation}%
From \eqref{pb61} we have 
\begin{equation}
\partial ^{\beta }\left\{ \sum_{\hat{\alpha}\in \hat{\mathcal{A}}}b_{\gamma 
\hat{\alpha}}\delta ^{\left\vert \gamma \right\vert -\left\vert \hat{\alpha}%
\right\vert }\left( \left[ S_{\hat{\alpha}}\odot _{0}\bar{P}_{\chi \left( 
\hat{\alpha}\right) }\right] \circ T\right) \right\} \left( 0\right) =\delta
_{\gamma \beta }\text{ for }\gamma ,\beta \in \hat{\mathcal{A}}\text{.}
\label{pb63}
\end{equation}%
Also, \eqref{pb56} and \eqref{pb62} imply that 
\begin{equation}
\left\vert \partial ^{\beta }\left\{ \sum_{\hat{\alpha}\in \hat{\mathcal{A}}%
}b_{\gamma \hat{\alpha}}\delta ^{\left\vert \gamma \right\vert -\left\vert 
\hat{\alpha}\right\vert }\left( \left[ S_{\hat{\alpha}}\odot _{0}\bar{P}%
_{\chi \left( \hat{\alpha}\right) }\right] \circ T\right) \right\} \left(
0\right) \right\vert \leq C\delta ^{\left\vert \gamma \right\vert
-\left\vert \beta \right\vert }\text{ for }\gamma \in \hat{\mathcal{A}}%
,\beta \in \mathcal{M}\text{.}  \label{pb64}
\end{equation}%
Since 
\begin{eqnarray*}
&&M_{0}\delta ^{m-\left\vert \gamma \right\vert }\left\{ \sum_{\hat{\alpha}%
\in \hat{\mathcal{A}}}b_{\gamma \hat{\alpha}}\delta ^{\left\vert \gamma
\right\vert -\left\vert \hat{\alpha}\right\vert }\left( \left[ S_{\hat{\alpha%
}}\odot _{0}\bar{P}_{\chi \left( \hat{\alpha}\right) }\right] \right)
\right\}  \\
&=&\sum_{\hat{\alpha}\in \hat{\mathcal{A}}}b_{\gamma \hat{\alpha}}\cdot
\left\{ M_{0}\delta ^{m-\left\vert \hat{\alpha}\right\vert }\cdot \left[ S_{%
\hat{\alpha}}\odot _{0}\bar{P}_{\chi \left( \hat{\alpha}\right) }\right]
\right\} \text{,}
\end{eqnarray*}%
we learn from \eqref{pb60}, \eqref{pb62} and the Trivial Remark on Convex
Sets in Section \ref{notation-and-preliminaries}, that 
\begin{equation}
P^{0}\pm c\left( a\right) M_{0}\delta ^{m-\left\vert \gamma \right\vert
}\left\{ \sum_{\hat{\alpha}\in \hat{\mathcal{A}}}b_{\gamma \hat{\alpha}%
}\delta ^{\left\vert \gamma \right\vert -\left\vert \hat{\alpha}\right\vert
}\left( \left[ S_{\hat{\alpha}}\odot _{0}\bar{P}_{\chi \left( \hat{\alpha}%
\right) }\right] \right) \right\} \in \Gamma \left( 0,C\left( a\right)
M_{0}\right)   \label{pb65}
\end{equation}%
for $\gamma \in \hat{\mathcal{A}}$.

We define%
\begin{equation}
P_{\gamma }=\lambda ^{\gamma }\sum_{\hat{\alpha}\in \hat{\mathcal{A}}%
}b_{\gamma \hat{\alpha}}\delta ^{\left\vert \gamma \right\vert -\left\vert 
\hat{\alpha}\right\vert }\left[ S_{\hat{\alpha}}\odot _{0}\bar{P}_{\chi
\left( \hat{\alpha}\right) }\right] \text{ for }\gamma \in \hat{\mathcal{A}}%
\text{.}  \label{pb66}
\end{equation}

From \eqref{pb30}, we have $\left\vert \lambda ^{\gamma }\right\vert \leq
C\left( a\right) $. Recall from \eqref{pb26} that $P^{0}\in \Gamma \left(
0,CM_{0}\right) $. Hence, we deduce from \eqref{pb65} (and from the
convexity of $\Gamma \left( 0,C\left( a\right) M_{0}\right) $) that

\begin{itemize}
\item[\refstepcounter{equation}\text{(\theequation)}\label{pb67}] $%
P^{0},P^{0}+ c\left( a\right) M_{0}\delta ^{m-\left\vert \gamma
\right\vert }P_{\gamma },P^{0}-c\left( a\right) M_{0}\delta ^{m-\left\vert
\gamma \right\vert }P_{\gamma }\in \Gamma \left( 0,C\left( a\right)
M_{0}\right) $ for $\gamma \in \hat{\mathcal{A}}$.
\end{itemize}

Also, \eqref{pb63} and \eqref{pb66} give 
\begin{equation}
\partial ^{\beta }P_{\gamma }\left( 0\right) =\delta _{\beta \gamma }\text{
for }\beta ,\gamma \in \hat{\mathcal{A}}\text{.}  \label{pb68}
\end{equation}

From \eqref{pb30}, \eqref{pb64}, \eqref{pb66}, we have 
\begin{equation}
\left\vert \partial ^{\beta }P_{\gamma }\left( 0\right) \right\vert \leq
C\left( a\right) \delta ^{\left\vert \gamma \right\vert -\left\vert \beta
\right\vert }\text{ for }\gamma \in \hat{\mathcal{A}}\text{, }\beta \in 
\mathcal{M}\text{.}  \label{pb69}
\end{equation}

Our results \eqref{pb67}, \eqref{pb68}, \eqref{pb69} show that

\begin{itemize}
\item[\refstepcounter{equation}\text{(\theequation)}\label{pb70}] $\left(
P_{\gamma }\right) _{\gamma \in \hat{\mathcal{A}}}$ is an $\left( \hat{%
\mathcal{A}},\delta ,C\left( a\right) \right) $-basis for $\vec{\Gamma}$ at $%
\left( 0,M_{0},P^{0}\right) $.
\end{itemize}

We now pick $a$ to to be a constant determined by $C_{B}$, $C_{w}$, $m$, $n$%
, small enough to satisfy our \emph{small }$a$ \emph{condition}. Then %
\eqref{pb47}, \eqref{pb49}, and \eqref{pb70} immediately imply the
conclusions of Lemma \ref{lemma-pb2}.

The proof of that lemma is complete.
\end{proof}

The next result is a consequence of the Relabeling Lemma (Lemma \ref%
{lemma-pb2}).

\begin{lemma}[Control $\Gamma $ Using Basis]
\label{lemma-pb3} Let $\vec{\Gamma}=\left( \Gamma \left( x,M\right) \right)
_{x\in E,M>0}$ be a $\left( C_{w},\delta _{\max }\right) $-convex shape
field. Let $x_{0}\in E$, $M_{0}>0$, $0<\delta \leq \delta _{\max }$, $C_{B}>0
$, $\mathcal{A\subseteq M}$, and let $P$, $P^{0}\in \mathcal{P}$. Suppose $\vec{%
\Gamma}$ has an $\left( \mathcal{A},\delta ,C_{B}\right) $-basis at $\left(
x_{0},M_{0},P^{0}\right) $. Suppose also that

\begin{itemize}
\item[\refstepcounter{equation}\text{(\theequation)}\label{pb71}] $P\in
\Gamma \left( x_{0},C_{B}M_{0}\right) $,
\end{itemize}

\begin{itemize}
\item[\refstepcounter{equation}\text{(\theequation)}\label{pb72}] $\partial
^{\beta }\left( P-P^{0}\right) \left( x_{0}\right) =0$ for all $\beta \in 
\mathcal{A}$, and
\end{itemize}

\begin{itemize}
\item[\refstepcounter{equation}\text{(\theequation)}\label{pb73}] $%
\max_{\beta \in \mathcal{M}}\delta ^{\left\vert \beta \right\vert
}\left\vert \partial ^{\beta }\left( P-P^{0}\right) \left( x_{0}\right)
\right\vert \geq M_{0}\delta ^{m}$.
\end{itemize}

Then there exist $\hat{\mathcal{A}} \subseteq \mathcal{M}$ and $\hat{P}^0
\in \mathcal{P}$ with the following properties.

\begin{itemize}
\item[\refstepcounter{equation}\text{(\theequation)}\label{pb74}] $\hat{%
\mathcal{A}}$ is monotonic.
\end{itemize}

\begin{itemize}
\item[\refstepcounter{equation}\text{(\theequation)}\label{pb75}] $\hat{%
\mathcal{A}} < \mathcal{A}$ (strict inequality).
\end{itemize}

\begin{itemize}
\item[\refstepcounter{equation}\text{(\theequation)}\label{pb76}] $\vec{%
\Gamma}$ has an $(\hat{\mathcal{A}},\delta,C_B^{\prime })$-basis at $%
(x_0,M_0,\hat{P}^0)$, with $C_B^{\prime }$ determined by $C_B$, $C_w$, $m$, $%
n$.
\end{itemize}

\begin{itemize}
\item[\refstepcounter{equation}\text{(\theequation)}\label{pb77}] $\partial
^{\beta }\left( \hat{P}^{0}-P^{0}\right) \left( x_{0}\right) =0$ for all $%
\beta \in \mathcal{A}$.
\end{itemize}

\begin{itemize}
\item[\refstepcounter{equation}\text{(\theequation)}\label{pb78}] $%
\left\vert \partial ^{\beta }\left( \hat{P}^{0}-P^{0}\right) \left(
x_{0}\right) \right\vert \leq M_{0}\delta ^{m-\left\vert \beta \right\vert }$
for all $\beta \in \mathcal{M}$.
\end{itemize}
\end{lemma}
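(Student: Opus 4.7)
The strategy is to augment the given $(\mathcal{A},\delta,C_B)$-basis at $(x_0,M_0,P^0)$ by a single new element obtained from $P-P^0$, and then invoke the Relabeling Lemma (Lemma \ref{lemma-pb2}). Let $\gamma\in\mathcal{M}$ maximize $\delta^{|\beta|}|\partial^\beta(P-P^0)(x_0)|$ over $\beta\in\mathcal{M}$; by \eqref{pb73}, $|\partial^\gamma(P-P^0)(x_0)|\geq M_0\delta^{m-|\gamma|}$, and by \eqref{pb72}, $\gamma\notin\mathcal{A}$. Set $c_0:=\partial^\gamma(P-P^0)(x_0)$, $\lambda_0:=\min\{M_0\delta^{m-|\beta|}/|\partial^\beta(P-P^0)(x_0)|:\partial^\beta(P-P^0)(x_0)\neq 0\}\in(0,1]$, $\mu:=\min(1/2,\lambda_0)$, and
\[
\hat{P}^0:=P^0+\mu(P-P^0)=(1-\mu)P^0+\mu P.
\]
As a convex combination of $P,P^0\in\Gamma(x_0,C_BM_0)$, $\hat{P}^0\in\Gamma(x_0,C_BM_0)$. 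Hypothesis \eqref{pb72} yields \eqref{pb77}, and the choice $\mu\leq\lambda_0$ yields \eqref{pb78}. A brief case split on whether $\lambda_0\leq 1/2$ also gives the key lower bound $\mu|c_0|\geq \tfrac12 M_0\delta^{m-|\gamma|}$.

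Set $\mathcal{A}':=\mathcal{A}\cup\{\gamma\}$. The symmetric difference of $\mathcal{A}'$ and $\mathcal{A}$ is $\{\gamma\}$, whose least element $\gamma$ lies in $\mathcal{A}'$, so $\mathcal{A}'<\mathcal{A}$ in the order on subsets of $\mathcal{M}$. Define
\[
P'_\gamma:=c_0^{-1}(P-P^0),\qquad P'_\alpha:=P_\alpha-\partial^\gamma P_\alpha(x_0)\cdot P'_\gamma\quad(\alpha\in\mathcal{A}).
\]
Using \eqref{pb72} one checks $\partial^\beta P'_\gamma(x_0)=0$ for $\beta\in\mathcal{A}$ and $\partial^\gamma P'_\gamma(x_0)=1$, so property \eqref{pb3} holds for $(P'_\alpha)_{\alpha\in\mathcal{A}'}$. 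The maximality of $\gamma$ gives $|\partial^\beta(P-P^0)(x_0)|\leq|c_0|\delta^{|\gamma|-|\beta|}$ for every $\beta\in\mathcal{M}$, hence $|\partial^\beta P'_\gamma(x_0)|\leq\delta^{|\gamma|-|\beta|}$; combined with the original basis bound $|\partial^\beta P_\alpha(x_0)|\leq C_B\delta^{|\alpha|-|\beta|}$, this yields the size bound \eqref{pb4} for $(P'_\alpha)_{\alpha\in\mathcal{A}'}$ with constant $\leq 2C_B$.

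For \eqref{pb2} we apply the Trivial Remark on Convex Sets around $\hat{P}^0$ using two controlled directions, $\mu(P-P^0)$ and $\tfrac{M_0\delta^{m-|\alpha|}}{2C_B}P_\alpha$ ($\alpha\in\mathcal{A}$). Both are symmetrically controlled around $\hat{P}^0$: for the first, $\hat{P}^0+\mu(P-P^0)=P^0+2\mu(P-P^0)$ is a convex combination of $P^0,P$ (since $2\mu\leq 1$) while $\hat{P}^0-\mu(P-P^0)=P^0$; for the second, writing $\hat{P}^0\pm\tfrac{M_0\delta^{m-|\alpha|}}{2C_B}P_\alpha=(1-\mu)\bigl(P^0\pm\tfrac{M_0\delta^{m-|\alpha|}}{2C_B(1-\mu)}P_\alpha\bigr)+\mu P$ displays it as a convex combination in $\Gamma(x_0,C_BM_0)$, using $\mu\leq 1/2$ together with the original \eqref{pb2}. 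Expanding $\hat{P}^0\pm\tfrac{M_0\delta^{m-|\alpha|}}{C'_B}P'_\alpha$ in terms of these two directions and invoking $\mu|c_0|\geq\tfrac12 M_0\delta^{m-|\gamma|}$, one verifies that the sum of absolute values of the resulting coefficients is $\leq 4C_B/C'_B$, hence $\leq 1$ for $C'_B:=4C_B$. Thus $(P'_\alpha)_{\alpha\in\mathcal{A}'}$ is a (weak) $(\mathcal{A}',\delta,C'_B)$-basis for $\vec{\Gamma}$ at $(x_0,M_0,\hat{P}^0)$.

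Applying the Relabeling Lemma to this weak basis produces a monotonic $\hat{\mathcal{A}}\leq\mathcal{A}'$ and an $(\hat{\mathcal{A}},\delta,C''_B)$-basis at $(x_0,M_0,\hat{P}^0)$ with $C''_B$ determined by $C_B,C_w,m,n$; combined with $\mathcal{A}'<\mathcal{A}$, this establishes \eqref{pb74}--\eqref{pb76}, while \eqref{pb77} and \eqref{pb78} were secured in the first paragraph. The main obstacle is the verification of \eqref{pb2} in the third paragraph: although $P,P^0\in\Gamma(x_0,C_BM_0)$, there is no reason to expect $2P^0-P\in\Gamma$, so the direction $P-P^0$ is not symmetrically controlled around $P^0$. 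The choice $\mu=\min(1/2,\lambda_0)$ is engineered precisely to restore two-sided control of this direction around the shifted center $\hat{P}^0$, while simultaneously keeping $\hat{P}^0$ close enough to $P^0$ to ensure \eqref{pb78}.
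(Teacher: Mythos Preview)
Your proof is correct and follows essentially the same approach as the paper: augment the given basis by the new index $\gamma\notin\mathcal{A}$ using a suitable normalization of $P-P^0$, recenter at $\hat P^0$, verify that $(P'_\alpha)_{\alpha\in\mathcal{A}\cup\{\gamma\}}$ is an $(\mathcal{A}\cup\{\gamma\},\delta,C)$-basis, then invoke the Relabeling Lemma. The only difference is cosmetic: the paper first replaces $P$ by a convex combination of $P$ and $P^0$ so that the maximum in \eqref{pb73} equals $M_0\delta^m$ exactly and then sets $\hat P^0=\tfrac12(P^0+P)$, whereas you keep $P$ and absorb the same rescaling into the parameter $\mu=\min(1/2,\lambda_0)$; the two normalizations are equivalent.
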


\begin{proof}
We write $c$, $C$, $C^{\prime }$, etc., to denote constants determined by $%
C_B$, $C_w$, $m$, $n$. These symbols may denote different constants in
different occurrences.

Let $(P_\alpha)_{\alpha \in \mathcal{A}}$ be an $(\mathcal{A},\delta,C_B)$%
-basis for $\vec{\Gamma}$ at $(x_0,M_0,P^0)$. By definition,

\begin{itemize}
\item[\refstepcounter{equation}\text{(\theequation)}\label{pb79}] $P^{0}\in
\Gamma (x_{0},C_{B}M_{0})$,
\end{itemize}

\begin{itemize}
\item[\refstepcounter{equation}\text{(\theequation)}\label{pb80}] $P^{0}\pm
cM_{0}\delta ^{m-\left\vert \alpha \right\vert }P_{\alpha }\in \Gamma \left(
x_{0},C_{B}M\right) $ for all $\alpha \in \mathcal{A}$,
\end{itemize}

\begin{itemize}
\item[\refstepcounter{equation}\text{(\theequation)}\label{pb81}] $\partial
^{\beta }P_{\alpha }\left( x_{0}\right) =\delta _{\beta \alpha }$ for $\beta
,\alpha \in \mathcal{A}$,
\end{itemize}

\begin{itemize}
\item[\refstepcounter{equation}\text{(\theequation)}\label{pb82}] $%
\left\vert \partial ^{\beta }P_{\alpha }\left( x_{0}\right) \right\vert \leq
C\delta ^{\left\vert \alpha \right\vert -\left\vert \beta \right\vert }$ for
all $\alpha \in A,\beta \in \mathcal{M}$.
\end{itemize}

Assuming there exists $P \in \mathcal{P}$ such that \eqref{pb71}, \eqref{pb72}, \eqref{pb73} hold, then there exists $P$ such that \eqref{pb71}, 
\eqref{pb72} hold, and also 

\begin{itemize}
\item[\refstepcounter{equation}\text{(\theequation)}\label{pb83}] $%
\max_{\beta \in \mathcal{M}}\delta ^{\left\vert \beta \right\vert
}\left\vert \partial ^{\beta }\left( P-P^{0}\right) \left( x_{0}\right)
\right\vert =M_{0}\delta ^{m}$.
\end{itemize}

(We just take our new $P$ to be a convex combination of our original $P$ and $P_0$.)

We pick $\gamma \in \mathcal{M}$ to achieve the above $\max$. Thanks to %
\eqref{pb72}, we have

\begin{itemize}
\item[\refstepcounter{equation}\text{(\theequation)}\label{pb84}] $\gamma
\not\in \mathcal{A}$,
\end{itemize}

hence

\begin{itemize}
\item[\refstepcounter{equation}\text{(\theequation)}\label{pb85}] $\mathcal{A%
} \cup \{ \gamma \} < \mathcal{A}$ (strict inequality).
\end{itemize}

We set

\begin{itemize}
\item[\refstepcounter{equation}\text{(\theequation)}\label{pb86}] $\hat{P}^0=%
\frac{1}{2}(P^0+P)$.
\end{itemize}

From \eqref{pb71}, \eqref{pb79}, \eqref{pb80}, we have

\begin{itemize}
\item[\refstepcounter{equation}\text{(\theequation)}\label{pb87}] $\hat{P}%
^{0},\hat{P}^{0}\pm c^{\prime }M_{0}\delta ^{m-\left\vert \alpha \right\vert
}P_{\alpha }\in \Gamma \left( x_{0},CM_{0}\right) $ for $\alpha \in \mathcal{%
A}$.
\end{itemize}

Also,

\begin{itemize}
\item[\refstepcounter{equation}\text{(\theequation)}\label{pb88}] $\hat{P}%
^{0}\pm \frac{1}{2}\left( P-P^{0}\right) \in \Gamma \left( x_{0},CM\right) $
\end{itemize}

since $\hat{P}^{0}+\frac{1}{2}\left( P-P^{0}\right) =P$ and $\hat{P}^{0}-%
\frac{1}{2}\left( P-P^{0}\right) =P^{0}$.

From \eqref{pb72} and \eqref{pb83}, we have

\begin{itemize}
\item[\refstepcounter{equation}\text{(\theequation)}\label{pb89}] $\partial
^{\beta }\left( \hat{P}^{0}-P^{0}\right) \left( x_{0}\right) =0$ for all $%
\beta \in \mathcal{A}$,
\end{itemize}

and

\begin{itemize}
\item[\refstepcounter{equation}\text{(\theequation)}\label{pb90}] $%
\left\vert \partial ^{\beta }\left( \hat{P}^{0}-P^{0}\right) \left(
x_{0}\right) \right\vert \leq M_{0}\delta ^{m-\left\vert \beta \right\vert }$
for all $\beta \in \mathcal{M}$.
\end{itemize}

We define

\begin{itemize}
\item[\refstepcounter{equation}\text{(\theequation)}\label{pb91}] $P_{\gamma
}^{\#}=\left[ \partial ^{\gamma }\left( P-P^{0}\right) \left( x_{0}\right) %
\right] ^{-1}\cdot \left( P-P^{0}\right) $.
\end{itemize}

We are not dividing by zero here; by \eqref{pb83} and the definition of $%
\gamma$, we have

\begin{itemize}
\item[\refstepcounter{equation}\text{(\theequation)}\label{pb92}] $%
\left\vert \partial ^{\gamma }\left( P-P^{0}\right) \left( x_{0}\right)
\right\vert ^{-1}=M_{0}^{-1}\delta ^{\left\vert \gamma \right\vert -m}$.
\end{itemize}

From \eqref{pb72}, \eqref{pb84}, \eqref{pb91}, we have

\begin{itemize}
\item[\refstepcounter{equation}\text{(\theequation)}\label{pb93}] $\partial
^{\beta }P_{\gamma }^{\#}\left( x_{0}\right) =\delta _{\beta \gamma }$ for
all $\beta \in \mathcal{A}\cup \{\gamma \}$.
\end{itemize}

Also, \eqref{pb83}, \eqref{pb91}, \eqref{pb92} give

\begin{itemize}
\item[\refstepcounter{equation}\text{(\theequation)}\label{pb94}] $%
\left\vert \partial ^{\beta }P_{\gamma }^{\#}\left( x_{0}\right) \right\vert
\leq M_{0}^{-1}\delta ^{\left\vert \gamma \right\vert -m}\cdot M_{0}\delta
^{m-\left\vert \beta \right\vert }=\delta ^{\left\vert \gamma \right\vert
-\left\vert \beta \right\vert }$ for all $\beta \in \mathcal{M}$.
\end{itemize}

From \eqref{pb91}, \eqref{pb92}, we have $P-P^0 = \sigma M_0
\delta^{m-|\gamma|}P_\gamma^\#$ for $\sigma=1$ or $\sigma =-1$. Therefore, %
\eqref{pb88} implies that

\begin{itemize}
\item[\refstepcounter{equation}\text{(\theequation)}\label{pb95}] $\hat{P}%
^{0}\pm cM_{0}\delta ^{m-\left\vert \gamma \right\vert }P_{\gamma }^{\#}\in
\Gamma \left( x_{0},CM_{0}\right) $.
\end{itemize}

From \eqref{pb87}, \eqref{pb95} and the Trivial Remark on Convex Sets in
Section \ref{notation-and-preliminaries}, we conclude that

\begin{itemize}
\item[\refstepcounter{equation}\text{(\theequation)}\label{pb96}] $\hat{P}%
^{0}+sM_{0}\delta ^{m-\left\vert \gamma \right\vert }P_{\gamma
}^{\#}+\sum_{\alpha \in \mathcal{A}}t_{\alpha }\cdot M_{0}\delta
^{m-\left\vert \alpha \right\vert }P_{\alpha }\in \Gamma \left(
x_{0},CM_{0}\right) $, 
\end{itemize}
whenever $\left\vert s\right\vert $, $\left\vert
t_{\alpha }\right\vert \leq c$ (all $\alpha \in \mathcal{A}$) for a small
enough $c$.

For $\alpha \in \mathcal{A}$, we define

\begin{itemize}
\item[\refstepcounter{equation}\text{(\theequation)}\label{pb97}] $P_{\alpha
}^{\#}=P_{\alpha }-\left[ \partial ^{\gamma }P_{\alpha }\left( x_{0}\right) %
\right] \cdot P_{\gamma }^{\#}$.
\end{itemize}

Fix $\alpha \in \mathcal{A}$. If $\beta \in \mathcal{A}$, then \eqref{pb81}, %
\eqref{pb84}, \eqref{pb93} imply%
\begin{equation*}
\partial ^{\beta }P_{\alpha }^{\#}\left( x_{0}\right) =\partial ^{\beta
}P_{\alpha }\left( x_{0}\right) -\left[ \partial ^{\gamma }P_{\alpha }\left(
x_{0}\right) \right] \cdot \partial ^{\beta }P_{\gamma }^{\#}\left(
x_{0}\right) =\delta _{\beta \alpha }\text{.}
\end{equation*}

On the other hand, \eqref{pb84} and \eqref{pb93} yield%
\begin{equation*}
\partial ^{\gamma }P_{\alpha }^{\#}\left( x_{0}\right) =\partial ^{\gamma
}P_{\alpha }\left( x_{0}\right) -\left[ \partial ^{\gamma }P_{\alpha }\left(
x_{0}\right) \right] \cdot \partial ^{\gamma }P_{\gamma }^{\#}\left(
x_{0}\right) =0=\delta _{\gamma \alpha }\text{.}
\end{equation*}
Thus, 
\begin{equation*}
\partial ^{\beta }P_{\alpha }^{\#}\left( x_{0}\right) =\delta _{\beta \alpha
}\text{ for }\beta \in \mathcal{A\cup }\left\{ \gamma \right\} \text{, }%
\alpha \in \mathcal{A}\text{.}
\end{equation*}%
Together with \eqref{pb93}, this tells us that

\begin{itemize}
\item[\refstepcounter{equation}\text{(\theequation)}\label{pb98}] $\partial
^{\beta }P_{\alpha }^{\#}\left( x_{0}\right) =\delta _{\beta \alpha }$ for $%
\beta ,\alpha \in \mathcal{A}\cup \{\gamma \}$.
\end{itemize}

Next, we learn from \eqref{pb82}, \eqref{pb94}, \eqref{pb97} that 
\begin{eqnarray*}
\left\vert \partial ^{\beta }P_{\alpha }^{\#}\left( x_{0}\right) \right\vert
&\leq &\left\vert \partial ^{\beta }P_{\alpha }\left( x_{0}\right)
\right\vert +\left\vert \partial ^{\gamma }P_{\alpha }\left( x_{0}\right)
\right\vert \cdot \left\vert \partial ^{\beta }P_{\gamma }^{\#}\left(
x_{0}\right) \right\vert  \\
&\leq &C\delta ^{\left\vert \alpha \right\vert -\left\vert \beta \right\vert
}+C\delta ^{\left\vert \alpha \right\vert -\left\vert \gamma \right\vert
}\cdot \delta ^{\left\vert \gamma \right\vert -\left\vert \beta \right\vert }
\\
&\leq &C^{\prime }\delta ^{\left\vert \alpha \right\vert -\left\vert \beta
\right\vert }\text{ for }\alpha \in \mathcal{A},\beta \in \mathcal{M}\text{.}
\end{eqnarray*}

Together with \eqref{pb94}, this tells us that

\begin{itemize}
\item[\refstepcounter{equation}\text{(\theequation)}\label{pb99}] $%
\left\vert \partial ^{\beta }P_{\alpha }^{\#}\left( x_{0}\right) \right\vert
\leq C\delta ^{\left\vert \alpha \right\vert -\left\vert \beta \right\vert }$
for all $\alpha \in \mathcal{A}\cup \{\gamma \},\beta \in \mathcal{M}$.
\end{itemize}

Next, note that for $\alpha \in \mathcal{A}$, we have%
\begin{equation*}
M_{0}\delta ^{m-\left\vert \alpha \right\vert }P_{\alpha }^{\#}=M_{0}\delta
^{m-\left\vert \alpha \right\vert }P_{\alpha }-\left[ \delta ^{\left\vert
\gamma \right\vert -\left\vert \alpha \right\vert }\partial ^{\gamma
}P_{\alpha }\left( x_{0}\right) \right] \cdot M_{0}\delta ^{m-\left\vert
\gamma \right\vert }P_{\gamma }^{\#}\text{,}
\end{equation*}%
with $\left\vert \left[ \delta ^{\left\vert \gamma \right\vert -\left\vert
\alpha \right\vert }\partial ^{\gamma }P_{\alpha }\left( x_{0}\right) \right]
\right\vert \leq C$ by \eqref{pb82}.

Therefore, \eqref{pb96} shows that 
\begin{equation*}
\hat{P}^{0}\pm cM_{0}\delta ^{m-\left\vert \alpha \right\vert }P_{\alpha
}^{\#}\in \Gamma \left( x_{0},CM_{0}\right) \text{ for }\alpha \in \mathcal{A%
}\text{,}
\end{equation*}%
provided we take $c$ small enough. Together with \eqref{pb95}, this yields

\begin{itemize}
\item[\refstepcounter{equation}\text{(\theequation)}\label{pb100}] $\hat{P}%
^{0}\pm cM_{0}\delta ^{m-\left\vert \alpha \right\vert }P_{\alpha }^{\#}\in
\Gamma \left( x_{0},CM_{0}\right) $ for all $\alpha \in \mathcal{A}\cup
\{\gamma \}$.
\end{itemize}

Our results \eqref{pb98}, \eqref{pb99}, \eqref{pb100} tell us
that $\left( P_{\alpha }^{\#}\right) _{\alpha \in \mathcal{A\cup }\left\{
\gamma \right\} }$ is an $\left( \mathcal{A\cup }\left\{ \gamma \right\}
,\delta ,C\right) $-basis \ for $\vec{\Gamma}$ at $\left( x_{0},M_{0},\hat{P}%
^{0}\right) $.

Consequently, the Relabeling Lemma (Lemma \ref{lemma-pb2}) produces a set $%
\hat{\mathcal{A}}\subseteq \mathcal{M}$ with the following properties.

\begin{itemize}
\item[\refstepcounter{equation}\text{(\theequation)}\label{pb101}] $\hat{%
\mathcal{A}}$ is monotonic.
\end{itemize}

\begin{itemize}
\item[\refstepcounter{equation}\text{(\theequation)}\label{pb102}] $\hat{%
\mathcal{A}} \leq \mathcal{A} \cup \{ \gamma\}< \mathcal{A}$, see %
\eqref{pb85}.
\end{itemize}

\begin{itemize}
\item[\refstepcounter{equation}\text{(\theequation)}\label{pb103}] $\vec{%
\Gamma}$ has an $(\hat{\mathcal{A}},\delta,C^{\prime })$-basis at $(x_0,M_0,%
\hat{P}^0)$.
\end{itemize}

Our results \eqref{pb89}, %
\eqref{pb90}, \eqref{pb101}, \eqref{pb102}, \eqref{pb103} are the conclusions \eqref{pb74}$\cdots $\eqref{pb78} of Lemma %
\ref{lemma-pb3}.

The proof of that lemma is complete.
\end{proof}

\section{The Transport Lemma}

\label{transport-lemma}

In this section, we prove the following result.

\begin{lemma}[Transport Lemma]
\label{lemma-transport}Let $\vec{\Gamma}_{0}=\left( \Gamma _{0}\left(
x,M\right) \right) _{x\in E,M>0}$ be a shape field. For $l\geq 1$, let $\vec{%
\Gamma}_{l}=\left( \Gamma _{l}\left( x,M\right) \right) _{x\in E,M>0}$ be
the $l$-th refinement of $\vec{\Gamma}_{0}$.

\begin{itemize}
\item[\refstepcounter{equation}\text{(\theequation)}\label{t1}] Suppose $%
\mathcal{A\subseteq M}$ is monotonic and $\hat{\mathcal{A}} \subseteq 
\mathcal{M}$ (not necessarily monotonic).
\end{itemize}

Let $x_{0}\in E$, $M_{0}>0$, $l_{0}\geq 1$, $\delta >0$, $C_{B}$, $\hat{C}%
_{B}$, $C_{DIFF}>0$. Let $P^{0}$, $\hat{P}^{0}\in \mathcal{P}$.
Assume that the following hold.

\begin{itemize}
\item[\refstepcounter{equation}\text{(\theequation)}\label{t2}] $\vec{\Gamma}%
_{l_{0}}$ has an $\left( \mathcal{A},\delta ,C_{B}\right) $-basis at $\left(
x_{0},M_{0},P^{0}\right) $, and an $\left(\hat{\mathcal{A}} ,\delta ,\hat{C}%
_{B}\right) $-basis at $\left(x_{0}, M_{0},\hat{P}^{0}\right) $.
\end{itemize}

\begin{itemize}
\item[\refstepcounter{equation}\text{(\theequation)}\label{t3}] $%
\partial^\beta(P^0-\hat{P}^0)\equiv 0$ for $\beta \in \mathcal{A}$.
\end{itemize}

\begin{itemize}
\item[\refstepcounter{equation}\text{(\theequation)}\label{t4}] $%
|\partial^\beta (P^0 - \hat{P}^0)(x_0)|\leq C_{DIFF}M_0\delta^{m-|\beta|}$
for $\beta \in \mathcal{M}$.
\end{itemize}

Let $y_0 \in E$, and suppose that

\begin{itemize}
\item[\refstepcounter{equation}\text{(\theequation)}\label{t5}] $%
|x_0-y_0|\leq \epsilon_0\delta $,
\end{itemize}
where $\epsilon_0$ is a a small enough constant determined by $C_B$, $\hat{C}%
_B$, $C_{DIFF}$, $m$, $n$. Then there exists $\hat{P}^\# \in \mathcal{P}$
with the following properties.

\begin{itemize}
\item[\refstepcounter{equation}\text{(\theequation)}\label{t6}] $\vec{\Gamma}%
_{l_0-1}$ has both an $(\mathcal{A},\delta,C_B^{\prime })$-basis and an $(%
\hat{\mathcal{A}},\delta,C_B^{\prime })$-basis at $(y_0,M_0,\hat{P}^\#)$,
with $C_B^{\prime }$ determined by $C_B$, $\hat{C}_B$, $C_{DIFF}$, $m$, $n$.
\end{itemize}

\begin{itemize}
\item[\refstepcounter{equation}\text{(\theequation)}\label{t7}] $%
\partial^\beta(\hat{P}^\#-P^0)\equiv0$ for $\beta \in \mathcal{A}$.
\end{itemize}

\begin{itemize}
\item[\refstepcounter{equation}\text{(\theequation)}\label{t8}] $%
|\partial^\beta(\hat{P}^\#-P^0)(x_0)|\leq C^{\prime }M_0\delta^{m-|\beta|}$
for $\beta \in \mathcal{M}$, with $C^{\prime }$ determined by $C_B$, $\hat{C}%
_B$, $C_{DIFF}$, $m$, $n$.
\end{itemize}
\end{lemma}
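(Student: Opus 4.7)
The plan is to transport both bases from $x_0$ to the nearby point $y_0$ via the refinement machinery of Lemma \ref{lemma-wsf4}(A), trading one level of refinement (from $\vec{\Gamma}_{l_0}$ to $\vec{\Gamma}_{l_0-1}$) for the freedom to shift the base point. Throughout, let $C'$ denote a constant depending only on $C_B, \hat{C}_B, C_{DIFF}, C_w, m, n$. In the first step, I would apply Lemma \ref{lemma-wsf4}(A) to each polynomial $P \in \Gamma_{l_0}(x_0, C' M_0)$ attached to either basis --- namely $P^0$, $\hat{P}^0$, and the shifted centers $P^0 \pm \frac{M_0 \delta^{m-|\alpha|}}{C_B} P_\alpha^{00}$ for $\alpha \in \mathcal{A}$ and $\hat{P}^0 \pm \frac{M_0 \delta^{m-|\alpha|}}{\hat{C}_B} \hat{P}_\alpha^{00}$ for $\alpha \in \hat{\mathcal{A}}$ --- producing a partner $\widetilde{P} \in \Gamma_{l_0-1}(y_0, C' M_0)$ with $|\partial^\beta(P - \widetilde{P})(x_0)| \leq C' M_0 |x_0-y_0|^{m-|\beta|} \leq C' M_0 \epsilon_0^{m-|\beta|} \delta^{m-|\beta|}$ for all $|\beta| \leq m-1$; the same estimate persists at $y_0$ up to a constant, because $P - \widetilde{P}$ has degree at most $m-1$.

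Next, I would define $\hat{P}^\# \in \mathcal{P}$ by specifying $\partial^\beta \hat{P}^\#(x_0) = \partial^\beta P^0(x_0)$ for $\beta \in \mathcal{A}$ and $\partial^\beta \hat{P}^\#(x_0) = \partial^\beta \widetilde{\hat{P}^0}(x_0)$ for $\beta \in \mathcal{M} \setminus \mathcal{A}$, where $\widetilde{\hat{P}^0}$ denotes the transport of $\hat{P}^0$ from the first step. Property (t7) is then immediate: $\partial^\beta(\hat{P}^\# - P^0)(x_0) = 0$ for $\beta \in \mathcal{A}$, and the monotonicity of $\mathcal{A}$ together with the polynomial identity principle recalled in Section \ref{notation-and-preliminaries} extends this to $\equiv 0$. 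Property (t8) follows by combining (t4) with the transport bound on $\widetilde{\hat{P}^0} - \hat{P}^0$ via the triangle inequality. Notably, $\hat{P}^\#$ and $\widetilde{\hat{P}^0}$ differ only in $\mathcal{A}$-derivatives, and only by an $\epsilon_0^{m-|\beta|}$-small multiple of $M_0 \delta^{m-|\beta|}$.

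The remaining task is to verify (t6): producing both an $(\mathcal{A},\delta,C')$-basis and an $(\hat{\mathcal{A}},\delta,C')$-basis for $\vec{\Gamma}_{l_0-1}$ at $(y_0, M_0, \hat{P}^\#)$. For the $\hat{\mathcal{A}}$-basis, I would form candidates $\hat{P}_\alpha^{\mathrm{new}}$ as suitably rescaled symmetric differences of the transports of $\hat{P}^0 \pm \frac{M_0 \delta^{m-|\alpha|}}{\hat{C}_B}\hat{P}_\alpha^{00}$; these satisfy (pb4), and their derivatives at $y_0$ form a matrix within $O(\epsilon_0)$ of the identity on $\hat{\mathcal{A}} \times \hat{\mathcal{A}}$, invertible for small $\epsilon_0$, so a linear-algebraic correction yields (pb3) exactly with label set $\hat{\mathcal{A}}$ unchanged. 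The averages of the two transports lie in $\Gamma_{l_0-1}(y_0, C' M_0)$ close to $\widetilde{\hat{P}^0}$, and the Trivial Remark on Convex Sets combined with the $(C_{l_0-1}, \delta_{\max})$-convexity of $\vec{\Gamma}_{l_0-1}$ from Lemma \ref{lemma-wsf4}(B) absorbs the $\epsilon_0$-small shift from this average to $\hat{P}^\#$, securing (pb1) and (pb2). The $(\mathcal{A},\delta,C')$-basis is built analogously, with the extra task of shifting from the ``natural'' transported center $\widetilde{P^0}$ to $\hat{P}^\#$; the discrepancy $\hat{P}^\# - \widetilde{P^0}$ is bounded by $C' M_0 \delta^{m-|\beta|}$ (combining (t3), (t4), the first step, and the construction of $\hat{P}^\#$) and is absorbed by convexity as before. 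The main obstacle is exactly this requirement that a single center $\hat{P}^\#$ serve both label sets simultaneously, without triggering a shrinkage of either via the Relabeling Lemma; all these issues are resolved by choosing $\epsilon_0$ below a threshold depending on $C_B, \hat{C}_B, C_{DIFF}, C_w, m, n$, precisely the smallness postulated in (t5).
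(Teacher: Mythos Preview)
Your overall strategy matches the paper's --- transport both bases via the refinement definition, then reassemble at $y_0$ --- but the step you describe as ``absorbed by convexity'' has a genuine gap. First, a minor point: you invoke the $(C_{l_0-1},\delta_{\max})$-convexity of $\vec\Gamma_{l_0-1}$ from Lemma~\ref{lemma-wsf4}(B), but the Transport Lemma does not assume $\vec\Gamma_0$ is $(C_w,\delta_{\max})$-convex (its constants depend only on $C_B,\hat C_B,C_{DIFF},m,n$), and the paper's proof uses only the ordinary convexity of the sets $\Gamma_{l_0-1}(y_0,CM_0)$. More seriously: you define $\hat P^\#$ abstractly by prescribing its derivatives at $x_0$, and must then exhibit it as the center of both bases inside $\Gamma_{l_0-1}(y_0,C'M_0)$. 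For the $\mathcal A$-basis this means shifting from the natural transported center $\widetilde{P^0}$ to $\hat P^\#$, and that shift is \emph{not} $\epsilon_0$-small: in the coordinates $\beta\notin\mathcal A$ it is of order $M_0\delta^{m-|\beta|}$, inherited from $\hat P^0-P^0$ via hypothesis \eqref{t4}. The Trivial Remark only lets you move along directions $P_\nu$ for which the current center $\pm P_\nu$ are already known to lie in the set, and the transported $\mathcal A$-basis elements need not span the non-$\mathcal A$ directions, so an $O(1)$ shift there cannot be absorbed. Even the $\epsilon_0$-small shift needed for the $\hat{\mathcal A}$-basis has no reason to lie in the span of the transported $\hat{\mathcal A}$-basis elements unless $\hat{\mathcal A}=\mathcal M$.

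The paper closes this gap by reversing the order of operations. It first forms a single grand average $\hat P'$ of \emph{all} the transported polynomials (from both bases and both signs), so that the Trivial Remark immediately gives the joint freedom
\[
\hat P'+\sum_{\alpha\in\mathcal A}s_\alpha M_0\delta^{m-|\alpha|}P'_\alpha+\sum_{\alpha\in\hat{\mathcal A}}t_\alpha M_0\delta^{m-|\alpha|}\hat P'_\alpha\ \in\ \Gamma_{l_0-1}(y_0,CM_0)
\]
for all sufficiently small $s_\alpha,t_\alpha$. It then \emph{constructs} $\hat P^\#$ within this freedom, by solving an $\epsilon_0$-small linear system in the $s_\alpha$ (the transported $\mathcal A$-basis matrix being $\epsilon_0$-close to the identity) to force $\partial^\beta(\hat P^\#-P^0)(x_0)=0$ for $\beta\in\mathcal A$. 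Both bases at the single center $\hat P^\#$ then follow directly from the joint freedom, with no recentering step required.
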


\begin{remark}
Note that $\mathcal{A}$ and $\hat{\mathcal{A}}$ play different r\^oles here;
see \eqref{t1}, \eqref{t3}, and \eqref{t7}.
\end{remark}

\begin{proof}[Proof of the Transport Lemma]
In the trivial case $\mathcal{A}=\hat{\mathcal{A}}=\emptyset $, the
Transport Lemma holds simply because (by definition of the $l$-th
refinement) there exists $\hat{P}^{\#}\in \Gamma _{l_{0}-1}\left(
y_{0},C_{B}M_{0}\right) $ such that 
\begin{eqnarray*}
\left\vert \partial ^{\beta }\left( \hat{P}^{\#}-P^{0}\right) \left(
x_{0}\right) \right\vert &\leq &C_{B}M_{0}\left\vert x_{0}-y_{0}\right\vert
^{m-\left\vert \beta \right\vert } \\
&\leq &C_{B}M_{0}\delta ^{m-\left\vert \beta \right\vert }\text{ for }\beta
\in \mathcal{M}\text{.}
\end{eqnarray*}%
(Recall that $P^{0}\in \Gamma _{l_{0}}\left( x_{0},C_{B}M_{0}\right) $ since 
$\vec{\Gamma}_{l_{0}}$ has an $\left( \mathcal{A},\delta ,C_{B}\right) $%
-basis at $\left( x_{0},M_{0},P^{0}\right) $.)

From now on, we suppose that 
\begin{equation}
\#\left( \mathcal{A}\right) +\#\left( {\hat{\mathcal{A}}}\right) \not=0\text{%
.}  \label{t9}
\end{equation}

In proving the Transport Lemma, we do not yet take $\epsilon _{0}$ to be a
constant determined by $C_{B}$, $\hat{C}_{B}$, $C_{DIFF}$, $m$, $n$. Rather,
we make the following

\begin{itemize}
\item[\refstepcounter{equation}\text{(\theequation)}\label{t10}] \emph{Small 
$\epsilon_0$ assumption: $\epsilon_0$ is less than a small enough constant
determined by $C_{B}$, $\hat{C}_{B}$, $C_{DIFF}$, $m$, $n$. }
\end{itemize}

Assuming \eqref{t1}$\cdots $\eqref{t5} and \eqref{t9}, \eqref{t10}, we will
prove that there exists $\hat{P}^{\#}\in \mathcal{P}$ satisfying \eqref{t6}, %
\eqref{t7}, \eqref{t8}. Once we do so, we may then pick $\epsilon _{0}$ to
be a constant determined by $C_{B}$, $\hat{C}_{B}$, $C_{DIFF}$, $m$, $n$,
small enough to satisfy \eqref{t10}. That will complete the proof of the
Transport Lemma.

Thus, assume \eqref{t1}$\cdots$\eqref{t5} and \eqref{t9}, \eqref{t10}.

We write $c$, $C$, $C^{\prime }$, etc. to denote ``controlled constants'',
i.e., constants determined by $C_{B}$, $\hat{C}_{B}$, $C_{DIFF}$, $m$, $n$.
These symbols may denote different controlled constants in different
occurrences.

Let $(P_\alpha)_{\alpha \in \mathcal{A}}$ be an $(\mathcal{A},\delta,C_B)$%
-basis for $\vec{\Gamma}_{l_0}$ at $(x_0,M_0,P^0)$, and let $(\hat{P}%
_\alpha)_{\alpha \in \hat{\mathcal{A}}}$ be an $(\hat{\mathcal{A}},\delta,%
\hat{C}_B)$-basis for $\vec{\Gamma}_{l_0}$ at $(x_0,M_0,\hat{P}^0)$.

By definition, the following hold.

\begin{itemize}
\item[\refstepcounter{equation}\text{(\theequation)}\label{t11}] $%
P^{0}+c_{0}\sigma M_{0}\delta ^{m-\left\vert \alpha \right\vert }P_{\alpha
}\in \Gamma _{l_{0}}\left( x_{0},CM_{0}\right) $ for $\alpha \in \mathcal{A}$%
, $\sigma \in \left\{ 1,-1\right\} $.
\end{itemize}

\begin{itemize}
\item[\refstepcounter{equation}\text{(\theequation)}\label{t12}] $\hat{P}%
^{0}+\hat{c}_{0}\sigma M_{0}\delta ^{m-\left\vert \alpha \right\vert }\hat{P}%
_{\alpha }\in \Gamma _{l_{0}}\left( x_{0},CM_{0}\right) $ for $\alpha \in 
\hat{\mathcal{A}}$, $\sigma \in \left\{ 1,-1\right\} $.
\end{itemize}

\begin{itemize}
\item[\refstepcounter{equation}\text{(\theequation)}\label{t13}] $\partial
^{\beta }P_{\alpha }\left( x_{0}\right) =\delta _{\beta \alpha }$ for $%
\alpha ,\beta \in \mathcal{A}$.
\end{itemize}

\begin{itemize}
\item[\refstepcounter{equation}\text{(\theequation)}\label{t14}] $\partial
^{\beta }\hat{P}_{\alpha }\left( x_{0}\right) =\delta _{\beta \alpha }$ for $%
\alpha ,\beta \in \hat{\mathcal{A}}$.
\end{itemize}

\begin{itemize}
\item[\refstepcounter{equation}\text{(\theequation)}\label{t15}] $\left\vert
\partial ^{\beta }P_{\alpha }\left( x_{0}\right) \right\vert \leq C\delta
^{\left\vert \alpha \right\vert -\left\vert \beta \right\vert }$ for $\alpha
\in \mathcal{A}$, $\beta \in \mathcal{M}$.
\end{itemize}

\begin{itemize}
\item[\refstepcounter{equation}\text{(\theequation)}\label{t16}] $\left\vert
\partial ^{\beta }\hat{P}_{\alpha }\left( x_{0}\right) \right\vert \leq
C\delta ^{\left\vert \alpha \right\vert -\left\vert \beta \right\vert }$ for 
$\alpha \in \hat{\mathcal{A}}$, $\beta \in \mathcal{M}$.
\end{itemize}

We fix controlled constants $c_{0},\hat{c}_{0}$ as in \eqref{t11}, %
\eqref{t12}. Recall that $\vec{\Gamma}_{l_{0}}$ is the first refinement of $%
\vec{\Gamma}_{l_{0}-1}$. Therefore, by \eqref{t5}, \eqref{t10}, \eqref{t11}, there
exists $\tilde{P}_{\alpha ,\sigma }\in \Gamma _{l_{0}-1}(y_{0},CM_{0})$ ($%
\alpha \in \mathcal{A},\sigma \in \{1,-1\}$) such that 
\begin{eqnarray*}
&&\left\vert \partial ^{\beta }\left( \tilde{P}_{\alpha ,\sigma }-\left[
P^{0}+c_{0}\sigma M_{0}\delta ^{m-\left\vert \alpha \right\vert }P_{\alpha }%
\right] \right) \left( x_{0}\right) \right\vert \\
&\leq &CM_{0}\left\vert x_{0}-y_{0}\right\vert ^{m-\left\vert \beta
\right\vert }\leq C\epsilon _{0}M_{0}\delta ^{m-\left\vert \beta \right\vert
}\text{, for }\beta \in \mathcal{M}\text{.}
\end{eqnarray*}

Writing 
\begin{equation*}
E_{\alpha ,\sigma }=\frac{\tilde{P}_{\alpha ,\sigma }-\left[
P^{0}+c_{0}\sigma M_{0}\delta ^{m-\left\vert \alpha \right\vert }P_{\alpha }%
\right] }{c_{0}\sigma M_{0}\delta ^{m-\left\vert \alpha \right\vert }},
\end{equation*}
we have

\begin{itemize}
\item[\refstepcounter{equation}\text{(\theequation)}\label{t17}] $%
P^{0}+c_{0}\sigma M_{0}\delta ^{m-\left\vert \alpha \right\vert }\left(
P_{\alpha }+E_{\alpha ,\sigma }\right) \in \Gamma _{l_{0}-1}\left(
y_{0},CM_{0}\right) $ for $\alpha \in \mathcal{A},\sigma \in \left\{
1,-1\right\} $,
\end{itemize}
and
\begin{itemize}
\item[\refstepcounter{equation}\text{(\theequation)}\label{t18}] $\left\vert
\partial ^{\beta }E_{\alpha ,\sigma }\left( x_{0}\right) \right\vert \leq
C\epsilon _{0}\delta ^{\left\vert \alpha \right\vert -\left\vert \beta
\right\vert }$ for $\alpha \in \mathcal{A},\beta \in \mathcal{M}$, $\sigma
\in \left\{ 1,-1\right\} $.
\end{itemize}

Similarly, we obtain $\hat{E}_{\alpha ,\sigma }\in \mathcal{P}$ $\left( \alpha \in 
\hat{\mathcal{A}},\sigma \in \left\{ 1,-1\right\} \right) $, satisfying

\begin{itemize}
\item[\refstepcounter{equation}\text{(\theequation)}\label{t19}] $\hat{P}%
^{0}+\hat{c}_{0}\sigma M_{0}\delta ^{m-\left\vert \alpha \right\vert }\left( 
\hat{P}_{\alpha }+\hat{E}_{\alpha ,\sigma }\right) \in \Gamma
_{l_{0}-1}\left( y_{0},CM_{0}\right) $ for $\alpha \in \hat{\mathcal{A}}$, $%
\sigma \in \left\{ 1,-1\right\} $,
\end{itemize}
and
\begin{itemize}
\item[\refstepcounter{equation}\text{(\theequation)}\label{t20}] $\left\vert
\partial ^{\beta }\hat{E}_{\alpha ,\sigma }\left( x_{0}\right) \right\vert
\leq C\epsilon _{0}\delta ^{\left\vert \alpha \right\vert -\left\vert \beta
\right\vert }$ for $\alpha \in \hat{\mathcal{A}},\beta \in \mathcal{M}$, $%
\sigma \in \left\{ 1,-1\right\} $.
\end{itemize}

We introduce the following polynomials:

\begin{itemize}
\item[\refstepcounter{equation}\text{(\theequation)}\label{t21}] 
\begin{equation*}
\hat{P}^{\prime }=\frac{1}{2}\left[ \#\left( \mathcal{A}\right) +\#\left( 
\hat{\mathcal{A}}\right) \right] ^{-1}\left( 
\begin{array}{c}
\sum_{\alpha \in \mathcal{A}\text{,}\sigma =\pm 1}\left\{ P^{0}+c_{0}\sigma
M_{0}\delta ^{m-\left\vert \alpha \right\vert }\left( P_{\alpha }+E_{\alpha
,\sigma }\right) \right\}  \\ 
+\sum_{\alpha \in \hat{\mathcal{A}}\text{,}\sigma =\pm 1}\left\{ \hat{P}^{0}+%
\hat{c}_{0}\sigma M_{0}\delta ^{m-\left\vert \alpha \right\vert }\left( \hat{%
P}_{\alpha }+\hat{E}_{\alpha ,\sigma }\right) \right\} 
\end{array}%
\right) 
\end{equation*}%
(see \eqref{t9});
\end{itemize}

\begin{itemize}
\item[\refstepcounter{equation}\text{(\theequation)}\label{t22}] 
\begin{eqnarray*}
P_{\alpha }^{\prime } &=&\frac{1}{2c_{0}M_{0}\delta ^{m-\left\vert \alpha
\right\vert }}\left( 
\begin{array}{c}
\left\{ P^{0}+c_{0}M_{0}\delta ^{m-\left\vert \alpha \right\vert }\left(
P_{\alpha }+E_{\alpha ,1}\right) \right\}  \\ 
-\left\{ P^{0}-c_{0}M_{0}\delta ^{m-\left\vert \alpha \right\vert }\left(
P_{\alpha }+E_{\alpha ,-1}\right) \right\} 
\end{array}%
\right)  \\
&=&P_{\alpha }+\frac{1}{2}\left( E_{\alpha ,1}+E_{\alpha ,-1}\right) \text{
for }\alpha \in \mathcal{A}\text{;}
\end{eqnarray*}
\end{itemize}

\begin{itemize}
\item[\refstepcounter{equation}\text{(\theequation)}\label{t23}] 
\begin{eqnarray*}
\hat{P}_{\alpha }^{\prime } &=&\frac{1}{2\hat{c}_{0}M_{0}\delta
^{m-\left\vert \alpha \right\vert }}\left( 
\begin{array}{c}
\left\{ \hat{P}^{0}+\hat{c}_{0}M_{0}\delta ^{m-\left\vert \alpha \right\vert
}\left( \hat{P}_{\alpha }+\hat{E}_{\alpha ,1}\right) \right\}  \\ 
-\left\{ \hat{P}^{0}-\hat{c}_{0}M_{0}\delta ^{m-\left\vert \alpha
\right\vert }\left( \hat{P}_{\alpha }+\hat{E}_{\alpha ,-1}\right) \right\} 
\end{array}%
\right)  \\
&=&\hat{P}_{\alpha }+\frac{1}{2}\left( \hat{E}_{\alpha ,1}+\hat{E}_{\alpha
,-1}\right) \text{ for }\alpha \in \hat{\mathcal{A}}.
\end{eqnarray*}
\end{itemize}

For a small enough controlled constant $c_1$, we have

\begin{itemize}
\item[\refstepcounter{equation}\text{(\theequation)}\label{t24}] $\hat{P}%
^{\prime }+c_{1}M_{0}\delta ^{m-\left\vert \alpha \right\vert }P_{\alpha
}^{\prime },\hat{P}^{\prime }-c_{1}M_{0}\delta ^{m-\left\vert \alpha
\right\vert }P_{\alpha }^{\prime }\in \Gamma _{l_{0}-1}\left(
y_{0},CM_{0}\right) $ for $\alpha \in \mathcal{A}$,
\end{itemize}

and

\begin{itemize}
\item[\refstepcounter{equation}\text{(\theequation)}\label{t25}] $\hat{P}%
^{\prime }+c_{1}M_{0}\delta ^{m-\left\vert \alpha \right\vert }\hat{P}%
_{\alpha }^{\prime },\hat{P}^{\prime }-c_{1}M_{0}\delta ^{m-\left\vert
\alpha \right\vert }\hat{P}_{\alpha }^{\prime }\in \Gamma _{l_{0}-1}\left(
y_{0},CM_{0}\right) $ for $\alpha \in \hat{\mathcal{A}}$,
\end{itemize}
because each of the polynomials in \eqref{t24}, \eqref{t25} is a convex
combination of the polynomials in \eqref{t17}, \eqref{t19}. (In fact, we can take $c_1=\frac{c_0}{\#(\mathcal{A})+\#(\hat{\mathcal{A}})}$.)

From \eqref{t24}, \eqref{t25} and the Trivial Remark on Convex Sets in
Section \ref{notation-and-preliminaries}, we obtain the following, for a
small enough controlled constant $c_2$.

\begin{itemize}
\item[\refstepcounter{equation}\text{(\theequation)}\label{t26}] $\hat{P}%
^{\prime }+\sum_{\alpha \in \mathcal{A}}s_{\alpha }M_{0}\delta
^{m-\left\vert \alpha \right\vert }P_{\alpha }^{\prime }+\sum_{\alpha \in 
\hat{\mathcal{A}}}t_{\alpha }M_{0}\delta ^{m-\left\vert \alpha \right\vert }%
\hat{P}_{\alpha }^{\prime }\in \Gamma _{l_{0}-1}\left( y_{0},CM_{0}\right) $%
, whenever $\left\vert s_{\alpha }\right\vert \leq c_{2}$ for all $\alpha
\in \mathcal{A}$ and $\left\vert t_{\alpha }\right\vert \leq c_{2}$ for all $%
\alpha \in \hat{\mathcal{A}}$.
\end{itemize}

Note also that \eqref{t21} may be written in the equivalent form

\begin{itemize}
\item[\refstepcounter{equation}\text{(\theequation)}\label{t27}] 
\begin{eqnarray*}
&&\hat{P}^{\prime }=P^{0}+\left[ \frac{\#\left( \hat{\mathcal{A}}\right) }{%
\#\left( \mathcal{A}\right) +\#\hat{\left( \mathcal{A}\right) }}\right]
\left( \hat{P}^{0}-P^{0}\right)  \\
&&+\frac{1}{2\left[ \#\left( \mathcal{A}\right) +\#\left( \hat{\mathcal{A}}%
\right) \right] }\left\{ 
\begin{array}{c}
\sum_{\alpha \in \mathcal{A}\text{,}\sigma =\pm 1}c_{0}\sigma M_{0}\delta
^{m-\left\vert \alpha \right\vert }E_{\alpha ,\sigma } \\ 
+\sum_{\alpha \in \hat{\mathcal{A}}\text{,}\sigma =\pm 1}\hat{c}_{0}\sigma
M_{0}\delta ^{m-\left\vert \alpha \right\vert }\hat{E}_{\alpha ,\sigma }%
\end{array}%
\right\} \text{.}
\end{eqnarray*}
\end{itemize}

Consequently, \eqref{t3}, \eqref{t4}, \eqref{t18}, \eqref{t20} tell us that

\begin{itemize}
\item[\refstepcounter{equation}\text{(\theequation)}\label{t28}] $\left\vert
\partial ^{\beta }\left( \hat{P}^{\prime }-P^{0}\right) \left( x_{0}\right)
\right\vert \leq C\epsilon _{0}M_{0}\delta ^{m-\left\vert \beta \right\vert }
$ for $\beta \in \mathcal{A}$;
\end{itemize}

\begin{itemize}
\item[\refstepcounter{equation}\text{(\theequation)}\label{t29}] $\left\vert
\partial ^{\beta }\left( \hat{P}^{\prime}-P^{0}\right) \left( x_{0}\right)
\right\vert \leq CM_{0}\delta ^{m-\left\vert \beta \right\vert }$ for $\beta
\in \mathcal{M}$.
\end{itemize}

Similarly, \eqref{t13}$\cdots$\eqref{t16}, \eqref{t18}, \eqref{t20}, and %
\eqref{t22}, \eqref{t23} together imply the estimates

\begin{itemize}
\item[\refstepcounter{equation}\text{(\theequation)}\label{t30}] $\left\vert
\partial ^{\beta }P_{\alpha }^{\prime }\left( x_{0}\right) -\delta _{\beta
\alpha }\right\vert \leq C\epsilon _{0}\delta ^{\left\vert \alpha
\right\vert -\left\vert \beta \right\vert }$ for $\alpha ,\beta \in \mathcal{%
A}$;
\end{itemize}

\begin{itemize}
\item[\refstepcounter{equation}\text{(\theequation)}\label{t31}] $\left\vert
\partial ^{\beta }\hat{P}_{\alpha }^{\prime }\left( x_{0}\right) -\delta
_{\beta \alpha }\right\vert \leq C\epsilon _{0}\delta ^{\left\vert \alpha
\right\vert -\left\vert \beta \right\vert }$ for $\alpha ,\beta \in \hat{%
\mathcal{A}}$;
\end{itemize}

\begin{itemize}
\item[\refstepcounter{equation}\text{(\theequation)}\label{t32}] $\left\vert
\partial ^{\beta }P_{\alpha }^{\prime }\left( x_{0}\right) \right\vert \leq
C\delta ^{\left\vert \alpha \right\vert -\left\vert \beta \right\vert }$ for 
$\alpha \in \mathcal{A}$, $\beta \in \mathcal{M}$; and
\end{itemize}

\begin{itemize}
\item[\refstepcounter{equation}\text{(\theequation)}\label{t33}] $\left\vert
\partial ^{\beta }\hat{P}_{\alpha }^{\prime }\left( x_{0}\right) \right\vert
\leq C\delta ^{\left\vert \alpha \right\vert -\left\vert \beta \right\vert }$
for $\alpha \in \hat{\mathcal{A}}$, $\beta \in \mathcal{M}$.
\end{itemize}

From \eqref{t30}$\cdots$\eqref{t33} and \eqref{t5}, we have also

\begin{itemize}
\item[\refstepcounter{equation}\text{(\theequation)}\label{t34}] $\left\vert
\partial ^{\beta }P_{\alpha }^{\prime }\left( y_{0}\right) -\delta _{\beta
\alpha }\right\vert \leq C\epsilon _{0}\delta ^{\left\vert \alpha
\right\vert -\left\vert \beta \right\vert }$ for $\beta ,\alpha \in \mathcal{%
A}$;
\end{itemize}

\begin{itemize}
\item[\refstepcounter{equation}\text{(\theequation)}\label{t35}] $\left\vert
\partial ^{\beta }\hat{P}_{\alpha }^{\prime }\left( y_{0}\right) -\delta
_{\beta \alpha }\right\vert \leq C\epsilon _{0}\delta ^{\left\vert \alpha
\right\vert -\left\vert \beta \right\vert }$ for $\beta ,\alpha \in \hat{%
\mathcal{A}}$;
\end{itemize}

\begin{itemize}
\item[\refstepcounter{equation}\text{(\theequation)}\label{t36}] $\left\vert
\partial ^{\beta }P_{\alpha }^{\prime }\left( y_{0}\right) \right\vert \leq
C\delta ^{\left\vert \alpha \right\vert -\left\vert \beta \right\vert }$ for 
$\alpha \in \mathcal{A}$, $\beta \in \mathcal{M}$; and
\end{itemize}

\begin{itemize}
\item[\refstepcounter{equation}\text{(\theequation)}\label{t37}] $\left\vert
\partial ^{\beta }\hat{P}_{\alpha }^{\prime }\left( y_{0}\right) \right\vert
\leq C\delta ^{\left\vert \alpha \right\vert -\left\vert \beta \right\vert }$
for $\alpha \in \hat{\mathcal{A}},\beta \in \mathcal{M}$.
\end{itemize}

Next, we prove that there exists $\hat{P}^{\#}\in \mathcal{P}$ with the
following properties:

\begin{itemize}
\item[\refstepcounter{equation}\text{(\theequation)}\label{t38}] $\partial
^{\beta }\left( \hat{P}^{\#}-P^{0}\right) \left( x_{0}\right) =0$ for $\beta
\in \mathcal{A}$;
\end{itemize}

\begin{itemize}
\item[\refstepcounter{equation}\text{(\theequation)}\label{t39}] $\left\vert
\partial ^{\beta }\left( \hat{P}^{\#}-P^{0}\right) \left( x_{0}\right)
\right\vert \leq CM_{0}\delta ^{m-\left\vert \beta \right\vert }$ for $\beta
\in \mathcal{M}$;
\end{itemize}

for a small enough controlled constant $c_{3}$, we have

\begin{itemize}
\item[\refstepcounter{equation}\text{(\theequation)}\label{t40}] $\hat{P}%
^{\#}+\sum_{\alpha \in \mathcal{A}}s_{\alpha }M_{0}\delta ^{m-\left\vert
\alpha \right\vert }P_{\alpha }^{\prime }+\sum_{\alpha \in \hat{\mathcal{A}}%
}t_{\alpha }M_{0}\delta ^{m-\left\vert \alpha \right\vert }\hat{P}_{\alpha
}^{\prime }\in \Gamma _{l_{0}-1}\left( y_{0},CM_{0}\right) $, whenever all $%
\left\vert s_{\alpha }\right\vert $, $\left\vert t_{\alpha }\right\vert $
are less than $c_{3}$.
\end{itemize}

Indeed, if $\mathcal{A}=\emptyset $, we set $\hat{P}^{\#}=\hat{P}^{\prime }
$; then \eqref{t38} holds vacuously, and \eqref{t39}, \eqref{t40} simply
restate \eqref{t29}, \eqref{t26}. Suppose $\mathcal{A}\not=\emptyset $. We
will pick coefficients $s_{\alpha }^{\#}$ ($\alpha \in \mathcal{A}$) for
which

\begin{itemize}
\item[\refstepcounter{equation}\text{(\theequation)}\label{t41}] $\hat{P}%
^{\#}:=\hat{P}^{\prime }+\sum_{\alpha \in \mathcal{A}}s_{\alpha
}^{\#}M_{0}\delta ^{m-\left\vert \alpha \right\vert }P_{\alpha }^{\prime }$
satisfies \eqref{t38}, \eqref{t39}, \eqref{t40}.
\end{itemize}

In fact, with $\hat{P}^\#$ given by \eqref{t41}, equation \eqref{t38} is
equivalent to the system of the linear equations

\begin{itemize}
\item[\refstepcounter{equation}\text{(\theequation)}\label{t42}] $%
\sum_{\alpha \in \mathcal{A}}\left[ \delta ^{\left\vert \beta \right\vert
-\left\vert \alpha \right\vert }\partial ^{\beta }P_{\alpha }^{\prime
}\left( x_{0}\right) \right] s_{\alpha }^{\#}=-M_{0}^{-1}\delta ^{\left\vert
\beta \right\vert -m}\partial ^{\beta }\left( \hat{P}^{\prime }-P^{0}\right)
\left( x_{0}\right) $ $\left( \beta \in \mathcal{A}\right) $.
\end{itemize}

By \eqref{t28}, the right-hand side of \eqref{t42} has absolute
value at most $C\epsilon _{0}$. Hence, by \eqref{t30} and the {\em small $%
\epsilon _{0}$ assumption} \eqref{t10}, we can solve \eqref{t42} for the $%
s_{\alpha }^{\#}$, and we have

\begin{itemize}
\item[\refstepcounter{equation}\text{(\theequation)}\label{t43}] $\left\vert
s_{\alpha }^{\#}\right\vert \leq C\epsilon _{0}$ for all $\alpha \in 
\mathcal{A}$.
\end{itemize}

The resulting $\hat{P}^{\#}$ given by \eqref{t41} then satisfies \eqref{t38}%
. Moreover, for $\beta \in \mathcal{M}$, we have%
\begin{eqnarray*}
\left\vert \partial ^{\beta }\left( \hat{P}^{\#}-P^{0}\right) \left(
x_{0}\right) \right\vert  &\leq &\left\vert \partial ^{\beta }\left( \hat{P}%
^{\prime }-P^{0}\right) \left( x_{0}\right) \right\vert +\sum_{\alpha \in 
\mathcal{A}}\left\vert s_{\alpha }^{\#}\right\vert \cdot M_{0}\delta
^{m-\left\vert \alpha \right\vert }\left\vert \partial ^{\beta }P_{\alpha
}^{\prime }\left( x_{0}\right) \right\vert  \\
&\leq &\left\vert \partial ^{\beta }\left( \hat{P}^{\prime }-P^{0}\right)
\left( x_{0}\right) \right\vert +C\epsilon _{0}\delta ^{m-\left\vert \beta
\right\vert }M_{0}\text{,}
\end{eqnarray*}%
thanks to \eqref{t32}, \eqref{t41}, \eqref{t43}.

Therefore, \eqref{t29} gives%
\begin{equation*}
\left\vert \partial ^{\beta }\left( \hat{P}^{\#}-P^{0}\right) \left(
x_{0}\right) \right\vert \leq CM_{0}\delta ^{m-\left\vert \beta \right\vert }%
\text{ for }\beta \in \mathcal{M}\text{,}
\end{equation*}
proving \eqref{t39}.

Finally, \eqref{t40} follows at once from \eqref{t26}, \eqref{t41}, %
\eqref{t43} and the {\em small $\epsilon_0$ assumption} \eqref{t10}.

Thus, in all cases, there exists $\hat{P}^{\#}$ satisfying \eqref{t38}, %
\eqref{t39}, \eqref{t40}. We fix such a $\hat{P}^{\#}$.

Next, we produce an $(\mathcal{A},\delta ,C)$-basis for $\vec{\Gamma}%
_{l_{0}-1}$ at $(y_{0},M_{0},\hat{P}^{\#})$. To do so, we first suppose that 
$\mathcal{A}\not=\emptyset $, and set

\begin{itemize}
\item[\refstepcounter{equation}\text{(\theequation)}\label{t44}] $P_{\gamma
}^{\#}=\sum_{\alpha \in \mathcal{A}}b_{\gamma \alpha }\delta ^{\left\vert
\gamma \right\vert -\left\vert \alpha \right\vert }P_{\alpha }^{\prime }$
\end{itemize}

for real coefficients $(b_{\gamma \alpha })_{\gamma ,\alpha \in \mathcal{A}}$
to be picked below. For $\beta ,\gamma \in \mathcal{A}$, we have 
\begin{equation*}
\partial ^{\beta }P_{\gamma }^{\#}\left( y_{0}\right) =\delta ^{\left\vert
\gamma \right\vert -\left\vert \beta \right\vert }\cdot \sum_{\alpha \in 
\mathcal{A}}b_{\gamma \alpha }\left[ \delta ^{\left\vert \beta \right\vert
-\left\vert \alpha \right\vert }\partial ^{\beta }P_{\alpha }^{\prime
}\left( y_{0}\right) \right] \text{.}
\end{equation*}

Thanks to \eqref{t34} and the {\em small $\epsilon_0$ assumption \eqref{t10}}, we
may define $(b_{\gamma\alpha})_{\gamma,\alpha \in \mathcal{A}}$ as the
inverse matrix of $\left(\delta^{|\beta|-|\alpha|}\partial^\beta
P_\alpha^{\prime }(y_0)\right)_{\alpha,\beta \in \mathcal{A}}$, and we then
have

\begin{itemize}
\item[\refstepcounter{equation}\text{(\theequation)}\label{t45}] $\partial
^{\beta }P_{\gamma }^{\#}\left( y_{0}\right) =\delta _{\beta \gamma }$ $%
\left( \beta ,\gamma \in \mathcal{A}\right) $
\end{itemize}
and
\begin{itemize}
\item[\refstepcounter{equation}\text{(\theequation)}\label{t46}] $\left\vert
b_{\gamma \alpha }-\delta _{\gamma \alpha }\right\vert \leq C\epsilon _{0} $
for $\gamma ,\alpha \in \mathcal{A}$.
\end{itemize}

In particular,

\begin{itemize}
\item[\refstepcounter{equation}\text{(\theequation)}\label{t47}] $\left\vert
b_{\gamma \alpha }\right\vert \leq C$, 
\end{itemize}
and therefore for $\gamma \in 
\mathcal{A}$, $\beta \in \mathcal{M}$ we have
\begin{itemize}
\item[\refstepcounter{equation}\text{(\theequation)}\label{t48}] 
\begin{eqnarray*}
\left\vert \partial ^{\beta }P_{\gamma }^{\#}\left( y_{0}\right) \right\vert
&\leq &\sum_{\alpha \in \mathcal{A}}\left\vert b_{\gamma \alpha }\right\vert
\delta ^{\left\vert \gamma \right\vert -\left\vert \alpha \right\vert
}\left\vert \partial ^{\beta }P_{\alpha }^{\prime }\left( y_{0}\right)
\right\vert  \\
&\leq &C\sum_{\alpha \in \mathcal{A}}\delta ^{\left\vert \gamma \right\vert
-\left\vert \alpha \right\vert }\delta ^{\left\vert \alpha \right\vert
-\left\vert \beta \right\vert }\leq C^{\prime }\delta ^{\left\vert \gamma
\right\vert -\left\vert \beta \right\vert }\text{,}
\end{eqnarray*}
\end{itemize}
thanks to \eqref{t36}.

Also, for $\gamma \in \mathcal{A}$, we have%
\begin{equation*}
\left[ M_{0}\delta ^{m-\left\vert \gamma \right\vert }P_{\gamma }^{\#}\right]
=\sum_{\alpha \in \mathcal{A}}b_{\gamma \alpha }\cdot \left[ M_{0}\delta
^{m-\left\vert \alpha \right\vert }P_{\alpha }^{\prime }\right] \text{.}
\end{equation*}

Therefore, for a small enough controlled constant $c_4$, we have

\begin{itemize}
\item[\refstepcounter{equation}\text{(\theequation)}\label{t49}] $\hat{P}%
^{\#}+c_{4}M_{0}\delta ^{m-\left\vert \gamma \right\vert }P_{\gamma }^{\#}$, 
$\hat{P}^{\#}-c_{4}M_{0}\delta ^{m-\left\vert \gamma \right\vert }P_{\gamma
}^{\#}\in \Gamma _{l_{0}-1}\left( y_{0},CM_{0}\right) $ for $\gamma \in 
\mathcal{A}$,
\end{itemize}
thanks to \eqref{t40}, which in turn holds thanks to \eqref{t47}. Since we are assuming that $\mathcal{A}
\not=\emptyset $, \eqref{t49} implies that also

\begin{itemize}
\item[\refstepcounter{equation}\text{(\theequation)}\label{t50}] $\hat{P}%
^{\#}\in \Gamma _{l_{0}-1}\left( y_{0},CM_{0}\right) $.
\end{itemize}

Our results \eqref{t45}, \eqref{t48}, \eqref{t49}, \eqref{t50} tell us that $%
\left( P_{\gamma }^{\#}\right) _{\gamma \in \mathcal{A}}$ is an $(\mathcal{A}%
,\delta ,C)$-basis for $\vec{\Gamma}_{l_{0}-1}$ at $\left( y_{0},M_{0},\hat{P%
}^{\#}\right) $.

Thus, we have produced the desired $(\mathcal{A},\delta ,C)$-basis, provided
that $\mathcal{A}\not=\emptyset $. On the other hand, if $\mathcal{A}%
=\emptyset $, then the existence of an $(\mathcal{A},\delta ,C)$-basis for $%
\vec{\Gamma}_{l_{0}-1}$ at $\left( y_{0},M_{0},\hat{P}^{\#}\right) $ is
equivalent to the assertion that

\begin{itemize}
\item[\refstepcounter{equation}\text{(\theequation)}\label{t51}] $\hat{P}%
^{\#}\in \Gamma _{l_{0}-1}\left( y_{0},CM_{0}\right) $,
\end{itemize}

and \eqref{t51} follows at once from \eqref{t40}. Thus, in all cases,

\begin{itemize}
\item[\refstepcounter{equation}\text{(\theequation)}\label{t52}] $\vec{\Gamma%
}_{l_{0}-1}$ has an $\left( \mathcal{A},\delta ,C\right) $-basis at $\left(
y_{0},M_{0},\hat{P}^{\#}\right) $.
\end{itemize}

Similarly, we can produce an $(\hat{\mathcal{A}},\delta ,C)$-basis for $\vec{%
\Gamma}_{l_{0}-1}$ at $\left( y_{0},M_{0},\hat{P}^{\#}\right) $. We suppose
first that $\hat{\mathcal{A}}\not=\emptyset $, and set

\begin{itemize}
\item[\refstepcounter{equation}\text{(\theequation)}\label{t53}] $\hat{P}%
_{\gamma }^{\#}=\sum_{\alpha \in \hat{\mathcal{A}}}\hat{b}_{\beta \alpha
}\delta ^{\left\vert \gamma \right\vert -\left\vert \alpha \right\vert }\hat{%
P}_{\alpha }^{\prime }$ for $\gamma \in \hat{\mathcal{A}}$, with
coefficients $\hat{b}_{\gamma \alpha }$ to be picked below.
\end{itemize}

Thanks to \eqref{t35} and the \emph{small $\epsilon_0$ assumption} %
\eqref{t10}, we can pick the coefficients $\hat{b}_{\gamma\alpha}$ so that

\begin{itemize}
\item[\refstepcounter{equation}\text{(\theequation)}\label{t54}] $\partial
^{\beta }\hat{P}_{\gamma }^{\#}\left( y_{0}\right) =\delta _{\beta ,\gamma }$
for $\beta ,\gamma \in \hat{\mathcal{A}}$
\end{itemize}

and

\begin{itemize}
\item[\refstepcounter{equation}\text{(\theequation)}\label{t55}] $\left\vert 
\hat{b}_{\gamma \alpha }-\delta _{\gamma \alpha }\right\vert \leq C\epsilon
_{0}$ for $\gamma ,\alpha \in \hat{\mathcal{A}}$,
\end{itemize}

hence

\begin{itemize}
\item[\refstepcounter{equation}\text{(\theequation)}\label{t56}] $\left\vert 
\hat{b}_{\gamma \alpha }\right\vert \leq C$ for $\gamma ,\alpha \in \hat{%
\mathcal{A}}$.
\end{itemize}

From \eqref{t37}, \eqref{t53}, \eqref{t56}, we obtain the estimate

\begin{itemize}
\item[\refstepcounter{equation}\text{(\theequation)}\label{t57}] $\left\vert
\partial ^{\beta }\hat{P}_{\gamma }^{\#}\left( y_{0}\right) \right\vert \leq
C\delta ^{\left\vert \gamma \right\vert -\left\vert \beta \right\vert }$ for 
$\gamma \in \hat{\mathcal{A}}$ and $\beta \in \mathcal{M}$,
\end{itemize}

in analogy with \eqref{t48}. Also for $\gamma \in \hat{\mathcal{A}}$, we have%
\begin{equation*}
M_{0}\delta ^{m-\left\vert \gamma \right\vert }\hat{P}_{\gamma
}^{\#}=\sum_{\alpha \in \hat{\mathcal{A}}}\hat{b}_{\gamma \alpha }\left[
M_{0}\delta ^{m-\left\vert \alpha \right\vert }\hat{P}_{\alpha }^{\prime }%
\right] \text{.}
\end{equation*}

Together with \eqref{t40} and \eqref{t56}, this tells us that

\begin{itemize}
\item[\refstepcounter{equation}\text{(\theequation)}\label{t58}] $\hat{P}%
^{\#}+c_{5}M_{0}\delta ^{m-\left\vert \gamma \right\vert }\hat{P}_{\gamma
}^{\#}$, $\hat{P}^{\#}-c_{5}M_{0}\delta ^{m-\left\vert \gamma \right\vert }%
\hat{P}_{\gamma }^{\#}\in \Gamma _{l_{0}-1}\left( y_{0},CM_{0}\right) $ for $%
\gamma \in \hat{\mathcal{A}}$
\end{itemize}

in analogy with \eqref{t49}. Since we are assuming that $\hat{\mathcal{A}}%
\not=\emptyset $, \eqref{t58} implies that

\begin{itemize}
\item[\refstepcounter{equation}\text{(\theequation)}\label{t59}] $\hat{P}%
^{\#}\in \Gamma _{l_{0}-1}\left( y_{0},CM_{0}\right) $.
\end{itemize}

Our results \eqref{t54},\eqref{t57}, \eqref{t58}, \eqref{t59} tell us that $%
\left( \hat{P}_{\gamma }^{\#}\right) _{\gamma \in \hat{\mathcal{A}}}$ is an $%
(\hat{\mathcal{A}},\delta ,C)$-basis for $\vec{\Gamma}_{l_{0}-1}$ at $\left(
y_{0},M_{0},\hat{P}^{\#}\right) $.

Thus, we have produced the desired $(\hat{\mathcal{A}},\delta ,C)$-basis,
provided $\hat{\mathcal{A}}\not=\emptyset $. On the other hand, if $\hat{%
\mathcal{A}}=\emptyset $, then the existence of an $(\hat{\mathcal{A}}%
,\delta ,C)$-basis for $\vec{\Gamma}_{l_{0}-1}$ at $\left( y_{0},M_{0},\hat{P%
}^{\#}\right) $ is equivalent to the assertion that

\begin{itemize}
\item[\refstepcounter{equation}\text{(\theequation)}\label{t60}] $\hat{P}%
^{\#}\in \Gamma _{l_{0}-1}\left( y_{0},CM_{0}\right) $,
\end{itemize}

and \eqref{t60} follows at once from \eqref{t40}. Thus, in all cases,

\begin{itemize}
\item[\refstepcounter{equation}\text{(\theequation)}\label{t61}] $\vec{\Gamma%
}_{l_{0}-1}$ has an $\left( \hat{\mathcal{A}},\delta ,C\right) $-basis at $%
\left( y_{0},M_{0},\hat{P}^{\#}\right) $.
\end{itemize}

Our results \eqref{t52} and \eqref{t61} together yield conclusion \eqref{t6}
of the Transport Lemma (Lemma \ref{lemma-transport}). Also, our results %
\eqref{t38} and \eqref{t39} imply conclusions \eqref{t7} and \eqref{t8},
since $\mathcal{A}$ is monotonic. (See \eqref{t1}.)

Thus, starting from assumptions \eqref{t1}$\cdots$\eqref{t5} and \eqref{t9}, %
\eqref{t10}, we have proven conclusions \eqref{t6}, \eqref{t7}, \eqref{t8}
for our $\hat{P}^\#$.

The proof of the Transport Lemma (Lemma \ref{lemma-transport}) is complete.
\end{proof}

\begin{remark}
The monotonicity of $\mathcal{A}$ was used in the proof of the Transport Lemma only to gurantee that formulas \eqref{t3} and \eqref{t7} are independent of the base point. 
\end{remark}

\part{The Main Lemma}

\section{Statement of the Main Lemma}

\label{statement-of-the-main-lemma}

For $\mathcal{A}\subseteq \mathcal{M}$ monotonic, we define

\begin{itemize}
\item[\refstepcounter{equation}\text{(\theequation)}\label{m1}] $l\left( 
\mathcal{A}\right) =1+3\cdot \#\left\{ \mathcal{A}^{\prime }\subseteq 
\mathcal{M}:\mathcal{A}^{\prime }\text{ monotonic, }\mathcal{A}^{\prime }<%
\mathcal{A}\right\} $.
\end{itemize}

Thus,

\begin{itemize}
\item[\refstepcounter{equation}\text{(\theequation)}\label{m2}] $l\left( 
\mathcal{A}\right) -3\geq l\left( \mathcal{A}^{\prime }\right) $ for $%
\mathcal{A}^{\prime },\mathcal{A}\subseteq \mathcal{M}$ monotonic with $%
\mathcal{A}^{\prime }<\mathcal{A}$.
\end{itemize}

By induction on $\mathcal{A}$ (with respect to the order relation $<$), we
will prove the following result.

{\textbf{Main Lemma for $\mathcal{A}$}}\thinspace \thinspace {\textit{Let $%
\vec{\Gamma}_{0}=\left( \Gamma _{0}\left( x,M\right) \right) _{x\in E,M>0}$
be a $\left( C_{w},\delta _{\max }\right) $-convex shape field, and for $%
l\geq 1$, let $\vec{\Gamma}_{l}=\left( \Gamma _{l}\left( x,M\right) \right)
_{x\in E,M>0}$ be the $l$-th refinement of $\vec{\Gamma}_{0}$. Fix a dyadic
cube $Q_{0}\subset \mathbb{R}^{n}$, a point $x_{0}\in E\cap 5\left(
Q_{0}^{+}\right) $ and a polynomial $P^{0}\in \mathcal{P}$, as well as
positive real numbers $M_{0}$, $\epsilon $, $C_{B}$. We make the following
assumptions.}}

\begin{itemize}
\item[(A1)] \textit{$\vec{\Gamma}_{l\left( \mathcal{A}\right) }$ has an $%
\left( \mathcal{A},\epsilon ^{-1}\delta _{Q_{0}},C_{B}\right) $-basis at $%
\left( x_{0},M_{0},P^{0}\right) $.}

\item[(A2)] $\epsilon ^{-1}\delta _{Q_{0}}\leq \delta _{\max }$.

\item[(A3)] (\textquotedblleft Small $\epsilon $ Assumption")\textit{\ $%
\epsilon $ is less than a small enough constant determined by $C_{B}$, $C_{w}
$, $m$, $n$.}
\end{itemize}

\textit{Then there exists $F\in C^{m}\left( \frac{65}{64}Q_{0}\right) $
satisfying the following conditions.}

\begin{itemize}
\item[(C1)] \textit{$\left\vert \partial ^{\beta }\left( F-P^{0}\right)
\right\vert \leq C\left( \epsilon \right) M_{0}\delta _{Q_{0}}^{m-\left\vert
\beta \right\vert }$ on $\frac{65}{64}Q_{0}$ for $\left\vert \beta
\right\vert \leq m$, where $C\left( \epsilon \right) $ is determined by $%
\epsilon $, $C_{B}$, $C_{w}$, $m$, $n$.}

\item[(C2)] \textit{$J_{z}\left( F\right) \in \Gamma _{0}\left( z,C^{\prime
}\left( \epsilon \right) M_{0}\right) $ for all $z\in E\cap \frac{65}{64}%
Q_{0}$, where $C^{\prime }\left( \epsilon \right) $ is determined by $%
\epsilon $, $C_{B}$, $C_{w}$, $m$, $n$.}
\end{itemize}

\begin{remarks} 
\begin{itemize}
\item We state the Main Lemma only for monotonic $\mathcal{A}$.
\item Note that $x_0$ may fail to belong to $\frac{65}{64}Q_0$, hence the
assertion $J_{x_0}(F)=P^0$ may be meaningless. Even if $x_0 \in \frac{65}{64}%
Q_0$, we do not assert that $J_{x_0}(F)=P^0$. However, see Corollary \ref{lemma-to-previous-results-on-shape-fields} in Section \ref{fp-i} below. 
\end{itemize}
\end{remarks}

\section{The Base Case}

\label{the-base-case} The base case of our induction on $\mathcal{A}$ is the
case $\mathcal{A}= \mathcal{M}$.

In this section, we prove the Main Lemma for $\mathcal{M}$. The hypotheses
of the lemma are as follows:

\begin{itemize}
\item[\refstepcounter{equation}\text{(\theequation)}\label{b1}] $\vec{\Gamma}%
_{0}=\left( \Gamma _{0}\left( x,M\right) \right) _{x\in E,M>0}$ is a $\left(
C_{w},\delta _{\max }\right) $-convex shape field.
\end{itemize}

\begin{itemize}
\item[\refstepcounter{equation}\text{(\theequation)}\label{b2}] $\vec{\Gamma}%
_{1}=\left( \Gamma _{1}\left( x,M\right) \right) _{x\in E,M>0}$ is the first
refinement of $\vec{\Gamma}_{0}$.
\end{itemize}

\begin{itemize}
\item[\refstepcounter{equation}\text{(\theequation)}\label{b3}] $\vec{\Gamma}%
_{1}$ has an $\left( \mathcal{M},\epsilon ^{-1}\delta _{Q_{0}},C_{B}\right) $%
-basis at $\left( x_{0},M_{0},P^{0}\right) $.
\end{itemize}

\begin{itemize}
\item[\refstepcounter{equation}\text{(\theequation)}\label{b4}] $\epsilon
^{-1}\delta _{Q_{0}}\leq \delta _{\max }$.
\end{itemize}

\begin{itemize}
\item[\refstepcounter{equation}\text{(\theequation)}\label{b5}] $\epsilon $
is less than a small enough constant determined by $C_{B}$, $C_{w}$, $m$, $n$%
.
\end{itemize}

\begin{itemize}
\item[\refstepcounter{equation}\text{(\theequation)}\label{b6}] $x_{0}\in
5\left( Q_{0}\right) ^{+}\cap E$.
\end{itemize}

We write $c$, $C$, $C^{\prime }$, etc., to denote constants determined by $%
C_{B}$, $C_{W}$, $m$, $n$. These symbols may denote different constants in
different occurrences.

\begin{itemize}
\item[\refstepcounter{equation}\text{(\theequation)}\label{b7}] Let $z\in
E\cap \frac{65}{64}Q_{0}$.
\end{itemize}

Then \eqref{b6}, \eqref{b7} imply that

\begin{itemize}
\item[\refstepcounter{equation}\text{(\theequation)}\label{b8}] $\left\vert
z-x_{0}\right\vert \leq C\delta _{Q_{0}}=C\epsilon \cdot \left( \epsilon
^{-1}\delta _{Q_{0}}\right) $.
\end{itemize}

From \eqref{b1}, \eqref{b2}, \eqref{b3}, \eqref{b5}, \eqref{b8},
and Lemma \ref{lemma-transport} in Section \ref{transport-lemma} (with $\hat{\mathcal{A}} = \mathcal{A}$, $\hat{P}^0=P^0$), we obtain
a polynomial $\hat{P}^{\#}\in \mathcal{P}$ such that

\begin{itemize}
\item[\refstepcounter{equation}\text{(\theequation)}\label{b9}] $\vec{\Gamma}%
_{0}$ has an $\left( \mathcal{M},\epsilon ^{-1}\delta _{Q_{0}},C^{\prime
}\right) $-basis at $\left( z,M_{0},\hat{P}^{\#}\right) $, and
\end{itemize}

\begin{itemize}
\item[\refstepcounter{equation}\text{(\theequation)}\label{b10}] $\partial
^{\beta }\left( \hat{P}^{\#}-P^{0}\right) =0$ for $\beta \in \mathcal{M}$.
\end{itemize}

From \eqref{b9}, we have $\hat{P}^\# \in \Gamma_0(z,C^{\prime }M_0)$, while %
\eqref{b10} tells us that $\hat{P}^\# = P^0$. Thus,

\begin{itemize}
\item[\refstepcounter{equation}\text{(\theequation)}\label{b11}] $P^{0}\in
\Gamma _{0}\left( z,C^{\prime }M_{0}\right) $ for all $z\in \frac{65}{64}%
Q_{0}\cap E$.
\end{itemize}

Consequently, the function $F:=P^0$ on $\frac{65}{64}Q_0$ satisfies the
conclusions (C1), (C2) of the Main Lemma for $\mathcal{M}$.

This completes the proof of the Main Lemma for $\mathcal{M}$.   $ \blacksquare $

\section{Setup for the Induction Step}

\label{setup-for-the-induction-step}

Fix a monotonic set $\mathcal{A}$ strictly contained in $\mathcal{M}$, and
assume the following

\begin{itemize}
\item[\refstepcounter{equation}\text{(\theequation)}\label{s1}] \underline{%
Induction Hypothesis}: The Main Lemma for $\mathcal{A}^{\prime }$ holds for
all monotonic $\mathcal{A}^{\prime }< \mathcal{A}$.
\end{itemize}

Under this assumption, we will prove the Main Lemma for $\mathcal{A}$. Thus,
let $\vec{\Gamma}_{0}$, $\vec{\Gamma}_{l}$ $(l\geq 1)$, $C_{w}$, $\delta
_{\max }$, $Q_{0}$, $x_{0}$, $P^{0}$, $M_{0}$, $\epsilon $, $C_{B}$ be as in
the hypotheses of the Main Lemma for $\mathcal{A}$. Our goal is to prove the
existence of $F\in C^{m}(\frac{65}{64}Q_{0})$ satisfying conditions (C1) and
(C2). To do so, we introduce a constant $A\geq 1$, and make the following
additional assumptions.

\begin{itemize}
\item[\refstepcounter{equation}\text{(\theequation)}\label{s2}] \underline{%
Large $A$ assumption:} $A$ exceeds a large enough constant determined by $C_B
$, $C_w$, $m$, $n$.
\end{itemize}

\begin{itemize}
\item[\refstepcounter{equation}\text{(\theequation)}\label{s3}] \underline{%
Small $\epsilon$ assumption:} $\epsilon$ is less than a small enough
constant determined by $A$, $C_B$, $C_w$, $m$, $n$.
\end{itemize}

We write $c$, $C$, $C^{\prime }$, etc., to denote constants determined by $%
C_{B}$, $C_{w}$, $m$, $n$. Also we write $c(A)$, $C(A)$, $C^{\prime }(A)$,
etc., to denote constants determined by $A$, $C_{B}$, $C_{w}$, $m$, $n$.
Similarly, we write $C\left( \epsilon \right) $, $c\left( \epsilon \right) $%
, $C^{\prime }\left( \epsilon \right) $, etc., to denote constants
determined by $\epsilon $, $A$, $C_{B}$, $C_{w}$, $m$, $n$. These symbols
may denote different constants in different occurrences.

In place of (C1), (C2), we will prove the existence of a function $F\in
C^{m}\left( \frac{65}{64}Q_{0}\right) $ satisfying

\begin{itemize}
\item[(C*1)] $\left\vert \partial ^{\beta }\left( F-P^{0}\right) \right\vert
\leq C\left( \epsilon \right) M_{0}\delta _{Q_{0}}^{m-\left\vert \beta
\right\vert }$ on $\frac{65}{64}Q_{0}$ for $|\beta |\leq \mathcal{M}$; and

\item[(C*2)] $J_{z}\left( F\right) \in \Gamma _{0}\left( z,C\left( \epsilon
\right) M_{0}\right) $ for all $z\in E\cap \frac{65}{64}Q_{0}$.
\end{itemize}

Conditions (C*1), (C*2) differ from (C1), (C2) in that the constants in
(C*1), (C*2) may depend on $A$.

Once we establish (C*1) and (C*2), we may fix $A$ to be a constant
determined by $C_{B}$, $C_{w}$, $m$, $n$, large enough to satisfy the Large $%
A$ Assumption \eqref{s2}. The Small $\epsilon $ Assumption \eqref{s3} will
then follow from the Small $\epsilon $ Assumption (A3) in the Main Lemma for 
$\mathcal{A}$; and the desired conclusions (C1), (C2) will then follow from
(C*1), (C*2).

Thus, our goal is to prove the existence of $F\in C^{m}\left( \frac{65}{64}%
Q_{0}\right) $ satisfying (C*1) and (C*2), assuming \eqref{s1}, \eqref{s2}, %
\eqref{s3} above, along with hypotheses of the Main Lemma for $\mathcal{A}$.
This will complete our induction on $\mathcal{A}$ and establish the Main
Lemma for all monotonic subsets of $\mathcal{M}$.

\section{Calder\'on-Zygmund Decomposition}

\label{cz-decomposition}

We place ourselves in the setting of Section \ref%
{setup-for-the-induction-step}. Let $Q$ be a dyadic cube. We say that $Q$ is
``OK'' if \eqref{cz1} and \eqref{cz2} below are satisfied.

\begin{itemize}
\item[\refstepcounter{equation}\text{(\theequation)}\label{cz1}] $%
5Q\subseteq 5Q_{0}$.
\end{itemize}

\begin{itemize}
\item[\refstepcounter{equation}\text{(\theequation)}\label{cz2}] Either $%
\#(E\cap 5Q)\leq 1$ or there exists $\hat{A}<\mathcal{A}$ (strict
inequality) for which the following holds:
\end{itemize}

\begin{itemize}
\item[\refstepcounter{equation}\text{(\theequation)}\label{cz3}] For each $%
y\in E\cap 5Q$ there exists $\hat{P}^{y}\in \mathcal{P}$ satisfying

\begin{itemize}
\item[(3a)] $\vec{\Gamma}_{l\left( \mathcal{A}\right) -3}$ has a weak $%
\left( \hat{\mathcal{A}},\epsilon ^{-1}\delta _{Q},A\right) $-basis at $%
\left( y,M_{0},\hat{P}^{y}\right) $.

\item[(3b)] $\left\vert \partial ^{\beta }\left( \hat{P}^{y}-P^{0}\right)
\left( x_{0}\right) \right\vert \leq AM_{0}\left( \epsilon ^{-1}\delta
_{Q_{0}}\right) ^{m-\left\vert \beta \right\vert }$ for all $\beta \in 
\mathcal{M}$.

\item[(3c)] $\partial ^{\beta }\left( \hat{P}^{y}-P^{0}\right) \equiv 0$ for 
$\beta \in \mathcal{A}$.
\end{itemize}
\end{itemize}

\begin{remark}
The argument in this section and the next will depend sensitively on several
details of the above definition. Note that (3a) involves $\vec{\Gamma}_{l(%
\mathcal{A})-3}$ rather than $\vec{\Gamma}_{l(\hat{\mathcal{A}})}$, and that
(3b) involves $x_0$, $\delta_{Q_0}$ rather than $y$, $\delta_Q$. Note also
that the set $\hat{\mathcal{A}}$ in \eqref{cz2}, \eqref{cz3} needn't be
monotonic.
\end{remark}

A dyadic cube $Q$ will be called a \underline{Calder\'on-Zygmund cube} (or a 
\underline{CZ} cube) if it is OK, but no dyadic cube strictly containing $Q$
is OK.

Recall that given any two distinct dyadic cubes $Q$, $Q^{\prime }$, either $Q
$ is strictly contained in $Q^{\prime }$, or $Q^{\prime }$ is strictly
contained in $Q$, or $Q \cap Q^{\prime }=\emptyset$. The first two
alternatives here are ruled out if $Q$, $Q^{\prime }$ are CZ cubes. Hence,
the Calder\'on-Zygmund cubes are pairwise disjoint.

Any CZ cube $Q$ satisfies \eqref{cz1} and is therefore contained in the
interior of $5Q_0$. On the other hand, let $x$ be an interior point of $5Q_0$%
. Then any sufficiently small dyadic cube $Q$ containing $x$ satisfies $5Q
\subset 5Q_0$ and $\#(E \cap 5Q) \leq 1$; such $Q$ are OK. However, any
sufficiently large dyadic cube $Q$ containing $x$ will fail to satisfy $5Q
\subseteq 5Q_0$; such $Q$ are not OK. It follows that $x$ is contained in a
maximal OK dyadic cube. Thus, we have proven

\begin{lemma}
\label{lemma-cz1} The CZ cubes form a partition of the interior of $5Q_0$.
\end{lemma}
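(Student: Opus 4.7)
The plan is to verify two properties separately: that the CZ cubes are pairwise disjoint, and that they cover the interior of $5Q_0$. Both pieces are essentially telegraphed in the paragraph preceding the lemma, so the argument will be short.

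For disjointness, I will note that any two distinct dyadic cubes are either disjoint or strictly nested. If two distinct CZ cubes were nested, say $Q \subsetneq Q'$, then $Q'$ would be an OK dyadic cube strictly containing $Q$, directly contradicting the maximality clause in the definition of a CZ cube (and symmetrically in the other orientation). Hence distinct CZ cubes are disjoint.

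For covering, I will fix an interior point $x$ of $5Q_0$ and consider the bi-infinite chain $\cdots \subsetneq Q_x^{(k)} \subsetneq Q_x^{(k+1)} \subsetneq \cdots$ of dyadic cubes containing $x$, where $Q_x^{(k)}$ is the unique dyadic cube of sidelength $2^k$ that contains $x$; every dyadic cube containing $x$ appears in this chain. I will show: (a) for all sufficiently negative $k$, the cube $Q_x^{(k)}$ is OK, because $x$ being interior to $5Q_0$ gives $5 Q_x^{(k)} \subseteq 5 Q_0$ once $2^k$ is small enough, and the finiteness of $E$ gives $\#(E \cap 5 Q_x^{(k)}) \leq 1$ once $2^k$ is small enough; (b) for all sufficiently large $k$, the cube $Q_x^{(k)}$ fails to be OK because $5 Q_x^{(k)} \not\subseteq 5 Q_0$. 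Therefore there is a largest integer $k^{\ast}$ with $Q_x^{(k^{\ast})}$ OK, and since every dyadic cube strictly containing $Q_x^{(k^{\ast})}$ is some $Q_x^{(l)}$ with $l > k^{\ast}$ and hence fails to be OK, the cube $Q_x^{(k^{\ast})}$ is a CZ cube containing $x$.

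There is essentially no hard step. The one small point to keep in mind is that condition \eqref{cz1} is monotonic upward under inclusion — once $5Q \not\subseteq 5Q_0$ holds it continues to hold for every dyadic cube containing $Q$ — which is what guarantees that the largest OK $k^{\ast}$ actually exists (rather than OK-ness alternating infinitely often up the chain).
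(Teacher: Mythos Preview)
Your proof is correct and follows essentially the same approach as the paper's (which appears in the paragraph immediately preceding the lemma statement): disjointness via the dyadic trichotomy plus maximality, and covering via the observation that sufficiently small dyadic cubes around an interior point are OK while sufficiently large ones are not. One minor note: the monotonicity remark at the end, while true, is not actually needed for the argument---the set $\{k : Q_x^{(k)} \text{ is OK}\}$ is nonempty and bounded above in $\mathbb{Z}$, hence has a maximum $k^\ast$ regardless of any alternation, and by definition of maximum every $Q_x^{(l)}$ with $l>k^\ast$ fails to be OK.
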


Next, we establish

\begin{lemma}
\label{lemma-cz2} Let $Q$, $Q^{\prime }$ be CZ cubes. If $\frac{65}{64}Q
\cap \frac{65}{64}Q^{\prime }\not= \emptyset$, then $\frac{1}{2}\delta_Q
\leq \delta_{Q^{\prime }}\leq 2\delta_Q$.
\end{lemma}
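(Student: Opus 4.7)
The plan is to argue by contradiction, exploiting the maximality that defines a CZ cube. By the symmetric roles of $Q$ and $Q'$, I may assume $\delta_{Q'}\leq \delta_Q$, and it suffices to rule out $\delta_{Q'}<\tfrac12\delta_Q$. Since dyadic sidelengths are powers of $2$, this hypothetical inequality forces $\delta_{Q'}\leq \tfrac14\delta_Q$, so that the dyadic parent $(Q')^{+}$ satisfies $\delta_{(Q')^{+}}=2\delta_{Q'}\leq \tfrac12\delta_Q$. I will derive a contradiction by showing that under this assumption $(Q')^{+}$ is itself OK, which is incompatible with $Q'$ being a CZ cube.

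The crucial geometric step is to prove $5(Q')^{+}\subseteq 5Q$. Picking any $x\in \tfrac{65}{64}Q\cap \tfrac{65}{64}Q'$ and working in sup norm $|\cdot|_\infty$, one has $|x-c_Q|_\infty\leq \tfrac{65}{128}\delta_Q$ and $|x-c_{Q'}|_\infty\leq \tfrac{65}{128}\delta_{Q'}$, where $c_Q,c_{Q'}$ denote centers. The dyadic structure forces $|c_{(Q')^{+}}-c_{Q'}|_\infty\leq \tfrac12\delta_{Q'}$, and any $y\in 5(Q')^{+}$ satisfies $|y-c_{(Q')^{+}}|_\infty\leq \tfrac52\delta_{(Q')^{+}}=5\delta_{Q'}$. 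Combining these via the triangle inequality and invoking $\delta_{Q'}\leq \tfrac14\delta_Q$ yields $|y-c_Q|_\infty\leq \tfrac52\delta_Q$ (a routine numerical check), i.e. $5(Q')^{+}\subseteq 5Q$.

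Once this inclusion is in hand, transferring OK-ness from $Q$ to $(Q')^{+}$ is straightforward. The chain $5(Q')^{+}\subseteq 5Q\subseteq 5Q_0$ immediately gives \eqref{cz1} for $(Q')^{+}$. For \eqref{cz2}: if $\#(E\cap 5Q)\leq 1$ then $\#(E\cap 5(Q')^{+})\leq 1$ at once; otherwise the OK-ness of $Q$ supplies some $\hat{\mathcal{A}}<\mathcal{A}$ and polynomials $\hat P^y$ obeying \eqref{cz3} for every $y\in E\cap 5Q$, and I use the same $\hat{\mathcal{A}}$ and the same $\hat P^y$ for every $y\in E\cap 5(Q')^{+}\subseteq E\cap 5Q$. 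Conditions (3b) and (3c) involve only $x_0$, $P^0$, $\delta_{Q_0}$, and $\hat P^y$, so they carry over verbatim; for (3a), remark \eqref{pb7} guarantees that a weak $(\hat{\mathcal{A}},\epsilon^{-1}\delta_Q,A)$-basis is automatically a weak $(\hat{\mathcal{A}},\epsilon^{-1}\delta_{(Q')^{+}},A)$-basis, since $\epsilon^{-1}\delta_{(Q')^{+}}\leq \epsilon^{-1}\delta_Q$. Hence $(Q')^{+}$ is OK, contradicting the maximality built into the definition of a CZ cube.

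The only genuine obstacle is the geometric inclusion in the second step; beyond that, the argument is purely bookkeeping, checking that the OK conditions survive the two operations ``pass from $5Q$ to a subcube contained in $5Q$'' and ``shrink $\delta_Q$ to $\delta_{(Q')^{+}}$ in the weak-basis scale.''
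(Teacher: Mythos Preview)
Your proof is correct and follows essentially the same approach as the paper: argue by contradiction, show that the dyadic parent of the smaller cube has its $5$-dilate contained in the $5$-dilate of the larger cube, and then transfer OK-ness using the monotonicity of weak bases in the scale parameter (remark \eqref{pb7}) together with the fact that (3b), (3c) are independent of the cube. The only cosmetic difference is that the paper takes $Q$ to be the smaller cube and derives that $Q^{+}$ is OK from the OK-ness of $Q'$, whereas you swap the roles and show $(Q')^{+}$ is OK from the OK-ness of $Q$; your explicit $\ell^\infty$ computation for the inclusion $5(Q')^{+}\subseteq 5Q$ is a welcome elaboration of what the paper leaves as a one-line assertion.
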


\begin{proof}
Suppose not. Without loss of generality, we may suppose that $\delta_Q \leq 
\frac{1}{4}\delta_{Q^{\prime }}$. Then $\delta_{Q^+} \leq \frac{1}{2}%
\delta_{Q^{\prime }}$, and $\frac{65}{64}Q^+ \cap \frac{65}{64}Q^{\prime
}\not= \emptyset$; hence, $5Q^+ \subset 5Q^{\prime }$. The cube $Q^{\prime }$
is OK. Therefore,

\begin{itemize}
\item[\refstepcounter{equation}\text{(\theequation)}\label{cz4}] $%
5Q^{+}\subset 5Q^{\prime }\subseteq 5Q_{0}$.
\end{itemize}

If $\#\left( E\cap 5Q^{\prime }\right) \leq 1$, then also $\#\left( E\cap
5Q^{+}\right) \leq 1$. Otherwise, there exists $\hat{\mathcal{A}}<\mathcal{A}
$ such that for each $y\in E\cap 5Q^{\prime }$ there exists $\hat{P}^{y}\in 
\mathcal{P}$ satisfying

\begin{itemize}
\item[\refstepcounter{equation}\text{(\theequation)}\label{cz5}] $\vec{\Gamma%
}_{l\left( \mathcal{A}\right) -3}$ has a weak $\left( \hat{\mathcal{A}}%
,\epsilon ^{-1}\delta _{Q^{\prime }},A\right) $-basis at $\left( y,M_{0},%
\hat{P}^{y}\right) $,
\end{itemize}

\begin{itemize}
\item[\refstepcounter{equation}\text{(\theequation)}\label{cz6}] $\left\vert
\partial ^{\beta }\left( \hat{P}^{y}-P^{0}\right) \left( x_{0}\right)
\right\vert \leq AM_{0}\left( \epsilon ^{-1}\delta _{Q_{0}}\right)
^{m-\left\vert \beta \right\vert }$ for $\beta \in \mathcal{M}$, and
\end{itemize}

\begin{itemize}
\item[\refstepcounter{equation}\text{(\theequation)}\label{cz7}] $\partial
^{\beta }\left( \hat{P}^{y}-P^{0}\right) \equiv 0$ for $\beta \in \mathcal{A}
$.
\end{itemize}

For each $y \in E \cap 5Q^+\subseteq E \cap 5Q^{\prime }$, the above $\hat{P}%
^y$ satisfies \eqref{cz6}, \eqref{cz7}; and \eqref{cz5} implies

\begin{itemize}
\item[\refstepcounter{equation}\text{(\theequation)}\label{cz8}] $\vec{\Gamma%
}_{l\left( \mathcal{A}\right) -3}$ has a weak $\left( \hat{\mathcal{A}}%
,\epsilon ^{-1}\delta _{Q^{+}},A\right) $-basis at $\left( y,M_{0},\hat{P}%
^{y}\right) $
\end{itemize}
because $\epsilon ^{-1}\delta _{Q^{+}}<\epsilon ^{-1}\delta _{Q^{\prime }}$,
and because \eqref{cz5}, \eqref{cz8} deal with weak bases. (See remarks in Section \ref{polynomial-bases}.) Thus, \eqref{cz4}
holds, and either $\#\left( E\cap 5Q^{+}\right) \leq 1$ or else our $\hat{%
\mathcal{A}}<\mathcal{A}$ and $\hat{P}^{y}$ $\left( y\in E\cap 5Q^{+}\right) 
$ satisfy \eqref{6}, \eqref{7}, \eqref{8}. This tells us that $Q^{+}$ is OK.
However, $Q^{+}$ strictly contains the CZ cube $Q$; therefore, $Q^{+}$
cannot be OK. This contradiction completes the proof of Lemma \ref{lemma-cz2}%
.
\end{proof}

Note that the proof of Lemma \ref{lemma-cz2} made use of our decision to
involve $x_0$, $\delta_{Q_0}$ rather than $y$, $\delta_Q$ in (3b), as well
as our decision to use weak bases in (3a).

We also have the following easy lemma. 

\begin{lemma}
\label{lemma-cz3} Only finitely many CZ cubes $Q$ satisfy the condition

\begin{itemize}
\item[\refstepcounter{equation}\text{(\theequation)}\label{cz9}] $\frac{65}{%
64}Q \cap \frac{65}{64} Q_0 \not= \emptyset$.
\end{itemize}
\end{lemma}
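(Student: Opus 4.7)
The plan is to combine a trivial upper bound on the sidelength of such cubes $Q$ with a matching lower bound, and then invoke the fact that only finitely many dyadic cubes of a bounded range of sidelengths can fit near $Q_0$.

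First I would obtain the upper bound: since any CZ cube $Q$ is OK, property \eqref{cz1} gives $5Q\subseteq 5Q_0$, hence $\delta_Q\le \delta_{Q_0}$. This is immediate.

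The main content is the lower bound, and this is where I would use the hypothesis that $E$ is finite (shape fields in Section \ref{shape-fields} are defined on finite $E$). Set
\begin{equation*}
r:=\min\bigl\{|y-y'|:y,y'\in E\cap 5Q_0,\ y\ne y'\bigr\},
\end{equation*}
with the convention $r=+\infty$ if $\#(E\cap 5Q_0)\le 1$. Suppose for contradiction that $Q$ is a CZ cube with $\frac{65}{64}Q\cap\frac{65}{64}Q_0\ne\emptyset$ and $\delta_Q$ extremely small, say $\delta_Q\le \min\{\delta_{Q_0}/100,\, r/100\}$. Then the parent $Q^+$ still satisfies $5Q^+\subseteq 5Q_0$: indeed, $\frac{65}{64}Q\cap\frac{65}{64}Q_0\ne\emptyset$ places the center of $Q$ (and hence of $Q^+$) within $O(\delta_{Q_0})$ of $Q_0$, and the diameter of $5Q^+=10\delta_Q$ is negligible compared to $\delta_{Q_0}$, so $5Q^+$ sits well inside $5Q_0$. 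Moreover, $\mathrm{diam}(5Q^+)<r$, so $\#(E\cap 5Q^+)\le 1$. Both clauses of the definition of "OK" are then satisfied by $Q^+$ (the second one vacuously, via the $\#(E\cap 5Q^+)\le 1$ alternative in \eqref{cz2}). This contradicts the maximality in the definition of CZ cube, since $Q^+$ strictly contains $Q$.

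Hence every CZ cube $Q$ satisfying \eqref{cz9} has $c\,\delta_{Q_0}\le \delta_Q\le \delta_{Q_0}$ for some $c>0$ depending on $E$, $Q_0$, giving finitely many allowed dyadic sidelengths. For each such sidelength, the cube $\frac{65}{64}Q$ must meet $\frac{65}{64}Q_0$, which forces the center of $Q$ to lie in a bounded region of $\mathbb{R}^n$; only finitely many dyadic cubes of that sidelength satisfy this. Combining, only finitely many CZ cubes satisfy \eqref{cz9}, which is the claim. The only non-routine point is verifying that small $\delta_Q$ guarantees $5Q^+\subseteq 5Q_0$, but this is a direct calculation using $\frac{65}{64}Q\cap\frac{65}{64}Q_0\ne\emptyset$.
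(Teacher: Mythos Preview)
Your argument is correct and is exactly the natural one: bound $\delta_Q$ above by $\delta_{Q_0}$ using \eqref{cz1}, bound it below by exploiting finiteness of $E$ to force $\#(E\cap 5Q^+)\le 1$ (so $Q^+$ is OK, contradicting maximality), and conclude. The paper itself gives no proof---it simply labels Lemma~\ref{lemma-cz3} an ``easy lemma''---so there is nothing to compare against; your write-up is precisely what the omitted argument would be. One cosmetic point: the threshold $\delta_Q\le r/100$ should really be $\delta_Q\le r/(C\sqrt{n})$ so that the Euclidean diameter of $5Q^+$ is genuinely below $r$, but since you only need \emph{some} positive lower bound depending on $E$ and $Q_0$, this does not affect the conclusion.
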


\section{Auxiliary Polynomials}

\label{auxiliary-polynomials}

We again place ourselves in the setting of Section \ref%
{setup-for-the-induction-step} and we make use of the Calder\'on-Zygmund
decomposition defined in Section \ref{cz-decomposition}.

Recall that $x_0 \in E \cap 5Q_0^+$, and that $\vec{\Gamma}_{l(\mathcal{A})}$
has an $(\mathcal{A}, \epsilon^{-1}\delta_{Q_0}, C_B)$-basis at $%
(x_0,M_0,P^0) $; moreover, $\mathcal{A} \subseteq \mathcal{M}$ is monotonic,
and $\epsilon$ is less than a small enough constant determined by $C_B$, $C_w
$, $m$, $n$.

Let $y\in E\cap 5Q_{0}$. Then $|x_{0}-y|\leq C\delta _{Q_{0}}=(C\epsilon
)(\epsilon ^{-1}\delta _{Q_{0}})$. Applying Lemma \ref{lemma-transport} in Section \ref{transport-lemma} with $\hat{\mathcal{A}} = \mathcal{A}$, $\hat{P}^0=P^0$, we see that there exists $P^{y}\in \mathcal{P}$ with
the following properties.

\begin{itemize}
\item[\refstepcounter{equation}\text{(\theequation)}\label{ap1}] $\vec{\Gamma%
}_{l\left( \mathcal{A}\right) -1}$ has an $\left( \mathcal{A},\epsilon
^{-1}\delta _{Q_{0}},C\right) $-basis $\left( P_{\alpha }^{y}\right)
_{\alpha \in \mathcal{A}}$ at $\left( y,M_{0},P^{y}\right) $,
\end{itemize}

\begin{itemize}
\item[\refstepcounter{equation}\text{(\theequation)}\label{ap2}] $\partial
^{\beta }\left( P^{y}-P^{0}\right) \equiv 0$ for $\beta \in \mathcal{A}$,
\end{itemize}

\begin{itemize}
\item[\refstepcounter{equation}\text{(\theequation)}\label{ap3}] $\left\vert
\partial ^{\beta }\left( P^{y}-P^{0}\right) \left( x_{0}\right) \right\vert
\leq CM_{0}\left( \epsilon ^{-1}\delta _{Q_{0}}\right) ^{m-\left\vert \beta
\right\vert }$ for $\beta \in \mathcal{M}$.
\end{itemize}

We fix $P^{y},P_{\alpha }^{y}$ $\left( \alpha \in \mathcal{A}\right) $ as
above for each $y\in E\cap 5Q_{0}$. We study the relationship between the
polynomials $P^{y},P_{\alpha }^{y}$ $(\alpha \in \mathcal{A})$ and the Calder%
\'{o}n-Zygmund decomposition.

\begin{lemma}[``Controlled Auxiliary Polynomials'']
\label{lemma-ap1} Let $Q \in$ CZ, and suppose that

\begin{itemize}
\item[\refstepcounter{equation}\text{(\theequation)}\label{ap4}] $\frac{65}{%
64}Q\cap \frac{65}{64}Q_{0}\not=\emptyset $.
\end{itemize}

Let

\begin{itemize}
\item[\refstepcounter{equation}\text{(\theequation)}\label{ap5}] $y\in E\cap
5Q_{0}\cap 5Q^{+}$.
\end{itemize}

Then

\begin{itemize}
\item[\refstepcounter{equation}\text{(\theequation)}\label{ap6}] $\left\vert
\partial ^{\beta }P_{\alpha }^{y}\left( y\right) \right\vert \leq C\cdot
\left( \epsilon ^{-1}\delta _{Q}\right) ^{\left\vert \alpha \right\vert
-\left\vert \beta \right\vert }$ for $\alpha \in \mathcal{A}$, $\beta \in 
\mathcal{M}$.
\end{itemize}
\end{lemma}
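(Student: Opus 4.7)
I would proceed by contradiction, combining the Relabeling Lemma with the Calder\'{o}n--Zygmund maximality of $Q$. Since $5Q\subseteq 5Q_{0}$ forces $\delta_{Q}\leq\delta_{Q_{0}}$, the $(\mathcal{A},\epsilon^{-1}\delta_{Q_{0}},C)$-basis $(P^{y}_{\alpha})_{\alpha\in\mathcal{A}}$ from \eqref{ap1} is also a \emph{weak} $(\mathcal{A},\epsilon^{-1}\delta_{Q},C)$-basis at $(y,M_{0},P^{y})$ for $\vec{\Gamma}_{l(\mathcal{A})-1}$, by the scaling property \eqref{pb7}. I would apply the Relabeling Lemma (Lemma~\ref{lemma-pb2}) at scale $\epsilon^{-1}\delta_{Q}$: it returns some monotonic $\hat{\mathcal{A}}\leq\mathcal{A}$ and an $(\hat{\mathcal{A}},\epsilon^{-1}\delta_{Q},C'_{B})$-basis at $(y,M_{0},P^{y})$. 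If $\hat{\mathcal{A}}=\mathcal{A}$, this is a (full) $(\mathcal{A},\epsilon^{-1}\delta_{Q},C'_{B})$-basis and its property \eqref{pb4} is exactly the desired inequality \eqref{ap6}. So it remains to rule out the case $\hat{\mathcal{A}}<\mathcal{A}$ strictly.

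Assume $\hat{\mathcal{A}}<\mathcal{A}$. The goal is to contradict the CZ-maximality of $Q$ by showing that $Q^{+}$ is OK with this $\hat{\mathcal{A}}$. Before doing so, dispose of the geometric sub-case $5Q^{+}\not\subseteq 5Q_{0}$: combining \eqref{ap4} with $Q\subseteq\operatorname{int}(5Q_{0})$, a short positional argument gives $\delta_{Q}\geq c\delta_{Q_{0}}$, whence the full $\delta_{Q_{0}}$-scale bound on $(P^{y}_{\alpha})$ implies \eqref{ap6} up to a controlled constant. Thus I may assume $5Q^{+}\subseteq 5Q_{0}$ and $\#(E\cap 5Q^{+})\geq 2$. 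For $y'=y$ I set $\hat{P}^{y}:=P^{y}$: condition (3a) is the $\hat{\mathcal{A}}$-basis at $(y,M_{0},P^{y})$ after rescaling from $\epsilon^{-1}\delta_{Q}$ to $\epsilon^{-1}\delta_{Q^{+}}=2\epsilon^{-1}\delta_{Q}$ (cost $2^{m}$) and using $\vec{\Gamma}_{l(\mathcal{A})-1}\subseteq\vec{\Gamma}_{l(\mathcal{A})-3}$, while (3b), (3c) come from \eqref{ap3} and \eqref{ap2}. For the remaining $y'\in E\cap 5Q^{+}$, I apply the Transport Lemma (Lemma~\ref{lemma-transport}) at scale $\delta=\epsilon^{-1}\delta_{Q}$, $l_{0}=l(\mathcal{A})-1$, with $\mathcal{A}_{T}:=\mathcal{A}$, $\hat{\mathcal{A}}_{T}:=\hat{\mathcal{A}}$, and $P^{0}_{T}=\hat{P}^{0}_{T}:=P^{y}$ (so (t3), (t4) are vacuous); the hypothesis (t5) is secured by $|y-y'|\leq C\delta_{Q}=(C\epsilon)(\epsilon^{-1}\delta_{Q})$ and the small-$\epsilon$ assumption. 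Setting $\hat{P}^{y'}:=\hat{P}^{\#}$, conditions (3a), (3b), (3c) follow respectively from (t6) (after rescaling and using $\vec{\Gamma}_{l(\mathcal{A})-2}\subseteq\vec{\Gamma}_{l(\mathcal{A})-3}$), from (t8) combined with Taylor expansion from $y$ to $x_{0}$ together with \eqref{ap3}, and from (t7) combined with \eqref{ap2}.

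The main obstacle is a scale mismatch. The Transport Lemma as stated requires a \emph{full} $\mathcal{A}$-basis at $(y,M_{0},P^{y})$ at the transport scale $\epsilon^{-1}\delta_{Q}$, yet the full $\mathcal{A}$-basis from \eqref{ap1} lives at the larger scale $\epsilon^{-1}\delta_{Q_{0}}$ and only a weak basis descends to scale $\epsilon^{-1}\delta_{Q}$; the naive rescaling cost $(\delta_{Q_{0}}/\delta_{Q})^{m}$ from \eqref{pb6} is not absorbable into the controlled constants. The resolution is to inspect the proof of Lemma~\ref{lemma-transport}: the crucial property \eqref{pb3} is part of the weak-basis definition, while the estimates \eqref{t30}, the linear system \eqref{t42} (that yields (t7)) and the bounds that produce (t8) only invoke \eqref{pb4} in the regime $\beta\geq\alpha$, which is precisely the content of the weak bound. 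Thus the weak $\mathcal{A}$-basis input together with the (full) $\hat{\mathcal{A}}$-basis from Relabeling still delivers the conclusions (t6), (t7), (t8) that our construction needs, and the $\hat{P}^{y'}$ constructed above verify the OK conditions for $Q^{+}$ with the strictly smaller $\hat{\mathcal{A}}$, contradicting that $Q$ is a CZ cube and establishing \eqref{ap6}.
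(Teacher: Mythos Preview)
Your overall strategy---argue by contradiction, apply the Relabeling Lemma to produce $\hat{\mathcal{A}}<\mathcal{A}$, transport to the other points of $E$, and conclude that a cube strictly containing $Q$ is OK---is the same as the paper's. You also correctly isolate the central difficulty: at scale $\epsilon^{-1}\delta_{Q}$ you have only a \emph{weak} $(\mathcal{A},\epsilon^{-1}\delta_{Q},C)$-basis, whereas the Transport Lemma is stated for a full one.

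Your proposed resolution, however, has a gap. You assert that the bounds producing \eqref{t8} in the proof of Lemma~\ref{lemma-transport} invoke \eqref{pb4} only in the regime $\beta\geq\alpha$. That is not so: the derivation of \eqref{t39} uses \eqref{t32}, namely $|\partial^{\beta}P'_{\alpha}(x_{0})|\leq C\delta^{|\alpha|-|\beta|}$ for \emph{all} $\beta\in\mathcal{M}$, and \eqref{t32} rests on the full bound \eqref{t15}. With only a weak $\mathcal{A}$-basis at scale $\epsilon^{-1}\delta_{Q}$, the derivatives $\partial^{\beta}P^{y}_{\alpha}(y)$ for $\beta<\alpha$ are controlled only by $(\epsilon^{-1}\delta_{Q_{0}})^{|\alpha|-|\beta|}$, so the correction term $\sum_{\alpha}s^{\#}_{\alpha}M_{0}\delta^{m-|\alpha|}P'_{\alpha}$ need not obey the $\delta_{Q}$-scale estimate \eqref{t8}. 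One can salvage the argument by observing that the weaker $\delta_{Q_{0}}$-scale estimate required for (3b) does survive, since $(\epsilon^{-1}\delta_{Q})^{m-|\alpha|}(\epsilon^{-1}\delta_{Q_{0}})^{|\alpha|-|\beta|}\leq(\epsilon^{-1}\delta_{Q_{0}})^{m-|\beta|}$; but that is a different computation from the one you sketch, and it must be carried out explicitly rather than claimed by inspection.

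The paper avoids this altogether with an intermediate-scale trick. Instead of working at $Q^{+}$, one considers the chain of dyadic cubes $Q=\hat{Q}_{0}\subset\hat{Q}_{1}\subset\cdots$ up to sidelength $2^{-10}\delta_{Q_{0}}$ and tracks $X_{\nu}=\max_{\alpha\in\mathcal{A},\beta\in\mathcal{M}}(\epsilon^{-1}\delta_{\hat{Q}_{\nu}})^{|\beta|-|\alpha|}|\partial^{\beta}P^{y}_{\alpha}(y)|$. By hypothesis $X_{0}>K$, while the $\delta_{Q_{0}}$-scale basis gives $X_{\nu_{\max}}\leq C'$, and consecutive $X_{\nu}$ differ by at most a factor $2^{m}$. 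Hence there is an intermediate $\tilde{Q}\supsetneq Q$ with $2^{-m}K\leq X_{\tilde{\nu}}\leq K$. At \emph{this} scale one has a \emph{full} $(\mathcal{A},\epsilon^{-1}\delta_{\tilde{Q}},CK)$-basis (the upper bound $X_{\tilde{\nu}}\leq K$ supplies exactly the missing estimates), and Relabeling still yields $\hat{\mathcal{A}}<\mathcal{A}$ strictly (because $X_{\tilde{\nu}}\geq 2^{-m}K$ is large). The Transport Lemma now applies verbatim, and one shows $\tilde{Q}$ is OK, contradicting the CZ property of $Q$. This continuity/pigeonhole step is the idea your argument is missing.
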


\begin{proof}

If $\mathcal{A} = \emptyset$, then \eqref{ap6} holds vacuously. Suppose $\mathcal{A} \not= \emptyset$. 

Let $K\geq 1$ be a large enough constant to be picked below, and assume that

\begin{itemize}
\item[\refstepcounter{equation}\text{(\theequation)}\label{ap7}] $%
\max_{\alpha \in \mathcal{A}\text{,}\beta \in \mathcal{M}}\left( \epsilon
^{-1}\delta _{Q}\right) ^{\left\vert \beta \right\vert -\left\vert \alpha
\right\vert }\left\vert \partial ^{\beta }P_{\alpha }^{y}\left( y\right)
\right\vert >K\text{.} $
\end{itemize}

We will derive a contradiction.

Thanks to \eqref{ap1}, we have

\begin{itemize}
\item[\refstepcounter{equation}\text{(\theequation)}\label{ap8}] $%
P^{y},P^{y}\pm CM_{0}\cdot \left( \epsilon ^{-1}\delta _{Q_{0}}\right)
^{m-\left\vert \alpha \right\vert }P_{\alpha }^{y}\in \Gamma _{l\left( 
\mathcal{A}\right) -1}\left( y,CM_{0}\right) $ for $\alpha \in \mathcal{A}$,
\end{itemize}

\begin{itemize}
\item[\refstepcounter{equation}\text{(\theequation)}\label{ap9}] $\partial
^{\beta }P_{\alpha }^{y}\left( y\right) =\delta _{\beta \alpha }$ for $\beta
,\alpha \in \mathcal{A}$,
\end{itemize}

and

\begin{itemize}
\item[\refstepcounter{equation}\text{(\theequation)}\label{ap10}] $%
\left\vert \partial ^{\beta }P_{\alpha }^{y}\left( y\right) \right\vert \leq
C\left( \epsilon ^{-1}\delta _{Q_{0}}\right) ^{\left\vert \alpha \right\vert
-\left\vert \beta \right\vert }$ for $\alpha \in \mathcal{A}$, $\beta \in 
\mathcal{M}$.
\end{itemize}

Also,

\begin{itemize}
\item[\refstepcounter{equation}\text{(\theequation)}\label{ap11}] $5Q\subset
5Q_{0}$ since $Q$ is OK.
\end{itemize}

If $\delta _{Q}\geq 2^{-12}\delta _{Q_{0}}$, then from \eqref{ap10}, %
\eqref{ap11}, we would have

\begin{itemize}
\item[\refstepcounter{equation}\text{(\theequation)}\label{ap14}] $%
\max_{\alpha \in \mathcal{A}\text{,}\beta \in \mathcal{M}}\left( \epsilon
^{-1}\delta _{Q}\right) ^{\left\vert \beta \right\vert -\left\vert \alpha
\right\vert }\left\vert \partial ^{\beta }P_{\alpha }^{y}\left( y\right)
\right\vert \leq C^{\prime }$.
\end{itemize}

We will pick

\begin{itemize}
\item[\refstepcounter{equation}\text{(\theequation)}\label{ap15}] $%
K>C^{\prime }$, with $C^{\prime }$ as in \eqref{ap14}.
\end{itemize}

Then \eqref{ap14} contradicts our assumption \eqref{ap7}.

Thus, we must have

\begin{itemize}
\item[\refstepcounter{equation}\text{(\theequation)}\label{ap16}] $\delta
_{Q}<2^{-12}\delta _{Q_{0}}$.
\end{itemize}

Let

\begin{itemize}
\item[\refstepcounter{equation}\text{(\theequation)}\label{ap17}] $Q=\hat{Q}%
_{0}\subset \hat{Q}_{1}\subset \cdots \subset \hat{Q}_{\nu _{\max }}$ be all
the dyadic cubes containing $Q$ and having sidelength at most $2^{-10}\delta
_{Q_{0}}$.
\end{itemize}

Then

\begin{itemize}
\item[\refstepcounter{equation}\text{(\theequation)}\label{ap18}] $\hat{Q}%
_{0}=Q$, $\delta _{\hat{Q}_{\nu _{\max }}}=2^{-10}\delta _{Q_{0}}$, $\hat{Q}%
_{\nu +1}=\left( \hat{Q}_{\nu }\right) ^{+}$ for $0\leq \nu \leq \nu _{\max
}-1$, and $\nu _{\max }\geq 2$.
\end{itemize}

For $1\leq \nu \leq \nu _{\max }$, we define

\begin{itemize}
\item[\refstepcounter{equation}\text{(\theequation)}\label{ap19}] $X_{\nu
}=\max_{\alpha \in \mathcal{A}\text{,}\beta \in \mathcal{M}}\left( \epsilon
^{-1}\delta _{\hat{Q}_{\nu }}\right) ^{\left\vert \beta \right\vert
-\left\vert \alpha \right\vert }\left\vert \partial ^{\beta }P_{\alpha
}^{y}\left( y\right) \right\vert $.
\end{itemize}

From \eqref{ap7} and \eqref{ap10}, we have

\begin{itemize}
\item[\refstepcounter{equation}\text{(\theequation)}\label{ap20}] $X_{0}>K$, 
$X_{\nu _{\max }}\leq C^{\prime }$,
\end{itemize}

and from \eqref{ap18}, \eqref{ap19}, we have

\begin{itemize}
\item[\refstepcounter{equation}\text{(\theequation)}\label{ap21}] $%
2^{-m}X_{\nu }\leq X_{\nu +1}\leq 2^{m}X_{\nu }$, for $0\leq \nu \leq \nu
_{\max }$.
\end{itemize}

We will pick

\begin{itemize}
\item[\refstepcounter{equation}\text{(\theequation)}\label{ap22}] $%
K>C^{\prime }$ with $C^{\prime }$ as in \eqref{ap20}.
\end{itemize}

Then $\tilde{\nu}:=\min \left\{ \nu :X_{\nu }\leq K\right\} $ and $\tilde{Q}=%
\hat{Q}_{\tilde{\nu}}$ satisfy the following, thanks to \eqref{ap20}, %
\eqref{ap21}, \eqref{ap22}: $\tilde{\nu}\not=0$, hence

\begin{itemize}
\item[\refstepcounter{equation}\text{(\theequation)}\label{ap23}] $\tilde{Q}$
is a dyadic cube strictly containing $Q$; also $2^{-m}K\leq X_{\tilde{\nu}%
}\leq K$,
\end{itemize}

hence

\begin{itemize}
\item[\refstepcounter{equation}\text{(\theequation)}\label{ap24}] $%
2^{-m}K\leq \max_{\alpha \in \mathcal{A}\text{,}\beta \in \mathcal{M}}\left(
\epsilon ^{-1}\delta _{\tilde{Q}}\right) ^{\left\vert \beta \right\vert
-\left\vert \alpha \right\vert }\left\vert \partial ^{\beta }P_{\alpha
}^{y}\left( y\right) \right\vert \leq K$.
\end{itemize}

Also, since $Q\subset \tilde{Q}$, we have $\frac{65}{64}\tilde{Q}\cap \frac{%
65}{64}Q_{0}\not=\emptyset $ by \eqref{ap4}; and since $\delta _{\tilde{Q}%
}\leq 2^{-10}\delta _{Q_{0}}$, we conclude that

\begin{itemize}
\item[\refstepcounter{equation}\text{(\theequation)}\label{ap25}] $5\tilde{Q}%
\subset 5Q_{0}$.
\end{itemize}

From \eqref{ap8}, \eqref{ap10}, and \eqref{ap25}, we have

\begin{itemize}
\item[\refstepcounter{equation}\text{(\theequation)}\label{ap26}] $%
P^{y},P^{y}\pm cM_{0}\left( \epsilon ^{-1}\delta _{\tilde{Q}}\right)
^{m-\left\vert \alpha \right\vert }P_{\alpha }^{y}\in \Gamma _{l\left( 
\mathcal{A}\right) -1}\left( y,CM_{0}\right) \subset \Gamma _{l\left( 
\mathcal{A}\right) -2}\left( y,CM_{0}\right) $ for $\alpha \in \mathcal{A}$;
\end{itemize}

and

\begin{itemize}
\item[\refstepcounter{equation}\text{(\theequation)}\label{ap27}] $%
\left\vert \partial ^{\beta }P_{\alpha }^{y}\left( y\right) \right\vert \leq
C\left( \epsilon ^{-1}\delta _{\tilde{Q}}\right) ^{\left\vert \alpha
\right\vert -\left\vert \beta \right\vert }$ for $\alpha \in \mathcal{A}$, $%
\beta \in \mathcal{M}$, $\beta \geq \alpha $.
\end{itemize}

Our results \eqref{ap9}, \eqref{ap26}, \eqref{ap27} tell us that

\begin{itemize}
\item[\refstepcounter{equation}\text{(\theequation)}\label{ap28}] $\left(
P_{\alpha }^{y}\right) _{\alpha \in \mathcal{A}}$ is a weak $\left( \mathcal{%
A},\epsilon ^{-1}\delta _{\tilde{Q}},C\right) $-basis for $\vec{\Gamma}%
_{l\left( \mathcal{A}\right) -2}$ at $\left( y,M_{0},P^{y}\right) $.
\end{itemize}

Note also that

\begin{itemize}
\item[\refstepcounter{equation}\text{(\theequation)}\label{ap29}] $\epsilon
^{-1}\delta _{\tilde{Q}}\leq \epsilon ^{-1}\delta _{Q_{0}}\leq \delta _{\max
}$, by \eqref{ap25} and hypothesis (A2) of the Main Lemma for $\mathcal{A}$.
\end{itemize}

Moreover,

\begin{itemize}
\item[\refstepcounter{equation}\text{(\theequation)}\label{ap30}] $\vec{%
\Gamma}_{l\left( \mathcal{A}\right) -2}$ is $\left( C,\delta _{\max }\right) 
$-convex, thanks to Lemma \ref{lemma-wsf4} (B).
\end{itemize}

If we take

\begin{itemize}
\item[\refstepcounter{equation}\text{(\theequation)}\label{ap31}] $K\geq
C^{\ast }$ for a large enough $C^{\ast }$,
\end{itemize}

then \eqref{ap24}, \eqref{ap28}$\cdots $\eqref{ap31} and the Relabeling
Lemma (Lemma \ref{lemma-pb2}) produce a monotonic set $\hat{\mathcal{A}}%
\subset \mathcal{M}$, such that

\begin{itemize}
\item[\refstepcounter{equation}\text{(\theequation)}\label{ap32}] $\hat{%
\mathcal{A}}<\mathcal{A}$ (strict inequality)
\end{itemize}

and

\begin{itemize}
\item[\refstepcounter{equation}\text{(\theequation)}\label{ap33}] $\vec{%
\Gamma}_{l\left( \mathcal{A}\right) -2}$ has an $\left( \hat{\mathcal{A}}%
,\epsilon ^{-1}\delta _{\tilde{Q}},C\right) $-basis at $\left(
y,M_{0},P^{y}\right) $.
\end{itemize}

Also, from \eqref{ap9}, \eqref{ap24}, \eqref{ap26}, we see that

\begin{itemize}
\item[\refstepcounter{equation}\text{(\theequation)}\label{ap34}] $\left(
P_{\alpha }^{y}\right) _{\alpha \in \mathcal{A}}$ is an $\left( \mathcal{A}%
,\epsilon ^{-1}\delta _{\tilde{Q}},CK\right) $-basis for $\vec{\Gamma}%
_{l\left( \mathcal{A}\right) -2}$ at $\left( y,M_{0},P^{y}\right) $.
\end{itemize}

We now pick

\begin{itemize}
\item[\refstepcounter{equation}\text{(\theequation)}\label{ap35}] $K=\hat{C}$
(a constant determined by $C_{B}$, $C_{w}$, $m$, $n$), with $\hat{C}\geq 1 $
large enough to satisfy \eqref{ap15}, \eqref{ap22}, \eqref{ap31}.
\end{itemize}

Then \eqref{ap33} and \eqref{ap34} tell us that

\begin{itemize}
\item[\refstepcounter{equation}\text{(\theequation)}\label{ap36}] $\vec{%
\Gamma}_{l\left( \mathcal{A}\right) -2}$ has both an $\left( \hat{\mathcal{A}%
},\epsilon ^{-1}\delta _{\tilde{Q}},C\right) $-basis and an $\left( \mathcal{%
A},\epsilon ^{-1}\delta _{\tilde{Q}},C\right) $-basis at $\left(
y,M_{0},P^{y}\right) $.
\end{itemize}

Let $z\in E\cap 5\tilde{Q}$. Then $z,y\in 5\tilde{Q}^{+}$ (see \eqref{ap5}), hence

\begin{itemize}
\item[\refstepcounter{equation}\text{(\theequation)}\label{ap37}] $%
\left\vert z-y\right\vert \leq C\delta _{\tilde{Q}}=C\epsilon \cdot \left(
\epsilon ^{-1}\delta _{\tilde{Q}}\right) $.
\end{itemize}

From \eqref{ap36}, \eqref{ap37}, the Small $\epsilon $ Assumption and Lemma %
\ref{lemma-transport} (and our hypothesis that $\mathcal{A}$ is monotonic;
see Section \ref{setup-for-the-induction-step}), we obtain a polynomial $%
\check{P}^{z}\in \mathcal{P}$, such that

\begin{itemize}
\item[\refstepcounter{equation}\text{(\theequation)}\label{ap38}] $\vec{\Gamma}
_{l\left( \mathcal{A}\right) -3}$ has an $\left( \hat{\mathcal{A}},\epsilon
^{-1}\delta _{\tilde{Q}},C\right) $-basis at $\left( z,M_{0},\check{P}%
^{z}\right) $,
\end{itemize}

\begin{itemize}
\item[\refstepcounter{equation}\text{(\theequation)}\label{ap39}] $\partial
^{\beta }\left( \check{P}^{z}-P^{y}\right) \equiv 0$ for $\beta \in \mathcal{%
A}$,
\end{itemize}

and

\begin{itemize}
\item[\refstepcounter{equation}\text{(\theequation)}\label{ap40}] $%
\left\vert \partial ^{\beta }\left( \check{P}^{z}-P^{y}\right) \left(
y\right) \right\vert \leq CM_{0}\left( \epsilon ^{-1}\delta _{\tilde{Q}%
}\right) ^{m-\left\vert \beta \right\vert }$ for $\beta \in \mathcal{M}$.
\end{itemize}

From \eqref{ap25} and \eqref{ap40}, we have

\begin{itemize}
\item[\refstepcounter{equation}\text{(\theequation)}\label{ap41}] $%
\left\vert \partial ^{\beta }\left( \check{P}^{z}-P^{y}\right) \left(
y\right) \right\vert \leq CM_{0}\left( \epsilon ^{-1}\delta _{Q_{0}}\right)
^{m-\left\vert \beta \right\vert }$ for $\beta \in \mathcal{M}$.
\end{itemize}

Since $y\in 5Q_{0}$ by hypothesis of Lemma \ref{lemma-ap1}, while $x_{0}\in
5Q_{0}^{+}$ by hypothesis of the Main Lemma for $\mathcal{A}$, we have $%
\left\vert x_{0}-y\right\vert \leq C\delta _{Q_{0}}$, and therefore %
\eqref{ap41} implies that

\begin{itemize}
\item[\refstepcounter{equation}\text{(\theequation)}\label{ap42}] $%
\left\vert \partial ^{\beta }\left( \check{P}^{z}-P^{y}\right) \left(
x_{0}\right) \right\vert \leq CM_{0}\left( \epsilon ^{-1}\delta
_{Q_{0}}\right) ^{m-\left\vert \beta \right\vert }$ for $\beta \in \mathcal{M%
}$.
\end{itemize}

From \eqref{ap2}, \eqref{ap3}, \eqref{ap39}, \eqref{ap42}, we now have

\begin{itemize}
\item[\refstepcounter{equation}\text{(\theequation)}\label{ap43}] $\partial
^{\beta }\left( \check{P}^{z}-P^{0}\right) \equiv 0$ for $\beta \in \mathcal{%
A}$
\end{itemize}

and

\begin{itemize}
\item[\refstepcounter{equation}\text{(\theequation)}\label{ap44}] $%
\left\vert \partial^{\beta }\left( \check{P}^{z}-P^{0}\right) \left(
x_{0}\right) \right\vert \leq CM_{0}\left( \epsilon ^{-1}\delta
_{Q_{0}}\right) ^{m-\left\vert \beta \right\vert }$ for $\beta \in \mathcal{M%
}$.
\end{itemize}

Our results \eqref{ap38}, \eqref{ap43}, \eqref{ap44} hold for every $z\in
E\cap 5\tilde{Q}$.

We recall the Large $A$ Assumption in the Section \ref%
{setup-for-the-induction-step}. Then \eqref{ap25}, \eqref{ap32}, \eqref{ap38}%
, \eqref{ap43}, \eqref{ap44} yield the following results: $5\tilde{Q}\subset
5Q_{0}$, $\hat{\mathcal{A}}<\mathcal{A}$ (strict inequality).

For every $z\in E\cap 5\tilde{Q}$, there exists $\check{P}^{z}\in \mathcal{P}
$ such that

\begin{itemize}
\item $\vec{\Gamma}_{l\left( \mathcal{A}\right) -3}$ has an $\left( \hat{%
\mathcal{A}},\epsilon ^{-1}\delta _{\tilde{Q}},A\right) $-basis at $\left(
z,M_{0},\check{P}^{z}\right) $.

\item $\partial ^{\beta }\left( \check{P}^{z}-P^{0}\right) \equiv 0$ for $%
\beta \in \mathcal{A}$.

\item $\left\vert \partial ^{\beta }\left( \check{P}^{z}-P^{0}\right) \left(
x_{0}\right) \right\vert \leq AM_{0}\left( \epsilon ^{-1}\delta
_{Q_{0}}\right) ^{m-\left\vert \beta \right\vert }$ for $\beta \in \mathcal{M%
}$.
\end{itemize}

Comparing the above results with the definition of an OK cube, we conclude
that $\tilde{Q}$ is OK.

However, since $\tilde{Q}$ properly contains the CZ cube $Q$, (see %
\eqref{ap23}), $\tilde{Q}$ cannot be OK.

This contradiction proves that our assumption \eqref{ap7} must be false.

Thus, $\left\vert \partial ^{\beta }P_{\alpha }^{y}\left( y\right)
\right\vert \leq K\left( \epsilon ^{-1}\delta _{Q}\right) ^{\left\vert
\alpha \right\vert -\left\vert \beta \right\vert }$ for $\alpha \in \mathcal{%
A}$, $\beta \in \mathcal{M}$.

Since we picked $K=\hat{C}$ in \eqref{ap35}, this implies the estimate %
\eqref{ap6}, completing the proof of Lemma \ref{lemma-ap1}.
\end{proof}

As an easy consequence of Lemma \ref{lemma-ap1}, we have the following result.

\begin{corollary}
\label{cor-to-lemma-ap1}

Let $Q\in $ CZ, and suppose $\frac{65}{64}Q\cap \frac{65}{64}%
Q_{0}\not=\emptyset $. Let $y\in E\cap 5Q_{0}\cap 5Q^{+}$. Then $\left(
P_{\alpha }^{y}\right) _{\alpha \in \mathcal{A}}$ is an $\left( \mathcal{A}%
,\epsilon ^{-1}\delta _{Q},C\right) $-basis for $\vec{\Gamma}_{l\left( 
\mathcal{A}\right) -1}$ at $\left( y,M_{0},P^{y}\right) $.
\end{corollary}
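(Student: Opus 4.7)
The plan is to verify the four axioms \eqref{pb1}--\eqref{pb4} defining an $(\mathcal{A}, \epsilon^{-1}\delta_Q, C)$-basis for $\vec{\Gamma}_{l(\mathcal{A})-1}$ at $(y, M_0, P^y)$, starting from the $(\mathcal{A}, \epsilon^{-1}\delta_{Q_0}, C)$-basis property already established in \eqref{ap1}, and passing from the larger scale $\epsilon^{-1}\delta_{Q_0}$ down to the smaller scale $\epsilon^{-1}\delta_Q$. The heart of the matter is that such a rescaling is \emph{not} automatic for a full basis (remark \eqref{pb6} would cost a factor of $(\delta_{Q_0}/\delta_Q)^m$ in the constant, which is uncontrolled); the new ingredient that makes it work is precisely Lemma \ref{lemma-ap1}.

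Axioms \eqref{pb1} and \eqref{pb3} involve no scale parameter, so they transfer verbatim from \eqref{ap1}: we have $P^y \in \Gamma_{l(\mathcal{A})-1}(y, CM_0)$ and $\partial^\beta P_\alpha^y(y) = \delta_{\beta\alpha}$ for $\beta, \alpha \in \mathcal{A}$. Axiom \eqref{pb4} at the scale $\epsilon^{-1}\delta_Q$ is \emph{exactly} the content of \eqref{ap6} in Lemma \ref{lemma-ap1}, namely $|\partial^\beta P_\alpha^y(y)| \leq C(\epsilon^{-1}\delta_Q)^{|\alpha|-|\beta|}$ for all $\alpha \in \mathcal{A}$, $\beta \in \mathcal{M}$.

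The only remaining axiom is \eqref{pb2}. Because $Q$ is a CZ cube, we have $5Q \subset 5Q_0$ and in particular $\delta_Q \leq \delta_{Q_0}$; since $\alpha \in \mathcal{A} \subseteq \mathcal{M}$ satisfies $|\alpha| \leq m-1$, the ratio
$$
\tau := \left(\frac{\delta_Q}{\delta_{Q_0}}\right)^{m-|\alpha|}
$$
lies in $(0, 1]$. From \eqref{ap1} we know that $P^y \in \Gamma_{l(\mathcal{A})-1}(y, CM_0)$ and $P^y \pm cM_0(\epsilon^{-1}\delta_{Q_0})^{m-|\alpha|}P_\alpha^y \in \Gamma_{l(\mathcal{A})-1}(y, CM_0)$. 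Taking the convex combination with weights $\tau$ and $1-\tau$ (using that $\Gamma_{l(\mathcal{A})-1}(y, CM_0)$ is convex) gives
$$
P^y \pm c M_0 (\epsilon^{-1}\delta_Q)^{m-|\alpha|} P_\alpha^y \in \Gamma_{l(\mathcal{A})-1}(y, CM_0),
$$
which is \eqref{pb2} at the new scale with a controlled constant. Combining this with the previous paragraph proves that $(P_\alpha^y)_{\alpha \in \mathcal{A}}$ is an $(\mathcal{A}, \epsilon^{-1}\delta_Q, C)$-basis for $\vec{\Gamma}_{l(\mathcal{A})-1}$ at $(y, M_0, P^y)$, as required.

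I do not expect any genuine obstacle here: the nontrivial estimate $|\partial^\beta P_\alpha^y(y)| \leq C(\epsilon^{-1}\delta_Q)^{|\alpha|-|\beta|}$ has already been extracted in Lemma \ref{lemma-ap1} via the Relabeling Lemma and a contradiction argument using the CZ maximality of $Q$; the present corollary simply repackages that estimate, together with a trivial downward rescaling of the convex-set condition \eqref{pb2}, into the more convenient language of a full $(\mathcal{A}, \epsilon^{-1}\delta_Q, C)$-basis at the CZ scale.
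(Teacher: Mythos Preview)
Your proof is correct and is exactly the argument the paper has in mind when it says the corollary is ``an easy consequence of Lemma \ref{lemma-ap1}'': axioms \eqref{pb1} and \eqref{pb3} transfer from \eqref{ap1} unchanged, axiom \eqref{pb4} at scale $\epsilon^{-1}\delta_Q$ is precisely the conclusion \eqref{ap6} of Lemma \ref{lemma-ap1}, and axiom \eqref{pb2} follows by convexity since $\delta_Q \leq \delta_{Q_0}$ and $m-|\alpha| \geq 1$.
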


\begin{lemma}[\textquotedblleft Consistency of Auxiliary
Polynomials\textquotedblright ]
\label{lemma-ap2} Let $Q,Q^{\prime }\in $ CZ, with

\begin{itemize}
\item[\refstepcounter{equation}\text{(\theequation)}\label{ap49}] $\frac{65}{%
64}Q\cap \frac{65}{64}Q_{0}\not=\emptyset $, $\frac{65}{64}Q^{\prime }\cap 
\frac{65}{64}Q_{0}\not=\emptyset $
\end{itemize}

and

\begin{itemize}
\item[\refstepcounter{equation}\text{(\theequation)}\label{ap50}] $\frac{65}{%
64}Q\cap \frac{65}{64}Q^{\prime }\not=\emptyset $.
\end{itemize}

Let

\begin{itemize}
\item[\refstepcounter{equation}\text{(\theequation)}\label{ap51}] $y\in
E\cap 5Q_{0}\cap 5Q^{+}$, $y^{\prime }\in E\cap 5Q_{0}\cap 5\left( Q^{\prime
}\right) ^{+}$.
\end{itemize}

Then

\begin{itemize}
\item[\refstepcounter{equation}\text{(\theequation)}\label{ap52}] $%
\left\vert \partial ^{\beta }\left( P^{y}-P^{y^{\prime }}\right) \left(
y^{\prime }\right) \right\vert \leq CM_{0}\left( \epsilon ^{-1}\delta
_{Q}\right) ^{m-\left\vert \beta \right\vert }$ for $\beta \in \mathcal{M}$.
\end{itemize}
\end{lemma}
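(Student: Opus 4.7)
The strategy is to transport the $(\mathcal{A},\epsilon^{-1}\delta_Q,C)$-basis at $(y, P^y)$ over to $y'$ via the Transport Lemma, and then either deduce the desired estimate \eqref{ap52} directly by comparing the transported polynomial to $P^{y'}$, or else use Lemma \ref{lemma-pb3} together with a second application of the Transport Lemma to produce a strictly smaller monotonic set $\mathcal{A}^{\ast}<\mathcal{A}$ that would certify the dyadic parent $Q^+$ as OK, contradicting the CZ property of $Q$.

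Concretely, Corollary \ref{cor-to-lemma-ap1} applied to $Q$ and $Q'$, combined with Lemma \ref{lemma-cz2} and the rescaling remark \eqref{pb6}, gives an $(\mathcal{A},\epsilon^{-1}\delta_Q,C)$-basis for $\vec{\Gamma}_{l(\mathcal{A})-1}$ at both $(y,M_0,P^y)$ and $(y',M_0,P^{y'})$. Since $|y-y'|\leq C\delta_Q=C\epsilon\cdot(\epsilon^{-1}\delta_Q)$ and $\epsilon$ is small, the Transport Lemma (with $\hat{\mathcal{A}}=\mathcal{A}$ and $\hat{P}^0=P^0=P^y$) yields $\hat{P}^\#\in\mathcal{P}$ such that $\vec{\Gamma}_{l(\mathcal{A})-2}$ has an $(\mathcal{A},\epsilon^{-1}\delta_Q,C)$-basis at $(y',M_0,\hat{P}^\#)$, with $\partial^\beta(\hat{P}^\#-P^y)\equiv 0$ for $\beta\in\mathcal{A}$ and $|\partial^\beta(\hat{P}^\#-P^y)(y)|\leq CM_0(\epsilon^{-1}\delta_Q)^{m-|\beta|}$ for all $\beta\in\mathcal{M}$. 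Applying \eqref{ap2} at $y$ and $y'$, I also get $\partial^\beta(P^{y'}-\hat{P}^\#)\equiv 0$ for $\beta\in\mathcal{A}$. If
\[
\max_{\beta\in\mathcal{M}}(\epsilon^{-1}\delta_Q)^{|\beta|}\,|\partial^\beta(P^{y'}-\hat{P}^\#)(y')|\leq M_0(\epsilon^{-1}\delta_Q)^m,
\]
then writing $P^{y'}-P^y=(P^{y'}-\hat{P}^\#)+(\hat{P}^\#-P^y)$ and Taylor-expanding the second term from $y$ to $y'$ (the displacement $|y-y'|\leq C\delta_Q$ gives only controlled factors when normalized by $\epsilon^{-1}\delta_Q$) gives \eqref{ap52} at once.

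Suppose instead that the above maximum exceeds $M_0(\epsilon^{-1}\delta_Q)^m$. Apply Lemma \ref{lemma-pb3} at $y'$ with base $\hat{P}^\#$ and test $P=P^{y'}\in\Gamma_{l(\mathcal{A})-2}(y',CM_0)$ to produce a monotonic $\mathcal{A}^{\ast}<\mathcal{A}$ and $\hat{P}^0_{\ast}\in\mathcal{P}$ so that $\vec{\Gamma}_{l(\mathcal{A})-2}$ has an $(\mathcal{A}^{\ast},\epsilon^{-1}\delta_Q,C)$-basis at $(y',M_0,\hat{P}^0_{\ast})$, with $\partial^\beta(\hat{P}^0_{\ast}-\hat{P}^\#)(y')=0$ for $\beta\in\mathcal{A}$ and $|\partial^\beta(\hat{P}^0_{\ast}-\hat{P}^\#)(y')|\leq M_0(\epsilon^{-1}\delta_Q)^{m-|\beta|}$. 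For each $z\in E\cap 5Q^+$ one has $|y'-z|\leq C\delta_Q$, so a second invocation of the Transport Lemma --- this time with the original monotonic $\mathcal{A}$ playing the role of $\mathcal{A}$, $\hat{\mathcal{A}}=\mathcal{A}^{\ast}$, $P^0=\hat{P}^\#$ and $\hat{P}^0=\hat{P}^0_{\ast}$ --- produces $\hat{P}^z$ such that $\vec{\Gamma}_{l(\mathcal{A})-3}$ carries both an $(\mathcal{A},\epsilon^{-1}\delta_Q,C)$-basis and an $(\mathcal{A}^{\ast},\epsilon^{-1}\delta_Q,C)$-basis at $(z,M_0,\hat{P}^z)$, with $\partial^\beta(\hat{P}^z-\hat{P}^\#)\equiv 0$ for $\beta\in\mathcal{A}$ and $|\partial^\beta(\hat{P}^z-\hat{P}^\#)(y')|\leq CM_0(\epsilon^{-1}\delta_Q)^{m-|\beta|}$. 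The family $(\hat{P}^z)_{z\in E\cap 5Q^+}$ then witnesses that $Q^+$ is OK with $\hat{\mathcal{A}}=\mathcal{A}^{\ast}$: condition (3a) is the $(\mathcal{A}^{\ast},\ldots,C)$-basis for $\vec{\Gamma}_{l(\mathcal{A})-3}$ (a full basis implies a weak one; the Large $A$ Assumption absorbs $C$); (3c) follows by telescoping $\hat{P}^z-P^0=(\hat{P}^z-\hat{P}^\#)+(\hat{P}^\#-P^y)+(P^y-P^0)$ through $\beta\in\mathcal{A}$; (3b) follows by Taylor-expanding the three pieces back to $x_0$, using \eqref{ap3} on the last piece and the estimates $|y-x_0|,|y'-x_0|,|z-x_0|\leq C\delta_{Q_0}$ to control the first two.

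Provided $5Q^+\subseteq 5Q_0$, this makes $Q^+$ OK and contradicts the maximality of $Q$ as a CZ cube, establishing \eqref{ap52}. The main obstacle is the geometric boundary case in which $\delta_Q$ is comparable to $\delta_{Q_0}$ (so $5Q^+$ may fail to lie in $5Q_0$ and the contradiction argument is unavailable): in that case, \eqref{ap52} is deduced directly from \eqref{ap3} by Taylor-expanding $P^y-P^{y'}$ from $x_0$ to $y'$ (using $|x_0-y'|\leq C\delta_{Q_0}$) and converting the resulting $(\epsilon^{-1}\delta_{Q_0})^{m-|\beta|}$ bound into an $(\epsilon^{-1}\delta_Q)^{m-|\beta|}$ bound by absorbing the controlled ratio $\delta_{Q_0}/\delta_Q\leq C$. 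The remaining bookkeeping --- that the Small $\epsilon$ Assumption is respected at each invocation of the Transport Lemma and that the refinement indices $l(\mathcal{A})-1\to l(\mathcal{A})-2\to l(\mathcal{A})-3$ remain at least $l(\mathcal{A}^{\ast})$ by \eqref{m2} --- is routine.
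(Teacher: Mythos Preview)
Your argument is correct and follows essentially the same route as the paper's proof: handle the boundary case $\delta_Q\gtrsim\delta_{Q_0}$ directly from \eqref{ap3}, and in the remaining case transport an $(\mathcal{A},\epsilon^{-1}\delta_Q,C)$-basis between $y$ and $y'$, then either conclude \eqref{ap52} or invoke Lemma~\ref{lemma-pb3} followed by a second Transport to certify $Q^+$ as OK, contradicting CZ. The only cosmetic difference is the direction of the first transport: the paper moves the basis from $y'$ to $y$ (obtaining $P'$ at $y$) and applies Lemma~\ref{lemma-pb3} at $y$ with base $P^y$ and test $P'$, whereas you move from $y$ to $y'$ and apply Lemma~\ref{lemma-pb3} at $y'$ with base $\hat P^\#$ and test $P^{y'}$; the two are symmetric and the bookkeeping is identical.
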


\begin{proof}
Suppose first that $\delta _{Q}\geq 2^{-20}\delta _{Q_{0}}$. Then \eqref{ap3}
(applied to $y$ and to $y^{\prime }$) tells us that 
\begin{equation*}
\left\vert \partial ^{\beta }\left( P^{y}-P^{y^{\prime }}\right) \left(
x_{0}\right) \right\vert \leq CM_{0}\left( \epsilon ^{-1}\delta
_{Q_{0}}\right) ^{m-\left\vert \beta \right\vert }\text{ for }\beta \in 
\mathcal{M}\text{.}
\end{equation*}%
Hence, $\left\vert \partial ^{\beta }\left( P^{y}-P^{y^{\prime }}\right)
\left( y^{\prime }\right) \right\vert \leq C^{\prime }M_{0}\left( \epsilon
^{-1}\delta _{Q_{0}}\right) ^{m-\left\vert \beta \right\vert }\leq C^{\prime
\prime }M_{0}\left( \epsilon ^{-1}\delta _{Q}\right) ^{m-\left\vert \beta
\right\vert }$ for $\beta \in \mathcal{M}$, since $x_{0}$, $y^{\prime }\in
5Q_{0}^{+}$. Thus, \eqref{ap52} holds if $\delta _{Q}\geq 2^{-20}\delta
_{Q_{0}}$. Suppose

\begin{itemize}
\item[\refstepcounter{equation}\text{(\theequation)}\label{ap53}] $\delta
_{Q}<2^{-20}\delta _{Q_{0}}$.
\end{itemize}

By \eqref{ap50} and Lemma \ref{lemma-cz2}, we have

\begin{itemize}
\item[\refstepcounter{equation}\text{(\theequation)}\label{ap54}] $\delta
_{Q},\delta _{Q^{\prime }}\leq 2^{-20}\delta _{Q_{0}}$ and $\frac{1}{2}%
\delta _{Q}\leq \delta _{Q^{\prime }}\leq 2\delta _{Q}$.
\end{itemize}

Together with \eqref{ap49}, this implies that

\begin{itemize}
\item[\refstepcounter{equation}\text{(\theequation)}\label{ap55}] $%
5Q^{+},5\left( Q^{\prime }\right) ^{+}\subseteq 5Q_{0}$.
\end{itemize}

From Corollary \ref{cor-to-lemma-ap1}, we have

\begin{itemize}
\item[\refstepcounter{equation}\text{(\theequation)}\label{ap56}] $\vec{%
\Gamma}_{l\left( \mathcal{A}\right) -1}$ has an $\left( \mathcal{A},\epsilon
^{-1}\delta _{Q^{\prime }},C\right) $-basis at $\left( y^{\prime
},M_{0},P^{y^{\prime }}\right) $.
\end{itemize}

From \eqref{ap50}, \eqref{ap51}, \eqref{ap54}, we have

\begin{itemize}
\item[\refstepcounter{equation}\text{(\theequation)}\label{ap57}] $%
\left\vert y-y^{\prime }\right\vert \leq C\delta _{Q^{\prime }}=C\epsilon
\left( \epsilon ^{-1}\delta _{Q^{\prime }}\right) $.
\end{itemize}

We recall from \eqref{ap54} and the hypotheses of the Main Lemma for $%
\mathcal{A}$ that

\begin{itemize}
\item[\refstepcounter{equation}\text{(\theequation)}\label{ap58}] $\epsilon
^{-1}\delta _{Q^{\prime }}\leq \epsilon ^{-1}\delta _{Q_{0}}\leq \delta
_{\max }$,
\end{itemize}

and we recall from Section \ref{setup-for-the-induction-step} that

\begin{itemize}
\item[\refstepcounter{equation}\text{(\theequation)}\label{ap59}] $\mathcal{A%
}$ is monotonic.
\end{itemize}

Thanks to \eqref{ap56}$\cdots $\eqref{ap59}, Lemma \ref{lemma-transport}
in Section \ref{transport-lemma} (with $\hat{\mathcal{A}} = \mathcal{A}$, $\hat{P}^0=P^0$) produces a polynomial $P^{\prime }\in 
\mathcal{P}$ such that

\begin{itemize}
\item[\refstepcounter{equation}\text{(\theequation)}\label{ap60}] $\vec{%
\Gamma}_{l\left( \mathcal{A}\right) -2}$ has an $\left( \mathcal{A},\epsilon
^{-1}\delta _{Q^{\prime }},C\right) $-basis at $\left( y,M_{0},P^{\prime
}\right) $;
\end{itemize}

\begin{itemize}
\item[\refstepcounter{equation}\text{(\theequation)}\label{ap61}] $\partial
^{\beta }\left( P^{\prime }-P^{y^{\prime }}\right) \equiv 0$ for $\beta \in 
\mathcal{A}$;
\end{itemize}

and

\begin{itemize}
\item[\refstepcounter{equation}\text{(\theequation)}\label{ap62}] $%
\left\vert \partial ^{\beta }\left( P^{\prime }-P^{y^{\prime }}\right)
\left( y^{\prime }\right) \right\vert \leq CM_{0}\left( \epsilon ^{-1}\delta
_{Q^{\prime }}\right) ^{m-\left\vert \beta \right\vert }$ for $\beta \in 
\mathcal{M}$.
\end{itemize}

From \eqref{ap60} we have in particular that

\begin{itemize}
\item[\refstepcounter{equation}\text{(\theequation)}\label{ap63}] $P^{\prime
}\in \Gamma _{l\left( \mathcal{A}\right) -2}\left( y,CM_{0}\right) $,
\end{itemize}

and from \eqref{ap62} and \eqref{ap54} we obtain

\begin{itemize}
\item[\refstepcounter{equation}\text{(\theequation)}\label{ap64}] $%
\left\vert \partial ^{\beta }\left( P^{y^{\prime }}-P^{\prime }\right)
\left( y^{\prime }\right) \right\vert \leq CM_{0}\left( \epsilon ^{-1}\delta
_{Q}\right) ^{m-\left\vert \beta \right\vert }$ for $\beta \in \mathcal{M}$.
\end{itemize}

If we knew that

\begin{itemize}
\item[\refstepcounter{equation}\text{(\theequation)}\label{ap65}] $%
\left\vert \partial ^{\beta }\left( P^{y}-P^{\prime }\right) \left( y\right)
\right\vert \leq M_{0}\left( \epsilon ^{-1}\delta _{Q}\right) ^{m-\left\vert
\beta \right\vert }$ for $\beta \in \mathcal{M}$,
\end{itemize}

then also $\left\vert \partial ^{\beta }\left( P^{y}-P^{\prime }\right)
\left( y^{\prime }\right) \right\vert \leq C^{\prime }M_{0}\left( \epsilon
^{-1}\delta _{Q}\right) ^{m-\left\vert \beta \right\vert }$ for $\beta \in 
\mathcal{M}$ since $\left\vert y-y^{\prime }\right\vert \leq C\delta _{Q}$
thanks to \eqref{ap50}, \eqref{ap51}, \eqref{ap54}. Consequently, by %
\eqref{ap64}, we would have $\left\vert \partial ^{\beta }\left(
P^{y^{\prime }}-P^{y}\right) \left( y^{\prime }\right) \right\vert \leq
CM_{0}\left( \epsilon ^{-1}\delta _{Q}\right) ^{m-\left\vert \beta
\right\vert }$ for $\beta \in \mathcal{M}$, which is our desired inequality %
\eqref{ap52}. Thus, Lemma \ref{lemma-ap2} will follow if we can prove %
\eqref{ap65}.

Suppose \eqref{ap65} fails.

Corollary \ref{cor-to-lemma-ap1} shows that $\vec{\Gamma}_{l\left( \mathcal{A%
}\right) -1}$ has an $\left( \mathcal{A},\epsilon ^{-1}\delta _{Q},C\right) $%
-basis at $\left( y,M_{0},P^{y}\right) $. Since $\Gamma _{l\left( \mathcal{A}%
\right) -1}\left( x,M\right) \subset \Gamma _{l\left( \mathcal{A}\right)
-2}\left( x,M\right) $ for all $x\in E$, $M>0$, it follows that

\begin{itemize}
\item[\refstepcounter{equation}\text{(\theequation)}\label{ap66}] $\vec{%
\Gamma}_{l\left( \mathcal{A}\right) -2}$ has an $\left( \mathcal{A},\epsilon
^{-1}\delta _{Q},C\right) $-basis at $\left( y,M_{0},P^{y}\right) $.
\end{itemize}

From \eqref{ap2} and \eqref{ap61} (applied to $y$ and $y^{\prime }$), we see
that

\begin{itemize}
\item[\refstepcounter{equation}\text{(\theequation)}\label{ap67}] $\partial
^{\beta }\left( P^{y}-P^{\prime }\right) \equiv 0$ for $\beta \in \mathcal{A}
$.
\end{itemize}

Since we are assuming that \eqref{ap65} fails, we have

\begin{itemize}
\item[\refstepcounter{equation}\text{(\theequation)}\label{ap68}] $%
\max_{\beta \in \mathcal{M}}\left( \epsilon ^{-1}\delta _{Q}\right)
^{\left\vert \beta \right\vert }\left\vert \partial ^{\beta }\left(
P^{y}-P^{\prime }\right) \left( y\right) \right\vert \geq M_{0}\left(
\epsilon ^{-1}\delta _{Q}\right) ^{m}$.
\end{itemize}

Also, from \eqref{ap54} and the hypotheses of the Main Lemma for $\mathcal{A}
$, we have

\begin{itemize}
\item[\refstepcounter{equation}\text{(\theequation)}\label{ap69}] $\epsilon
^{-1}\delta _{Q}<\epsilon ^{-1}\delta _{Q_{0}}\leq \delta _{\max }$.
\end{itemize}

From Lemma \ref{lemma-wsf4} (B), we know that

\begin{itemize}
\item[\refstepcounter{equation}\text{(\theequation)}\label{ap70}] $\vec{%
\Gamma}_{l\left( \mathcal{A}\right) -2}$ is $\left( C,\delta _{\max }\right) 
$-convex.
\end{itemize}

Our results \eqref{ap63}, \eqref{ap66}$\cdots$\eqref{ap70} and Lemma \ref%
{lemma-pb3} produce a set $\hat{\mathcal{A}}\subseteq \mathcal{M}$ and a
polynomial $\hat{P}\in \mathcal{P}$, with the following properties:

\begin{itemize}
\item[\refstepcounter{equation}\text{(\theequation)}\label{ap71}] $\hat{%
\mathcal{A}}$ is monotonic;
\end{itemize}

\begin{itemize}
\item[\refstepcounter{equation}\text{(\theequation)}\label{ap72}] $\hat{%
\mathcal{A}}<\mathcal{A}$ (strict inequality);
\end{itemize}

\begin{itemize}
\item[\refstepcounter{equation}\text{(\theequation)}\label{ap73}] $\vec{%
\Gamma}_{l\left( \mathcal{A}\right) -2}$ has an $\left( \hat{\mathcal{A}}%
,\epsilon ^{-1}\delta _{Q},C\right) $-basis at $\left( y,M_{0},\hat{P}%
\right) $;
\end{itemize}

\begin{itemize}
\item[\refstepcounter{equation}\text{(\theequation)}\label{ap74}] $\partial
^{\beta }\left( \hat{P}-P^{y}\right) \equiv 0$ for $\beta \in \mathcal{A}$
(recall, $\mathcal{A}$ is monotonic);
\end{itemize}

and

\begin{itemize}
\item[\refstepcounter{equation}\text{(\theequation)}\label{ap75}] $%
\left\vert \partial ^{\beta }\left( \hat{P}-P^{y}\right) \left( y\right)
\right\vert \leq CM_0\left( \epsilon ^{-1}\delta _{Q}\right) ^{m-\left\vert
\beta \right\vert }$ for $\beta \in \mathcal{M}$.
\end{itemize}

Now let $z\in E\cap 5Q^{+}$. We recall that $\mathcal{A}$ is monotonic, and
that \eqref{ap66}, \eqref{ap67}, \eqref{ap73}, \eqref{ap74}, \eqref{ap75} hold. Moreover,
since $y,z\in 5Q^{+}$, we have $\left\vert y-z\right\vert \leq C\delta
_{Q}=C\epsilon \left( \epsilon ^{-1}\delta _{Q}\right) $. Thanks to the
above remarks and the Small $\epsilon $ Assumption, we may apply Lemma \ref%
{lemma-transport} to produce $\check{P}^{z}\in \mathcal{P}$ satisfying the
following conditions.

\begin{itemize}
\item[\refstepcounter{equation}\text{(\theequation)}\label{ap76}] $\vec{%
\Gamma}_{l\left( \mathcal{A}\right) -3}$ has an $\left( \hat{\mathcal{A}}%
,\epsilon ^{-1}\delta _{Q},C\right) $-basis at $\left( z,M_{0},\check{P}%
^{z}\right) $.
\end{itemize}

\begin{itemize}
\item[\refstepcounter{equation}\text{(\theequation)}\label{ap77}] $\partial
^{\beta }\left( \check{P}^{z}-P^{y}\right) \equiv 0$ for $\beta \in \mathcal{%
A}$.
\end{itemize}

\begin{itemize}
\item[\refstepcounter{equation}\text{(\theequation)}\label{ap78}] $%
\left\vert \partial ^{\beta }\left( \check{P}^{z}-P^{y}\right) \left(
y\right) \right\vert \leq CM_{0}\left( \epsilon ^{-1}\delta _{Q}\right)
^{m-\left\vert \beta \right\vert }$ for $\beta \in \mathcal{M}$.
\end{itemize}

By \eqref{ap76} and the Large $A$ Assumption,

\begin{itemize}
\item[\refstepcounter{equation}\text{(\theequation)}\label{ap79}] $\vec{%
\Gamma}_{l\left( \mathcal{A}\right) -3}$ has an $\left( \hat{\mathcal{A}}%
,\epsilon ^{-1}\delta _{Q^{+}},A\right) $-basis at $\left( z,M_{0},\check{P}%
^{z}\right) $.
\end{itemize}

By \eqref{ap2} and \eqref{ap77}, we have

\begin{itemize}
\item[\refstepcounter{equation}\text{(\theequation)}\label{ap80}] $\partial
^{\beta }\left( \check{P}^{z}-P^{0}\right) \equiv 0$ for $\beta \in \mathcal{%
A}$.
\end{itemize}

By \eqref{ap54} and \eqref{ap78}, we have $\left\vert \partial ^{\beta
}\left( \check{P}^{z}-P^{y}\right) \left( y\right) \right\vert \leq
CM_{0}\left( \epsilon ^{-1}\delta _{Q_{0}}\right) ^{m-\left\vert \beta
\right\vert }$ for $\beta \in \mathcal{M}$, hence $\left\vert \partial
^{\beta }\left( \check{P}^{z}-P^{y}\right) \left( x_{0}\right) \right\vert
\leq CM_{0}\left( \epsilon ^{-1}\delta _{Q_{0}}\right) ^{m-\left\vert \beta
\right\vert }$ for $\beta \in \mathcal{M}$, since $x_0,y\in 5Q_{0}^{+}$.
Together with \eqref{ap3} and the Large $A$ Assumption, this yields the
estimate

\begin{itemize}
\item[\refstepcounter{equation}\text{(\theequation)}\label{ap81}] $%
\left\vert \partial ^{\beta }\left( \check{P}^{z}-P^{0}\right) \left(
x_{0}\right) \right\vert \leq AM_{0}\left( \epsilon ^{-1}\delta
_{Q_{0}}\right) ^{m-\left\vert \beta \right\vert }$ for $\beta \in \mathcal{M%
}$.
\end{itemize}

We have proven \eqref{ap79}, \eqref{ap80}, \eqref{ap81} for each $z\in E\cap
5Q^{+}$. Thus, $5Q^{+}\subset 5Q_{0}$ (see \eqref{ap55}), $\hat{\mathcal{A}}<%
\mathcal{A}$ (strict inequality; see \eqref{ap72}), and for each $z\in E\cap
5Q^{+}$ there exists $\check{P}^{z}\in \mathcal{P}$ satisfying \eqref{ap79}, \eqref{ap80}, \eqref{ap81}.

Comparing the above results with the definition of an OK cube, we see that $%
Q^{+}$ is OK. On the other hand $Q^{+}$ cannot be OK, since it properly
contains the CZ cube $Q$. Assuming that \eqref{ap65} fails, we have derived
a contradiction. Thus, \eqref{ap65} holds, completing the proof of Lemma \ref%
{lemma-ap2}.
\end{proof}

\section{Good News About CZ Cubes}

\label{gn}

In this section we again place ourselves in the setting of Section \ref%
{setup-for-the-induction-step}, and we make use of the auxiliary polynomials 
$P^{y}$ and the CZ cubes $Q$ defined above.

\begin{lemma}
\label{lemma-gn1} Let $Q \in$ CZ, with

\begin{itemize}
\item[\refstepcounter{equation}\text{(\theequation)}\label{gn1}] $\frac{65}{%
64}Q\cap \frac{65}{64}Q_{0}\not=\emptyset $
\end{itemize}

and

\begin{itemize}
\item[\refstepcounter{equation}\text{(\theequation)}\label{gn2}] $\#\left(
E\cap 5Q\right) \geq 2$.
\end{itemize}

Let

\begin{itemize}
\item[\refstepcounter{equation}\text{(\theequation)}\label{gn3}] $y\in E\cap
5Q$.
\end{itemize}

Then there exist a set $\mathcal{A}^\# \subseteq \mathcal{M}$ and a
polynomial $P^\# \in \mathcal{P}$ with the following properties.

\begin{itemize}
\item[\refstepcounter{equation}\text{(\theequation)}\label{gn4}] $\mathcal{A}%
^\#$ is monotonic.
\end{itemize}

\begin{itemize}
\item[\refstepcounter{equation}\text{(\theequation)}\label{gn5}] $\mathcal{A}%
^\# < \mathcal{A}$ (strict inequality).
\end{itemize}

\begin{itemize}
\item[\refstepcounter{equation}\text{(\theequation)}\label{gn6}] $\vec{\Gamma%
}_{l\left( \mathcal{A}\right) -3}$ has an $\left( \mathcal{A}^{\#},\epsilon
^{-1}\delta _{Q},C\left( A\right) \right) $-basis at $\left(
y,M_{0},P^{\#}\right) $.
\end{itemize}

\begin{itemize}
\item[\refstepcounter{equation}\text{(\theequation)}\label{gn7}] $\left\vert
\partial ^{\beta }\left( P^{\#}-P^{y}\right) \left( y\right) \right\vert
\leq C\left( A\right) M_{0}\left( \epsilon ^{-1}\delta _{Q}\right)
^{m-\left\vert \beta \right\vert }$ for $\beta \in \mathcal{M}$.
\end{itemize}
\end{lemma}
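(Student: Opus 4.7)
The key input is that $Q$, being a CZ cube, is itself OK. Together with the hypothesis $\#(E\cap 5Q)\geq 2$, the definition of OK (conditions \eqref{cz2}--\eqref{cz3}) directly hands us a set $\hat{\mathcal{A}}\subset\mathcal{M}$ (not necessarily monotonic) with $\hat{\mathcal{A}}<\mathcal{A}$ (strict), and, for every $y\in E\cap 5Q$, a polynomial $\hat{P}^y$ with the following properties: $\vec{\Gamma}_{l(\mathcal{A})-3}$ has a weak $(\hat{\mathcal{A}},\epsilon^{-1}\delta_Q,A)$-basis at $(y,M_0,\hat{P}^y)$, $|\partial^\beta(\hat{P}^y-P^0)(x_0)|\leq AM_0(\epsilon^{-1}\delta_{Q_0})^{m-|\beta|}$ for $\beta\in\mathcal{M}$, and $\partial^\beta(\hat{P}^y-P^0)\equiv 0$ for $\beta\in\mathcal{A}$. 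Combining the last identity with \eqref{ap2} gives $\partial^\beta(\hat{P}^y-P^y)\equiv 0$ for every $\beta\in\mathcal{A}$.

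I then split on whether $\hat{P}^y$ is close to $P^y$ in the Taylor-jet sense at $y$.

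\emph{Case 1 (small deviation).} Suppose $|\partial^\beta(\hat{P}^y-P^y)(y)|\leq M_0(\epsilon^{-1}\delta_Q)^{m-|\beta|}$ for every $\beta\in\mathcal{M}$. I set $P^\#:=\hat{P}^y$, which gives \eqref{gn7} at once. Applying the Relabeling Lemma (Lemma \ref{lemma-pb2}) to the weak basis at $(y,M_0,\hat{P}^y)$ (after a mild rescaling of $M_0$ by the factor $A$ to meet the assumption $P^0\in\Gamma(x_0,M_0)$ of that lemma) yields a monotonic $\mathcal{A}^\#\leq\hat{\mathcal{A}}<\mathcal{A}$ together with an $(\mathcal{A}^\#,\epsilon^{-1}\delta_Q,C(A))$-basis for $\vec{\Gamma}_{l(\mathcal{A})-3}$ at $(y,M_0,\hat{P}^y)$, establishing \eqref{gn4}, \eqref{gn5}, and \eqref{gn6}.

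\emph{Case 2 (large deviation).} Suppose instead that $\max_{\beta\in\mathcal{M}}(\epsilon^{-1}\delta_Q)^{|\beta|}|\partial^\beta(\hat{P}^y-P^y)(y)|\geq M_0(\epsilon^{-1}\delta_Q)^{m}$. Corollary \ref{cor-to-lemma-ap1} supplies an $(\mathcal{A},\epsilon^{-1}\delta_Q,C)$-basis $(P_\alpha^y)_{\alpha\in\mathcal{A}}$ for $\vec{\Gamma}_{l(\mathcal{A})-1}$, hence for $\vec{\Gamma}_{l(\mathcal{A})-3}$, at $(y,M_0,P^y)$. I invoke Lemma \ref{lemma-pb3} (Control $\Gamma$ Using Basis) on this basis, taking that lemma's ``$P$'' to be $\hat{P}^y$ and its ``$P^0$'' to be $P^y$, with $C_B=A$. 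Hypothesis \eqref{pb71} holds because $\hat{P}^y\in\Gamma_{l(\mathcal{A})-3}(y,AM_0)$, hypothesis \eqref{pb72} is the displayed vanishing of the previous paragraph, and \eqref{pb73} is precisely the Case 2 assumption. The lemma outputs a monotonic $\mathcal{A}^\#<\mathcal{A}$ (strict) and a polynomial $\hat{P}^0$ admitting an $(\mathcal{A}^\#,\epsilon^{-1}\delta_Q,C(A))$-basis for $\vec{\Gamma}_{l(\mathcal{A})-3}$ at $(y,M_0,\hat{P}^0)$, together with $|\partial^\beta(\hat{P}^0-P^y)(y)|\leq M_0(\epsilon^{-1}\delta_Q)^{m-|\beta|}$ for all $\beta\in\mathcal{M}$. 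Setting $P^\#:=\hat{P}^0$ delivers \eqref{gn4}--\eqref{gn7}.

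\emph{Main obstacle.} Case 2 is the branch that the introduction flags as having no analogue in \cite{f-2005-a}. In the symmetric setting of that earlier paper, the polynomial produced by the OK-type definition was automatically close to $P^y$, so Case 2 did not arise. In the present general setting, $\hat{P}^y$ may be far from $P^y$, and one must use Lemma \ref{lemma-pb3} to replace $\hat{P}^y$ by a re-centered polynomial $\hat{P}^0$ that is genuinely close to $P^y$ while still supporting a basis for a strictly smaller monotonic set; this re-centering is the crucial new ingredient.
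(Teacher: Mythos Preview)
Your proof is correct and follows essentially the same approach as the paper: you use the OK-condition for $Q$ together with $\#(E\cap 5Q)\geq 2$ to produce $\hat{\mathcal{A}}<\mathcal{A}$ and $\hat{P}^y$, then split into the same two cases, applying the Relabeling Lemma (Lemma~\ref{lemma-pb2}) in Case~1 and Lemma~\ref{lemma-pb3} in Case~2. The only cosmetic differences are that the paper defers the observation $\partial^\beta(\hat{P}^y-P^y)\equiv 0$ for $\beta\in\mathcal{A}$ to inside Case~2, and it does not spell out the ``rescaling of $M_0$'' you mention in Case~1 (the hypothesis $P^0\in\Gamma(x_0,M_0)$ of Lemma~\ref{lemma-pb2} is in fact redundant with condition~\eqref{pb1} of the weak basis, so no rescaling is actually needed).
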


\begin{proof}
Recall that

\begin{itemize}
\item[\refstepcounter{equation}\text{(\theequation)}\label{gn8}] $\partial
^{\beta }\left( P^{y}-P^{0}\right) \equiv 0$ for $\beta \in \mathcal{A}$
(see \eqref{ap2} in Section \ref{auxiliary-polynomials})
\end{itemize}

and that

\begin{itemize}
\item[\refstepcounter{equation}\text{(\theequation)}\label{gn9}] $5Q
\subseteq 5Q_0$, since $Q$ is OK.
\end{itemize}

Thanks to \eqref{gn3} and \eqref{gn9}, Corollary \ref{cor-to-lemma-ap1} in
Section \ref{auxiliary-polynomials} applies, and it tells us that

\begin{itemize}
\item[\refstepcounter{equation}\text{(\theequation)}\label{gn10}] $\vec{%
\Gamma}_{l\left( \mathcal{A}\right) -1}$ has an $\left( \mathcal{A},\epsilon
^{-1}\delta _{Q},C\right) $-basis at $\left( y,M_{0},P^{y}\right) $.
\end{itemize}

On the other hand, $Q$ is OK and $\#(E\cap 5Q) \geq 2$; hence, there exist $%
\hat{\mathcal{A}} \subseteq \mathcal{M}$ and $\hat{P} \in \mathcal{P}$ with
the following properties

\begin{itemize}
\item[\refstepcounter{equation}\text{(\theequation)}\label{gn11}] $\vec{%
\Gamma}_{l\left( \mathcal{A}\right) -3}$ has a weak $\left( \hat{\mathcal{A}}%
,\epsilon ^{-1}\delta _{Q},A\right) $-basis at $\left( y,M_{0},\hat{P}%
\right) $.
\end{itemize}

\begin{itemize}
\item[\refstepcounter{equation}\text{(\theequation)}\label{gn12}] $%
\left\vert \partial ^{\beta }\left( \hat{P}-P^{0}\right) \left( x_{0}\right)
\right\vert \leq AM_{0}\left( \epsilon ^{-1}\delta _{Q_{0}}\right)
^{m-\left\vert \beta \right\vert }$ for $\beta \in \mathcal{M}$.
\end{itemize}

\begin{itemize}
\item[\refstepcounter{equation}\text{(\theequation)}\label{gn13}] $\partial
^{\beta }\left( \hat{P}-P^{0}\right) \equiv 0$ for $\beta \in \mathcal{A}$.
\end{itemize}

\begin{itemize}
\item[\refstepcounter{equation}\text{(\theequation)}\label{gn14}] $\hat{%
\mathcal{A}} < \mathcal{A}$ (strict inequality).
\end{itemize}

We consider separately two cases.

\underline{Case 1:} Suppose that

\begin{itemize}
\item[\refstepcounter{equation}\text{(\theequation)}\label{gn15}] $%
\left\vert \partial ^{\beta }\left( \hat{P}-P^{y}\right) \left( y\right)
\right\vert \leq M_{0}\left( \epsilon ^{-1}\delta _{Q}\right) ^{m-\left\vert
\beta \right\vert }$ for $\beta \in \mathcal{M}$.
\end{itemize}

By Lemma \ref{lemma-wsf4} (B),

\begin{itemize}
\item[\refstepcounter{equation}\text{(\theequation)}\label{gn16}] $\vec{%
\Gamma}_{l\left( \mathcal{A}\right) -3}$ is $\left( C,\delta _{\max }\right) 
$-convex.
\end{itemize}

Also, \eqref{gn9} and hypothesis (A2) of the Main Lemma for $\mathcal{A}$
give

\begin{itemize}
\item[\refstepcounter{equation}\text{(\theequation)}\label{gn17}] $\epsilon
^{-1}\delta _{Q}\leq \epsilon ^{-1}\delta _{Q_{0}}\leq \delta _{\max }$.
\end{itemize}

Applying \eqref{gn11}, \eqref{gn16}, \eqref{17} and Lemma \ref{lemma-pb2},
we obtain a set $\mathcal{A}^\# \subseteq \mathcal{M}$ such that

\begin{itemize}
\item[\refstepcounter{equation}\text{(\theequation)}\label{gn18}] $\mathcal{A%
}^{\#}\leq \hat{\mathcal{A}}$,
\end{itemize}

\begin{itemize}
\item[\refstepcounter{equation}\text{(\theequation)}\label{gn19}] $
\mathcal{A}^\#$ is monotonic,
\end{itemize}

and

\begin{itemize}
\item[\refstepcounter{equation}\text{(\theequation)}\label{gn20}] $\vec{%
\Gamma}_{l\left( \mathcal{A}\right) -3}$ has an $\left( \mathcal{A}%
^{\#},\epsilon ^{-1}\delta _{Q},C\left( A\right) \right) $-basis at $\left(
y,M_{0},\hat{P}\right) $.
\end{itemize}

Setting $P^\# =\hat{P}$, we obtain the desired conclusions \eqref{gn4}$\cdots
$\eqref{gn7} at once from \eqref{gn14}, \eqref{gn15}, \eqref{gn18}, %
\eqref{gn19}, and \eqref{gn20}.

Thus, Lemma \ref{lemma-gn1} holds in Case 1.

\underline{Case 2:} Suppose that $\left\vert \partial ^{\beta }\left( \hat{P}%
-P^{y}\right) \left( y\right) \right\vert >M_{0}\left( \epsilon ^{-1}\delta
_{Q}\right) ^{m-\left\vert \beta \right\vert }$ for some $\beta \in \mathcal{%
M}$, i.e.,

\begin{itemize}
\item[\refstepcounter{equation}\text{(\theequation)}\label{gn21}] $%
\max_{\beta \in \mathcal{M}}\left( \epsilon ^{-1}\delta _{Q}\right)
^{\left\vert \beta \right\vert }\left\vert \partial ^{\beta }\left( \hat{P}%
-P^{y}\right) \left( y\right) \right\vert >M_{0}\left( \epsilon ^{-1}\delta
_{Q}\right) ^{m}$.
\end{itemize}

From \eqref{gn11} we have

\begin{itemize}
\item[\refstepcounter{equation}\text{(\theequation)}\label{gn22}] $\hat{P}%
\in \Gamma _{l\left( \mathcal{A}\right) -3}\left( y,AM_{0}\right) $.
\end{itemize}

Since $\Gamma _{l(\mathcal{A})-1}(x,M)\subseteq \Gamma _{l\left( \mathcal{A}%
\right) -3}\left( x,M\right) $ for all $x\in E,M>0$, \eqref{gn10} implies
that

\begin{itemize}
\item[\refstepcounter{equation}\text{(\theequation)}\label{gn23}] $\vec{%
\Gamma}_{l\left( \mathcal{A}\right) -3}$ has an $\left( \mathcal{A},\epsilon
^{-1}\delta _{Q},C\right) $-basis at $\left( y,M_{0},P^{y}\right) $.
\end{itemize}

As in Case 1,

\begin{itemize}
\item[\refstepcounter{equation}\text{(\theequation)}\label{gn24}] $\vec{%
\Gamma}_{l\left( \mathcal{A}\right) -3}$ is $\left( C,\delta _{\max }\right) 
$-convex,
\end{itemize}

and

\begin{itemize}
\item[\refstepcounter{equation}\text{(\theequation)}\label{gn25}] $\epsilon
^{-1}\delta _{Q}\leq \epsilon ^{-1}\delta _{Q_{0}}\leq \delta _{\max }$.
\end{itemize}

From \eqref{gn8} and \eqref{gn13} we have

\begin{itemize}
\item[\refstepcounter{equation}\text{(\theequation)}\label{gn26}] $\partial
^{\beta }\left( \hat{P}-P^{y}\right) \equiv 0$ for $\beta \in \mathcal{A}$.
\end{itemize}

Thanks to \eqref{gn21}$\cdots$\eqref{gn26} and Lemma \ref{lemma-pb3} there exist 
$\mathcal{A}^\# \subseteq \mathcal{M}$ and $P^\# \in \mathcal{P}$ with the
following properties: $\mathcal{A}^\#$ is monotonic; $\mathcal{A}^\# <%
\mathcal{A}$ (strict inequality); $\vec{\Gamma}_{l(\mathcal{A})-3}$ has an $(%
\mathcal{A}^\#,\epsilon^{-1}\delta_{Q},C(A))$-basis at $(y,M_0,P^\#)$; $%
\partial^\beta(P^\#-P^y)\equiv 0$ for $\beta \in \mathcal{A}$; $%
|\partial^\beta (P^\# -P^y)(y)|\leq M_0 (\epsilon^{-1}\delta_Q)^{m-|\beta|}$
for $\beta \in \mathcal{M}$.

Thus, $\mathcal{A}^\#$ and $P^\#$ satisfy \eqref{gn4}$\cdots$\eqref{gn7},
proving Lemma \ref{lemma-gn1} in Case 2.

We have seen that Lemma \ref{lemma-gn1} holds in all cases.
\end{proof}

\begin{remarks}
\begin{itemize} \item The analysis of Case 2 in the proof of Lemma \ref{lemma-gn1} is a new
ingredient, with no analogue in our previous work on Whitney problems.

\item The proof of Lemma \ref{lemma-gn1} gives a $\hat{P}$ that satisfies
also $\partial^\beta(\hat{P}-P^0)\equiv 0$ for $\beta \in \mathcal{A}$, but
we make no use of that.

\item Note that $x_0$ and $\delta_{Q_0}$ appear in \eqref{gn12}, rather than
the desired $y, \delta_Q$. Consequently, \eqref{gn12} is of no help in the
proof of Lemma \ref{lemma-gn1}.
\end{itemize}
\end{remarks}

In the proof of our next result, we use our Induction Hypothesis that the
Main Lemma for $\mathcal{A}^{\prime }$ holds whenever $\mathcal{A}^{\prime }<%
\mathcal{A}$ and $\mathcal{A}^{\prime }$ is monotonic. (See Section \ref%
{setup-for-the-induction-step}.)

\begin{lemma}
\label{lemma-gn2}

Let $Q\in $ CZ. Suppose that

\begin{itemize}
\item[\refstepcounter{equation}\text{(\theequation)}\label{gn27}] $\frac{65}{%
64}Q\cap \frac{65}{64}Q_{0}\not=\emptyset $
\end{itemize}

and

\begin{itemize}
\item[\refstepcounter{equation}\text{(\theequation)}\label{gn28}] $\#\left(
E\cap 5Q\right) \geq 2$.
\end{itemize}

Let

\begin{itemize}
\item[\refstepcounter{equation}\text{(\theequation)}\label{gn29}] $y\in
E\cap 5Q$.
\end{itemize}

Then there exists $F^{y,Q} \in C^m(\frac{65}{64}Q)$ such that

\begin{itemize}
\item[(*1)] $\left\vert \partial ^{\beta }\left( F^{y,Q}-P^{y}\right)
\right\vert \leq C\left( \epsilon \right) M_{0}\delta _{Q}^{m-\left\vert
\beta \right\vert }$ on $\frac{65}{64}Q$, for $\left\vert \beta \right\vert
\leq m$; and

\item[(*2)] $J_{z}\left( F^{y,Q}\right) \in \Gamma _{0}\left( z,C\left(
\epsilon \right) M_{0}\right) $ for all $z\in E\cap \frac{65}{64}Q$.
\end{itemize}
\end{lemma}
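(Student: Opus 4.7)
The plan is to apply the Induction Hypothesis (the Main Lemma for $\mathcal{A}^\#$) to the cube $Q$ itself, using the data furnished by Lemma~\ref{lemma-gn1}. First, invoking Lemma~\ref{lemma-gn1} with our $Q$ and $y$, we obtain a monotonic $\mathcal{A}^\# \subseteq \mathcal{M}$ with $\mathcal{A}^\# < \mathcal{A}$ (strict inequality), and $P^\# \in \mathcal{P}$, such that $\vec{\Gamma}_{l(\mathcal{A})-3}$ has an $(\mathcal{A}^\#,\epsilon^{-1}\delta_Q,C(A))$-basis at $(y,M_0,P^\#)$, and
\begin{equation*}
|\partial^\beta(P^\# - P^y)(y)| \leq C(A) M_0 (\epsilon^{-1}\delta_Q)^{m-|\beta|} \text{ for } \beta \in \mathcal{M}.
\end{equation*}
Since $\mathcal{A}^\# < \mathcal{A}$, property \eqref{m2} gives $l(\mathcal{A}^\#) \leq l(\mathcal{A}) - 3$, so $\Gamma_{l(\mathcal{A})-3}(z,M) \subseteq \Gamma_{l(\mathcal{A}^\#)}(z,M)$ for all $z,M$; hence the same family $(P^\#_\alpha)_{\alpha \in \mathcal{A}^\#}$ is an $(\mathcal{A}^\#,\epsilon^{-1}\delta_Q,C(A))$-basis for $\vec{\Gamma}_{l(\mathcal{A}^\#)}$ at $(y,M_0,P^\#)$.

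Next I would verify the hypotheses of the Main Lemma for $\mathcal{A}^\#$, applied with the cube $Q$, the base point $y \in E \cap 5Q \subseteq E \cap 5Q^+$, the polynomial $P^\#$, and ``$C_B$'' replaced by $C(A)$. Hypothesis (A1) is what we just established. Hypothesis (A2), $\epsilon^{-1}\delta_Q \leq \delta_{\max}$, follows from $5Q \subseteq 5Q_0$ (so $\delta_Q \leq \delta_{Q_0}$) combined with hypothesis (A2) of the Main Lemma for $\mathcal{A}$. Hypothesis (A3), the Small~$\epsilon$ assumption relative to $C(A)$, $C_w$, $m$, $n$, is a consequence of the Small $\epsilon$ Assumption \eqref{s3} from Section~\ref{setup-for-the-induction-step}, since $C(A)$ depends only on $A$, $C_B$, $C_w$, $m$, $n$. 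The Induction Hypothesis \eqref{s1} therefore produces $F^{y,Q} \in C^m(\tfrac{65}{64}Q)$ with
\begin{equation*}
|\partial^\beta(F^{y,Q} - P^\#)| \leq C(\epsilon) M_0 \delta_Q^{m-|\beta|} \text{ on } \tfrac{65}{64}Q, \quad |\beta| \leq m,
\end{equation*}
and $J_z(F^{y,Q}) \in \Gamma_0(z,C'(\epsilon) M_0)$ for all $z \in E \cap \tfrac{65}{64}Q$. The latter is precisely conclusion (*2).

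For (*1), write $\partial^\beta(F^{y,Q} - P^y) = \partial^\beta(F^{y,Q} - P^\#) + \partial^\beta(P^\# - P^y)$. The first summand is controlled by (C1). For the second, $P^\# - P^y \in \mathcal{P}$ has degree $\leq m-1$, so for $|\beta| = m$ it vanishes identically, and for $|\beta| \leq m-1$, Taylor expansion around $y$ on $\tfrac{65}{64}Q$ (where $|x-y| \leq C\delta_Q$) gives
\begin{equation*}
|\partial^\beta(P^\# - P^y)(x)| \leq \sum_{|\gamma| \leq m-1-|\beta|} \tfrac{1}{\gamma!} C(A) M_0 (\epsilon^{-1}\delta_Q)^{m-|\beta|-|\gamma|}(C\delta_Q)^{|\gamma|} \leq C(\epsilon) M_0 \delta_Q^{m-|\beta|},
\end{equation*}
establishing (*1).

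The substantive content is entirely absorbed in Lemma~\ref{lemma-gn1} (which produces a strictly smaller monotonic $\mathcal{A}^\#$ together with the basis and the closeness estimate between $P^\#$ and $P^y$); once that is in hand, the present lemma is essentially a direct invocation of the induction hypothesis followed by a bookkeeping calculation. The only place any care is needed is in tracking that the constant $C(A)$ from Lemma~\ref{lemma-gn1} is acceptable as the ``$C_B$'' of the Main Lemma for $\mathcal{A}^\#$, which is justified because the Small $\epsilon$ Assumption of Section~\ref{setup-for-the-induction-step} was already allowed to depend on $A$.
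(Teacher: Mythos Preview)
Your proof is correct and follows essentially the same approach as the paper: invoke Lemma~\ref{lemma-gn1} to obtain $\mathcal{A}^{\#}<\mathcal{A}$ and $P^{\#}$, pass from $\vec{\Gamma}_{l(\mathcal{A})-3}$ to $\vec{\Gamma}_{l(\mathcal{A}^{\#})}$ via $l(\mathcal{A}^{\#})\le l(\mathcal{A})-3$, apply the inductive Main Lemma for $\mathcal{A}^{\#}$ on $Q$, and then combine (C1) with the estimate on $P^{\#}-P^{y}$ to obtain~(*1). Your explicit verification that the Small~$\epsilon$ Assumption~\eqref{s3} absorbs the new ``$C_B$''$=C(A)$ is exactly the point the paper handles in one line.
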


\begin{proof}
Our hypotheses \eqref{gn27}, \eqref{gn28}, \eqref{gn29} are precisely the
hypotheses of Lemma \ref{lemma-gn1}. Let $\mathcal{A}^{\#}$, $P^{\#}$
satisfy the conclusions \eqref{gn4}$\cdots $\eqref{gn7} of that Lemma.
Recall the definition of $l(\mathcal{A})$; see \eqref{gn1}, \eqref{gn2} in
Section \ref{statement-of-the-main-lemma}. We have $l(\mathcal{A}^{\#})\leq
l(\mathcal{A})-3$ since $\mathcal{A}^{\#}<\mathcal{A}$; hence \eqref{gn6}
implies that

\begin{itemize}
\item[\refstepcounter{equation}\text{(\theequation)}\label{gn30}] $\vec{%
\Gamma}_{l\left( \mathcal{A}^{\#}\right) }$ has an $\left( \mathcal{A}%
^{\#},\epsilon ^{-1}\delta _{Q},C\left( A\right) \right) $-basis at $\left(
y,M_{0},P^{\#}\right) $.
\end{itemize}

Also, since $Q$ is OK, we have $5Q \subseteq 5Q_0$, hence $\delta_Q \leq
\delta_{Q_0}$. Hence, hypothesis (A2) of the Main Lemma for $\mathcal{A}$
implies that

\begin{itemize}
\item[\refstepcounter{equation}\text{(\theequation)}\label{gn31}] $\epsilon
^{-1}\delta _{Q}\leq \delta _{\max }$.
\end{itemize}

By \eqref{gn4}, \eqref{gn5}, and our Inductive Hypothesis, the Main Lemma
holds for $\mathcal{A}^{\#}$. Thanks to \eqref{gn29}, \eqref{gn30}, %
\eqref{gn31} and the Small $\epsilon $ Assumption in Section \ref%
{setup-for-the-induction-step}, the Main Lemma for $\mathcal{A}^{\#}$ now
yields a function $F\in C^{m}\left( \frac{65}{64}Q\right) $, such that

\begin{itemize}
\item[\refstepcounter{equation}\text{(\theequation)}\label{gn32}] $%
\left\vert \partial ^{\beta }\left( F-P^{\#}\right) \right\vert \leq C\left(
\epsilon \right) M_{0}\delta_{Q} ^{m-\left\vert \beta \right\vert }$ on $\frac{65%
}{64}Q$, for $\left\vert \beta \right\vert \leq m$; and
\end{itemize}

\begin{itemize}
\item[\refstepcounter{equation}\text{(\theequation)}\label{gn33}] $%
J_{z}\left( F\right) \in \Gamma _{0}\left( z,C\left( \epsilon \right)
M_{0}\right) $ for all $z\in E\cap \frac{65}{64}Q$.
\end{itemize}

Thanks to conclusion \eqref{gn7} of Lemma \ref{lemma-gn1} (together with %
\eqref{gn29}), we have also

\begin{itemize}
\item[\refstepcounter{equation}\text{(\theequation)}\label{gn34}] $%
\left\vert \partial ^{\beta }\left( P^{\#}-P^{y}\right) \right\vert \leq
C\left( \epsilon \right) M_{0}\delta _{Q}^{m-\left\vert \beta \right\vert }$
on $\frac{65}{64}Q$ for $\left\vert \beta \right\vert \leq m$.
\end{itemize}

(Recall that $P^\#-P^y$ is a polynomial of degree at most $m-1$.) Taking $%
F^{y,Q}=F$, we may read off the desired conclusions (*1) and (*2) from %
\eqref{gn32}, \eqref{gn33}, \eqref{gn34}.

The proof of Lemma \ref{lemma-gn2} is complete.
\end{proof}

\section{Local Interpolants}

In this section, we again place ourselves in the setting of Section \ref%
{setup-for-the-induction-step}. We make use of the Calder\'{o}n-Zygmund
cubes $Q$ and the auxiliary polynomials $P^{y}$ defined above. Let

\begin{itemize}
\item[\refstepcounter{equation}\text{(\theequation)}\label{li0}] $\mathcal{Q}%
=\left\{ Q\in CZ:\frac{65}{64}Q\cap \frac{65}{64}Q_{0}\not=\emptyset
\right\} $.
\end{itemize}

For each $Q\in \mathcal{Q}$, we define a function $F^{Q}\in C^{m}\left( 
\frac{65}{64}Q\right) $ and a polynomial $P^{Q}\in \mathcal{P}$. We proceed
by cases. We say that $Q \in \mathcal{Q}$ is

\begin{description}
\item[Type 1] if $\# (E \cap 5Q) \geq 2$,

\item[Type 2] if $\# (E \cap 5Q) = 1$,

\item[Type 3] if $\#(E\cap 5Q)=0$ and $\delta _{Q}\leq \frac{1}{1024}\delta
_{Q_{0}}$, and

\item[Type 4] if $\#(E\cap 5Q)=0$ and $\delta _{Q}>\frac{1}{1024}\delta
_{Q_{0}}$.
\end{description}

\underline{If $Q$ is of Type 1}, then we pick a point $y_{Q}\in E\cap 5Q$,
and set $P^{Q}=P^{y_{Q}}$. Applying Lemma \ref{lemma-gn2}, we obtain a
function $F^{Q}\in C^{m}\left( \frac{65}{64}Q\right) $ such that

\begin{itemize}
\item[\refstepcounter{equation}\text{(\theequation)}\label{li1}] $\left\vert
\partial ^{\beta }\left( F^{Q}-P^{Q}\right) \right\vert \leq C\left(
\epsilon \right) M_{0}\delta _{Q}^{m-\left\vert \beta \right\vert }$ on $%
\frac{65}{64}Q$, for $\left\vert \beta \right\vert \leq m$; and
\end{itemize}

\begin{itemize}
\item[\refstepcounter{equation}\text{(\theequation)}\label{li2}] $%
J_{z}\left( F^{Q}\right) \in \Gamma _{0}\left( z,C\left( \epsilon \right)
M_{0}\right) $ for all $z\in E\cap \frac{65}{64}Q$.
\end{itemize}

\underline{If $Q$ is of Type 2}, then we let $y_{Q}$ be the one and only
point of $E\cap 5Q$, and define $F^{Q}=P^{Q}=P^{y_{Q}}$. Then \eqref{li1}
holds trivially. If $y_{Q}\not\in \frac{65}{64}Q$ then \eqref{li2} holds
vacuously.

If $y_{Q}\in \frac{65}{64}Q$, then \eqref{li2} asserts that $P^{y_{Q}}\in
\Gamma _{0}\left( y_{Q},C\left( \epsilon \right) M_{0}\right) $. Thanks to %
\eqref{li1} in Section \ref{auxiliary-polynomials}, we know that $%
P^{y_{Q}}\in \Gamma _{l\left( \mathcal{A}\right) -1}\left(
y_{Q},CM_{0}\right) \subset \Gamma _{0}\left( y_{Q},C\left( \epsilon \right)
M_{0}\right) $. Thus, \eqref{li1} and \eqref{li2} hold also when $Q$ is of
Type 2.

\underline{If $Q$ is of Type 3}, then $5Q^{+}\subset 5Q_{0}$, since $\frac{65%
}{64}Q\cap \frac{65}{64}Q_{0}\not=\emptyset $ and $\delta _{Q}\leq \frac{1}{%
1024}\delta _{Q_{0}}$. However, $Q^{+}$ cannot be OK, since $Q$ is a CZ
cube. Therefore $\#\left( E\cap 5Q^{+}\right) \geq 2$. We pick $y_{Q}\in
E\cap 5Q^{+}$, and set $F^{Q}=P^{Q}=P^{y_{Q}}$. Then \eqref{li1} holds
trivially, and \eqref{li2} holds vacuously.

\underline{If $Q$ is of Type 4}, then we set $F^{Q}=P^{Q}=P^{0}$, and again %
\eqref{li1} holds trivially, and \eqref{li2} holds vacuously.

Note that if $Q$ is of Type 1, 2, or 3, then we have defined a point $y_{Q}$%
, and we have $P^Q=P^{y_Q}$ and 

\begin{itemize}
\item[\refstepcounter{equation}\text{(\theequation)}\label{li3}] $y_{Q}\in
E\cap 5Q^{+}\cap 5Q_{0}$.
\end{itemize}

(If $Q$ is of Type 1 or 2, then $y_{Q}\in E\cap 5Q$ and $5Q\subseteq 5Q_{0}$
since $Q$ is OK. If $Q$ is of Type 3, then $y_{Q}\in E\cap 5Q^{+}$ and $%
5Q^{+}\subset 5Q_{0}$). We have picked $F^{Q}$ and $P^{Q}$ for all $Q\in 
\mathcal{Q}$, and \eqref{li1}, \eqref{li2} hold in all cases.

\begin{lemma}[\textquotedblleft Consistency of the $P^{Q}$\textquotedblright 
]
\label{lemma-li1} Let $Q,Q^{\prime }\in \mathcal{Q}$, and suppose $\frac{65}{%
64}Q\cap \frac{65}{64}Q^{\prime }\not=\emptyset $. Then

\begin{itemize}
\item[\refstepcounter{equation}\text{(\theequation)}\label{li4}] $\left\vert
\partial ^{\beta }\left( P^{Q}-P^{Q^{\prime }}\right) \right\vert \leq
C\left( \epsilon \right) M_{0}\delta _{Q}^{m-\left\vert \beta \right\vert }$
on $\frac{65}{64}Q\cap \frac{65}{64}Q^{\prime }$, for $\left\vert \beta
\right\vert \leq m$.
\end{itemize}
\end{lemma}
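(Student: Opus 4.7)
The plan is to reduce everything to known consistency estimates on the auxiliary polynomials $P^y$, splitting into cases according to the types of $Q$ and $Q'$. First observe two preliminary reductions. Since $P^Q$ and $P^{Q'}$ both have degree at most $m-1$, the case $|\beta| = m$ is trivial, and it suffices to establish the bound for $|\beta| \leq m-1$. Second, since $Q,Q' \in $ CZ with $\frac{65}{64}Q \cap \frac{65}{64}Q' \neq \emptyset$, Lemma \ref{lemma-cz2} yields $\tfrac{1}{2}\delta_Q \leq \delta_{Q'} \leq 2\delta_Q$.

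The generic case is when both $Q$ and $Q'$ are of Type 1, 2, or 3. Then $P^Q = P^{y_Q}$ and $P^{Q'} = P^{y_{Q'}}$ with $y_Q \in E \cap 5Q_0 \cap 5Q^+$ and $y_{Q'} \in E \cap 5Q_0 \cap 5(Q')^+$ (by \eqref{li3}). The hypotheses of Lemma \ref{lemma-ap2} are satisfied, so
\[
|\partial^\beta (P^Q - P^{Q'})(y_{Q'})| \leq C M_0 (\epsilon^{-1}\delta_Q)^{m-|\beta|} \quad \text{for } |\beta|\leq m-1.
\]
For $x \in \frac{65}{64}Q \cap \frac{65}{64}Q'$, we have $|x - y_{Q'}| \leq C \delta_{Q'} \leq C'\delta_Q$. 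Taylor expanding the polynomial $P^Q - P^{Q'}$ around $y_{Q'}$ and substituting this distance bound yields $|\partial^\beta(P^Q-P^{Q'})(x)| \leq C(\epsilon) M_0 \delta_Q^{m-|\beta|}$, with the factor $\epsilon^{-(m-|\beta|)}$ absorbed into $C(\epsilon)$.

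The remaining case is when at least one of $Q,Q'$ is of Type 4. The key point is that Type 4 forces $\delta_Q > \frac{1}{1024}\delta_{Q_0}$, and then Lemma \ref{lemma-cz2} forces both $\delta_Q$ and $\delta_{Q'}$ to be comparable to $\delta_{Q_0}$. If both are Type 4, then $P^Q = P^{Q'} = P^0$, and the estimate is trivial. Otherwise exactly one, say $Q'$, is Type 4, so $P^{Q'} = P^0$ and $P^Q = P^{y_Q}$ for some $y_Q \in E \cap 5Q_0$. Estimate \eqref{ap3} gives
\[
|\partial^\beta(P^{y_Q} - P^0)(x_0)| \leq C M_0 (\epsilon^{-1}\delta_{Q_0})^{m-|\beta|} \quad \text{for } |\beta| \leq m-1.
\]
For $x \in \frac{65}{64}Q \cap \frac{65}{64}Q' \subset 5Q_0^+$, the distance $|x - x_0| \leq C \delta_{Q_0}$; Taylor expansion and the comparison $\delta_{Q_0} \leq C'\delta_Q$ give the desired $C(\epsilon) M_0 \delta_Q^{m-|\beta|}$. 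The case where $Q$ is the Type 4 cube is symmetric.

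The only subtlety is the careful bookkeeping of the cases; no new technical ingredient is needed, since Lemma \ref{lemma-ap2} handles all cube pairs where both polynomials are genuine auxiliary polynomials, while the sidelength lower bound built into Type 4 allows \eqref{ap3} alone (localized near $x_0$) to carry the day whenever $P^0$ is involved.
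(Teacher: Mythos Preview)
Your proof is correct and follows essentially the same approach as the paper: split into the cases where neither, both, or exactly one of $Q,Q'$ is of Type~4, invoking Lemma~\ref{lemma-ap2} in the first case and estimate~\eqref{ap3} together with the sidelength comparability $\delta_Q \sim \delta_{Q_0}$ in the last. Your explicit reduction to $|\beta|\le m-1$ is a clean touch the paper leaves implicit.
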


\begin{proof}
Suppose first that neither $Q$ nor $Q^{\prime }$ is Type 4. Then $%
P^{Q}=P^{y_{Q}}$ and $P^{Q^{\prime }}=P^{y_{Q^{\prime }}}$ with $y_{Q}\in
E\cap 5Q^{+}\cap 5Q_{0}$, $y_{Q^{\prime }}\in E\cap 5\left( Q^{\prime
}\right) ^{+}\cap 5Q_{0}$. Thanks to Lemma \ref{lemma-ap2}, we have%
\begin{equation*}
\left\vert \partial ^{\beta }\left( P^{Q}-P^{Q^{\prime }}\right) \left(
y_{Q}\right) \right\vert \leq C\left( \epsilon \right) M_{0}\delta
_{Q}^{m-\left\vert \beta \right\vert }\text{ for }\beta \in \mathcal{M}\text{%
,}
\end{equation*}
which implies \eqref{li4}, since $y_{Q}\in 5Q^{+}$ and $P^{Q}-P^{Q^{\prime
}} $ is an $(m-1)^{rst}$ degree polynomial.

Next, suppose that $Q$ and $Q^{\prime }$ are both Type 4.

Then by definition $P^{Q}=P^{Q^{\prime }}=P^{0}$, and consequently %
\eqref{li4} holds trivially.

Finally, suppose that exactly one of $Q$, $Q^{\prime }$ is of Type 4.

Since $\delta _{Q}$ and $\delta _{Q^{\prime }}$, differ by at most a factor
of $2$, the cubes $Q$ and $Q^{\prime }$ may be interchanged without loss of
generality. Hence, we may assume that $Q^{\prime }$ is of Type 4 and $Q$ is
not. By definition of Type 4,

\begin{itemize}
\item[\refstepcounter{equation}\text{(\theequation)}\label{li5}] $\delta
_{Q^{\prime }}>\frac{1}{1024}\delta _{Q_{0}}$; hence also $\delta _{Q}\geq 
\frac{1}{1024}\delta _{Q_{0}}$,
\end{itemize}

since $\delta _{Q}$, $\delta _{Q^{\prime }}$, are powers of $2$ that differ
by at most a factor of $2$.

Since $Q^{\prime }$ is of Type 4 and $Q$ is not, we have $P^Q= P^{y_Q}$ and $%
P^{Q^{\prime }} = P^0$, with

\begin{itemize}
\item[\refstepcounter{equation}\text{(\theequation)}\label{li6}] $y_{Q}\in
E\cap 5Q^{+}\cap 5Q_{0}$.
\end{itemize}

Thus, in this case, \eqref{li4} asserts that

\begin{itemize}
\item[\refstepcounter{equation}\text{(\theequation)}\label{li7}] $\left\vert
\partial ^{\beta }\left( P^{y_{Q}}-P^{0}\right) \right\vert \leq C\left(
\epsilon \right) M_{0}\delta _{Q}^{m-\left\vert \beta \right\vert }$ on $%
\frac{65}{64}Q\cap \frac{65}{64}Q^{\prime }$, for $\left\vert \beta
\right\vert \leq m$.
\end{itemize}

However, by \eqref{li6} above, property \eqref{ap3} in Section \ref%
{auxiliary-polynomials} gives the estimate

\begin{itemize}
\item[\refstepcounter{equation}\text{(\theequation)}\label{li8}] $\left\vert
\partial ^{\beta }\left( P^{y_{Q}}-P^{0}\right) \left( x_{0}\right)
\right\vert \leq C\left( \epsilon \right) M_{0}\delta _{Q_{0}}^{m-\left\vert
\beta \right\vert }$ for $\left\vert \beta \right\vert \leq m-1$.
\end{itemize}

Recall from the hypotheses of the Main Lemma for $\mathcal{A}$ that $%
x_{0}\in 5\left( Q_{0}\right) ^{+}$. Since $P^{y_{Q}}-P^{0}$ is an $%
(m-1)^{rst}$ degree polynomial, we conclude from \eqref{li8} that

\begin{itemize}
\item[\refstepcounter{equation}\text{(\theequation)}\label{li9}] $\left\vert
\partial ^{\beta }\left( P^{y_{Q}}-P^{0}\right) \right\vert \leq C\left(
\epsilon \right) M_{0}\delta _{Q_{0}}^{m-\left\vert \beta \right\vert }$ on $%
5Q$, for $\left\vert \beta \right\vert \leq m$.
\end{itemize}

The desired inequality \eqref{li7} now follows from \eqref{li5} and %
\eqref{li9}. Thus, \eqref{li4} holds in all cases.

The proof of Lemma \ref{lemma-li1} is complete.
\end{proof}

From estimate \eqref{li1}, Lemma \ref{lemma-cz2} and Lemma \ref{lemma-li1},
we immediately obtain the following.

\begin{corollary}
\label{cor-to-lemma-li1} Let $Q,Q^{\prime }\in \mathcal{Q}$ and suppose that 
$\frac{65}{64}Q\cap \frac{65}{64}Q^{\prime }\not=\emptyset $. Then

\begin{itemize}
\item[\refstepcounter{equation}\text{(\theequation)}\label{li10}] $%
\left\vert \partial ^{\beta }\left( F^{Q}-F^{Q^{\prime }}\right) \right\vert
\leq C\left( \epsilon \right) M_{0}\delta _{Q}^{m-\left\vert \beta
\right\vert }$ on $\frac{65}{64}Q\cap \frac{65}{64}Q^{\prime }$, for $%
\left\vert \beta \right\vert \leq m$.
\end{itemize}
\end{corollary}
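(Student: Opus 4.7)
The plan is a direct triangle-inequality decomposition together with the three ingredients already stated. I would write
\begin{equation*}
F^{Q}-F^{Q'} = (F^{Q}-P^{Q}) + (P^{Q}-P^{Q'}) + (P^{Q'}-F^{Q'})
\end{equation*}
and estimate each term on $\frac{65}{64}Q \cap \frac{65}{64}Q'$ separately.

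For the first term, estimate \eqref{li1} applied to $Q$ gives $|\partial^\beta(F^{Q}-P^{Q})| \le C(\epsilon) M_{0}\delta_{Q}^{m-|\beta|}$ everywhere on $\frac{65}{64}Q$, hence on the intersection. The middle term is exactly the content of Lemma \ref{lemma-li1}. The only step that needs a brief comment is the third term: estimate \eqref{li1} applied to $Q'$ yields $|\partial^\beta(F^{Q'}-P^{Q'})| \le C(\epsilon) M_{0}\delta_{Q'}^{m-|\beta|}$ on $\frac{65}{64}Q'$. Since $Q,Q' \in \mathcal{Q} \subseteq \mathrm{CZ}$ and their enlargements intersect, Lemma \ref{lemma-cz2} gives $\delta_{Q'}\le 2\delta_Q$, so $\delta_{Q'}^{m-|\beta|} \le 2^{m}\delta_{Q}^{m-|\beta|}$ (using $|\beta|\le m$), absorbing the harmless factor $2^m$ into $C(\epsilon)$.

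Summing the three bounds via the triangle inequality gives the desired estimate on $\frac{65}{64}Q \cap \frac{65}{64}Q'$ with a new constant $C(\epsilon)$ depending only on $\epsilon$, $A$, $C_B$, $C_w$, $m$, $n$. There is no real obstacle here; the only subtlety is ensuring that the scale on $Q'$ is replaced by the scale on $Q$, which is precisely what Lemma \ref{lemma-cz2} provides.
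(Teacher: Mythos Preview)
Your proof is correct and is exactly the argument the paper has in mind: it states that the corollary follows immediately from estimate \eqref{li1}, Lemma \ref{lemma-cz2}, and Lemma \ref{lemma-li1}, and your triangle-inequality decomposition together with the scale comparison $\delta_{Q'}\le 2\delta_Q$ is precisely how those three ingredients combine.
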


Regarding the polynomials $P^{Q}$, we make the following simple observation.

\begin{lemma}
\label{lemma-li2} We have

\begin{itemize}
\item[\refstepcounter{equation}\text{(\theequation)}\label{li11}] $%
\left\vert \partial ^{\beta }\left( P^{Q}-P^{0}\right) \right\vert \leq
C\left( \epsilon \right) M_{0}\delta _{Q_{0}}^{m-\left\vert \beta
\right\vert }$ on $\frac{65}{64}Q$, for $\left\vert \beta \right\vert \leq m$
and $Q\in \mathcal{Q}$.
\end{itemize}
\end{lemma}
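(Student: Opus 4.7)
The plan is to dispose of the lemma by a case split on the Type of $Q$. The cases in which $y_Q$ exists reduce to the derivative bound \eqref{ap3} already proven in Section \ref{auxiliary-polynomials}, together with a one-point-to-many-point Taylor expansion from $x_0$.

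First I would handle \textbf{Case 1}: if $Q$ is of Type 4, then by construction $P^Q=P^0$, so $P^Q-P^0\equiv 0$ and \eqref{li11} holds trivially. This case contains no real content.

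\textbf{Case 2}: $Q$ is of Type 1, 2, or 3. Then by construction $P^Q=P^{y_Q}$ for some
\[
y_Q\in E\cap 5Q^{+}\cap 5Q_0,
\]
as recorded in \eqref{li3}. Property \eqref{ap3} in Section \ref{auxiliary-polynomials} (applied to $y=y_Q$) then gives
\[
|\partial^\beta(P^{y_Q}-P^0)(x_0)|\leq CM_0(\epsilon^{-1}\delta_{Q_0})^{m-|\beta|}\quad\text{for }|\beta|\leq m-1.
\]
Since $P^{y_Q}-P^0$ is a polynomial of degree at most $m-1$, its derivatives of order $m$ vanish identically, so it suffices to control derivatives of order $\leq m-1$. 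For any $x\in\tfrac{65}{64}Q$, Taylor-expand at $x_0$:
\[
\partial^\beta(P^{y_Q}-P^0)(x)=\sum_{|\gamma|\leq m-1-|\beta|}\frac{1}{\gamma!}\,\partial^{\beta+\gamma}(P^{y_Q}-P^0)(x_0)\,(x-x_0)^\gamma.
\]
Because $Q$ is OK we have $\tfrac{65}{64}Q\subseteq 5Q\subseteq 5Q_0$ by \eqref{cz1}, and $x_0\in 5Q_0^{+}$ by hypothesis of the Main Lemma, so $|x-x_0|\leq C\delta_{Q_0}$ on $\tfrac{65}{64}Q$. Combining these,
\[
|\partial^\beta(P^{y_Q}-P^0)(x)|\leq C\sum_{|\gamma|\leq m-1-|\beta|} M_0\,\epsilon^{-(m-|\beta|-|\gamma|)}\,\delta_{Q_0}^{m-|\beta|-|\gamma|}\cdot\delta_{Q_0}^{|\gamma|}\leq C\epsilon^{-m}M_0\,\delta_{Q_0}^{m-|\beta|},
\]
which is \eqref{li11} with $C(\epsilon)=C\epsilon^{-m}$.

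There is no real obstacle here; the only thing to notice is that the bound at the single point $x_0$ in \eqref{ap3} transfers to a uniform bound on $\tfrac{65}{64}Q$ because $P^{y_Q}-P^0\in\mathcal{P}$ is a polynomial of degree $\leq m-1$ and the separation $|x-x_0|$ is $O(\delta_{Q_0})$, which costs only a factor depending on $\epsilon$. This same maneuver (essentially identical to the Type-4 versus non-Type-4 subcase in the proof of Lemma \ref{lemma-li1}) completes the argument.
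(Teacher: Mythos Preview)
Your proof is correct and follows essentially the same approach as the paper: split into Type 4 (trivial) versus Types 1--3, invoke \eqref{ap3} at $x_0$, and then transfer the pointwise bound to all of $\tfrac{65}{64}Q$ using that $P^{y_Q}-P^0\in\mathcal{P}$ and $|x-x_0|\leq C\delta_{Q_0}$. The paper's version is simply more terse where you spell out the Taylor expansion explicitly.
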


\begin{proof}
Recall that if $Q$ is of Type 1, 2, or 3, then $P^{Q}=P^{y_{Q}}$ for some $%
y_{Q}\in E \cap 5Q_{0}$. From estimate \eqref{ap3} in Section \ref%
{auxiliary-polynomials}, we know that

\begin{itemize}
\item[\refstepcounter{equation}\text{(\theequation)}\label{li12}] $%
\left\vert \partial ^{\beta }\left( P^{Q}-P^{0}\right) \left( x_{0}\right)
\right\vert \leq C\left( \epsilon \right) M_{0}\delta _{Q_{0}}^{m-\left\vert
\beta \right\vert }$ for $\left\vert \beta \right\vert \leq m-1$.
\end{itemize}

Since $x_{0}\in 5Q_{0}^{+}$ (see the hypotheses of the Main Lemma for $%
\mathcal{A}$) and $P^{Q}-P^{0}$ is a polynomial of degree at most $m-1$, and
since $\frac{65}{64}Q\subset 5Q\subset 5Q_{0}$ (because $Q$ is OK), estimate %
\eqref{li12} implies the desired estimate \eqref{li11}.

If instead, $Q$ is of Type 4, then by definition $P^{Q}=P^{0}$, hence
estimate \eqref{li11} holds trivially.

Thus, \eqref{li11} holds in all cases.
\end{proof}

\begin{corollary}
\label{cor-to-lemma-li2} For $Q\in \mathcal{Q}$ and $|\beta |\leq m$, we
have $\left\vert \partial ^{\beta }\left( F^{Q}-P^{0}\right) \right\vert
\leq C\left( \epsilon \right) M_{0}\delta _{Q_{0}}^{m-\left\vert \beta
\right\vert }$ on $\frac{65}{64}Q$.
\end{corollary}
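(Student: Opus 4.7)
The plan is to deduce the corollary immediately from the triangle inequality, combining estimate \eqref{li1} (which controls $F^Q-P^Q$ on $\frac{65}{64}Q$) with Lemma \ref{lemma-li2} (which controls $P^Q-P^0$ on the same set). There is essentially no substantial content beyond bookkeeping the scales $\delta_Q$ and $\delta_{Q_0}$.

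More concretely, I would fix $Q \in \mathcal{Q}$ and $|\beta| \leq m$, and write
\begin{equation*}
|\partial^\beta(F^Q - P^0)| \leq |\partial^\beta(F^Q - P^Q)| + |\partial^\beta(P^Q - P^0)|
\end{equation*}
pointwise on $\frac{65}{64}Q$. By \eqref{li1}, the first term is bounded by $C(\epsilon) M_0 \delta_Q^{m-|\beta|}$ on $\frac{65}{64}Q$; by Lemma \ref{lemma-li2}, the second term is bounded by $C(\epsilon) M_0 \delta_{Q_0}^{m-|\beta|}$ on $\frac{65}{64}Q$.

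The only remaining point is to compare the two scales. Since $Q$ is OK, we have $5Q \subseteq 5Q_0$, so $\delta_Q \leq \delta_{Q_0}$. Because $m - |\beta| \geq 0$, this gives $\delta_Q^{m-|\beta|} \leq \delta_{Q_0}^{m-|\beta|}$, and the two bounds combine to yield
\begin{equation*}
|\partial^\beta(F^Q - P^0)| \leq C'(\epsilon) M_0 \delta_{Q_0}^{m-|\beta|} \quad \text{on } \tfrac{65}{64}Q,
\end{equation*}
which is the asserted estimate. There is no hard step here; the corollary is purely a consolidation of the two preceding estimates, recording the fact that once we have fixed $Q \in \mathcal{Q}$, the local comparison of $F^Q$ with $P^0$ proceeds through the intermediate polynomial $P^Q$ and loses nothing because $\delta_Q \leq \delta_{Q_0}$.
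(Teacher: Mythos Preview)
Your proof is correct and follows exactly the same approach as the paper: combine \eqref{li1} with Lemma \ref{lemma-li2} via the triangle inequality, using $5Q\subseteq 5Q_0$ (since $Q$ is OK) to bound $\delta_Q^{m-|\beta|}$ by $\delta_{Q_0}^{m-|\beta|}$.
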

\begin{proof}
Recall that, since $Q$ is OK, we have $5Q\subset 5Q_{0}$. The desired
estimate therefore follows from estimates \eqref{li1} and \eqref{li11}.
\end{proof}

\section{Completing the Induction}

\label{completing-the-induction}

We again place ourselves in the setting of Section \ref%
{setup-for-the-induction-step}. We use the CZ cubes $Q$ and the functions $%
F^Q$ defined above. We recall several basic results from earlier sections.

\begin{itemize}
\item[\refstepcounter{equation}\text{(\theequation)}\label{ci1}] $\vec{\Gamma%
}_{0}$ is a $\left( C,\delta _{\max }\right) $-convex shape field.
\end{itemize}

\begin{itemize}
\item[\refstepcounter{equation}\text{(\theequation)}\label{ci2}] $\epsilon
^{-1}\delta _{Q_{0}}\leq \delta _{\max }$, hence $\epsilon ^{-1}\delta
_{Q}\leq \delta _{\max }$ for $Q\in $ CZ.
\end{itemize}

\begin{itemize}
\item[\refstepcounter{equation}\text{(\theequation)}\label{ci3}] The cubes $%
Q\in $ CZ partition the interior of $5Q_{0}$.
\end{itemize}

\begin{itemize}
\item[\refstepcounter{equation}\text{(\theequation)}\label{ci4}] For $%
Q,Q^{\prime }\in $ CZ, if $\frac{65}{64}Q\cap \frac{65}{64}Q^{\prime
}\not=\emptyset $, then $\frac{1}{2}\delta _{Q}\leq \delta _{Q^{\prime
}}\leq 2\delta _{Q}$.
\end{itemize}

Let

\begin{itemize}
\item[\refstepcounter{equation}\text{(\theequation)}\label{ci5}] $\mathcal{Q}%
=\left\{ Q\in CZ:\frac{65}{64}Q\cap \frac{65}{64}Q_{0}\not=\emptyset
\right\} $.
\end{itemize}

Then

\begin{itemize}
\item[\refstepcounter{equation}\text{(\theequation)}\label{ci6}] $\mathcal{Q}
$ is finite.
\end{itemize}

For each $Q \in \mathcal{Q}$, we have

\begin{itemize}
\item[\refstepcounter{equation}\text{(\theequation)}\label{ci7}] $F^{Q}\in
C^{m}\left( \frac{65}{64}Q\right) $,
\end{itemize}

\begin{itemize}
\item[\refstepcounter{equation}\text{(\theequation)}\label{ci8}] $%
J_{z}\left( F^{Q}\right) \in \Gamma _{0}\left( z,C\left( \epsilon \right)
M_{0}\right) $ for $z\in E\cap \frac{65}{64}Q$, and
\end{itemize}

\begin{itemize}
\item[\refstepcounter{equation}\text{(\theequation)}\label{ci9}] $\left\vert
\partial ^{\beta }\left( F^{Q}-P^{0}\right) \right\vert \leq C\left(
\epsilon \right) M_{0}\delta _{Q_{0}}^{m-\left\vert \beta \right\vert }$ on $%
\frac{65}{64}Q$, for $\left\vert \beta \right\vert \leq m$.
\end{itemize}

\begin{itemize}
\item[\refstepcounter{equation}\text{(\theequation)}\label{ci10}] For each $%
Q,Q^{\prime }\in \mathcal{Q}$, if $\frac{65}{64}Q\cap \frac{65}{64}Q^{\prime
}\not=\emptyset $, then $\left\vert \partial ^{\beta }\left(
F^{Q}-F^{Q^{\prime }}\right) \right\vert \leq C\left( \epsilon \right)
M_{0}\delta _{Q}^{m-\left\vert \beta \right\vert }$ on $\frac{65}{64}Q\cap 
\frac{65}{64}Q^{\prime }$, for $\left\vert \beta \right\vert \leq m$.
\end{itemize}

We introduce a Whitney partition of unity adapted to the cubes $Q\in $ CZ.
For each $Q\in $ CZ, let $\tilde{\theta}_{Q}\in C^{m}\left( \mathbb{R}%
^{n}\right) $ satisfy  $\tilde{\theta}_{Q}=1$ on $Q$, support$\left( \tilde{\theta}%
_{Q}\right) \subset \frac{65}{64}Q$, $\left\vert \partial ^{\beta }
\tilde{\theta}_{Q}\right\vert \leq C\delta _{Q}^{-\left\vert \beta
\right\vert }$ for $\left\vert \beta \right\vert \leq m$.

Set 
\begin{itemize}
\item[\refstepcounter{equation}\text{(\theequation)}\label{ci11}]$\theta _{Q}=\tilde{\theta}_{Q}\cdot \left( \sum_{Q^{\prime }\in
CZ}\left( \tilde{\theta}_{Q^{\prime }}\right) ^{2}\right) ^{-1/2}$. 
\end{itemize}
Note that we have \begin{itemize}
\item[\refstepcounter{equation}\text{(\theequation)}\label{ci14}] $%
\sum_{Q\in \mathcal{Q}}\theta _{Q}^{2}=1$ on $\frac{65}{64}Q_{0}$.
\end{itemize}
We define

\begin{itemize}
\item[\refstepcounter{equation}\text{(\theequation)}\label{ci15}] $%
F=\sum_{Q\in \mathcal{Q}}\theta _{Q}^{2}F^{Q}$.
\end{itemize}

For each $Q\in \mathcal{Q}$, \eqref{ci7}, \eqref{ci11} show
that $\theta _{Q}^{2}F^{Q}\in C^{m}\left( \mathbb{R}^{n}\right) $. Since
also $\mathcal{Q}$ is finite (see \eqref{ci6}), it follows that

\begin{itemize}
\item[\refstepcounter{equation}\text{(\theequation)}\label{ci16}] $F\in
C^{m}\left( \mathbb{R}^{n}\right) $.
\end{itemize}

Moreover, for any $x\in \frac{65}{64}Q_{0}$ and any $\beta $ of order $%
|\beta |\leq m$, we have

\begin{itemize}
\item[\refstepcounter{equation}\text{(\theequation)}\label{ci17}] $\partial
^{\beta }F\left( x\right) =\sum_{Q\in \mathcal{Q}\left( x\right) }\partial
^{\beta }\left\{ \theta _{Q}^{2}F^{Q}\right\} $, where
\end{itemize}

\begin{itemize}
\item[\refstepcounter{equation}\text{(\theequation)}\label{ci18}] $\mathcal{Q%
}\left( x\right) =\left\{ Q\in \mathcal{Q}:x\in \frac{65}{64}Q\right\} $.
\end{itemize}

Note that

\begin{itemize}
\item[\refstepcounter{equation}\text{(\theequation)}\label{ci19}] $\#\left( 
\mathcal{Q}\left( x\right) \right) \leq C$, by \eqref{ci4}.
\end{itemize}

Let $\hat{Q}$ be the CZ cube containing $x$. (There is one and only one such
cube, thanks to \eqref{ci3}; recall that we suppose that $x\in \frac{65}{64}%
Q_{0}$.) Then $\hat{Q}\in \mathcal{Q}(x)$, and \eqref{ci17} may be written
in the form

\begin{itemize}
\item[\refstepcounter{equation}\text{(\theequation)}\label{ci20}] $\partial
^{\beta }\left( F-P^{0}\right) \left( x\right) =\partial ^{\beta }\left( F^{%
\hat{Q}}-P^{0}\right) \left( x\right) +\sum_{Q\in \mathcal{Q}\left( x\right)
}\partial ^{\beta }\left\{ \theta _{Q}^{2}\cdot \left( F^{Q}-F^{\hat{Q}%
}\right) \right\} \left( x\right) $.
\end{itemize}

(Here we use \eqref{ci14}.) The first term on the right in \eqref{ci20} has
absolute value at most $C\left( \epsilon \right) M_{0}\delta
_{Q_{0}}^{m-\left\vert \beta \right\vert }$; see \eqref{ci9}. At most $C$
distinct cubes $Q$ enter into the second term on the right in \eqref{ci20};
see \eqref{ci19}. For each $Q\in \mathcal{Q}(x)$, we have 
\begin{equation*}
\left\vert \partial ^{\beta }\left\{ \theta _{Q}^{2}\cdot \left( F^{Q}-F^{%
\hat{Q}}\right) \right\} \left( x\right) \right\vert \leq C\left( \epsilon
\right) M_{0}\delta _{Q}^{m-\left\vert \beta \right\vert }\text{,}
\end{equation*}%
by \eqref{ci10} and \eqref{ci11}. Hence, for each $Q\in \mathcal{Q}(x)$, we
have 
\begin{equation*}
\left\vert \partial ^{\beta }\left\{ \theta _{Q}^{2}\cdot \left( F^{Q}-F^{%
\hat{Q}}\right) \right\} \left( x\right) \right\vert \leq C\left( \epsilon
\right) M_{0}\delta _{Q_{0}}^{m-\left\vert \beta \right\vert };
\end{equation*}%
see \eqref{ci3}.

The above remarks and \eqref{ci20} together yield the estimate

\begin{itemize}
\item[\refstepcounter{equation}\text{(\theequation)}\label{ci21}] $%
\left\vert \partial ^{\beta }\left( F-P^{0}\right) \right\vert \leq C\left(
\epsilon \right) M_{0}\delta _{Q_{0}}^{m-\left\vert \beta \right\vert }$ on $%
\frac{65}{64}Q_{0}$, for $\left\vert \beta \right\vert \leq m$.
\end{itemize}

Moreover, let $z\in E\cap \frac{65}{64}Q_{0}$. Then 
\begin{equation*}
J_{z}\left( F\right) =\sum_{Q\in \mathcal{Q}\left( z\right) }J_{z}\left(
\theta _{Q}\right) \odot _{z}J_{z}\left( \theta _{Q}\right) \odot
_{z}J_{z}\left( F^{Q}\right) \text{ (see \eqref{ci17});}
\end{equation*}%
\begin{equation*}
\left\vert \partial ^{\beta }\left[ J_{z}\left( \theta _{Q}\right) \right]
\left( z\right) \right\vert \leq C\delta _{Q}^{-\left\vert \beta \right\vert
}\text{ for }\left\vert \beta \right\vert \leq m-1\text{, }Q\in \mathcal{Q}%
\left( z\right) \text{ (see \eqref{ci11});}
\end{equation*}%
\begin{equation*}
\sum_{Q\in \mathcal{Q}\left( z\right) }\left[ J_{z}\left( \theta _{Q}\right) %
\right] \odot _{z}\left[ J_{z}\left( \theta _{Q}\right) \right] =1
\end{equation*}%
(see \eqref{ci14} and note that $J_{z}(\theta _{Q})=0$ for $Q\not\in 
\mathcal{Q}(z)$ by \eqref{ci11} and \eqref{ci18}); 
\begin{equation*}
J_{z}\left( F^{Q}\right) \in \Gamma _{0}\left( z,C\left( \epsilon \right)
M_{0}\right) \text{ for }Q\in \mathcal{Q}\left( z\right) \text{ (see %
\eqref{ci8});}
\end{equation*}%
\begin{equation*}
\left\vert \partial ^{\beta }\left\{ J_{z}\left( F^{Q}\right) -J_{z}\left(
F^{Q^{\prime }}\right) \right\} \left( z\right) \right\vert \leq C\left(
\epsilon \right) M_{0}\delta _{Q}^{m-\left\vert \beta \right\vert }
\end{equation*}%
for $\left\vert \beta \right\vert \leq m-1$, $Q,Q^{\prime }\in \mathcal{Q}%
\left( z\right) $ (see \eqref{ci10}); $\#\left( \mathcal{Q}\left( z\right) \right) \leq C$ (see \eqref{ci19}); $\delta _{Q}\leq \delta _{\max }$ (see \eqref{ci2});
$\vec{\Gamma}_{0}$ is a $\left( C,\delta _{\max }\right) $-convex shape
field (see \eqref{ci1}). 

The above results, together with Lemma \ref%
{lemma-wsf2}, tell us that

\begin{itemize}
\item[\refstepcounter{equation}\text{(\theequation)}\label{ci22}] $%
J_{z}\left( F\right) \in \Gamma _{0}\left( z,C\left( \epsilon \right)
M_{0}\right) $ for all $z\in E\cap \frac{65}{64}Q_{0}$.
\end{itemize}

From \eqref{ci16}, \eqref{ci21}, \eqref{ci22}, we see at once that the
restriction of $F$ to $\frac{65}{64}Q_{0}$ belongs to $C^{m}\left( \frac{65}{%
64}Q_{0}\right) $ and satisfies conditions (C*1) and (C*2) in Section \ref%
{setup-for-the-induction-step}. As we explained in that section, once we
have found a function in $C^{m}\left( \frac{65}{64}Q_{0}\right) $ satisfying
(C*1) and (C*2), our induction on $\mathcal{A}$ is complete. Thus, we have
proven the Main Lemma for all monotonic $\mathcal{A}\subseteq \mathcal{M}$. $\blacksquare$

\section{Restatement of the Main Lemma}

\label{rml}

An equivalent version of the Main Lemma for $\mathcal{A}=\emptyset $ reads as follows. 

\theoremstyle{plain} 
\newtheorem*{thm Restated Main Lemma}{Restated Main
Lemma}%
\begin{thm Restated Main Lemma}Let $\vec{\Gamma}_{0}=\left( \Gamma _{0}\left( x,M\right) \right) _{x\in
E,M>0}$ be a $\left( C_{w},\delta _{\max }\right) $-convex shape
field. For $l\geq 1$, let $\vec{\Gamma}_{l}=\left( \Gamma _{l}\left(
x,M\right) \right) _{x\in E,M>0}$ be the $l^{th}$-refinement of $\vec{\Gamma}_{0}$. Fix a dyadic cube $Q_0$ of sidelength $\delta _{Q_{0}}\leq \epsilon
\delta _{\max }$, where $\epsilon >0$ is a small enough constant determined
by $m$, $n$, $C_{w}$. Let $x_{0}\in E\cap 5Q_{0}^{+}$, and let $P_{0}\in \Gamma
_{l\left( \emptyset \right) }\left( x_{0},M_{0}\right) $.

Then there exists a function $F\in C^{m}\left( \frac{65}{64}Q_{0}\right) $,
satisfying 

\begin{itemize}
\item $\left\vert \partial ^{\beta }\left( F-P_{0}\right) \left( x\right)
\right\vert \leq C_{\ast }M_{0}\delta _{Q_{0}}^{m-\left\vert \beta
\right\vert }$ for $x\in \frac{65}{64}Q_{0}$, $\left\vert \beta \right\vert
\leq m$; and 
\item $J_{z}\left( F\right) \in \Gamma _{0}\left( z,C_{\ast }M_{0}\right) $
for all $z\in E\cap \frac{65}{64}Q_{0}$;
\end{itemize}
where $C_{\ast }$ is determined by $C_{w}$, $m$, $n$.\end{thm Restated Main Lemma}

\section{Tidying Up}

\label{tu}

In this section, we remove from the Restated Main Lemma the small constant $%
\epsilon$ and the assumption that $Q_0$ is dyadic.

\begin{theorem}
\label{theorem-tu1} Let $\vec{\Gamma}_{0}=\left( \Gamma _{0}\left(
x,M\right) \right) _{x\in E,M>0}$ be a $\left( C_{w},\delta _{\max }\right) $%
-convex shape field. For $l\geq 1$, let $\vec{\Gamma}_{l}=\left( \Gamma
_{l}\left( x,M\right) \right) _{x\in E,M>0}$ be the $l^{th}$-refinement of $%
\vec{\Gamma}_{0}$. Fix a cube $Q_0$ of sidelength $\delta _{Q_{0}}\leq
\delta _{\max }$, a point $x_0 \in E \cap 5Q_0$, and a real number $M_0>0$.
Let $P_{0}\in \Gamma _{l\left( \emptyset \right) +1}\left(
x_{0},M_{0}\right) $.

Then there exists a function $F\in C^{m}\left(Q_{0}\right) $ satisfying the
following, with $C_{\ast }$ determined by $C_{w}$, $m$, $n$.

\begin{itemize}
\item $\left\vert \partial ^{\beta }\left( F-P_{0}\right) \left( x\right)
\right\vert \leq C_{\ast }M_{0}\delta _{Q_{0}}^{m-\left\vert \beta
\right\vert }$ for $x\in Q_{0}$, $\left\vert \beta \right\vert \leq m$; and

\item $J_{z}\left( F\right) \in \Gamma _{0}\left( z,C_{\ast }M_{0}\right) $
for all $z\in E\cap Q_{0}$.
\end{itemize}
\end{theorem}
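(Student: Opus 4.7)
The plan is to reduce Theorem \ref{theorem-tu1} to the Restated Main Lemma by covering $Q_0$ with a finite grid of small dyadic cubes, applying the Restated Main Lemma on each, and gluing the local interpolants via a Whitney-style partition of unity. The extra refinement level in the hypothesis ($P_0 \in \Gamma_{l(\emptyset)+1}$ rather than $\Gamma_{l(\emptyset)}$) is precisely the ``budget'' that allows us to produce suitable approximating polynomials at nearby basepoints.

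Let $\bar\epsilon$ be the small constant in the Restated Main Lemma, and fix a dyadic scale $\delta_*$ comparable to $\min(\delta_{Q_0}, \bar\epsilon\,\delta_{\max})$. Cover $Q_0$ by the finite collection $\{Q_\nu\}$ of dyadic cubes of sidelength $\delta_*$ meeting $Q_0$. For each $\nu$: if $E \cap 5Q_\nu^+ = \emptyset$, put $F^\nu := P_0$ on $\frac{65}{64}Q_\nu$; otherwise pick $y_\nu \in E \cap 5Q_\nu^+$, and use Lemma \ref{lemma-wsf4}(A) applied to $P_0 \in \Gamma_{l(\emptyset)+1}(x_0, M_0)$ to extract $P_\nu \in \Gamma_{l(\emptyset)}(y_\nu, M_0)$ satisfying
\[
|\partial^\beta(P_0 - P_\nu)(x_0)| \leq M_0 |x_0 - y_\nu|^{m-|\beta|} \leq CM_0 \delta_{Q_0}^{m-|\beta|}, \qquad |\beta| \leq m-1.
\]
Applying the Restated Main Lemma on $Q_\nu$ with basepoint $y_\nu$ and polynomial $P_\nu$ yields $F^\nu \in C^m(\frac{65}{64}Q_\nu)$ with $|\partial^\beta(F^\nu - P_\nu)| \leq C_* M_0 \delta_*^{m-|\beta|}$ on $\frac{65}{64}Q_\nu$ and $J_z(F^\nu) \in \Gamma_0(z, C_* M_0)$ for every $z \in E \cap \frac{65}{64}Q_\nu$. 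Choose a Whitney partition of unity $\{\theta_\nu^2\}$ subordinate to $\{\frac{65}{64}Q_\nu\}$, with $\sum_\nu \theta_\nu^2 \equiv 1$ on $Q_0$ and $|\partial^\beta \theta_\nu| \leq C\delta_*^{-|\beta|}$, and set $F := \sum_\nu \theta_\nu^2 F^\nu$. The convex-set constraint $J_z(F) \in \Gamma_0(z, C_* M_0)$ for $z \in E \cap Q_0$ then follows from Lemma \ref{lemma-wsf2}, applied to the local convex combination of jets at the admissible scale $\delta_* \leq \delta_{\max}$.

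The main obstacle is the derivative bound $|\partial^\beta(F - P_0)| \leq C_* M_0 \delta_{Q_0}^{m-|\beta|}$ on $Q_0$. Writing
\[
F - P_0 = \sum_\nu \theta_\nu^2(F^\nu - P_\nu) + \sum_\nu \theta_\nu^2(P_\nu - P_0),
\]
the first sum is immediate: Leibniz pairs each factor $|\partial^\gamma \theta_\nu^2|\lesssim \delta_*^{-|\gamma|}$ with $|\partial^{\beta-\gamma}(F^\nu - P_\nu)|\lesssim M_0 \delta_*^{m-|\beta-\gamma|}$, giving $\lesssim M_0\delta_*^{m-|\beta|}\leq M_0\delta_{Q_0}^{m-|\beta|}$. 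The second sum is the crux. A naive triangle inequality through $P_0$ only yields $|\partial^\rho(P_\nu - P_{\nu'})|\lesssim M_0 \delta_{Q_0}^{m-|\rho|}$, which when combined with $\delta_*^{-|\gamma|}$ from $\partial^\gamma \theta_\nu^2$ blows up by a factor $(\delta_{Q_0}/\delta_*)^{|\gamma|}$. One must instead establish finer consistency $|\partial^\rho(P_\nu - P_{\nu'})| \lesssim M_0 \delta_*^{m-|\rho|}$ for neighboring cubes, using that $|y_\nu - y_{\nu'}| = O(\delta_*)$. This is exactly what the Transport Lemma provides when applied at scale $\delta = O(\delta_*) \leq \bar\epsilon\,\delta_{\max}$ between neighboring basepoints, and it consumes precisely the one refinement level of slack afforded by the hypothesis $P_0 \in \Gamma_{l(\emptyset)+1}$. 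Equivalently, expanding $P_\nu - P_0 = \sum_{|\rho|\le m-1} a_\rho^\nu (x-x_0)^\rho/\rho!$ with $a_\rho^\nu := \partial^\rho(P_\nu - P_0)(x_0)$, the identity $\sum_\nu \theta_\nu^2 \equiv 1$ collapses the term independent of $\nu$; the residual depends on the variation $a_\rho^\nu - a_\rho^{\nu'}$ among neighbors, which the coordinated Transport choice controls at the required scale $\delta_*^{m-|\rho|}$. Executing this consistency step within the single available refinement level completes the proof.
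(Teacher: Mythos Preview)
Your overall architecture---cover $Q_0$ by a finite grid of dyadic cubes, spend the extra refinement level to extract $P_\nu \in \Gamma_{l(\emptyset)}(y_\nu, M_0)$ close to $P_0$, invoke the Restated Main Lemma on each small cube, and patch with a squared partition of unity using Lemma~\ref{lemma-wsf2}---is exactly the paper's approach.

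However, you have manufactured an obstacle that is not there. Since $\bar\epsilon$ depends only on $C_w,m,n$, the ratio $\delta_{Q_0}/\delta_*$ is itself a controlled constant: by your own choice $\delta_*\sim\min(\delta_{Q_0},\bar\epsilon\,\delta_{\max})$ together with the hypothesis $\delta_{Q_0}\le\delta_{\max}$, one has $\delta_{Q_0}/\delta_*\le C/\bar\epsilon$ (the paper makes the simpler choice $\delta_{Q_\nu}\in[\tfrac{\epsilon}{20}\delta_{Q_0},\,\epsilon\delta_{Q_0}]$, with the same effect). Consequently the factor $(\delta_{Q_0}/\delta_*)^{|\gamma|}$ in your Leibniz estimate is harmless, and the naive triangle inequality through $P_0$ already closes the argument; this is why the paper can write ``one checks easily.'' The Transport Lemma detour you propose is therefore unnecessary, and as you sketch it---obtaining $|\partial^\rho(P_\nu-P_{\nu'})|\le CM_0\delta_*^{m-|\rho|}$ between independently chosen neighbors---would appear to consume a \emph{second} refinement level beyond the one already spent passing from $\Gamma_{l(\emptyset)+1}$ to $\Gamma_{l(\emptyset)}$, which the hypothesis does not supply.
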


\begin{proof}[Sketch of Proof]
Let $\epsilon >0$ be the small constant in the statement of the Restated
Main Lemma in Section \ref{rml}. In particular, $\epsilon $ is determined by 
$C_{w}$, $m$, $n$. We write $c$, $C$, $C^{\prime }$, etc., to denote
constants determined by $C_{w}$, $m$, $n$. These symbols may denote
different constants in different occurrences.

We cover $CQ_{0}$ by a grid of dyadic cubes $\{Q_{\nu }\}$, all having the same
sidelength $\delta _{Q_{\nu }}$, with $\frac{\epsilon }{20}\delta
_{Q_{0}}\leq \delta _{Q_{\nu }}\leq \epsilon \delta _{Q_{0}}$, and all
contained in $C^{\prime }Q_{0}$. 

For each $Q_{\nu }$ with $E\cap \frac{65}{64}Q_{\nu }\not=\emptyset $, we
pick a point $x_{\nu }\in E\cap \frac{65}{64}Q_{\nu }$; by definition of the 
$l^{th}$-refinement, there exists $P_{\nu }\in \Gamma _{l(\emptyset
)}(x_{\nu },M_{0})$ such that $\left\vert \partial ^{\beta }\left( P_{\nu
}-P_{0}\right) \left( x_{0}\right) \right\vert \leq CM_{0}\delta
_{Q_{0}}^{m-\left\vert \beta \right\vert }$ for $\beta \in \mathcal{M}$.

Since $x_{\nu }\in E\cap \frac{65}{64}Q_{\nu }$, $P_{\nu }\in \Gamma
_{l\left( \emptyset \right) }$, and $\delta _{Q_{\nu }}\leq \epsilon \delta
_{Q_{0}}\leq \epsilon \delta _{\max }$, the Restated Main Lemma applies to $%
x_{\nu },P_{\nu },Q_{\nu }$ to produce $F_{\nu }\in C^{m}\left( \frac{65}{64}%
Q_{\nu }\right) $ satisfying

\begin{itemize}
\item[\refstepcounter{equation}\text{(\theequation)}\label{tu2}] $\left\vert
\partial ^{\beta }\left( F_{\nu }-P_{\nu }\right) \left( x\right)
\right\vert \leq CM_{0}\delta _{Q_{\nu }}^{m-\left\vert \beta \right\vert
}\leq CM_{0}\delta _{Q_{0}}^{m-\left\vert \beta \right\vert }$ for $x\in 
\frac{65}{64}Q_{\nu }$, $\left\vert \beta \right\vert \leq m$;
\end{itemize}

and

\begin{itemize}
\item[\refstepcounter{equation}\text{(\theequation)}\label{tu3}] $%
J_{z}\left( F_{\nu }\right) \in \Gamma _{0}\left( z,CM_{0}\right) $ for all $%
z\in E\cap \frac{65}{64}Q_{\nu }$.
\end{itemize}

We have produced such $F_{\nu }$ for those $\nu $ satisfying $E\cap \frac{65%
}{64}Q_{\nu }\not=\emptyset $. If instead $E\cap \frac{65}{64}Q_{\nu
}=\emptyset $, then we set $F_{\nu }=P_{0}$. 

Next, we introduce a partition of unity. We fix cutoff functions $\theta
_{\nu }\in C^{m}\left( \mathbb{R}^{n}\right) $ satisfying

\begin{itemize}
\item[\refstepcounter{equation}\text{(\theequation)}\label{tu6}] support $%
\theta _{\nu }\subset \frac{65}{64}Q_{\nu }$, $\left\vert \partial ^{\beta
}\theta _{\nu }\right\vert \leq C\delta _{Q_{0}}^{-\left\vert \beta
\right\vert }$ for $\left\vert \beta \right\vert \leq m$, $\sum_{\nu }\theta
_{\nu }^{2}=1$ on $Q_{0}$.
\end{itemize}

We then define

\begin{itemize}
\item[\refstepcounter{equation}\text{(\theequation)}\label{tu7}] $%
F=\sum_{\nu }\theta _{\nu }^{2}F_{\nu }$ on $Q_{0}$.
\end{itemize}

One checks easily that $F$ satisfies the conclusions of
Theorem \ref{theorem-tu1}.
\end{proof}

\part{Applications}

\section{Finiteness Principle I} \label{fp-i}

In this section we prove a finiteness principle for shape fields.

Let $\vec{\Gamma}_{0}=\left( \Gamma _{0}\left( x,M\right) \right) _{x\in
E,M>0}$ be a shape field. For $l\geq 1$, let $\vec{\Gamma}_{l}=\left( \Gamma
_{l}\left( x,M\right) \right) _{x\in E,M>0}$ be the $l^{th}$-refinement of $%
\vec{\Gamma}_{0}$. Fix $M_{0}>0$. For $x\in E$, $S\subset E$, define

\begin{itemize}
\item[\refstepcounter{equation}\text{(\theequation)}\label{fp1}] $\Gamma
\left( x,S\right) =\left\{ 
\begin{array}{c}
P^{x}:\vec{P}=\left( P^{y}\right) _{y\in S\cup \left\{ x\right\} }\in
Wh\left( S\cup \left\{ x\right\} \right) \text{, }\left\Vert \vec{P}%
\right\Vert _{\dot{C}^{m}\left( S\cup \left\{ x\right\} \right) }\leq M_{0},
\\ 
P^{y}\in \Gamma _{0}\left( y,M_{0}\right) \text{ for all }y\in S\cup \left\{
x\right\} \text{.}%
\end{array}%
\right\} $
\end{itemize}

(See Section \ref{notation-and-preliminaries} for the definition of $%
Wh(\cdot )$ and $||\cdot ||_{\dot{C}^{m}\left( \cdot \right)}$.) Note that

\begin{itemize}
\item[\refstepcounter{equation}\text{(\theequation)}\label{fp2}] $\Gamma
\left( x,\emptyset \right) =\Gamma _{0}\left( x,M_{0}\right) $.
\end{itemize}

Define

\begin{itemize}
\item[\refstepcounter{equation}\text{(\theequation)}\label{fp3}] $\Gamma
_{l}^{fp}\left( x,M_{0}\right) =\bigcap_{S\subset E,\#\left( S\right) \leq
\left( D+2\right) ^{l}}\Gamma \left( x,S\right) $ for $l\geq 0$, where
\end{itemize}

\begin{itemize}
\item[\refstepcounter{equation}\text{(\theequation)}\label{fp4}] $D=\dim 
\mathcal{P}$.
\end{itemize}

Note that

\begin{itemize}
\item[\refstepcounter{equation}\text{(\theequation)}\label{fp5}] $\Gamma
_{0}^{fp}\left( x,M_{0}\right) \subseteq \Gamma _{0}\left( x,M_{0}\right) $,
thanks to \eqref{fp2}.
\end{itemize}

Each $\Gamma (x,S)$, $\Gamma _{l}^{fp}\left( x,M_{0}\right) $ is a (possibly
empty) convex subset of $\mathcal{P}$.

As a consequence of Helly's theorem, we have the following result.
\begin{lemma}
\label{lemma-fp1} Let $x\in E$, $l\geq 0$. Suppose $\Gamma
(x,S)\not=\emptyset $ for all $S\subset E$ with $\#\left( S\right) \leq
\left( D+2\right) ^{l+1}$. Then $\Gamma _{l}^{fp}\left( x,M_{0}\right)
\not=\emptyset $.
\end{lemma}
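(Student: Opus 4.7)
The plan is to apply Helly's Theorem (stated earlier in Section \ref{notation-and-preliminaries}) to the finite family of convex sets $\{\Gamma(x,S) : S \subset E,\, \#(S) \leq (D+2)^l\}$ sitting inside the $D$-dimensional vector space $\mathcal{P}$. Since $E$ is finite (by the standing assumption on shape fields in Section \ref{shape-fields}), this family is finite, so Helly's Theorem is applicable once we verify that every $D+1$ members of the family have a common point.

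First I would check three easy facts. (i) Each $\Gamma(x,S)$ is convex: if $P^x_1, P^x_2 \in \Gamma(x,S)$ are witnessed by Whitney fields $\vec{P}_1, \vec{P}_2$ on $S \cup \{x\}$, then the convex combination $t\vec{P}_1 + (1-t)\vec{P}_2$ is again a Whitney field; its $\dot{C}^m$ seminorm is at most $t M_0 + (1-t)M_0 = M_0$ by the triangle inequality, and at each $y \in S \cup \{x\}$ the convex combination lies in $\Gamma_0(y,M_0)$ because $\Gamma_0(y,M_0)$ is convex. (ii) The family is finite, since $E$ is finite. (iii) $\mathcal{P}$ has dimension $D$, matching the hypothesis of Helly's Theorem.

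The key step is the following monotonicity/restriction observation: for any subsets $S_1, \ldots, S_{D+1} \subset E$,
\begin{equation*}
\Gamma(x, S_1 \cup \cdots \cup S_{D+1}) \subseteq \Gamma(x,S_1) \cap \cdots \cap \Gamma(x,S_{D+1}).
\end{equation*}
Indeed, if a Whitney field $\vec{P}$ on $(S_1 \cup \cdots \cup S_{D+1}) \cup \{x\}$ witnesses membership of $P^x$ in the left-hand side, then its restriction to $S_i \cup \{x\}$ is again a Whitney field whose $\dot{C}^m$ seminorm only decreases (the seminorm is a maximum over pairs), and the pointwise conditions $P^y \in \Gamma_0(y,M_0)$ are preserved. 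Hence $P^x \in \Gamma(x,S_i)$ for each $i$.

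Now suppose each $S_i$ satisfies $\#(S_i) \leq (D+2)^l$. Then
\begin{equation*}
\#(S_1 \cup \cdots \cup S_{D+1}) \leq (D+1)(D+2)^l \leq (D+2)^{l+1},
\end{equation*}
so by the hypothesis of the lemma, $\Gamma(x, S_1 \cup \cdots \cup S_{D+1}) \neq \emptyset$, and therefore $\Gamma(x,S_1) \cap \cdots \cap \Gamma(x,S_{D+1}) \neq \emptyset$ by the restriction observation. Applying Helly's Theorem to the finite family of convex sets $\{\Gamma(x,S)\}_{\#(S) \leq (D+2)^l}$ in $\mathcal{P}$ yields a common point, which by definition lies in $\Gamma_l^{fp}(x,M_0)$. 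There is no serious obstacle here; the only thing to be careful about is the bookkeeping inequality $(D+1)(D+2)^l \leq (D+2)^{l+1}$ and the trivial but essential point that Whitney-field restriction preserves both the seminorm bound and the per-point shape-field constraints.
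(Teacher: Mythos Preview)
Your proof is correct and follows exactly the approach the paper intends: the paper states Lemma~\ref{lemma-fp1} ``as a consequence of Helly's theorem'' without further detail, and your argument supplies precisely the verification (convexity of each $\Gamma(x,S)$, finiteness of the family, the restriction inclusion $\Gamma(x,S_1\cup\cdots\cup S_{D+1})\subseteq\bigcap_i\Gamma(x,S_i)$, and the cardinality bound $(D+1)(D+2)^l\leq(D+2)^{l+1}$) needed to invoke Helly in the $D$-dimensional space~$\mathcal{P}$.
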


%\begin{proof}
%By Helly's theorem, it is enough to show that $\Gamma \left( x,S_{1}\right)
%\cap \cdots \cap \Gamma \left( x,S_{D+1}\right) \not=\emptyset $ for any $%
%S_{1},\cdots ,S_{D+1}\subset E$ with $\#\left( S_{i}\right) \leq \left(
%D+2\right) ^{l}$ for each $i$. However, $\Gamma \left( x,S_{1}\right) \cap
%\cdots \cap \Gamma \left( x,S_{D+1}\right) \supset \Gamma \left( x,S_{1}\cup
%\cdots \cup S_{D+1}\right) \not=\emptyset $, since $\#\left( S_{1}\cup
%\cdots \cup S_{D+1}\right) \leq \left( D+1\right) \cdot \left( D+2\right)
%^{l}\leq \left( D+2\right) ^{l+1}$.
%\end{proof}

\begin{lemma}
\label{lemma-fp2} For $x \in E, l \geq 0$, we have

\begin{itemize}
\item[\refstepcounter{equation}\text{(\theequation)}\label{fp6}] $\Gamma
_{l}^{fp}\left( x,M_{0}\right) \subseteq \Gamma _{l}\left( x,M_{0}\right) $.
\end{itemize}
\end{lemma}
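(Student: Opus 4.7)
The plan is to induct on $l$. The base case $l=0$ is immediate from \eqref{fp5}: the intersection defining $\Gamma_0^{fp}(x,M_0)$ includes the term $S = \emptyset$, yielding $\Gamma_0^{fp}(x,M_0) \subseteq \Gamma(x,\emptyset) = \Gamma_0(x,M_0)$.

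For the inductive step, assume the statement at level $l$ and let $P^\# \in \Gamma_{l+1}^{fp}(x,M_0)$. By the definition of the $(l{+}1)^{st}$ refinement, it suffices to show that for every $y \in E$ there exists $P \in \Gamma_l(y,M_0)$ with $|\partial^\beta(P^\# - P)(x)| \leq M_0|x-y|^{m-|\beta|}$ for $|\beta| \leq m-1$. Fix such a $y$, and for each $S \subseteq E$ with $\#(S) \leq (D+2)^l$ set
\begin{equation*}
K_S = \Bigl\{\, P \in \Gamma(y,S) \;:\; |\partial^\beta(P^\# - P)(x)| \leq M_0|x-y|^{m-|\beta|} \text{ for } |\beta|\leq m-1 \,\Bigr\} \subseteq \mathcal{P}.
\end{equation*}
Each $K_S$ is convex: $\Gamma(y,S)$ is convex (being the projection onto the $P^y$ component of the convex set of admissible Whitney fields on $S \cup \{y\}$), and the Taylor-coefficient conditions defining the second half of $K_S$ cut out a convex region of $\mathcal{P}$. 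By the inductive hypothesis, $\bigcap_S K_S \subseteq \Gamma_l^{fp}(y,M_0) \cap \{\cdot\} \subseteq \Gamma_l(y,M_0)$ intersected with the desired Taylor conditions, so a single point $P$ in $\bigcap_S K_S$ will do the job.

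The key step is to apply Helly's Theorem in $\mathcal{P} \cong \mathbb{R}^D$ to the finite family $\{K_S\}$: it suffices to check that any $D+1$ sets $K_{S_1},\dots,K_{S_{D+1}}$ have a common point. Set $\tilde{S} = S_1 \cup \cdots \cup S_{D+1} \cup \{y\}$; then
\begin{equation*}
\#(\tilde{S}) \leq (D+1)(D+2)^l + 1 \leq (D+2)^{l+1},
\end{equation*}
so by $P^\# \in \Gamma_{l+1}^{fp}(x,M_0) \subseteq \Gamma(x,\tilde{S})$ there exists a Whitney field $\vec{P} = (P^z)_{z \in \tilde{S}\cup\{x\}}$ with $P^x = P^\#$, $\|\vec{P}\|_{\dot{C}^m(\tilde{S}\cup\{x\})} \leq M_0$, and $P^z \in \Gamma_0(z,M_0)$ for every $z$. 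The polynomial $P := P^y$ satisfies the Taylor bound at $x$ because of the bound on the Whitney seminorm; and for each $i$, restricting $\vec{P}$ to $S_i \cup \{y\} \subseteq \tilde{S}$ witnesses that $P \in \Gamma(y,S_i)$. Hence $P \in \bigcap_i K_{S_i}$.

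The main obstacle is arithmetic bookkeeping on the cardinality bound --- one must see that $(D+2)^{l+1}$ is the precise budget that lets Helly in dimension $D$ consume $D+1$ subsets of size $(D+2)^l$ together with the basepoint $y$. Once that is in place, the restriction-of-Whitney-fields argument and the definition of the first refinement close the induction.
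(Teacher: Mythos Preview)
Your proof is correct and follows essentially the same approach as the paper's: induction on $l$, with the inductive step handled by applying Helly's theorem in $\mathcal{P}\cong\mathbb{R}^D$ to a family of convex sets indexed by subsets $S$ of size at most $(D+2)^l$, using the cardinality arithmetic $(D+1)(D+2)^l+1\leq(D+2)^{l+1}$ to pass from $D+1$ such subsets (plus the basepoint $y$) to a single admissible $\tilde{S}$. The only cosmetic difference is that the paper packages the auxiliary sets as $\hat{\Gamma}(S)=\{P^y:\exists\,\vec{P}\text{ on }S\cup\{x,y\}\text{ with }P^x=P^\#,\ \|\vec{P}\|\leq M_0,\ldots\}$, which is contained in your $K_S$; both families have the same $(D+1)$-fold intersection witness (the restricted Whitney field coming from $P^\#\in\Gamma(x,\tilde{S})$), so Helly applies identically.
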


\begin{proof}
We use induction on $l$. The base case $l=0$ is our observation \eqref{fp5}.
For the induction step, fix $l\geq 1$. We will prove \eqref{6} under the
inductive assumption

\begin{itemize}
\item[\refstepcounter{equation}\text{(\theequation)}\label{fp7}] $\Gamma
_{l-1}^{fp}\left( y,M_{0}\right) \subseteq \Gamma _{l-1}\left(
y,M_{0}\right) $ for all $y\in E$.
\end{itemize}

Thus, let $P\in \Gamma _{l}^{fp}\left( x,M_{0}\right) $ be given. We must
prove that $P\in \Gamma _{l}\left( x,M_{0}\right) $, which means that given $%
y\in E$ there exists

\begin{itemize}
\item[\refstepcounter{equation}\text{(\theequation)}\label{fp8}] $P^{\prime
}\in \Gamma _{l-1}\left( y,M_{0}\right) $ such that $\left\vert \partial
^{\beta }\left( P-P^{\prime }\right) \left( x\right) \right\vert \leq
M_{0}\left\vert x-y\right\vert ^{m-\left\vert \beta \right\vert }$ for $%
\left\vert \beta \right\vert \leq m-1$.
\end{itemize}

We will prove that there exists

\begin{itemize}
\item[\refstepcounter{equation}\text{(\theequation)}\label{fp9}] $P^{\prime
}\in \Gamma _{l-1}^{fp}\left( y,M_{0}\right) $ such that $\left\vert
\partial ^{\beta }\left( P-P^{\prime }\right) \left( x\right) \right\vert
\leq M_{0}\left\vert x-y\right\vert ^{m-\left\vert \beta \right\vert }$ for $%
\left\vert \beta \right\vert \leq m-1$.
\end{itemize}

Thanks to our inductive hypothesis \eqref{fp7}, we see that \eqref{fp9}
implies \eqref{fp8}. Therefore, to complete the proof of the Lemma, it is
enough to prove the existence of a $P^{\prime }$ satisfying \eqref{fp9}. For 
$S\subset E$, define 
\begin{equation*}
\hat{\Gamma}\left( S\right) =\left\{ 
\begin{array}{c}
P^{y}:\vec{P}=\left( P^{z}\right) _{z\in S\cup \left\{ x,y\right\} }\in
Wh\left( S\cup \left\{ x,y\right\} \right) \text{, }P^{x}=P\text{,}%
\left\Vert \vec{P}\right\Vert _{\dot{C}^{m}\left( S\cup \left\{ x,y\right\}
\right) }\leq M_{0}, \\ 
P^{z}\in \Gamma _{0}\left( z,M_{0}\right) \text{ for all }z\in S\cup \left\{
x,y\right\} \text{.}%
\end{array}%
\right\}
\end{equation*}%
By definition,

\begin{itemize}
\item[\refstepcounter{equation}\text{(\theequation)}\label{fp10}] $\hat{%
\Gamma}\left( S\right) \subset \Gamma \left( y,S\right) $ for $S\subset E$.
\end{itemize}

Let $S_{1},\cdots ,S_{D+1}\subset E$ with $\#\left( S_{i}\right) \leq \left(
D+2\right) ^{l-1}$ for each $i$.

Then $\hat{\Gamma}\left( S_{1}\right) \cap \cdots \cap \hat{\Gamma}\left(
S_{D+1}\right) \supset \hat{\Gamma}\left( S_{1}\cup \cdots \cup
S_{D+1}\right) $, and $\#\left( S_{1}\cup \cdots \cup S_{D+1}\cup \left\{
y\right\} \right) \leq \left( D+1\right) \left( D+2\right) ^{l-1}+1\leq
\left( D+2\right) ^{l}$. Since $P\in \Gamma _{l}^{fp}\left( x,M_{0}\right) $%
, it follows that there exists $\vec{P}=\left( P^{z}\right) _{z\in S_{1}\cup
\cdots \cup S_{D+1}\cup \left\{ x,y\right\} }\in Wh\left( S_{1}\cup \cdots
\cup S_{D+1}\cup \left\{ x,y\right\} \right) $ such that $P^{x}=P$, $%
\left\Vert \vec{P}\right\Vert _{\dot{C}^{m}\left( S_{1}\cup \cdots \cup
S_{D+1}\cup \left\{ x,y\right\} \right) }\leq M_{0}$, $P^{z}\in \Gamma
_{0}\left( z,M_{0}\right) $ for all $z\in S_{1}\cup \cdots \cup S_{D+1}\cup
\left\{ x,y\right\} $. We then have $P^{y}\in \hat{\Gamma}\left( S_{1}\cup
\cdots \cup S_{D+1}\right) $, hence $\hat{\Gamma}\left( S_{1}\right) \cap
\cdots \cap \hat{\Gamma}\left( S_{D+1}\right) \supset \hat{\Gamma}\left(
S_{1}\cup \cdots \cup S_{D+1}\right) \not=\emptyset $.

By Helly's theorem, there exists

\begin{itemize}
\item[\refstepcounter{equation}\text{(\theequation)}\label{fp11}] $P^{\prime
}\in \bigcap_{S\subset E,\#\left( S\right) \leq \left( D+2\right) ^{l-1}}%
\hat{\Gamma}\left( S\right) $.
\end{itemize}

In particular, $P^{\prime }\in \hat{\Gamma}\left( \emptyset \right) $, which
implies that 
\begin{equation*}
\left\vert \partial ^{\beta }\left( P-P^{\prime }\right) \left( x\right)
\right\vert \leq M_{0}\left\vert x-y\right\vert ^{m-\left\vert \beta
\right\vert }\text{ for }\left\vert \beta \right\vert \leq m-1\text{.}
\end{equation*}%
Also, \eqref{fp10}, \eqref{fp11} imply that 
\begin{equation*}
P^{\prime }\in \bigcap_{S\subset E,\#\left( S\right) \leq \left( D+2\right)
^{l-1}}\Gamma \left( y,S\right) =\Gamma _{l-1}^{fp}\left( y,M_{0}\right) 
\text{.}
\end{equation*}%
Thus, $P^{\prime }$ satisfies \eqref{fp9}, completing the proof of Lemma \ref%
{lemma-fp2}.
\end{proof}

\begin{theorem}[Finiteness Principle for Shape Fields]
\label{theorem-fp-for-wsf} For a large enough $k^{\#}$ determined by $m$, $n$%
, the following holds. Let $\vec{\Gamma}_{0}=\left( \Gamma _{0}\left(
x,M\right) \right) _{x\in E,M>0}$ be a $\left( C_{w},\delta _{\max }\right) $%
-convex shape field and let $Q_{0}\subset \mathbb{R}^{n}$ be a cube of
sidelength $\delta _{Q_{0}}\leq \delta _{\max }$. Also, let $x_{0}\in E\cap
5Q_{0}$ and $M_{0}>0$ be given. Assume that for each $S\subset E$ with $%
\#\left( S\right) \leq k^{\#}$ there exists a Whitney field $\vec{P}%
^{S}=\left( P^{z}\right) _{z\in S}$ such that 
\begin{equation*}
\left\Vert \vec{P}^{S}\right\Vert _{\dot{C}^{m}\left( S\right) }\leq M_{0}%
\text{,}
\end{equation*}%
and 
\begin{equation*}
P^{z}\in \Gamma _{0}\left( z,M_{0}\right) \text{ for all }z\in S\text{.}
\end{equation*}%
Then there exist $P^{0}\in \Gamma _{0}\left( x_{0},M_{0}\right) $ and $F\in
C^{m}\left( Q_{0}\right) $ such that the following hold, with a constant $%
C_{\ast }$ determined by $C_{w}$, $m$, $n$:

\begin{itemize}
\item $J_{z}(F)\in \Gamma _{0}\left( z,C_{\ast }M_{0}\right) $ for all $z\in
E\cap Q_{0}$.

\item $|\partial ^{\beta }\left( F-P^{0}\right) \left( x\right) |\leq
C_{\ast }M_{0}\delta _{Q_{0}}^{m-\left\vert \beta \right\vert }$ for all $%
x\in Q_{0}$, $\left\vert \beta \right\vert \leq m$.

\item In particular, $\left\vert \partial ^{\beta }F\left( x\right)
\right\vert \leq C_{\ast }M_{0}$ for all $x\in Q_{0}$, $\left\vert \beta
\right\vert =m$.
\end{itemize}
\end{theorem}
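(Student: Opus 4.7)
My plan is to deduce Theorem \ref{theorem-fp-for-wsf} by combining the Helly-type machinery of Lemmas \ref{lemma-fp1} and \ref{lemma-fp2} with the main technical result Theorem \ref{theorem-tu1}. Essentially, all the heavy lifting has already been done; the remaining task is to choose $k^\#$ so that the hypothesis of the theorem translates into the nonemptiness of an appropriate finite intersection, and then invoke the already-established machinery.

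I would set $k^\# = (D+2)^{l(\emptyset)+2}+1$, where $D=\dim \mathcal{P}$. Fix any $S\subseteq E$ with $\#(S)\leq (D+2)^{l(\emptyset)+2}$. Applying the hypothesis of Theorem \ref{theorem-fp-for-wsf} to $S\cup\{x_0\}$ (which has at most $k^\#$ elements), I obtain a Whitney field $\vec{P}=(P^y)_{y\in S\cup\{x_0\}}$ with $\|\vec{P}\|_{\dot{C}^m(S\cup\{x_0\})}\leq M_0$ and $P^y\in\Gamma_0(y,M_0)$ for all $y\in S\cup\{x_0\}$. By definition \eqref{fp1}, the jet $P^{x_0}$ lies in $\Gamma(x_0,S)$. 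Hence $\Gamma(x_0,S)\neq\emptyset$ for all $S\subseteq E$ with $\#(S)\leq (D+2)^{l(\emptyset)+2}$. Lemma \ref{lemma-fp1} (applied with $l=l(\emptyset)+1$) then yields
\begin{equation*}
\Gamma^{fp}_{l(\emptyset)+1}(x_0,M_0)\neq \emptyset.
\end{equation*}

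Pick any $P^0\in \Gamma^{fp}_{l(\emptyset)+1}(x_0,M_0)$. By Lemma \ref{lemma-fp2}, we have $P^0\in \Gamma_{l(\emptyset)+1}(x_0,M_0)$. Moreover, since taking $S=\emptyset$ in \eqref{fp1}, \eqref{fp3} gives $\Gamma^{fp}_{l(\emptyset)+1}(x_0,M_0)\subseteq \Gamma(x_0,\emptyset)=\Gamma_0(x_0,M_0)$, we also have $P^0\in\Gamma_0(x_0,M_0)$, which establishes the first half of the conclusion. Now Theorem \ref{theorem-tu1}, applied to the cube $Q_0$, the point $x_0\in E\cap 5Q_0$, and the polynomial $P^0\in\Gamma_{l(\emptyset)+1}(x_0,M_0)$, directly produces the desired $F\in C^m(Q_0)$ satisfying
\begin{equation*}
|\partial^\beta(F-P^0)(x)|\leq C_\ast M_0 \delta_{Q_0}^{m-|\beta|}\quad (x\in Q_0,\ |\beta|\leq m)
\end{equation*}
and $J_z(F)\in\Gamma_0(z,C_\ast M_0)$ for all $z\in E\cap Q_0$. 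The bound $|\partial^\beta F(x)|\leq C_\ast M_0$ for $|\beta|=m$ is immediate since $P^0$ has degree $\leq m-1$.

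There is no real obstacle at this stage since the difficult inductive construction has been absorbed into Theorem \ref{theorem-tu1}, and the Helly-theoretic transition from ``small-$S$'' solvability to membership in $\Gamma_{l(\emptyset)+1}$ is handled by the iterated refinement argument of Lemma \ref{lemma-fp2}. The only bookkeeping point is the choice of $k^\#$: we need $(D+2)^{l(\emptyset)+1}$-fold Helly iterations inside Lemma \ref{lemma-fp1} to reach the $(l(\emptyset)+1)$-th refinement level required by Theorem \ref{theorem-tu1}, and one extra slot in the count to accommodate adjoining $x_0$ to each test set $S$. Thus $k^\#$ depends only on $m$ and $n$ (through $D$ and $l(\emptyset)$), as claimed.
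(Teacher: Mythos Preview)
Your proof is correct and follows essentially the same approach as the paper: verify via the finiteness hypothesis that $\Gamma(x_0,S)\neq\emptyset$ for sufficiently many $S$, invoke Lemmas \ref{lemma-fp1} and \ref{lemma-fp2} to obtain a $P^0\in\Gamma_{l(\emptyset)+1}(x_0,M_0)$, and then apply Theorem \ref{theorem-tu1}. The only difference is cosmetic: the paper takes the deliberately wasteful constants $l_\ast=100+l(\emptyset)$ and $k^\#=100+(D+2)^{l_\ast+100}$, whereas you compute the minimal value $k^\#=(D+2)^{l(\emptyset)+2}+1$ needed for the argument (including the ``$+1$'' to adjoin $x_0$).
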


\begin{proof}
For $l\geq 1$, define $\vec{\Gamma}_{l}=\left( \Gamma _{l}\left( x,M\right)
\right) _{x\in E,M>0}$ and $\vec{\Gamma}_{l}^{fp}=\left( \Gamma
_{l}^{fp}\left( x,M\right) \right) _{x\in E,M>0}$ as in Lemmas \ref%
{lemma-fp1} and \ref{lemma-fp2}. We take $l_{\ast }=100+l\left( \emptyset
\right) $ and $k^{\#}=100+\left( D+2\right) ^{l_{\ast }+100}$. (For the
definition of $l\left( \emptyset \right) $, see Section \ref%
{statement-of-the-main-lemma}.)

Lemmas \ref{lemma-fp1} and \ref{lemma-fp2} show that $\Gamma _{l_{\ast
}}^{fp}\left( x_{0},M_{0}\right) $ is nonempty, hence $\Gamma _{l\left(
\emptyset \right) +1}\left( x_{0},M_{0}\right) $ is nonempty. Pick any $%
P^{0}\in \Gamma _{l\left( \emptyset \right) +1}\left( x_{0},M_{0}\right)
\subset \Gamma _{0}\left( x_{0},M_{0}\right) $. Then Theorem \ref%
{theorem-tu1} in Section \ref{tu} produces a function $F\in C^{m}(Q_{0})$
with the desired properties.
\end{proof}

The finiteness principle for $\vec{\Gamma}=\left(\Gamma(x,M)\right)_{x \in E, M>0}$ stated in the Introduction follows easily from Theorem \ref{theorem-fp-for-wsf}, under the following assumptions on $\vec{\Gamma}$: 
\begin{itemize}
\item $\vec{\Gamma}$ is a $(C_w,\delta_{\max})$-convex shape field, with $\delta_{\max}=1$.
\item Any $P \in \Gamma(x,M)$ satisfies $|\partial^\beta P(x)| \leq M$ for $|\beta| \leq m-1$. 
\end{itemize}

We have the following corollary to Theorem \ref{theorem-fp-for-wsf}.

\begin{corollary}
\label{lemma-to-previous-results-on-shape-fields}Let $Q_{0}$ be a cube of
sidelength $\delta _{Q_{0}}\leq \delta _{\max }$, and let $x_{0}\in E\cap
Q_{0}$. Let $\vec{\Gamma}_0 = (\Gamma_0(x,M))_{x \in E, M >0}$ be a $%
(C_w,\delta_{\max})$-convex shape field, and for $l \geq 1$, let $\vec{\Gamma%
}_l = (\Gamma_l(x,M))_{ x\in E, M >0}$ be its $l^{th}$ refinement. Let $M_{0}>0$, and let $P_{0}\in \Gamma _{l_{\ast }}(x_{0},M_{0})$,
where $l_{\ast }$ is a large enough integer constant determined by $m$ and $%
n $.

Then there exists $F \in C^m(Q_0)$ such that

\begin{itemize}
\item $|\partial^\alpha F(x)| \leq C_*M_0$ for $x \in Q_0$, $|\alpha|=m$.

\item $J_{x}(F)\in \Gamma _{0}\left( x,C_{\ast }M_{0}\right) $ for all $x\in
E\cap Q_{0}$.

\item $J_{x_{0}}\left( F\right) =P_{0}$.
\end{itemize}

Here, $C_*$ depends only on $m$, $n$, $C_w$.
\end{corollary}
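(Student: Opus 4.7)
The plan is to reduce the corollary to Theorem \ref{theorem-tu1} by passing to a constrained shape field in which the jet at $x_0$ is forced to equal $P_0$. Define $\vec{\Gamma}^{\circ} = (\Gamma^{\circ}(x,M))_{x\in E, M>0}$ by $\Gamma^{\circ}(x,M) = \Gamma_0(x,M)$ for $x \neq x_0$, and $\Gamma^{\circ}(x_0, M) = \{P_0\}$ when $P_0 \in \Gamma_0(x_0,M)$ (empty otherwise). Monotonicity in $M$ is immediate, so $\vec{\Gamma}^{\circ}$ is a shape field; $(C_w, \delta_{\max})$-convexity is inherited at points $x \neq x_0$, and at $x = x_0$ it is automatic because the only admissible inputs are $P_1 = P_2 = P_0$, and then $Q_1 \odot_{x_0} Q_1 \odot_{x_0} P_0 + Q_2 \odot_{x_0} Q_2 \odot_{x_0} P_0 = P_0 \in \Gamma^{\circ}(x_0, C_w M)$ since $P_0 \in \Gamma_0(x_0, M) \subset \Gamma_0(x_0, C_w M)$.

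The key technical step is to establish $P_0 \in \Gamma^{\circ}_{l(\emptyset)+1}(x_0, C M_0)$, where $\vec{\Gamma}^{\circ}_l$ denotes the $l$-th refinement of $\vec{\Gamma}^{\circ}$ and $C$ depends only on $m, n, C_w$. I would prove this by induction on the refinement level, with the auxiliary inductive claim that any polynomial $P \in \Gamma_l(y,M)$ satisfying the Taylor-closeness estimate $|\partial^\beta(P-P_0)(y)| \leq M |y-x_0|^{m-|\beta|}$ for $|\beta| \leq m-1$ automatically lies in $\Gamma^{\circ}_l(y, C_l M)$. The inductive step uses Lemma \ref{lemma-wsf4}(A) to produce, for each $z \in E$, a candidate $P^z \in \Gamma_{l-1}(z, M)$ matching $P$ at $y$; Taylor expansion of the degree-$(m-1)$ polynomials $P - P^z$ and $P - P_0$ then verifies that $P^z$ satisfies the analogous Taylor-closeness at $z$, whence $P^z \in \Gamma^{\circ}_{l-1}(z, C_{l-1}' M)$ with a controlled growth of constant. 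Iterating $l(\emptyset)+1$ times, starting from the base case $P_0 \in \Gamma_{l_{\ast}}(x_0, M_0)$ (for $l_{\ast}$ taken sufficiently large in terms of $m$, $n$, $C_w$, with the trivial closeness $|\partial^\beta(P_0 - P_0)(x_0)| = 0$) yields the claim.

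With this in hand, apply Theorem \ref{theorem-tu1} to the shape field $\vec{\Gamma}^{\circ}$, cube $Q_0$, point $x_0 \in E \cap Q_0 \subseteq E \cap 5Q_0$, and polynomial $P_0$. The theorem produces $F \in C^m(Q_0)$ satisfying $|\partial^\beta(F - P_0)(x)| \leq C_{\ast} M_0 \delta_{Q_0}^{m-|\beta|}$ on $Q_0$ for $|\beta| \leq m$, and $J_z(F) \in \Gamma^{\circ}(z, C_{\ast} M_0)$ for every $z \in E \cap Q_0$. The first display with $|\alpha| = m$ gives $|\partial^\alpha F| = |\partial^\alpha(F - P_0)| \leq C_{\ast} M_0$ because $\partial^\alpha P_0 \equiv 0$. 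The second display evaluated at $z = x_0$ reads $J_{x_0}(F) \in \Gamma^{\circ}(x_0, C_{\ast} M_0) = \{P_0\}$, so $J_{x_0}(F) = P_0$; for $z \in E \cap Q_0 \setminus \{x_0\}$, it reduces to $J_z(F) \in \Gamma_0(z, C_{\ast} M_0)$. All three bulleted conclusions of the corollary follow at once.

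The main obstacle is the induction in the second paragraph. The difficulty is that each level of refinement of $\vec{\Gamma}^{\circ}$ imposes an additional ``closeness-to-$P_0$'' constraint at every base point, and propagating this constraint through the recursion requires careful Taylor-remainder estimates relating jets at $y$, $z$, and $x_0$. When $y$ and $z$ lie on different scales relative to $x_0$, the naive Taylor transfer produces constants depending on the ratio of scales; absorbing these into a finite, uniformly bounded chain of inequalities is precisely where the freedom to take $l_{\ast}$ large (depending on $m$, $n$, $C_w$) is spent.
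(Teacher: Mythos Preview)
Your overall strategy---replace $\Gamma_0(x_0,M)$ by $\{P_0\}$, check that the resulting $\vec\Gamma^{\circ}$ is still $(C_w,\delta_{\max})$-convex, and then feed it into one of the main theorems---is exactly what the paper does (with $\hat\Gamma$ in place of your $\Gamma^{\circ}$). Once $J_{x_0}(F)\in\Gamma^{\circ}(x_0,C_\ast M_0)=\{P_0\}$, the endgame is identical.

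The gap is in your induction, and your final paragraph correctly locates the failure but misidentifies the cure. Take $y$ with $|y-x_0|\sim R$ and $z$ with $|z-x_0|\sim\epsilon\ll R$, so $|y-z|\sim R$. Your inductive step produces only the $P^z\in\Gamma_{l-1}(z,M)$ coming from the refinement of $P$ at $y$; combining $|\partial^\beta(P-P^z)(y)|\le MR^{m-|\beta|}$ and $|\partial^\beta(P-P_0)(y)|\le MR^{m-|\beta|}$ and Taylor-transferring to $z$ gives only $|\partial^\beta(P^z-P_0)(z)|\le CMR^{m-|\beta|}$, not the required $CM\epsilon^{m-|\beta|}$. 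Enlarging $l_\ast$ cannot bound the ratio $R/\epsilon$; the only role of $l_\ast$ is to guarantee $P_0\in\Gamma_{l(\emptyset)+1}(x_0,M_0)$. The induction \emph{can} be repaired by a case split: when $|y-z|\le 2|z-x_0|$ your $P^z$ works as written, while when $|y-z|>2|z-x_0|$ one should instead use $\tilde P^z\in\Gamma_{l-1}(z,M_0)$ supplied by the refinement definition applied to $P_0\in\Gamma_l(x_0,M_0)$; this $\tilde P^z$ is automatically close to $P_0$ at $z$, and in this regime $|y-x_0|,|z-x_0|\le C|y-z|$, so $\tilde P^z$ is also close to $P$ at $y$. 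The paper bypasses this induction entirely: it proves (Lemma~\ref{refinement-lemma1}, via the Clustering Lemma~\ref{clustering-lemma}) that for every $S\subset E$ with $\#(S)\le k^{\#}$ there is a Whitney field on $S\cup\{x_0\}$ with $P^{x_0}=P_0$, each $P^x\in\Gamma_0(x,C_\ast M_0)$, and controlled $\dot C^m$-seminorm. This is exactly the finiteness hypothesis of Theorem~\ref{theorem-fp-for-wsf} for $\hat\Gamma$, so that theorem applies directly.
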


\begin{remark}
The corollary strengthens Theorem \ref{theorem-fp-for-wsf}, because we now obtain $J_{x_{0}}\left(
F\right) =P_{0}$ rather than the weaker assertion $\left\vert \partial
^{\beta }\left( F-P_{0}\right) \right\vert \leq C_{\ast }M_{0}\delta
_{Q_{0}}^{m-\left\vert \beta \right\vert }$ for $\left\vert \beta
\right\vert \leq m$. The $l_{\ast }$ here is much bigger than in the proof of Theorem \ref{theorem-fp-for-wsf}; maybe one
can do better than here.
\end{remark}

To prove the corollary, we use a simple \textquotedblleft clustering lemma",
namely Lemma~3.4 in \cite{f-2006}.

\begin{lemma}[Clustering Lemma]
\label{clustering-lemma}Let $S\subset \mathbb{R}^{n}$, with $2\leq \#\left(
S\right) \leq k^{\#}$. Then $S$ can be partitioned into nonempty subsets $%
S_{0}$, $S_{1}$, $\cdots $, $S_{\nu _{\max }}$, such that $\#(S_\nu) < \#(S)$ for each $\nu$, and $\text{dist}\left( S_{\nu
},S_{\mu }\right) \geq c\cdot \text{diam}\left( S\right) $, for $\mu \not=\nu $,
with $c$ depending only on $k^{\#}$.
\end{lemma}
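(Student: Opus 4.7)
The plan is to reduce the clustering to a one-dimensional gap argument by projecting $S$ onto the line through a diameter-realizing pair. The output will be a partition into just two pieces, which suffices since the hypothesis $\#(S) \geq 2$ automatically forces $\#(S_\nu) < \#(S)$ for each piece of any 2-piece partition into nonempty sets.

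First, I would pick $x,y \in S$ with $|x-y| = \text{diam}(S)$ (possible since $S$ is finite with $\#(S) \geq 2$), and let $\pi \colon \mathbb{R}^n \to \mathbb{R}$ be the orthogonal projection onto the line through $x$ and $y$, shifted so that $\pi(x) = 0$; explicitly, $\pi(z) = \langle z-x,\,(y-x)/|y-x|\rangle$. Then $\pi$ is $1$-Lipschitz, $\pi(x) = 0$, and $\pi(y) = \text{diam}(S)$. Combining this with $|\pi(u) - \pi(v)| \leq |u-v| \leq \text{diam}(S)$ forces $\pi(S) \subset [0,\text{diam}(S)]$ with both endpoints attained.

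Next, I would order $S = \{z_1,\dots,z_N\}$ so that $0 = \pi(z_1) \leq \pi(z_2) \leq \cdots \leq \pi(z_N) = \text{diam}(S)$, where $N = \#(S) \leq k^\#$. The $N-1$ consecutive gaps $\pi(z_{i+1}) - \pi(z_i)$ telescope to $\text{diam}(S)$, so pigeonhole yields some index $j$ with $\pi(z_{j+1}) - \pi(z_j) \geq \text{diam}(S)/(N-1) \geq \text{diam}(S)/(k^\# - 1)$. Setting $S_0 = \{z_1,\dots,z_j\}$ and $S_1 = \{z_{j+1},\dots,z_N\}$ yields two nonempty subsets of $S$, each of cardinality strictly less than $N$. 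For any $u \in S_0$ and $v \in S_1$, the $1$-Lipschitz property gives
\[
|u-v| \;\geq\; \pi(v) - \pi(u) \;\geq\; \pi(z_{j+1}) - \pi(z_j) \;\geq\; \text{diam}(S)/(k^\# - 1),
\]
so $\text{dist}(S_0,S_1) \geq c \cdot \text{diam}(S)$ with $c = 1/(k^\# - 1)$, depending only on $k^\#$.

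There is no real obstacle here: the argument is a one-shot pigeonhole after a diameter-aware choice of projection, and it uses none of the shape-field or refinement machinery developed earlier in the paper. The only mildly delicate step is noticing that projecting along the diameter direction forces $\pi(S)$ to span \emph{exactly} the interval $[0,\text{diam}(S)]$, which simultaneously pins the two endpoints of the sorted sequence of projections and caps the largest consecutive gap from above, making the pigeonhole lower bound $\text{diam}(S)/(N-1)$ tight up to the constant $c$.
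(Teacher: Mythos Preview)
Your proof is correct. The projection onto the diameter direction does force $\pi(S)\subset[0,\operatorname{diam}(S)]$ (this follows from the two inequalities $|\pi(z)-\pi(x)|\leq|z-x|\leq\operatorname{diam}(S)$ and $|\pi(z)-\pi(y)|\leq|z-y|\leq\operatorname{diam}(S)$, which together pin $\pi(z)$ between $0$ and $\operatorname{diam}(S)$), and the pigeonhole-on-gaps step is clean. The resulting two-piece partition with $c=1/(k^{\#}-1)$ satisfies all the stated conclusions.

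As for comparison: the paper does not actually prove this lemma---it is quoted verbatim as Lemma~3.4 of \cite{f-2006} and invoked without argument. So there is no in-paper proof to compare against. The standard proof one usually sees (and the one in the cited reference) runs through a connectivity argument: form the graph on $S$ joining pairs at distance less than $c\cdot\operatorname{diam}(S)$, and observe that if it were connected then a path of at most $\#(S)-1$ short edges would force $\operatorname{diam}(S)<(k^{\#}-1)c\cdot\operatorname{diam}(S)$, a contradiction for $c\leq 1/(k^{\#}-1)$; the connected components then give the partition. Your projection approach is a genuinely different and arguably more direct route: it produces exactly two pieces with an explicit constant, avoids any graph language, and makes transparent why only the cardinality bound (not $n$) enters the constant. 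The graph argument, on the other hand, can yield more than two clusters at once, which is occasionally convenient but not needed here.
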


We use Lemma \ref{clustering-lemma} to prove the following lemma.

\begin{lemma}
\label{refinement-lemma1}Let $\vec{\Gamma}_{0}=\left( \Gamma _{0}\left(
x,M\right) \right) _{x\in E,M>0}$ be a shape field. For $l\geq 1$, let $\vec{%
\Gamma}_{l}=\left( \Gamma _{l}\left( x,M\right) \right) _{x\in E,M>0}$ be
the $l^{th}$ refinement of $\vec{\Gamma}_{0}$. Let $l_{\ast }\geq 1$, $%
M_{0}>0,$ $x_{0}\in E,$ and let $P_{0}\in \Gamma _{l_{\ast }}\left(
x_{0},M_{0}\right) $. Then for any $S\subset E$ with $x_{0}\in S$ and $%
\#\left( S\right) \leq l_{\ast }$, there exists a Whitney field $\vec{P}%
^{S}=\left( P^{x}\right) _{x\in S}$ such that $P^{x}\in \Gamma _{0}\left(
x,C_{\ast }M_{0}\right) $ for $x\in S$; 
\[
\left\vert \partial ^{\beta }\left( P^{x}-P^{y}\right) \left( x\right)
\right\vert \leq C_{\ast }M_{0}\left\vert x-y\right\vert ^{m-\beta }\text{
for }x,y\in S\text{, }\left\vert \beta \right\vert \leq m-1\text{;} 
\]%
and $P^{x_{0}}=P_{0}.$ Here, $C_{\ast }$ depends only on $m$, $n$, $l_{\ast
} $.
\end{lemma}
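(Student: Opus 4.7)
The plan is to proceed by induction on $k := \#(S)$, proving a slightly stronger statement: for every $k \geq 1$, every $M_0 > 0$, every $x_0 \in E$, and every $P_0 \in \Gamma_k(x_0, M_0)$, and every $S \subset E$ with $x_0 \in S$ and $\#(S) \leq k$, there exists a Whitney field $(P^x)_{x \in S}$ on $S$ satisfying the three listed conclusions with a constant $C_* = C_*(m,n,k)$. Since $\Gamma_{l_*}(x_0,M_0) \subseteq \Gamma_k(x_0,M_0)$ whenever $k \leq l_*$, this implies the lemma. The base case $k=1$ is immediate: $S = \{x_0\}$ and we set $P^{x_0} = P_0 \in \Gamma_k(x_0,M_0) \subseteq \Gamma_0(x_0,M_0)$.

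For the inductive step, apply the Clustering Lemma (Lemma \ref{clustering-lemma}) to partition $S$ into nonempty pieces $S_0, S_1, \ldots, S_{\nu_{\max}}$, each of cardinality strictly less than $k$, with $\operatorname{dist}(S_\mu, S_\nu) \geq c \cdot \operatorname{diam}(S)$ for $\mu \neq \nu$. Arrange so that $x_0 \in S_0$, and for each $\nu \geq 1$ fix an arbitrary representative $y_\nu \in S_\nu$. For the cluster $S_0$, use $P_0 \in \Gamma_k(x_0,M_0) \subseteq \Gamma_{k-1}(x_0,M_0)$ and apply the inductive hypothesis with distinguished point $x_0$ and polynomial $P_0$ to obtain a Whitney field on $S_0$ whose value at $x_0$ is $P_0$. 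For each cluster $S_\nu$ with $\nu \geq 1$, use the fact that $\vec{\Gamma}_k$ is the first refinement of $\vec{\Gamma}_{k-1}$: since $P_0 \in \Gamma_k(x_0,M_0)$, there exists $\tilde{P}_\nu \in \Gamma_{k-1}(y_\nu, M_0)$ such that
\begin{equation*}
|\partial^\beta(P_0 - \tilde{P}_\nu)(x_0)| \leq M_0 |x_0 - y_\nu|^{m-|\beta|} \quad \text{for } |\beta| \leq m-1.
\end{equation*}
Then apply the inductive hypothesis to $S_\nu$ with distinguished point $y_\nu$ and polynomial $\tilde{P}_\nu$ to produce a Whitney field $(\tilde{P}^x)_{x \in S_\nu}$ with $\tilde{P}^{y_\nu} = \tilde{P}_\nu$. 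Assemble the global Whitney field $(P^x)_{x \in S}$ by gluing the pieces from each cluster.

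The conditions $P^x \in \Gamma_0(x, C_*M_0)$, $P^{x_0} = P_0$, and within-cluster Whitney compatibility are immediate from the inductive hypothesis. The one real task is to verify cross-cluster Whitney compatibility: for $x \in S_\nu$, $x' \in S_\mu$ with $\mu \neq \nu$, we need $|\partial^\beta(P^x - P^{x'})(x)| \leq C_* M_0 |x-x'|^{m-|\beta|}$. Write
\begin{equation*}
P^x - P^{x'} = (P^x - \tilde{P}_\nu) + (\tilde{P}_\nu - P_0) + (P_0 - \tilde{P}_\mu) + (\tilde{P}_\mu - P^{x'}).
\end{equation*}
The first and last differences are controlled at $x$ and $x'$ respectively by the within-cluster Whitney estimates, and the middle two are controlled at $x_0$ by the refinement inequality. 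Since all base points (the points $x$, $x'$, $y_\nu$, $y_\mu$, $x_0$) lie within distance $\operatorname{diam}(S)$ of $x$, standard Taylor transfer for polynomials of degree $\leq m-1$ yields $|\partial^\beta(P^x - P^{x'})(x)| \leq C M_0 \,\operatorname{diam}(S)^{m-|\beta|}$. The separation bound $|x - x'| \geq c \cdot \operatorname{diam}(S)$ then upgrades this to $|\partial^\beta(P^x - P^{x'})(x)| \leq C_* M_0 |x-x'|^{m-|\beta|}$, as required.

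The main obstacle is simply bookkeeping: making sure the constants at each step of the induction depend only on $m$, $n$, and the current value of $k$, so that at the end $C_* = C_*(m,n,l_*)$. No delicate geometric or convexity input is needed beyond the Clustering Lemma and the definition of refinement; no use is made of the $(C_w, \delta_{\max})$-convexity hypothesis, which is why the constant depends on $l_*$ rather than on $C_w$.
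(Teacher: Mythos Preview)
Your proof is correct and follows essentially the same route as the paper's: induction via the Clustering Lemma, using the definition of refinement to produce anchor polynomials $\tilde{P}_\nu \in \Gamma_{k-1}(y_\nu,M_0)$ in each cluster, applying the inductive hypothesis to each cluster, and then verifying cross-cluster Whitney compatibility by chaining through $P_0$ and invoking the separation bound $\operatorname{dist}(S_\mu,S_\nu)\geq c\cdot\operatorname{diam}(S)$. The only difference is cosmetic: the paper inducts on $l_\ast$ directly (handling $\#(S)<l_\ast$ by an immediate appeal to the inductive hypothesis and $\#(S)=l_\ast$ via clustering), whereas you phrase the induction on $k=\#(S)$ with the auxiliary hypothesis $P_0\in\Gamma_k$; these are equivalent formulations of the same argument.
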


\begin{proof}[Proof of Lemma \protect\ref{refinement-lemma1}]
Throughout the proof of Lemma \ref{refinement-lemma1}, $C_{\ast }$ denotes a
constant determined by $m$, $n$, $l_{\ast }$.

We use induction on $l_{\ast }$.

In the base case, $l_{\ast }=1,$ so $S\equiv \left\{ x_{0}\right\} $, and we
can take our Whitney field $\vec{P}^{S}$ to consist of the single polynomial 
$P_{0}$.

For the induction step, we fix $l_{\ast }\geq 2$, and make the inductive
assumption: Lemma \ref{refinement-lemma1} holds with $\left( l_{\ast
}-1\right) $ in place of $l_{\ast }$. We will then prove Lemma \ref%
{refinement-lemma1} for the given $l_{\ast }$.

Let $\vec{\Gamma}_{l},$ $M_{0},$ $x_{0}$, $P_{0}$, $S$ be as in the
hypotheses of Lemma \ref{refinement-lemma1}.

If $\#\left( S\right) \leq l_{\ast }-1$, then the conclusion of Lemma \ref%
{refinement-lemma1} follows instantly from our inductive assumption. Suppose 
$\#\left( S\right) =l_{\ast }$.

Let $S_{0},S_{1},\cdots ,S_{\nu _{\max }}$ be a partition of $S$ as in Lemma %
\ref{clustering-lemma}. We may assume that the $S_{\nu }$ are numbered so
that $x_{0}\in S_{0}.$ For each $\nu =1,\cdots ,\nu _{\max }$, we pick a
point $x_{\nu }\in S_{\nu }$.

Recall that $P_{0}\in \Gamma _{l_{\ast }}\left( x_{0},M_{0}\right) $. By
definition of the $l^{th}$ refinement, for each $\nu =1,\cdots ,\nu _{\max }$%
, there exists

\begin{itemize}
\item[\refstepcounter{equation}\text{(\theequation)}\label{refined1}] $P_{\nu }\in \Gamma _{l_{\ast }-1}\left( x_{\nu },M_{0}\right) $ such that $%
\left\vert \partial ^{\beta }\left( P_{\nu }-P_{0}\right) \left(
x_{0}\right) \right\vert \leq M_{0}\left\vert x_{\nu }-x_{0}\right\vert
^{m-\left\vert \beta \right\vert }$ for $\left\vert \beta \right\vert \leq
m-1$.
\end{itemize}

Fix such $P_{\nu }$ for $\nu =1,\cdots ,\nu _{\max }$; and note that \eqref{refined1}
holds also for $\nu =0.$

For each $\nu =0,1,\cdots ,\nu _{\max }$, we have $P_{\nu }\in \Gamma
_{l_{\ast }-1}\left( x_{\nu },M_{0}\right) $, $x_{\nu }\in S_{\nu }\subset E$,
and $\#\left( S_{\nu }\right) \leq l_{\ast }-1$.

Hence our inductive assumption produces a Whitney field $\vec{P}^{S_{\nu
}}=\left( P_{\nu }^{x}\right) _{x\in S_{\nu }}$ such that $P_{\nu }^{x}\in
\Gamma _{0}\left( x,C_{\ast }M_{0}\right) $ for $x\in S_{\nu }$, $\left\vert
\partial ^{\beta }\left( P_{\nu }^{x}-P_{\nu }^{y}\right) \left( x\right)
\right\vert \leq C_{\ast }M_{0}\left\vert x-y\right\vert ^{m-\left\vert
\beta \right\vert }$ for $x,y\in S_{\nu }$, $\left\vert \beta \right\vert
\leq m-1$, $P_{\nu }^{x_{\nu }}=P_{\nu }$.

We now combine the $\vec{P}^{S_{\nu }}$ into a single Whitney field $\vec{P}%
^{S}=\left( P^{x}\right) _{x\in S}$ by setting $P^{x}=P_{\nu }^{x}$ for $%
x\in S_{\nu }$. Note that $P^{x}\in \Gamma _{0}\left( x,C_{\ast
}M_{0}\right) $ for $x\in S$, and that $P^{x_{0}}=P_{0}^{x_{0}}=P_{0}$. We
check that

\begin{itemize}
\item[\refstepcounter{equation}\text{(\theequation)}\label{refined2}] $\left\vert \partial ^{\beta }\left( P^{x}-P^{y}\right) \left( x\right)
\right\vert \leq C_{\ast }M_{0}\left\vert x-y\right\vert ^{m-\left\vert
\beta \right\vert }$ for $x,y$ $\in S$, $\left\vert \beta \right\vert \leq
m-1$.\end{itemize}

We already know \eqref{refined2} for $x,y$ belonging to the same $S_{\nu }$.

Suppose $x\in S_{\nu }$, $y\in S_{\mu }$ with $\mu \not=\nu $. Then $%
\left\vert x-y\right\vert $ is comparable to diam $\left( S\right) $ so \eqref{refined2}
asserts that

\begin{itemize}
\item[\refstepcounter{equation}\text{(\theequation)}\label{refined3}]  $\left\vert \partial ^{\beta }\left( P^{x}-P^{y}\right) \left( x\right)
\right\vert \leq C_{\ast }M_{0}$ (diam $\left( S\right) $)$%
^{m-\left\vert \beta \right\vert }$ for $\left\vert \beta \right\vert \leq
m-1$. \end{itemize}

We know that

\begin{itemize}
\item[\refstepcounter{equation}\text{(\theequation)}\label{refined4}]  $\left\vert \partial ^{\beta }\left( P^{x}-P_{\nu }\right) \left( x_{\nu
}\right) \right\vert =\left\vert \partial ^{\beta }\left( P_{\nu
}^{x}-P_{\nu }^{x_{\nu }}\right) \left( x_{\nu }\right) \right\vert \leq
C_{\ast }M_{0}\left\vert x-x_{\nu }\right\vert ^{m-\left\vert \beta
\right\vert }\leq$ \\ $C_{\ast }M_{0} \left( \text{diam}\left( S\right) \right)
^{m-\left\vert \beta \right\vert }$ for $\left\vert \beta \right\vert \leq
m-1.$ \end{itemize}

Similarly,

\begin{itemize}
\item[\refstepcounter{equation}\text{(\theequation)}\label{refined5}]  $\left\vert \partial ^{\beta }\left( P^{y}-P_{\mu }\right) \left( x_{\mu
}\right) \right\vert \leq C_{\ast }M_{0}\left( \text{diam}\left( S\right) \right)
^{m-\left\vert \beta \right\vert }$ for $\left\vert \beta \right\vert \leq
m-1$.\end{itemize}

\begin{itemize}
\item[\refstepcounter{equation}\text{(\theequation)}\label{refined6}] Also, $\left\vert \partial ^{\beta }\left( P_{\nu }-P_{0}\right) \left(
x_{0}\right) \right\vert \leq C_{\ast }M_{0}\left( \text{diam}\left( S\right)
\right) ^{m-\left\vert \beta \right\vert }$ for $\left\vert \beta
\right\vert \leq m-1$, and \end{itemize}

\begin{itemize}
\item[\refstepcounter{equation}\text{(\theequation)}\label{refined7}] $\left\vert \partial ^{\beta }\left( P_{\mu }-P_{0}\right) \left(
x_{0}\right) \right\vert \leq C_{\ast }M_{0}\left( \text{diam}\left( S\right)
\right) ^{m-\left\vert \beta \right\vert }$ for $\left\vert \beta
\right\vert \leq m-1$.\end{itemize}

Because the points $x_{0},$ $x_{\mu }$, $x_{\nu }$, $x$ all lie in $S$, the
distance between any two of these points is at most diam $\left( S\right)$.

Hence, the estimates \eqref{refined4},$\cdots $,\eqref{refined7} together imply \eqref{refined3}, completing the
proof of \eqref{refined2}. This completes our induction on $l_{\ast },$ thus establishing
Lemma \ref{refinement-lemma1}.
\end{proof}

\begin{proof}[Proof of Corollary \ref{lemma-to-previous-results-on-shape-fields}]
We set $\hat{\Gamma}=\left( \hat{\Gamma}\left( x,M\right) \right) _{x\in
E,M>0}$, where 
\[
\hat{\Gamma}\left( x,M\right) =\left\{ 
\begin{array}{c}
\Gamma _{0}\left( x,M\right)  \\ 
\left\{ P_{0}\right\} 
\end{array}%
\right. 
\begin{array}{l}
\text{if }x\in E\setminus \left\{ x_{0}\right\}  \\ 
\text{if }x=x_{0}%
\end{array}%
.
\]%
One checks trivially that $\hat{\Gamma}$ is a $\left( C_{w},\delta _{\max
}\right) $-convex shape field. 

By applying Lemma \ref{refinement-lemma1} with $l_{\ast }=k^{\#}+1$ (so that
if necessary we can add $x_{0}$ into the set $S$ below), we obtain the
following conclusion:

\begin{itemize}
\item[\refstepcounter{equation}\text{(\theequation)}\label{refined8}] Given $S \subset E$ with $\#(S)\leq k^\#$, there exists a Whitney field $\vec{P}^{S}=\left( P^{x}\right) _{x\in S}$ such that $%
P^{x}\in \hat{\Gamma}\left( x,C_{\ast }M_{0}\right) $ for $x\in S,$ and $%
\left\vert \partial ^{\beta }\left( P^{x}-P^{y}\right) \left( x\right)
\right\vert \leq C_{\ast }M_{0}\left\vert x-y\right\vert ^{m-\left\vert
\beta \right\vert }$ for $x,y\in S,$ $\left\vert \beta \right\vert \leq m-1$.
\end{itemize}

Here, $C_{\ast }$ depends only on $m$, $n$, $k^{\#}$.

For large enough $k^{\#}$ depending only on $m,$ $n,$ \eqref{refined8} and Theorem \ref{theorem-fp-for-wsf}
together imply the conclusion of our corollary.

The proof of Corollary \ref{lemma-to-previous-results-on-shape-fields} is
complete. 
\end{proof}

\section{Finiteness Principle II}

\label{fpii}

\begin{proof}[Proof of Theorem \ref{Th3} (A)]
Let us first set up notation. We write $c$, $C$, $C^{\prime }$, etc., to
denote constants determined by $m$, $n$, $D$; these symbols may denote
different constants in different occurrences. We will work with $C^{m}$
vector and scalar-valued functions on $\mathbb{R}^{n}$, and also with $%
C^{m+1}$ scalar-valued functions on $\mathbb{R}^{n+D}$. We use Roman letters
($x$, $y$, $z$$,\cdots $) to denote points of $\mathbb{R}^{n}$, and Greek
letters $(\xi ,\eta ,\zeta ,\cdots )$ to denote points of $\mathbb{R}^{D}$.
We denote points of the $\mathbb{R}^{n+D}$ by $(x,\xi )$, $(y,\eta )$, etc.
As usual, $\mathcal{P}$ denotes the vector space of real-valued polynomials of degree at
most $m-1$ on $\mathbb{R}^{n}$. We write $\mathcal{P}^{D}$ to denote the
direct sum of $D$ copies of $\mathcal{P}$. If $F\in C^{m-1}(\mathbb{R}^{n},%
\mathbb{R}^{D})$ with $F(x)=\left( F_{1}\left( x\right) ,\cdots ,F_{D}\left(
x\right) \right) $ for $x\in \mathbb{R}^{n}$, then $J_{x}(F):=(J_{x}\left(
F_{1}\right) ,\cdots ,J_{x}\left( F_{D}\right) )\in \mathcal{P}^{D}$.

We write $\mathcal{P}^{+}$ to denote the vector space of real-valued polynomials of
degree at most $m$ on $\mathbb{R}^{n+D}$. If $F\in C^{m+1}\left( \mathbb{R}%
^{n+D}\right) $, then we write $J_{\left( x,\xi \right) }^{+}F\in \mathcal{P}%
^{+}$ to denote the $m^{th}$-degree Taylor polynomial of $F$ at the point $%
\left( x,\xi \right) \in \mathbb{R}^{n+D}$.

When we work with $\mathcal{P}^{+}$, we write $\odot _{\left( x,\xi \right)
} $ to denote the multiplication 
\begin{equation*}
P\odot _{\left( x,\xi \right) }Q:=J_{\left( x,\xi \right) }^{+}\left(
PQ\right) \in \mathcal{P}^{+}\text{ for }P,Q\in \mathcal{P}^{+}\text{.}
\end{equation*}

We will use Theorem \ref{theorem-fp-for-wsf} for $C^{m+1}$-functions on $%
\mathbb{R}^{n+D}$. (Compare with \cite{fl-2014}.) Thus, $m+1$ and $n+D$ will play the r\^oles of $m$, $n$,
respectively, when we apply that theorem.

We take $k^{\#}$ as in Theorem \ref{theorem-fp-for-wsf}, where we use $%
m+1,n+D$ in place of $m,n$, respectively.

We now introduce the relevant shape field.

Let $E^{+}=\left\{ \left( x,0\right) \in \mathbb{R}^{n+D}:x\in E\right\} $.
For $\left( x_{0},0\right) \in E^{+}$ and $M>0$, let

\begin{itemize}
\item[\refstepcounter{equation}\text{(\theequation)}\label{fpii3}] $\Gamma
\left( \left( x_0,0\right) ,M\right) =\left\{ 
\begin{array}{c}
P\in \mathcal{P}^{+}:P\left( x_{0},0\right) =0,\nabla _{\xi }P\left(
x_{0},0\right) \in K\left( x_{0}\right) , \\ 
\left\vert \partial _{x}^{\alpha }\partial _{\xi }^{\beta }P\left(
x_{0},0\right) \right\vert \leq M\text{ for }\left\vert \alpha \right\vert
+\left\vert \beta \right\vert \leq m%
\end{array}%
\right\} \mathcal{\subset \mathcal{P}}^{+}$.
\end{itemize}

Let $\vec{\Gamma}=\left( \Gamma \left( x_{0},0\right) ,M\right) $ $_{\left(
x_0,0\right) \in E^{+},M>0}$.

\begin{lemma}
\label{lemma-bfp1} $\vec{\Gamma}$ is a $(C,1)$-convex shape field.
\end{lemma}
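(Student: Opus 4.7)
The plan is to verify the three things needed: (i) each $\Gamma((x_0,0),M)$ is convex, (ii) $M \mapsto \Gamma((x_0,0),M)$ is monotone, so $\vec\Gamma$ is a shape field, and (iii) the $(C,1)$-convexity condition from Section~\ref{shape-fields}, now phrased with $m$ replaced by $m+1$ and $n$ by $n+D$, since we intend to feed $\vec\Gamma$ into Theorem~\ref{theorem-fp-for-wsf} for $C^{m+1}$ functions on $\mathbb{R}^{n+D}$. Items (i) and (ii) are immediate from the definition \eqref{fpii3}: the conditions $P(x_0,0)=0$, $\nabla_\xi P(x_0,0)\in K(x_0)$, and $|\partial_x^\alpha\partial_\xi^\beta P(x_0,0)|\le M$ are respectively linear, convex (since $K(x_0)$ is convex), and a growing-in-$M$ pointwise bound.

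For (iii), fix $0<\delta\le 1$, $(x_0,0)\in E^+$, $M>0$, and $P_1,P_2,Q_1,Q_2\in\mathcal{P}^+$ satisfying the analogues of \eqref{wsf1}--\eqref{wsf4}. Using $Q_1\odot_{(x_0,0)}Q_1+Q_2\odot_{(x_0,0)}Q_2=1$, I rewrite
\[
P \;=\; P_1 \;+\; Q_2\odot_{(x_0,0)}Q_2\odot_{(x_0,0)}(P_2-P_1).
\]
Since $\odot_{(x_0,0)}$ agrees with ordinary multiplication through order $m$ at $(x_0,0)$, evaluating at $(x_0,0)$ gives $P(x_0,0)=P_1(x_0,0)+Q_2(x_0,0)^2(P_2(x_0,0)-P_1(x_0,0))=0$, which is the first defining condition of $\Gamma((x_0,0),CM)$.

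For the $\xi$-gradient, the vanishing $P_1(x_0,0)=P_2(x_0,0)=0$ kills the term where the derivative lands on $Q_2^2$, yielding
\[
\nabla_\xi P(x_0,0) \;=\; Q_1(x_0,0)^2\,\nabla_\xi P_1(x_0,0) \;+\; Q_2(x_0,0)^2\,\nabla_\xi P_2(x_0,0),
\]
a convex combination (with weights summing to $1$ by \eqref{wsf4} evaluated at $(x_0,0)$) of two elements of the convex set $K(x_0)$, hence in $K(x_0)$.

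Finally, for the derivative bound at order $|\gamma|\le m$, I apply Leibniz to $\partial^\gamma(Q_2^2(P_2-P_1))(x_0,0)$: each term contributes at most
\[
|\partial^{\gamma_1}Q_2(x_0,0)|\,|\partial^{\gamma_2}Q_2(x_0,0)|\,|\partial^{\gamma_3}(P_2-P_1)(x_0,0)| \;\le\; \delta^{-|\gamma_1|}\delta^{-|\gamma_2|}\cdot M\delta^{(m+1)-|\gamma_3|} \;=\; M\delta^{(m+1)-|\gamma|}\le M,
\]
since $\delta\le 1$ and $|\gamma|\le m$. Combined with $|\partial^\gamma P_1(x_0,0)|\le M$ (from $P_1\in\Gamma((x_0,0),M)$), this gives $|\partial_x^\alpha\partial_\xi^\beta P(x_0,0)|\le CM$ for $|\alpha|+|\beta|\le m$, so $P\in\Gamma((x_0,0),CM)$. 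There is no real obstacle here; the only subtlety is to remember that we are working in the $C^{m+1}$-setting on $\mathbb{R}^{n+D}$, so in the hypothesis the exponent on $\delta$ is $m+1-|\beta|$ rather than $m-|\beta|$, and this is precisely what makes the Leibniz estimate absorb the two $\delta^{-1}$'s from the $Q_2$'s.
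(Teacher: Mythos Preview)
Your proof is correct and follows the same approach as the paper: the paper's proof is just the one-line observation that $P_1(x_0,0)=P_2(x_0,0)=0$ forces $P(x_0,0)=0$ and makes $\nabla_\xi P(x_0,0)$ the convex combination $Q_1(x_0,0)^2\nabla_\xi P_1(x_0,0)+Q_2(x_0,0)^2\nabla_\xi P_2(x_0,0)$, with everything else left implicit. You have filled in the remaining routine verifications (convexity, monotonicity, and the Leibniz estimate for the derivative bounds), and correctly noted the $m+1$ shift in the $\delta$-exponent that makes the Leibniz bound close up.
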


The proof of the lemma follows easily from the following observation: If $%
P_{1},P_{2}\in \Gamma \left( \left( x_{0},0\right) ,M\right) $ and $%
Q_{1},Q_{2}\in \mathcal{P}^{+}$, then $%
P_{1}\left( x_{0},0\right) =P_{2}\left( x_{0},0\right) =0$, hence $%
P:=Q_{1}\odot _{\left( x_{0},0\right) }Q_{1}\odot _{\left( x_{0},0\right)
}P_{1}+Q_{2}\odot _{\left( x_{0},0\right) }Q_{2}\odot _{\left(
x_{0},0\right) }P_{2}$ satisfies $P\left( x_{0},0\right) =0$ and $\nabla _{\xi }P\left( x_{0},0\right) =\left( Q_{1}\left( x_{0},0\right)
\right) ^{2}\nabla _{\xi }P_{1}\left( x_{0},0\right) +\left( Q_{2}\left(
x_{0},0\right) \right) ^{2}\nabla _{\xi }P_{2}\left( x_{0},0\right)$.

\begin{lemma}
\label{lemma-bfp2} Let $S^{+}\subset E^{+}$ with $\#\left( S^{+}\right) \leq
k^{\#}$. Then there exists $\vec{P}=\left( P^{z}\right) _{z\in S^{+}}$, with
each $P^{z}\in \mathcal{P}^{+}$, such that

\begin{itemize}
\item[\refstepcounter{equation}\text{(\theequation)}\label{fpii16}] $%
P^{z}\in \Gamma \left( z,C\right) $ for each $z\in S^{+}$, and
\end{itemize}

\begin{itemize}
\item[\refstepcounter{equation}\text{(\theequation)}\label{fpii17}] $%
\left\vert \partial _{x}^{\alpha }\partial _{\xi }^{\beta }\left(
P^{z}-P^{z^{\prime }}\right) \left( z\right) \right\vert \leq C\left\vert
z-z^{\prime }\right\vert ^{\left( m+1\right) -\left\vert \alpha \right\vert
-\left\vert \beta \right\vert }$ for $z$, $z^{\prime }\in S^{+}$ and $%
\left\vert \alpha \right\vert +\left\vert \beta \right\vert \leq m$.
\end{itemize}
\end{lemma}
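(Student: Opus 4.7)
\textbf{Proof plan for Lemma \ref{lemma-bfp2}.} The plan is to build the Whitney field from a single global selection on $\mathbb{R}^n$ provided by the hypothesis of Theorem \ref{Th3}(A). Set $S = \{x \in E : (x,0) \in S^+\}$, so $\#(S) \leq k^\#$. Provided $k^\#$ is chosen at least as large as the corresponding finiteness constant in Theorem \ref{Th3}(A) (which is what we are trying to prove, so this is just a choice in the final threshold), the hypothesis of Theorem \ref{Th3}(A) gives $F^S \in C^m(\mathbb{R}^n, \mathbb{R}^D)$ with $\Vert F^S\Vert_{C^m(\mathbb{R}^n, \mathbb{R}^D)} \leq 1$ and $F^S(x) \in K(x)$ for every $x \in S$. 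Define the auxiliary function $G : \mathbb{R}^{n+D} \to \mathbb{R}$ by $G(y,\eta) = \eta \cdot F^S(y) = \sum_{j=1}^D \eta_j F^S_j(y)$, and for each $(x,0) \in S^+$ set $P^{(x,0)} := J^+_{(x,0)}(G) \in \mathcal{P}^+$ (the degree-$m$ Taylor polynomial at $(x,0)$).

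Next I would verify \eqref{fpii16}. Since $G(y,0) \equiv 0$, we have $P^{(x,0)}(x,0) = 0$. Since $\partial_{\eta_j} G(y,\eta) = F^S_j(y)$ is independent of $\eta$, we get $\nabla_\eta P^{(x,0)}(x,0) = F^S(x) \in K(x)$. Finally, for $|\alpha|+|\beta| \leq m$, the derivative $\partial_y^\alpha \partial_\eta^\beta P^{(x,0)}(x,0) = \partial_y^\alpha \partial_\eta^\beta G(x,0)$ is either $0$ (when $|\beta|=0$ or $|\beta| \geq 2$) or equals $\partial^\alpha F^S_j(x)$ for some $j$ (when $\beta = e_j$), and in every case has absolute value $\leq \Vert F^S\Vert_{C^m} \leq 1$. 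Hence $P^{(x,0)} \in \Gamma((x,0),C)$ for a universal $C$.

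Now I would verify the Whitney compatibility \eqref{fpii17}. The decisive observation is that although $G$ is only $C^m$, the function is linear in $\eta$, so $\partial_\eta^\beta G \equiv 0$ for every $\beta$ with $|\beta| \geq 2$. Write $z = (x_i, 0)$, $z' = (x_j, 0)$ and compute $\partial_y^\alpha \partial_\eta^\beta (P^z - P^{z'})(z)$ by cases on $|\beta|$. If $|\beta| \geq 2$, both terms vanish identically, so the difference is $0$. If $|\beta| = 0$, then $P^z(y,0) = P^{z'}(y,0) = 0$ identically in $y$ (each $P$ has $\eta$ as a factor), so again the difference is $0$. If $\beta = e_k$, then $\partial_\eta^{e_k} P^{(x,0)}(y,0)$ is the degree-$(m-1)$ Taylor polynomial of $F^S_k$ at $x$, evaluated at $y$; therefore for $|\alpha| \leq m-1$,
\[
\partial_y^\alpha \partial_\eta^{e_k}(P^z - P^{z'})(z) = \partial^\alpha F^S_k(x_i) - \partial^\alpha (T^{m-1}_{x_j} F^S_k)(x_i),
\]
which is bounded by $C|x_i - x_j|^{m-|\alpha|} = C|z-z'|^{(m+1)-|\alpha|-|\beta|}$ by the standard Taylor remainder estimate for a $C^m$ function of norm $\leq 1$ (and for $|\alpha| = m$, $\beta = 0$ would be required, which is case 1). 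This covers all cases and yields \eqref{fpii17}.

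The \emph{main obstacle to anticipate} is the mismatch in regularity: we need Whitney-type estimates of order $m+1$ in $n+D$ variables, yet $G$ is only $C^m$. This would be fatal for a generic $G$, but the specific construction $G(y,\eta) = \eta \cdot F^S(y)$ is designed precisely to kill all pure $\eta$-derivatives of order $\geq 2$, so only mixed derivatives involving at most one $\eta$-differentiation survive, and those reduce to ordinary $C^m$-Whitney estimates on $\mathbb{R}^n$. Once this structural point is in hand, no further ingenuity is required.
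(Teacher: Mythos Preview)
Your proof is correct and follows essentially the same approach as the paper: you define $P^{(x,0)}$ as the degree-$m$ jet at $(x,0)$ of $G(y,\eta)=\eta\cdot F^{S}(y)$, which is exactly the paper's formula $P^{(x_0,0)}(x,\xi)=\sum_i \xi_i\,[J_{x_0}(F_i^S)(x)]$, and you then verify membership in $\Gamma((x,0),C)$ and the Whitney estimates by the same case split on $|\beta|$. The paper's argument is identical in substance, only omitting the intermediate function $G$.
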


\begin{proof}[Proof of Lemma \protect\ref{lemma-bfp2}]
Since $E^{+}=E\times \left\{ 0\right\} $, we have $S^{+}=S\times \left\{
0\right\} $ for an $S\subset E$ with $\#\left( S\right) \leq k^{\#}$. By
hypothesis of Theorem \ref{Th3} (A), there exists $%
F^{S}\in C^{m}\left( \mathbb{R}^{n},\mathbb{R}^{D}\right) $ such that

\begin{itemize}
\item[\refstepcounter{equation}\text{(\theequation)}\label{fpii18}] $%
\left\Vert F^{S}\right\Vert _{C^{m}\left( \mathbb{R}^{n},\mathbb{R}%
^{D}\right) }\leq 1$ and $F^{S}\left( x_{0}\right) \in K\left( x_{0}\right) $
for all $x_{0}\in S$.
\end{itemize}

Let $F^{S}\left( x\right) =\left( F_{1}^{S}\left( x\right) ,\cdots
,F_{D}^{S}\left( x\right) \right) $ for $x\in \mathbb{R}^{n}$, and let $\vec{P%
}=\left( P^{\left( x_{0},0\right) }\right) _{\left( x_{0},0\right) \in
S\times \left\{ 0\right\} }$ with

\begin{itemize}
\item[\refstepcounter{equation}\text{(\theequation)}\label{fpii19}] $%
P^{\left( x_{0},0\right) }\left( x,\xi \right) =\sum_{i=1}^{D}\xi _{i}\left[
J_{x_{0}}\left( F_{i}^{S}\right) \left( x\right) \right] $ for $x\in \mathbb{%
R}^{n}$, $\xi =\left( \xi _{1},\cdots ,\xi _{D}\right) \in \mathbb{R}^{D}$.
\end{itemize}

Each $P^{(x_0,0)}$ belongs to $\mathcal{P}^+$ and satisfies

\begin{itemize}
\item[\refstepcounter{equation}\text{(\theequation)}\label{fpii20}] $%
P^{\left( x_{0},0\right) }\left( x_{0},0\right) =0$, $\nabla _{\xi
}P^{\left( x_{0},0\right) }\left( x_{0},0\right) \in K\left( x_{0}\right) $,
and
\end{itemize}

and

\begin{itemize}
\item[\refstepcounter{equation}\text{(\theequation)}\label{fpii21}] $%
\left\vert \partial _{x}^{\alpha }\partial _{\xi }^{\beta }P^{\left(
x_{0},0\right) }\left( x_{0},0\right) \right\vert \leq C$ for $\left\vert
\alpha \right\vert +\left\vert \beta \right\vert \leq m$,
\end{itemize}
thanks to \eqref{fpii18}, \eqref{fpii19}. Our results \eqref{fpii20}, %
\eqref{fpii21} and definition \eqref{fpii3} together imply \eqref{fpii16}.
We pass to \eqref{fpii17}. Let $\left( x_{0},0\right) ,\left( y_{0},0\right)
\in S^{+}=S\times \left\{ 0\right\} $. From \eqref{fpii18}, \eqref{fpii19},
we have 
\begin{eqnarray*}
\left\vert \partial _{x}^{\alpha }\partial _{\xi _{j}}\left( P^{\left(
x_{0},0\right) }-P^{\left( y_{0},0\right) }\right) \left( x_{0},0\right)
\right\vert  &=&\left\vert \partial _{x}^{\alpha }\left( J_{x_{0}}\left(
F_{j}^{S}\right) -J_{y_{0}}\left( F_{j}^{S}\right) \right) \left(
x_{0}\right) \right\vert  \\
&\leq &C\left\vert x_{0}-y_{0}\right\vert ^{m-\left\vert \alpha \right\vert }
\\
&=&C\left\vert x_{0}-y_{0}\right\vert ^{\left( m+1\right) -\left( \left\vert
\alpha \right\vert +1\right) }
\end{eqnarray*}%
for $\left\vert \alpha \right\vert \leq m-1$, $j=1,\cdots ,D$. For $|\beta
|\not=1$, we have 
\begin{equation*}
\partial _{x}^{\alpha }\partial _{\xi }^{\beta }\left( P^{\left(
x_{0},0\right) }-P^{\left( y_{0},0\right) }\right) \left( x_{0},0\right) =0
\end{equation*}%
by \eqref{fpii19}. The above remarks imply \eqref{fpii17}, completing the
proof of Lemma \ref{lemma-bfp2}.
\end{proof}

\begin{lemma}
\label{lemma-bfp3} Given a cube $Q\subset \mathbb{R}^{n}$ of sidelength $%
\delta _{Q}=1$, there exists $F^{Q}\in C^{m}(Q,\mathbb{R}^{D})$ such that

\begin{itemize}
\item[\refstepcounter{equation}\text{(\theequation)}\label{fpii22}] $%
\left\vert \partial ^{\alpha }F^{Q}\left( x\right) \right\vert \leq C$ for $%
x\in Q$, $\left\vert \alpha \right\vert \leq m$; and
\end{itemize}

\begin{itemize}
\item[\refstepcounter{equation}\text{(\theequation)}\label{fpii23}] $%
F^{Q}\left( z\right) \in K\left( z\right) $ for all $z\in E\cap Q$.
\end{itemize}
\end{lemma}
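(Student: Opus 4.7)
The plan is to apply the Finiteness Principle for Shape Fields (Theorem \ref{theorem-fp-for-wsf}) in dimension $n+D$ with smoothness order $m+1$, to the shape field $\vec{\Gamma}$ constructed in \eqref{fpii3}, and then recover $F^Q$ from the resulting $C^{m+1}$ function on $\mathbb{R}^{n+D}$ by taking its $\xi$-gradient at $\xi = 0$.

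More precisely, first dispose of the trivial case $E \cap Q = \emptyset$ by setting $F^Q \equiv 0$. Otherwise, pick any $x_0 \in E \cap Q$ and form the cube
\begin{equation*}
Q^+ = Q \times \left[-\tfrac{1}{2}, \tfrac{1}{2}\right]^D \subset \mathbb{R}^{n+D},
\end{equation*}
which is an axis-parallel cube of sidelength $\delta_{Q^+} = 1 \leq \delta_{\max}$, and which contains the point $(x_0, 0) \in E^+$. By Lemma \ref{lemma-bfp1}, $\vec{\Gamma}$ is a $(C,1)$-convex shape field on $E^+$; by Lemma \ref{lemma-bfp2}, for every $S^+ \subset E^+$ with $\#(S^+) \leq k^\#$ there is a Whitney field $\vec{P} = (P^z)_{z \in S^+}$ satisfying $\|\vec{P}\|_{\dot C^{m+1}(S^+)} \leq C$ and $P^z \in \Gamma(z, C)$. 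Thus the hypotheses of Theorem \ref{theorem-fp-for-wsf} are satisfied (with $m+1$, $n+D$, $M_0 = C$ in place of $m$, $n$, $M_0$), and we obtain $P^0 \in \Gamma((x_0,0), C)$ and a function $F^+ \in C^{m+1}(Q^+)$ such that
\begin{equation*}
|\partial^\gamma (F^+ - P^0)(x, \xi)| \leq C \delta_{Q^+}^{(m+1) - |\gamma|} = C \quad \text{on } Q^+, \text{ for } |\gamma| \leq m+1,
\end{equation*}
and $J^+_{(z,0)}(F^+) \in \Gamma((z,0), C)$ for every $z \in E \cap Q$.

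Now define
\begin{equation*}
F^Q(x) := \nabla_\xi F^+(x, 0), \qquad x \in Q,
\end{equation*}
which is a $C^m$ map from $Q$ to $\mathbb{R}^D$. For $z \in E \cap Q$, the membership $J^+_{(z,0)}(F^+) \in \Gamma((z,0), C)$ and definition \eqref{fpii3} give $F^+(z,0) = 0$ and $\nabla_\xi F^+(z, 0) \in K(z)$, i.e., $F^Q(z) \in K(z)$, which is \eqref{fpii23}. For the derivative bound \eqref{fpii22}, note that $P^0 \in \Gamma((x_0,0), C)$ is a polynomial of degree $\leq m$ on $\mathbb{R}^{n+D}$ whose derivatives at $(x_0,0)$ through order $m$ are bounded by $C$; since $Q^+$ has diameter bounded by a controlled constant, it follows that $|\partial^\gamma P^0| \leq C$ on $Q^+$ for $|\gamma| \leq m$. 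Combined with the displayed estimate on $F^+ - P^0$, this yields $|\partial^\gamma F^+(x,\xi)| \leq C$ for all $(x,\xi) \in Q^+$ and $|\gamma| \leq m+1$, and in particular $|\partial_x^\alpha \partial_{\xi_j} F^+(x, 0)| \leq C$ for $x \in Q$, $|\alpha| \leq m$, $1 \leq j \leq D$. This is precisely \eqref{fpii22}.

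The routine verifications are the two definitional checks: that $Q \times [-1/2, 1/2]^D$ is admissible as a cube for Theorem \ref{theorem-fp-for-wsf} in $\mathbb{R}^{n+D}$, and that the pointwise statement $J^+_{(z,0)}(F^+) \in \Gamma((z,0), C)$ unpacks (via \eqref{fpii3}) into the two conditions $F^+(z,0) = 0$ and $\nabla_\xi F^+(z,0) \in K(z)$. There is essentially no obstacle here — all the real work was done in setting up the shape field and verifying convexity and the Whitney-field hypothesis (Lemmas \ref{lemma-bfp1} and \ref{lemma-bfp2}); Lemma \ref{lemma-bfp3} is the cost-free pay-off.
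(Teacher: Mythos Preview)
Your proof is correct and follows essentially the same approach as the paper: both apply Theorem \ref{theorem-fp-for-wsf} in $\mathbb{R}^{n+D}$ with smoothness $m+1$ to the shape field \eqref{fpii3} on a product cube $Q\times Q'$ (you take $Q'=[-1/2,1/2]^D$, the paper takes an unspecified unit cube containing the origin in its interior), then define $F^Q(x)=\nabla_\xi F^+(x,0)$ and read off \eqref{fpii22}, \eqref{fpii23} from the conclusions of the theorem and the definition of $\Gamma((z,0),M)$. The verifications you flag as routine are exactly the ones the paper carries out.
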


\begin{proof}[Proof of Lemma \protect\ref{lemma-bfp3}]
If $E\cap Q=\emptyset $, then we can just take $F^{Q}\equiv 0$. Otherwise,
pick $x_{00}\in E\cap Q$, let $Q^{\prime }\in \mathbb{R}^{D}$ be a cube of
sidelength $\delta _{Q^{\prime }}=1$, containing the origin in its interior,
and apply Theorem \ref{theorem-fp-for-wsf} (with $m+1$, $n+D$ in place of $m$%
, $n$, respectively) to the shape field $\vec{\Gamma}=\left( \Gamma \left(
x_{0},0\right) ,M\right) _{\left( x_{0},0\right) \in E^{+},M>0}$ given by %
\eqref{fpii3}, the cube $Q_{0}:=Q\times Q^{\prime }\subset \mathbb{R}^{n+D}$%
, the point $(x_{00},0)$, and the number $M_{0}=C$.

Lemmas \ref{lemma-bfp1} and \ref{lemma-bfp2} tell us that the above data
satisfy the hypotheses of Theorem \ref{theorem-fp-for-wsf}. Applying Theorem %
\ref{theorem-fp-for-wsf}, we obtain

\begin{itemize}
\item[\refstepcounter{equation}\text{(\theequation)}\label{fpii24}] $%
P_{0}\in \Gamma \left( \left( x_{00},0\right) ,C\right) $ and $F\in
C^{m+1}\left( Q\times Q^{\prime }\right) $ such that
\end{itemize}

\begin{itemize}
\item[\refstepcounter{equation}\text{(\theequation)}\label{fpii25}] $%
\left\vert \partial _{x}^{\alpha }\partial _{\xi }^{\beta }\left(
F-P_{0}\right) \left( x,\xi \right) \right\vert \leq C$ for $\left\vert
\alpha \right\vert +\left\vert \beta \right\vert \leq m+1$ and $\left( x,\xi
\right) \in Q\times Q^{\prime }$; and
\end{itemize}

\begin{itemize}
\item[\refstepcounter{equation}\text{(\theequation)}\label{fpii26}] $%
J_{\left( z,0\right) }^{+}\left( F\right) \in \Gamma \left( \left(
z,0\right) ,C\right) $ for all $z\in E\cap Q$.
\end{itemize}

By \eqref{fpii24}, \eqref{fpii26} and definition \eqref{fpii3}, we have

\begin{itemize}
\item[\refstepcounter{equation}\text{(\theequation)}\label{fpii27}] $%
\left\vert \partial _{x}^{\alpha }\partial _{\xi }^{\beta }P_{0}\left(
x_{00},0\right) \right\vert \leq C$ for $\left\vert \alpha \right\vert
+\left\vert \beta \right\vert \leq m$
\end{itemize}

and

\begin{itemize}
\item[\refstepcounter{equation}\text{(\theequation)}\label{fpii28}] $\nabla
_{\xi }F\left( z,0\right) \in K\left( z\right) $ for all $z\in E\cap Q$.
\end{itemize}

Since $\left( x_{00},0\right) \in Q\times Q^{\prime }$ and $\delta _{Q\times
Q^{\prime }}=1$, \eqref{fpii27} implies that 
\begin{equation*}
\left\vert \partial _{x}^{\alpha }\partial _{\xi }^{\beta }P_{0}\left( x,\xi
\right) \right\vert \leq C
\end{equation*}%
for $\left( x,\xi \right) \in Q\times Q^{\prime }$, $\left\vert \alpha
\right\vert +\left\vert \beta \right\vert \leq m+1$. (Recall that $P_{0}$ is
a polynomial of degree at most $m$.) Together with \eqref{fpii25}, this
implies that

\begin{itemize}
\item[\refstepcounter{equation}\text{(\theequation)}\label{fpii29}] $%
\left\vert \partial _{x}^{\alpha }\partial _{\xi }^{\beta }F\left( x,\xi
\right) \right\vert \leq C$ for $\left( x,\xi \right) \in Q\times Q^{\prime }
$, $\left\vert \alpha \right\vert +\left\vert \beta \right\vert \leq m+1$.
\end{itemize}

Taking 
\begin{equation*}
F^{Q}\left( x\right) =\nabla _{\xi }F\left( x,0\right) \text{ for }x\in Q%
\text{,}
\end{equation*}%
we learn from \eqref{fpii28}, \eqref{fpii29} that $F^{Q}\in C^{m}\left( Q,%
\mathbb{R}^{D}\right) $; $\left\vert \partial ^{\alpha }F^{Q}\left( x\right)
\right\vert \leq C$ for $x\in Q$, $\left\vert \alpha \right\vert \leq m$;
and $F^{Q}\left( z\right) \in K\left( z\right) $ for all $z\in E\cap Q$.
Thus, $F^{Q}$ satisfies \eqref{fpii22} and \eqref{fpii23}, completing the
proof of Lemma \ref{lemma-bfp3}.
\end{proof}

It is now trivial to complete the proof of Theorem \ref{Th3} (A) by using a partition of unity on $\mathbb{R}^n$.
\end{proof}

\bigskip

\begin{proof}[Proof of Theorem \ref{Th3} (B)]
We write $c$, $C$, $C^{\prime }$, etc., to denote constants determined by $m$%
, $n$, $D$. These symbols may denote different constants in different
occurrences.

Suppose first that $E$ is finite.

Given $F^{S}$ as in the hypothesis of Theorem \ref{Th3} (B), set $\vec{P}=\left( P^{x}\right)
_{x\in S}$, with $P^{x}=J_{x}\left( F^{S}\right) \in \mathcal{P}^{D}$. Then $%
P^{x}\left( x\right) \in K\left( x\right) $ for $x\in S$, $\left\vert
\partial ^{\beta }P^{x}\left( x\right) \right\vert \leq C$ for $x\in S$, $%
\left\vert \beta \right\vert \leq m-1$, and $\left\vert \partial ^{\beta
}\left( P^{x}-P^{y}\right) \left( x\right) \right\vert \leq C\left\vert
x-y\right\vert ^{m-\left\vert \beta \right\vert }$ for $x,y\in S$, $%
\left\vert \beta \right\vert \leq m-1$.

By Whitney's extension theorem for finite sets, there exists

\begin{itemize}
\item $\tilde{F}^{S}\in C^{m}\left( \mathbb{R}^{n},\mathbb{R}^{D}\right) $
such that

\item $\left\Vert \tilde{F}^{S}\right\Vert _{C^{m}\left( \mathbb{R}^{n},%
\mathbb{R}^{D}\right) }\leq C$, and
\end{itemize}

$J_{x}\left( \tilde{F}^{S}\right) =P^{x}$ for $x\in S$; in particular,

\begin{itemize}
\item $\tilde{F}^{S}\left( x\right) =F^{S}\left( x\right) \in K\left(
x\right) $ for $x\in S$.
\end{itemize}

Thanks to the above bullet points, we have satisfied the hypotheses of Theorem \ref{Th3} (A). Hence, we obtain $F\in C^{m}\left( 
\mathbb{R}^{n},\mathbb{R}^{D}\right) \subset C^{m-1,1}\left( \mathbb{R}^{n},%
\mathbb{R}^{D}\right) $ such that $\left\Vert F\right\Vert _{C^{m-1,1}\left( 
\mathbb{R}^{n},\mathbb{R}^{D}\right) }\leq C\left\Vert F\right\Vert
_{C^{m}\left( \mathbb{R}^{n},\mathbb{R}^{D}\right) }\leq C^{\prime }$, and $%
F\left( x\right) \in K\left( x\right) $ for each $x\in E$. Thus, we have
proven Theorem \ref{Th3} (B) in the case of
finite $E$.

Next, suppose $E$ is an arbitrary subset of a cube $Q\subset \mathbb{R}^{n}$%
. Then 
\begin{equation*}
X=\left\{ F\in C^{m-1,1}\left( Q,\mathbb{R}^{D}\right) :\left\Vert
F\right\Vert _{C^{m-1,1}\left( Q,\mathbb{R}^{D}\right) }\leq C\right\} 
\end{equation*}%
is compact in the topology of the $C^{m-1}(Q,\mathbb{R}^{D})$-norm, by
Ascoli's theorem.

For each $x\in E$, let 
\begin{equation*}
X(x)=\left\{ F\in X:F\left( x\right) \in K\left( x\right) \right\} \text{.}
\end{equation*}

Then each $X(x)$ is a closed subset of $X$, since $K(x)\subset \mathbb{R}%
^{n} $ is closed. Moreover, given finitely many points $x_{1},\cdots
,x_{N}\in E$, we have $X(x_{1})\cap \cdots \cap X(x_{N})\not=\emptyset $,
thanks to Theorem \ref{Th3} (B) in the known
case of finite sets. 

Consequently, $\bigcap_{x\in E}X(x)\not=\emptyset $. Thus, there exists $%
F\in C^{m-1,1}\left( Q,\mathbb{R}^{D}\right) $ such that

\begin{itemize}
\item[\refstepcounter{equation}\text{(\theequation)}\label{afc3}] $%
\left\Vert F\right\Vert _{C^{m-1,1}\left( Q,\mathbb{R}^{D}\right) }\leq C$
and $F\left( x\right) \in K\left( x\right) $ for all $x\in E$.
\end{itemize}

We have achieved \eqref{afc3} under the assumption $E\subset Q$.

Finally, Theorem \ref{Th3} (B) for arbitrary $E \subset \mathbb{R}^n$ follows from the known case $E \subset Q$ by an obvious argument using a partition of unity. 
\end{proof}

\bigskip

\bibliographystyle{plain}
%\bibliography{papers}
\def\cprime{$'$} \def\cprime{$'$}

\vspace{1mm} 

\begin{flushleft} 
Charles Fefferman \\
Affiliation: Department of Mathematics, Princeton University, Fine Hall Washington Road, Princeton, New Jersey, 08544, USA \\
Email: cf$\MVAt$math.princeton.edu  \\
\vspace{2mm} 
Arie Israel \\ 
Affiliation: The University of Texas at Austin, 
Department of Mathematics,
2515 Speedway Stop C1200, 
Austin, Texas, 78712-1202, USA \\ 
Email: arie$\MVAt$math.utexas.edu \\
\vspace{2mm} 
Garving K. Luli \\ 
Affiliation: Department of Mathematics, 
University of California, Davis, 
One Shields Ave, 
Davis, California, 95616, USA \\ 
Email: kluli@$\MVAt$math.ucdavis.edu 
\end{flushleft}

\end{document}